\documentclass[11pt]{amsart}
\usepackage{amscd}      
\usepackage{amssymb}
\usepackage{amsmath, amsthm, graphics}
\usepackage[normalem]{ulem}
\usepackage{color}
\usepackage{xypic}      
\LaTeXdiagrams          
\usepackage[all,v2]{xy}
\xyoption{2cell} \UseAllTwocells \xyoption{frame} \CompileMatrices
\allowdisplaybreaks[4]

\usepackage[margin=1.05in]{geometry}

\usepackage{latexsym}
\usepackage{epsfig}
\usepackage{amsfonts}
\usepackage{enumerate}
\usepackage{times}

\newtheorem{prop}{Proposition}[section]

\newtheorem{question}[prop]{Question}


\newtheorem{numcon}[prop]{Construction}

\newtheorem{claim}{Claim}


\theoremstyle{plain}
        \newtheorem{theorem}{Theorem}[section]
        \newtheorem{lemma}[theorem]{Lemma}
        \newtheorem{remark}[theorem]{Remark}
        \newtheorem{proposition}[theorem]{Proposition}
        \newtheorem{corollary}[theorem]{Corollary}
        \newtheorem{conjecture}[theorem]{Conjecture}
        
\theoremstyle{definition}
        \newtheorem{definition}[theorem]{Definition}

\newtheorem{assump}[theorem]{Assumption}

\theoremstyle{remark}

\theoremstyle{remark}

\numberwithin{equation}{section}

\newcommand{\Mbar}{\overline{\M}}

\newcommand{\com}{\mathbb{C}}

\newcommand{\X}{\mathcal{X}}
\newcommand{\Y}{\mathcal{Y}}

\newcommand{\M}{\mathcal{M}}

\newcommand{\B}{\mathcal{B}}

\newcommand{\D}{\mathcal{D}}
\newcommand{\F}{\mathcal{F}}

\newcommand{\sL}{\mathcal{L}}
\newcommand{\sI}{\mathcal{I}}

\def\<{\left\langle}
\def\>{\right\rangle}

\newcommand{\complex}{{\mathbb C}}
\newcommand{\rational}{{\mathbb Q}}

\newcommand{\reals}{{\mathbb R}}

\newcommand{\integers}{{\mathbb Z}}

\newcommand{\cala}{{\mathcal A}}
\newcommand{\calb}{{\mathcal B}}
\newcommand{\calc}{{\mathcal C}}

\newcommand{\calg}{{\mathcal G}}
\newcommand{\calh}{{\mathcal H}}
\newcommand{\cali}{{\mathcal I}}

\newcommand{\calk}{{\mathcal K}}
\newcommand{\call}{{\mathcal L}}
\newcommand{\calm}{{\mathcal M}}
\newcommand{\caln}{{\mathcal N}}

\newcommand{\calq}{{\mathcal Q}}

\newcommand{\cals}{{\mathcal S}}

\newcommand{\calv}{{\mathcal V}}
\newcommand{\calu}{{\mathcal U}}
\newcommand{\calw}{{\mathcal W}}

\newcommand{\Sh}{\operatorname{Sh}}
\newcommand{\Ad}{\operatorname{Ad}}
\newcommand{\End}{\operatorname{End}}
\newcommand{\tr}{\operatorname{tr}}
\newcommand{\orb}{\operatorname{Orb}}
\newcommand{\stab}{\operatorname{Stab}}
\newcommand{\age}{\operatorname{age}}
\newcommand{\frakQ}{{\mathfrak {Q}}}
\newcommand{\frakH}{{\mathfrak {H}}}
\newcommand{\frakG}{{\mathfrak {G}}}
\newcommand{\frakc}{{c}}
\newcommand{\wfrakQ}{\widehat{\mathfrak{Q}}}
\newcommand{\wfrakH}{\widehat{\mathfrak{H}}}

\newcommand{\bi}{\mathbf{i}}
\newcommand{\bj}{\mathbf{j}}

\pagestyle{headings}
\title{Duality theorems for \'etale gerbes on orbifolds}
\author{Xiang Tang}
\thanks{(X. T.) Department of Mathematics, Washington University, St. Louis, MO 63130, USA, xtang@math.wustl.edu.}
\author{Hsian-Hua Tseng}
\thanks{(H.-H. T.) Department of Mathematics, Ohio State University, Columbus, OH 43210, USA, hhtseng@math.ohio-state.edu.}
\begin{document}
\date{\today}
\keywords{duality, gerbe, Hochschild cohomology, orbifold}
\begin{abstract}
Let $G$ be a finite group and $\Y$ a $G$-gerbe over an orbifold
$\B$. A disconnected orbifold $\widehat{\Y}$ and a flat $U(1)$-gerbe
$c$ on $\widehat{\Y}$ is canonically constructed from $\Y$.
Motivated by a proposal in physics, we study a mathematical duality
between the geometry of the $G$-gerbe $\Y$ and the geometry of
$\widehat{\Y}$ {\em twisted by} $c$. We prove several results verifying this duality in the contexts of non-commutative geometry and
symplectic topology. In particular, we prove that the category of
sheaves on $\Y$ is equivalent to the category of $c$-twisted sheaves
on $\widehat{\Y}$. When $\Y$ is symplectic, we show, by a
combination of techniques from non-commutative geometry and
symplectic topology, that the Chen-Ruan orbifold cohomology of $\Y$
is isomorphic to the $c$-twisted orbifold cohomology of
$\widehat{\Y}$ as graded algebras.
\end{abstract}

\maketitle \tableofcontents

\section{Introduction}
\subsection{Background}
The notion of an orbifold\footnote{Throughout this paper we consider
orbifolds which are not necessarily reduced.} was first introduced
in \cite{sa} under the name ``$V$-manifold,'' and was introduced in
algebraic geometry in \cite{deligne-mumford},  and is now called a
Deligne-Mumford stack. The term ``orbifold" was coined by Thurston
\cite{th} during his study of 3-dimensional manifolds. Orbifolds are
geometric objects that are locally modeled on quotients of manifolds
by actions of finite groups. Introductory accounts about orbifolds
can be found in \cite{Adem_Leida_Ruan}, \cite{lm-b}, and \cite{mo-pr}.

Besides being interesting in its own right, the theory of orbifolds
can be applied in numerous areas, such as the study of moduli
problems and quotient singularities. Moreover, there has been an
increase of activities in the study of the stringy geometry of
orbifolds. See \cite{Adem_Leida_Ruan}, \cite{ruan_0011149}, and \cite{ruan_0201123} for expository accounts.

In this paper, we study a special kind of orbifolds called {\em
gerbes}. Let $G$ be a finite group and $BG=[pt/G]$ the
classifying orbifold of $G$. Roughly speaking, one can think of a
$G$-gerbe over an orbifold $\B$ as a $BG$-bundle over
$\B$. Then in order to define a $G$-gerbe $\Y$ over $\B$, one starts
with an open cover $\{U_i\}$ of $\B$ and specifies the following
data:
\begin{equation}\label{intro:gerbe_data}
\begin{split}
&\varphi_{ij}\in Aut(G) \quad \text{ for each double overlap } U_{ij}:=U_i\cap U_j, \text{ and }\\
&g_{ijk}\in G\quad \text{ for each triple overlap } U_{ijk}:=U_i\cap
U_j\cap U_k,
\end{split}
\end{equation}
so that the following constraints are satisfied:
\begin{equation}\label{intro:gerbe_constraints}
\begin{split}
&\varphi_{jk}\circ\varphi_{ij}=\text{Ad}_{g_{ijk}}\circ \varphi_{ik}, \quad \text{ on } U_{ijk},\\
&g_{jkl}g_{ijl}=\varphi_{kl}(g_{ijk})g_{ikl},\quad \text{ on }
U_i\cap U_j\cap U_k\cap U_l.
\end{split}
\end{equation}
Here, $\text{Ad}_{g}: G\to G$ denotes the map of conjugation by $g$.
The data in (\ref{intro:gerbe_data}) are then used to glue $U_i\times BG$
together to form a $G$-gerbe $\Y$ together with an associated map
$\Y\to \B$.

We easily see that $BG$ is the unique $G$-gerbe over a point. Gerbes
arise naturally from ineffective group actions. For example, let $M$
be a manifold and $H$ a compact group that acts on $M$ with
finite stabilizers at every point. The quotient space $[M/H]$ is an
orbifold. Suppose that $G$ is a finite normal subgroup of $H$ such
that the induced action of $G$ on $M$ is trivial. Then there is an
induced action of the quotient group $Q:=H/G$ on $M$. The orbifold
$[M/H]$ defines a $G$-gerbe
$[M/H]\to [M/Q]$
over the orbifold $[M/Q]$. In general, gerbes play an important role
in the structure theory of orbifolds. For example, given an orbifold
$\X$ there is a finite group $G$ and a {\em reduced} orbifold $\X'$
such that $\X$ is a $G$-gerbe over $\X'$. See \cite[Proposition
4.6]{bn}. Introductory accounts about gerbes can be found in
 \cite{EHKV}, \cite{gi}, and \cite{la-st-xu}.

The purpose of this paper is to study the geometry and topology of
$G$-gerbes. Our study is motivated and inspired by results in the
physics paper \cite{hel-hen-pan-sh}. Given a $G$-gerbe $\Y\to \B$,
the authors of \cite{hel-hen-pan-sh} construct a disconnected space
$\widehat{\Y}$ with a map $\widehat{\Y}\to \B$ and a flat
$U(1)$-gerbe $c$ on $\widehat{\Y}$. This construction is reviewed in
Sec.  \ref{intro:dual_construction} below. The main point of
\cite{hel-hen-pan-sh} is the conjecture which asserts that the
conformal field theories on the $G$-gerbe $\Y$ are equivalent to the
corresponding conformal field theories on $\widehat{\Y}$ {\em
twisted by the B-field} $c$. This conjecture 
suggests the existence of a certain duality between the
$G$-gerbe $\Y$ and the pair $(\widehat{\Y},c)$. Our viewpoint toward
this conjecture is the following claim:\\

{\bf ($\star$) {\em The geometry/topology of the $G$-gerbe $\Y$ is equivalent to the geometry/topology of $\widehat{\Y}$ twisted by $c$.}}\\

The claim ($\star$) reveals a deep and highly nontrivial connection
between different geometric spaces. Let us look at the simplest $G$-gerbe, namely, a $G$-gerbe over a point, (i.e.,
$\B=pt$ and $\Y=[pt/G]=BG$). The dual orbifold $\widehat{\Y}$
is the discrete set $\widehat{G}$, the space of isomorphism classes
of irreducible unitary $G$-representations, with a trivial $U(1)$-gerbe $c$ on
$\widehat{\Y}$. In this case, the claim ($\star$) states
that the geometry/topology of the classifying space $BG$ is
equivalent to the geometry/topology of the discrete set
$\widehat{G}$. Such a relationship is not clear at all at
the level of spaces. For example, when $G=\integers_2$, there does
not seem to be any obvious geometric connection between the space
$B\mathbb{Z}_2$ (interpreted either as an orbifold
$[pt/\mathbb{Z}_2]$ or as the space $\mathbb{RP}^\infty$) and the
space $\widehat{\integers}_2$. In general, to our best knowledge
there is no known geometric relation at the level of spaces between
a $G$-gerbe $\Y$ and the orbifold $\widehat{\Y}$ with the
$U(1)$-gerbe $c$.

One observes that a natural place where both $BG$ and $\widehat{G}$
appear is representation theory, since $BG$ encodes information
about principal $G$-bundles, and $\widehat{G}$ is defined to be the
set of isomorphism classes of irreducible $G$-representations.
Noncommutative geometry is a powerful modern approach to
representation theory. Thus, it makes sense to consider possible
relations between $BG$ and $\widehat{G}$ in noncommutative geometry.
In noncommutative geometry, $BG$ is represented by the group algebra
$\complex G$, and $\widehat{G}$ is represented by the commutative
algebra, $C(\widehat{G})$, of functions on $\widehat{G}$. By a
classical result, the group algebra $\complex G$ is Morita
equivalent to the algebra $C(\widehat{G})$. We can interpret this as
saying that the two spaces $BG$ and $\widehat{G}$ are ``equivalent"
from the viewpoint of noncommutative geometry. This observation
strongly suggests that noncommutative geometry naturally relates the
two geometries of $\Y$ and $(\widehat{\Y}, c)$, which appear to be
very different in the classical geometric/topological viewpoints.

Indeed, the main theme of this paper is  using tools from
noncommutative geometry to build a bridge connecting the $G$-gerbe
$\Y$ and the dual $\widehat{\Y}$ with the $U(1)$-gerbe $c$. Such a
bridge turns out to be very useful. We will prove a number of
results that show ($\star$) is true in the contexts of both
noncommutative geometry and symplectic topology, and in particular,
in Gromov-Witten theory. 
\subsection{The dual of an \'etale gerbe}\label{intro:dual_construction}
Given a $G$-gerbe over an orbifold $\B$,  $\Y\to\B,$ we describe the dual of the gerbe $\Y$
following \cite{hel-hen-pan-sh}. Consider the group of outer
automorphisms of $G$, $Out(G)=Aut(G)/Inn(G),$ i.e., the quotient of the
group $Aut(G)$ of automorphisms of $G$ by the normal subgroup
$Inn(G)$ of inner automorphisms. Associated to $\Y\to
\B$, there is a naturally defined $Out(G)$-bundle,
$\overline{\Y}\to \B,$ called the {\em band} of  $\Y$. In the
description (\ref{intro:gerbe_data}) of the $G$-gerbe $\Y$, let
$\phi_{ij}\in Out(G)$ be the image of $\varphi_{ij}$ under the
quotient map $Aut(G)\to Out(G)$. By (\ref{intro:gerbe_constraints})
we have $\phi_{jk}\circ\phi_{ij}=\phi_{ik}$ on $U_{ijk}$. Thus, the
collection $\{\phi_{ij}\}$ defines an $Out(G)$-bundle
$\overline{\Y}$ over $\B$.

We view
$\widehat{G}$ as a disjoint union of points, whose cardinality is equal to the number of conjugacy classes of $G$. A right action of
$Out(G)$ on $\widehat{G}$ is defined as follows. Given an
irreducible $G$-representation $\rho: G\to End(V_\rho)$ and $\phi\in
Aut(G)$, the composite map
$\rho\circ \phi: G\to End(V_\rho)$
is an irreducible representation of $G$. The action of $\phi$ on the
class $[\rho]$ is defined to be $[\rho\circ\phi]$. This defines a
right action of $Out(G)$ on $\widehat{G}$ because inner
automorphisms preserve isomorphism classes of irreducible
representations of $G$. Note that the isomorphism class of the trivial representation of $G$ is fixed by this
$Out(G)$ action.

Following \cite{hel-hen-pan-sh}, we define the dual space to be the
associated bundle
\begin{equation}\label{defn:dual_space}
\widehat{\Y}:=[(\overline{\Y} \times\widehat{G})/Out(G)].
\end{equation}
There is a natural map, $\widehat{\Y}\to \B$, induced from the map
$\overline{\Y}\to \B$. It is easy to see that $\widehat{G}$
decomposes into a union of $Out(G)$ orbits, and $\widehat{\Y}$
decomposes into a union of components with respect to the $Out(G)$
orbits.

Next, we define a $U(1)$-gerbe, $c$, on $\widehat{\Y}$. For each
isomorphism class $[\rho]\in \widehat{G}$, we fix a representation
$\rho: G\to End(V_\rho)$. To a point $(x, [\rho])\in
\overline{\Y}\times \widehat{G}$, we assign the vector space $V_\rho$. This defines a
family of vector spaces over $\widehat{\Y}$, which is in general
{\em not} a vector bundle over $\widehat{\Y}$. The obstruction to
forming a vector bundle over $\widehat{\Y}$ with the fiber over $(x,
[\rho])$ being $V_\rho$ is a $U(1)$-gerbe, $c^{-1}$, on
$\widehat{\Y}$. The $U(1)$-gerbe $c$ over $\widehat{\Y}$ is obtained
from $c^{-1}$ by applying the group homomorphism $U(1)\to U(1),
a\mapsto a^{-1}$. As observed in \cite{hel-hen-pan-sh}, the
$U(1)$-gerbe $c$ is flat, and the isomorphism class of $c$ is a {\em
torsion} class in the cohomology $H^2(\widehat{\Y},U(1))$.

\subsubsection*{Remark on terminology}
It is well-known that the isomorphism classes of flat $U(1)$-gerbes
over a space $\X$ are in bijective correspondence with the torsion
classes of the cohomology group $H^2(\X, U(1))$. If we fix a
sufficiently fine open cover, $\{V_i\}$, of $\X$, then the flat
$U(1)$-gerbes on $\X$ ({\em not} their isomorphism classes) are in
bijective correspondence with the $U(1)$-valued \v{C}ech
$2$-cocycles with respect to the cover $\{V_i\}$, which represent
the torsion classes in $H^2(\X, U(1))$.

In our study of the $G$-gerbe $\Y\to \B$ and the dual pair
$(\widehat{\Y}, c)$, we often choose a presentation of the orbifold
$\B$ arising from a sufficiently fine open cover of $\B$. Such a
presentation yields an open cover of $\widehat{\Y}$, and we often
represent the $U(1)$-gerbe $c$ by a $U(1)$-valued \v{C}ech
$2$-cocycle with respect to this cover. In view of the
aforementioned correspondence, in what follows, we abuse the
notation and let $c$ denote either the $U(1)$-gerbe on
$\widehat{\Y}$ or the $U(1)$-valued \v{C}ech $2$-cocycle on
$\widehat{\Y}$. We will call $c$ a $U(1)$-gerbe if a presentation of
$\widehat{\Y}$ is not chosen and call $c$ a $U(1)$-valued
$2$-cocycle if a presentation of $\widehat{\Y}$ is chosen.

\subsection{Gerbes arising from group extensions}
The simplest examples of $G$-gerbes other than $BG$ itself are of
the form $BH\to BQ$, where $H$ and $Q$ are finite groups, and $BH$
(respectively, $BQ$) is the classifying orbifold of $H$ (respectively,
$Q$). Such an example arises from an extension of a finite group $Q$
by $G$, namely, an exact sequence
$1\rightarrow G\rightarrow H \rightarrow Q\rightarrow 1.$
Our study of the gerbe $BH\to BQ$ uses knowledge about finite group
extensions substantially.

By construction, the dual space of $BH$ is given by
$\widehat{BH}=[\widehat{G}/Q]$. Here, the (right) $Q$-action on
$\widehat{G}$ is defined by a group homomorphism\footnote{The homomorphism $Q\to Out(G)$ defines an $Out(G)$-bundle over $BQ$,
which is the band of the $G$-gerbe $BH\to BQ$.} $Q\to Out(G)$, which
is constructed as follows. Choose\footnote{It is easy to see that
the resulting homomorphism, $Q\to Out(G)$, does not depend on the
choice of such a section.} a section, $s: Q\to H$, of the group
extension such that $s(1)=1$. Given $q\in Q$, we define an
automorphism of $G$ by $G\ni g\mapsto \text{Ad}_{s(q)}(g).$
The homomorphism $Q\to Out(G)$ sends $q$ to the image of the above
automorphism of $G$. The $U(1)$-gerbe on $\widehat{BH}$ can be
represented by a function $c: \widehat{G}\times Q\times Q\to U(1)$.
See Proposition \ref{prop:u(1)-cocycle} for more details.

One object naturally associated with the group $H$ is its
group algebra $\com H$. Given the $Q$-action on $\widehat{G}$ and
the function $c$, one can define the {\em twisted groupoid algebra}
$C(\widehat{G}\rtimes Q, c).$
The construction, a special case of a construction in
\cite{tu-la-xu}, is explained in Sec.  \ref{sec:mackey}. Our first result for the gerbe $BH\to BQ$ is:
\begin{theorem}[=Theorem \ref{thm:local-mackey}]\label{Intro:thm:local-mackey}
The group algebra $\complex H$ is Morita equivalent to the twisted
groupoid algebra $C(\widehat{G}\rtimes Q, c)$.
\end{theorem}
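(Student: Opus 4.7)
The plan is to realize both sides as endomorphism rings of a common progenerator bimodule, combining classical Clifford theory for the extension $1\to G\to H\to Q\to 1$ with the Peter--Weyl Morita equivalence between $\com G$ and $C(\widehat{G})$.

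First I would present $\com H$ as a twisted crossed product of $\com G$ by $Q$. A set-theoretic section $s\colon Q\to H$ with $s(1)=1$ yields a vector-space identification $\com H\cong \com G\otimes \com[Q]$ whose multiplication is determined by the automorphisms $\Ad_{s(q)}$ of $G$ and the $G$-valued $2$-cocycle $\sigma\colon Q\times Q\to G$ defined by $s(q_1)s(q_2)=\sigma(q_1,q_2)\,s(q_1 q_2)$. Applying Peter--Weyl next, $\com G\cong \bigoplus_{[\rho]\in\widehat{G}} \End(V_\rho)$, the bimodule $W:=\bigoplus_{[\rho]} V_\rho$ already implements a $(\com G,\,C(\widehat{G}))$-Morita equivalence.

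For each $q\in Q$ and each $[\rho]$, Schur's lemma provides an intertwiner $T_{\rho,q}\colon V_{[\rho]\cdot q}\to V_\rho$ realizing the isomorphism $\rho\circ\Ad_{s(q)}\cong \rho\cdot q$, unique up to a scalar; normalize $T_{\rho,1}=\mathrm{id}$ and fix arbitrary choices otherwise. Using these intertwiners together with the given $\com G$-action, the elements $s(q)\in\com H$ act on $W$, promoting it to a left $\com H$-module. The associativity of this action is controlled by $\sigma$ on the $\com G$-side and by a $U(1)$-valued $2$-cocycle $c'$ on the transformation groupoid $\widehat{G}\rtimes Q$ arising from the scalar ambiguity in the composition of the $T_{\rho,q}$'s. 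A direct computation then identifies the commutant: any $\com H$-linear endomorphism of $W$ must commute with each $\rho(g)$, hence is a scalar on each $V_{[\rho]}$ by Schur, and its compatibility with the $s(q)$-action translates precisely into the twisted convolution product on $\widehat{G}\rtimes Q$ weighted by $c'$. Together with the obvious faithfulness and the fact that $W$ generates every isotypic block, this identifies $\End_{\com H}(W)$ with (the opposite of) $C(\widehat{G}\rtimes Q, c')$ and yields the Morita equivalence.

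The main obstacle will be verifying that the cocycle $c'$ built from the intertwiners coincides with the cocycle $c$ of Section~\ref{intro:dual_construction}, which was defined as the obstruction to assembling the $V_\rho$'s into a global vector bundle on $\widehat{BH}=[\widehat{G}/Q]$. Both cocycles should measure exactly the failure of the family $\{V_\rho\}_{[\rho]\in\widehat{G}}$ to carry a strict $Q$-equivariant structure, and the identification $c'=c$ (up to coboundary) is expected to be obtained by unwinding the definition of $c$ in terms of the section $s$ --- which is precisely the content of the forthcoming Proposition~\ref{prop:u(1)-cocycle}.
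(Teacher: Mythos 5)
Your preliminary reductions (the section $s$, the cocycle $\tau$, the identification $\complex H\cong\oplus_{[\rho]}\End(V_\rho)\rtimes_{T,\tau}Q$, and the $U(1)$-cocycle produced by the scalar ambiguity of the intertwiners) coincide with the paper's Propositions \ref{prop:u(1)-cocycle} and \ref{prop:matrix-coeff-algebra}, and your closing worry is moot: in Theorem \ref{thm:local-mackey} the cocycle $c$ \emph{is} by definition the one built from the intertwiners, so there is nothing to reconcile. The genuine gap is the choice of bimodule. The space $W=\oplus_{[\rho]}V_\rho$ is an equivalence bimodule only for the pair $(\complex G, C(\widehat{G}))$; it is too small to implement a Morita equivalence between $\complex H$ and $C(\widehat{G}\rtimes Q,c)$, and the commutant identification you propose fails. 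By Schur's lemma, any operator on $W$ commuting with all $\rho(g)$ is already a tuple of scalars $(\lambda_{[\rho]})$, i.e.\ lies in the commutative algebra $C(\widehat{G})$ of dimension $|\widehat{G}|$; requiring compatibility with the $s(q)$-action can only cut this algebra down (to functions constant on $Q$-orbits, subject to cocycle conditions), never enlarge it to the generally noncommutative algebra $C(\widehat{G}\rtimes Q,c)$ of dimension $|\widehat{G}|\,|Q|$. The degenerate case $G=\{1\}$ makes this stark: then $W=\complex$, $\End_{\complex H}(W)=\complex$, while $C(\widehat{G}\rtimes Q,c)=\complex Q$. There is also an internal inconsistency: when $c$ is nontrivial on some stabilizer $\stab([\rho])$, the operators $\rho(g)$ and $T^{[\rho]}_q$ give only a \emph{projective} action of $H$ on $W$, so $W$ carries no honest left $\complex H$-module structure at all --- you notice the scalar ambiguity but still treat $W$ as an $\complex H$-module.

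The repair is to induce before taking commutants: replace $W$ by $\complex H\otimes_{\complex G}W\cong\bigl(\oplus_{[\rho]}V_\rho\bigr)\otimes\complex Q$, on which $\complex H$ does act honestly. Its restriction to $G$ contains each $V_{\rho'}$ with multiplicity $|Q|$ (one copy for each $q$ with $q([\rho])=[\rho']$), so a Frobenius-reciprocity count gives $\dim\End_{\complex H}\bigl(\complex H\otimes_{\complex G}W\bigr)=|\widehat{G}|\,|Q|$, the correct size, and the intertwiners identify this endomorphism algebra with the twisted groupoid algebra $C(\widehat{G}\rtimes Q,c)$ (up to opposite, depending on conventions); one must also check that the induced module is a progenerator, which holds because every irreducible $H$-representation restricts to contain some $V_\rho$. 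This is exactly what the paper's proof does in explicit form: its bimodules $M=\oplus_{[\rho]}V_\rho\times Q$ and $N=\oplus_{[\rho]}V_\rho^*\times Q$ are this induced module and its dual, with concrete $A$-$B$ and $B$-$A$ actions and explicit isomorphisms $\Xi\colon M\otimes_B N\to A$ and $\Theta\colon N\otimes_A M\to B$ replacing the commutant argument.
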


Theorem \ref{Intro:thm:local-mackey} can be interpreted as {\em Mackey's machine} in the
case of finite group extensions, which is well-studied. However, our
formulation using the language of Morita equivalence seems to be
new. The generalization of this theorem serves as a crucial step in
our study of $G$-gerbes. We prove Theorem \ref{Intro:thm:local-mackey} by explicit
constructions of bimodules that realize the Morita equivalence.
These constructions also allow us to analyze the structure of the
induced isomorphism $I: Z(\complex H)\to Z(C(\widehat{G}\rtimes Q,
c))$ between centers. See Proposition\ \ref{prop:conjugacy}.
\subsection{Non-commutative geometry}
Theorem \ref{Intro:thm:local-mackey} is best understood in the
context of non-commutative geometry. According to the viewpoint of
non-commutative differential geometry \`a la A. Connes, the geometry
of an orbifold is encoded in the Morita equivalence class of the
groupoid algebras of its groupoid presentations\footnote{It is known
that groupoid algebras arising from different groupoid presentations
of the same orbifold are Morita equivalent.}, and the geometry of an
orbifold with a $U(1)$-gerbe is represented by the Morita
equivalence class of the associated twisted groupoid algebras.
Therefore, Theorem \ref{Intro:thm:local-mackey} can be interpreted
as saying that the non-commutative geometry of $BH$ is equivalent to
the non-commutative geometry of the dual $(\widehat{BH}, c)$. This
provides an example of the claim {\bf ($\star$)}.

One of the main results of this paper is a generalization of Theorem
\ref{Intro:thm:local-mackey}. Let $\Y\to
\B$ be a $G$-gerbe over an orbifold $\B$. Denote by
$\widehat{\Y}$ the dual space of the $G$-gerbe and by $c$ the
$U(1)$-gerbe on $\widehat{\Y}$. As explained in Sec. 
\ref{subsec:general}, we pick groupoid presentations $\frakH$
(respectively, $\frakQ$) of $\Y$ (respectively, $\B$) so that the
dual space $\widehat{\Y}$ is represented by a transformation
groupoid $\widehat{G}\rtimes \frakQ.$ 
The $U(1)$-gerbe on $\widehat{\Y}$ can be presented by a locally
constant groupoid 2-cocycle $c$, see Propositions
\ref{prop:u(1)-cocycle-gpd}-\ref{prop:local-const}. The Mackey machine provides a
beautiful bridge between $\Y$ and $(\widehat{\Y},c)$ in the context
of noncommutative geometry. Let $\cala$ be an
$\frakH$-sheaf\footnote{We refer the readers to Sec. \ref{subsec:algebra} for the definition of an $\frakH$-sheaf.} of unital algebras, and let $\widetilde{\cala}$ be the
corresponding $\widehat{G}\rtimes \frakQ$-sheaf defined by $\cala$.
We consider the cross-product algebra $\cala\rtimes \frakH$
associated with the groupoid $\frakH$ and the {\em twisted}
cross-product algebra $\widetilde{\cala}\rtimes_c(\widehat{G}\rtimes
\frakQ)$ associated with $\widehat{G}\rtimes \frakQ$ and $c$, as $c$
is locally constant. We prove the following:

\begin{theorem}[=Theorem \ref{thm:global-mackey}]\label{Intro:thm:global-mackey}
The crossed product algebra $\cala\rtimes \frakH$  is Morita equivalent to the twisted crossed product algebra $\widetilde{\cala}\rtimes_c(\widehat{G}\rtimes \frakQ)$.
\end{theorem}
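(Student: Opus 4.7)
The plan is to bootstrap Theorem \ref{Intro:thm:local-mackey} from the local (single extension $1\to G\to H\to Q\to 1$) setting to the global setting by constructing an explicit Morita bimodule that is assembled from the local Mackey bimodules and glued along the groupoid cocycle data of $\frakH$. Concretely, I would choose the presentation so that $\frakH$ arises from an atlas of $\B$ on which the $G$-gerbe is locally of the form $BH_i \to BQ_i$ for finite groups fitting into an extension $1\to G\to H_i\to Q_i\to 1$, and so that $\frakQ$ is the induced presentation of $\B$. On each such chart, Theorem \ref{Intro:thm:local-mackey} produces a $(\complex H_i,\,C(\widehat{G}\rtimes Q_i, c_i))$-bimodule $P_i$ realizing a Morita equivalence; after tensoring with the local sections of $\cala$ one obtains an $(\cala\rtimes\frakH|_{U_i},\,\widetilde{\cala}\rtimes_{c_i}(\widehat{G}\rtimes\frakQ)|_{U_i})$-bimodule $P_i^{\cala}$.

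Next I would glue the $P_i^{\cala}$ into a global bimodule $P^{\cala}$. On an overlap $U_{ij}$ one has two prescriptions for the bimodule, differing by the transition automorphisms $\varphi_{ij}\in Aut(G)$ and the triple-overlap cochain $g_{ijk}\in G$ of the gerbe as in \eqref{intro:gerbe_data}. The content of Propositions \ref{prop:u(1)-cocycle-gpd}--\ref{prop:local-const} (applied in the explicit Mackey construction behind Theorem \ref{Intro:thm:local-mackey}) is exactly that the discrepancy between the two prescriptions, when transported to the $\widehat{G}$-side via the characters of $G$, is governed precisely by the $U(1)$-valued 2-cocycle $c$. Hence the local Mackey bimodules \emph{do not} glue to a bimodule for the untwisted crossed product on the right, but do glue to a bimodule for the $c$-twisted crossed product $\widetilde{\cala}\rtimes_c(\widehat{G}\rtimes\frakQ)$; the twist $c$ precisely absorbs the obstruction.

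With the global bimodule $P^{\cala}$ in hand, Morita equivalence is verified by checking the two standard conditions locally: the left $\cala\rtimes\frakH$-action and the right $\widetilde{\cala}\rtimes_c(\widehat{G}\rtimes\frakQ)$-action each make $P^{\cala}$ into a finitely generated projective generator, and the commutants agree. Both assertions are stalkwise consequences of the corresponding assertions in Theorem \ref{Intro:thm:local-mackey}, since the $\frakH$- and $\widehat{G}\rtimes\frakQ$-structures respect the covering and $\cala$ is a sheaf of unital algebras.

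The hard part will be the second step: writing the gluing cocycle on $P_i^{\cala}$ in a form that manifestly equals the cocycle $c$ defining the twisted crossed product. This requires tracking how an inner automorphism $\Ad_{g_{ijk}}$ of $G$ acts on a chosen representative $\rho:G\to \End(V_\rho)$ for each $[\rho]\in\widehat{G}$, producing a scalar discrepancy (via Schur's lemma) that is exactly $c_{ijk}([\rho])$. Once this identification is made the rest of the argument is formal. A minor technical point is to check that the $\widehat{G}\rtimes\frakQ$-sheaf structure on $\widetilde{\cala}$ is the one compatible with this gluing, which follows from the definition of $\widetilde{\cala}$ from $\cala$ via the Mackey correspondence on each chart.
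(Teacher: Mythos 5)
Your strategy records a plausible local-to-global plan, but the step you yourself flag as ``the hard part'' is precisely the content a proof must supply, and it is not routine: you would need coherent choices of the representatives $\rho$ and of the intertwiners $T^{[\rho]}_q$ across charts, bimodule isomorphisms on double overlaps intertwining \emph{both} module structures, a verification that the triple-overlap scalar discrepancy is exactly the cocycle $c$ of Proposition \ref{prop:u(1)-cocycle-gpd}, and then a descent argument showing that Morita equivalence survives this twisted gluing. Moreover, two of the steps you treat as formal are not. First, the local model is slightly off: on a chart the gerbe is $[U_i/H_i]\to[U_i/Q_i]$ with $H_i$ acting nontrivially on $U_i$, so the local left algebra is the crossed product $\cala|_{U_i}\rtimes H_i$, not $\complex H_i$ ``tensored with local sections of $\cala$''; the correct local input is the global-quotient case (Proposition \ref{prop:global-quotient}/Theorem \ref{thm:main-global-quot}), whose bimodules already build in the $H_i$-action on $\cala$. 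Second, the algebras $\cala\rtimes\frakH$ and $\widetilde{\cala}\rtimes_c(\widehat{G}\rtimes\frakQ)$ are only quasi-unital bornological algebras, so the ``finitely generated projective generator and commutants agree'' criterion is not available verbatim and cannot be checked stalkwise; the paper establishes the equivalence by exhibiting explicit bimodule isomorphisms $\Xi$ and $\Theta$ within the framework of \cite[Appendix A]{pptt} (cf.\ Remark \ref{rmk:morita-quasi-unital}).

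For contrast, the paper's proof avoids the \v{C}ech gluing altogether: using \cite{la-st-xu} and \cite{mo-pr-ind} it refines the presentation so that $\frakQ$ and its nerve are disjoint unions of contractible opens and the principal $G$-bundle $\frakH\to\frakQ$ is trivial; a single global section $\sigma:\frakQ\to\frakH$ then produces one global cochain $\tau$ and one locally constant cocycle $c$ on $\widehat{G}\rtimes\frakQ$ (Propositions \ref{prop:u(1)-cocycle-gpd}--\ref{prop:local-const}), and the Morita equivalence is realized by the single pair of globally defined bimodules $\calm=\Gamma_{\text{cpt}}\big(s^*(\cala\otimes\oplus_{[\rho]}V_\rho)\big)$ and $\caln=\Gamma_{\text{cpt}}\big(s^*(\cala\otimes\oplus_{[\rho]}V_\rho^*)\big)$ with explicit maps $\Xi,\Theta$ generalizing the local formulas of Theorem \ref{thm:local-mackey}. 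If you wish to keep your chart-by-chart approach, you must actually carry out the twisted descent of Morita bimodules and the identification of the gluing scalar with $c$; as written, the proposal leaves its only nontrivial step unproved.
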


When $\cala$ is the sheaf $\calc^\infty$ of smooth functions on
$\frakH_0$ (the unit space of the groupoid $\frakH\rightrightarrows \frakH_0$), Theorem \ref{Intro:thm:global-mackey} shows that the
groupoid algebra $C^\infty_c(\frakH)$ is Morita equivalent to the
$c$-twisted groupoid algebra
$\calc^\infty\rtimes_c(\widehat{G}\rtimes \frakQ)$.

Consider the symplectic case, namely, the base $\B$ is assumed to be
symplectic. Then both the gerbe $\Y$ and its dual $\widehat{\Y}$ can
be equipped with symplectic structures pulled back from the one on
$\B$. In this case, $\frakH_0$ is equipped with an $\frakH$-invariant
symplectic form, and an $\frakH$-sheaf $\cala^{((\hbar))}$ of
deformation quantization on $\frakH_0$ is constructed in \cite{ta}
via Fedosov's construction. The crossed product algebra
$\cala^{((\hbar))}\rtimes \frakH$ is a deformation quantization of
the groupoid algebra $C_c^\infty(\frakH)$, and
$\widetilde{\cala}^{((\hbar))}\rtimes_c(\widehat{G}\rtimes \frakQ)$
is a deformation quantization of the $c$-twisted groupoid algebra.
Because our cocycle $c$ is locally constant, the  algebra
$\widetilde{\cala}^{((\hbar))}\rtimes_c(\widehat{G}\rtimes \frakQ)$
can be constructed by following exactly the same method described in \cite{ta} (and recalled in Sec. 
\ref{subsec:gpd-algebra}). We refer  to \cite{bgnt} for a
general discussion of deformation quantizations of gerbes. Theorem
\ref{Intro:thm:global-mackey} shows that
\begin{corollary}\label{Intro:cor_global_mackey_quantized}
The two deformation quantizations $\cala^{((\hbar))}\rtimes \frakH$
and $\widetilde{\cala}^{((\hbar))}\rtimes_c(\widehat{G}\rtimes
\frakQ)$ are Morita equivalent.
\end{corollary}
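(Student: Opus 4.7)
The plan is to derive Corollary \ref{Intro:cor_global_mackey_quantized} as a direct consequence of Theorem \ref{Intro:thm:global-mackey} applied to the Fedosov deformation quantization sheaf $\cala^{((\hbar))}$. No new gluing or cocycle computation should be required; the only substantive work is to check that the Morita equivalence produced by Theorem \ref{Intro:thm:global-mackey} is $\hbar$-linear, rather than only $\complex$-linear.

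First I would recall that the Fedosov construction of \cite{ta}, applied to the $\frakH$-invariant symplectic form on $\frakH_0$, yields a $\frakH$-sheaf of unital $\complex((\hbar))$-algebras on $\frakH_0$. In particular $\cala^{((\hbar))}$ satisfies the hypotheses of Theorem \ref{Intro:thm:global-mackey}, and the associated $\widehat{G}\rtimes\frakQ$-sheaf $\widetilde{\cala}^{((\hbar))}$ is obtained by the pullback construction of Section \ref{subsec:general}. Since the $2$-cocycle $c$ is locally constant and $U(1)$-valued (Proposition \ref{prop:local-const}), it extends trivially to a locally constant cocycle valued in $\complex((\hbar))^\times$, so the twisted crossed product $\widetilde{\cala}^{((\hbar))}\rtimes_c(\widehat{G}\rtimes\frakQ)$ is defined by the same recipe used in Section \ref{subsec:gpd-algebra} to construct $\cala^{((\hbar))}\rtimes\frakH$, and provides a deformation quantization of $\calc^\infty\rtimes_c(\widehat{G}\rtimes\frakQ)$.

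Plugging $\cala=\cala^{((\hbar))}$ into Theorem \ref{Intro:thm:global-mackey} then produces an $(\cala^{((\hbar))}\rtimes\frakH, \widetilde{\cala}^{((\hbar))}\rtimes_c(\widehat{G}\rtimes\frakQ))$-bimodule, together with an inverse, implementing the Morita equivalence at the level of $\complex$-algebras. The main obstacle I anticipate is to verify that these bimodules are genuine Morita bimodules over $\complex((\hbar))$-algebras, i.e.\ $\hbar$-adically complete rather than merely defined modulo $\hbar$. However, by construction the bimodule of Theorem \ref{Intro:thm:global-mackey} is locally the Mackey-type bimodule of Theorem \ref{Intro:thm:local-mackey} tensored over $\complex$ with $\cala^{((\hbar))}$, hence locally finitely generated and projective over $\cala^{((\hbar))}$; this makes $\hbar$-adic completeness automatic and ensures that tensoring against the inverse bimodule gives the identity bimodule in the $\complex((\hbar))$-linear sense. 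The Morita equivalence of Theorem \ref{Intro:thm:global-mackey} thus upgrades to a Morita equivalence of the two formal deformation quantizations, which is exactly the content of the corollary.
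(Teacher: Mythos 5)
Your proposal is correct and is essentially the paper's own argument: the corollary is obtained simply by applying Theorem \ref{thm:global-mackey} to the Fedosov $\frakH$-sheaf $\cala^{((\hbar))}$ of deformation quantization, the locally constant cocycle $c$ guaranteeing that the twisted crossed product $\widetilde{\cala}^{((\hbar))}\rtimes_c(\widehat{G}\rtimes\frakQ)$ is well-defined. Your additional concern about $\hbar$-adic completeness is harmless but not really needed, since all algebras and the Morita bimodules of Theorem \ref{thm:global-mackey} are defined over the field $\complex((\hbar))$ and the bimodule constructions are manifestly $\complex((\hbar))$-linear.
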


We can interpret Theorem \ref{Intro:thm:global-mackey} as saying
that the non-commutative differential geometry of the gerbe $\Y\to
\B$ is equivalent to the non-commutative differential geometry of
the dual pair $(\widehat{\Y}, c)$, proving the claim {\bf ($\star$)}
in full generality in the context of non-commutative geometry.

Categories of sheaves can be used to provide an algebro-geometric
approach to non-commutative spaces. More precisely, in this
approach, one studies geometric properties of spaces by considering
properties of their categories of sheaves. For the $G$-gerbe $\Y$,
we consider sheaves on $\Y$. For the dual pair $(\widehat{\Y},c)$, we
consider the $c$-twisted sheaves on $\widehat{\Y}$. See
\cite{cal} and \cite{Lieblich} for detailed introductions to
twisted sheaves.

\begin{theorem}[=Theorem \ref{equivalence3_general}]\label{thm:category}
The abelian category of sheaves on $\Y$ is equivalent to
the abelian category of $c$-twisted sheaves on
$\widehat{\Y}$.
\end{theorem}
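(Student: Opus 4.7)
The plan is to deduce this statement as a direct consequence of Theorem~\ref{Intro:thm:global-mackey} applied with $\cala$ equal to the structure sheaf. First, I fix groupoid presentations $\frakH \rightrightarrows \frakH_0$ of the $G$-gerbe $\Y$ and the associated transformation groupoid $\widehat{G}\rtimes \frakQ$ presenting $\widehat{\Y}$, together with the locally constant $U(1)$-valued $2$-cocycle $c$ representing the dual gerbe, all as constructed in Section~\ref{subsec:general}.

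Second, I translate both sides of the claimed equivalence into module categories over sheaves of algebras. By the standard dictionary between equivariant sheaves and sheaves on stacks, a sheaf on the orbifold $\Y$ is the same as an $\frakH$-equivariant sheaf on $\frakH_0$, which in turn is a sheaf of modules over the crossed-product sheaf of algebras $\sO_{\frakH_0}\rtimes \frakH$, where $\sO_{\frakH_0}$ is the structure sheaf. Similarly, a $c$-twisted sheaf on $\widehat{\Y}$ (in the sense of \cite{cal}, \cite{Lieblich}) corresponds, under the chosen presentation, to a $c$-twisted $(\widehat{G}\rtimes\frakQ)$-equivariant sheaf on the object space of $\widehat{G}\rtimes\frakQ$, equivalently a sheaf of modules over the twisted crossed-product sheaf $\widetilde{\sO}\rtimes_c(\widehat{G}\rtimes\frakQ)$, where $\widetilde{\sO}$ denotes the pulled-back structure sheaf.

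Third, I invoke Theorem~\ref{Intro:thm:global-mackey} with $\cala=\sO_{\frakH_0}$. That theorem provides an explicit bimodule realizing a Morita equivalence between $\sO_{\frakH_0}\rtimes \frakH$ and $\widetilde{\sO}\rtimes_c(\widehat{G}\rtimes\frakQ)$. Because the bimodule is built locally out of the decomposition of the regular representation of $G$ into isotypic components indexed by $\widehat{G}$, and then glued along the band of $\Y$, the Morita equivalence is one of sheaves of algebras, not merely of their global sections. Standard Morita theory then yields an equivalence of the abelian categories of module-sheaves, and under the dictionary of the previous step this is exactly the asserted equivalence
$$\mathrm{Sh}(\Y)\;\xrightarrow{\ \sim\ }\;\mathrm{Sh}_c(\widehat{\Y}).$$

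The main obstacle, rather than a deep one, is verifying that the bimodule of Theorem~\ref{Intro:thm:global-mackey} is genuinely a Morita equivalence of \emph{sheaves} of algebras, i.e., that the local Morita equivalences patch across the $Out(G)$-twisting inherent in the band $\overline{\Y}$. Concretely, one must confirm that the $U(1)$-valued cocycle obstructing the local isotypic bimodules from being globally defined coincides with the dual cocycle $c$ introduced in Section~\ref{intro:dual_construction}, so that twisting by $c$ on the $\widehat{\Y}$-side precisely absorbs this obstruction. Once that bookkeeping is matched up, the passage from the sheaf-level Morita equivalence to the equivalence of abelian categories is formal.
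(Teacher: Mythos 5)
There is a genuine gap, and it sits exactly where you try to reduce the statement to Theorem \ref{thm:global-mackey}. First, the translation in your second step is not valid for the categories the theorem is about. The statement concerns sheaves of abelian groups on $\Y$ and $c$-twisted sheaves (of abelian groups) on $\widehat{\Y}$; a sheaf of abelian groups on $\Y$ is an $\frakH$-sheaf on $\frakH_0$, and this is \emph{not} the same as a sheaf of modules over $\sO_{\frakH_0}\rtimes\frakH$ (an arbitrary abelian sheaf carries no module structure over functions at all). Even if you restricted to $\sO$-module sheaves, the passage from $\calg$-sheaves to modules over the convolution algebra recalled in Section \ref{subsec:gpd-algebra} is only an additive functor, not an equivalence, and the Morita equivalence of Theorem \ref{thm:global-mackey} is an equivalence between module categories of the (quasi-unital) algebras of compactly supported sections, which does not by itself yield an equivalence of categories of sheaves. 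So ``standard Morita theory'' cannot be invoked at the point where you invoke it.

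Second, the issue you defer as ``bookkeeping'' --- checking that the obstruction to gluing the local isotypic pieces is exactly the dual cocycle $c$ --- is in fact the mathematical core of the paper's argument, and the paper does not route it through the Morita bimodule at all. The paper proves directly that the sheaf $\calv_G$ on $M\times\widehat{G}$ (fiber $V_\rho$ over $M\times\{[\rho]\}$) carries a $c^{-1}$-twisted equivariant structure (Lemma \ref{lem:Q_str_on_V}), then shows that tensoring with $\calv_G$ is an equivalence between $c$-twisted $\widehat{G}\rtimes\frakQ$-sheaves and those $\widehat{G}\rtimes\frakH$-sheaves whose restriction to each $M\times\{[\rho]\}$ is an ordinary sheaf tensored with $V_\rho$ (Proposition \ref{equivalence2_general}), and finally identifies the latter with all $\frakH$-sheaves on $M$ via the canonical isotypic decomposition $\calw=\bigoplus_{[\rho]}\mathcal{H}om_G(V_\rho,\calw)\otimes V_\rho$ (Lemma \ref{equivalence1_general}); Schur's lemma is what makes the functors fully faithful and inverse to each other. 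Your proposal shares the same underlying idea (isotypic decomposition over $\widehat{G}$, with $\calv_G$ absorbing the twist --- compare Remark \ref{rmk:gerbe-sheaf}), but as written it neither supplies these verifications nor has a correct bridge from the algebra-level Morita equivalence to the sheaf categories, so the conclusion does not follow from what you have proved.
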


In the algebraic context, i.e., when both $\Y$ and $\B$ are
Deligne-Mumford stacks, this theorem is also valid for categories of
(quasi-)coherent sheaves. And Theorem \ref{thm:category} can be
interpreted as saying that the non-commutative algebraic geometry of
the gerbe $\Y\to \B$ is equivalent to the non-commutative algebraic
geometry of the dual $(\widehat{\Y}, c)$, proving the claim {\bf
($\star$)} in full generality in the context of non-commutative
algebraic geometry. This theorem suggests that in algebraic geometry
a suitably defined theory of counting invariants of (semi)stable
sheaves on $\Y$ should be equivalent to an analogous theory of
counting invariants of (semi)stable $c$-twisted sheaves on
$\widehat{\Y}$. See \cite{gt} for results in this direction.

\subsection{Hochschild cohomology}
The Hochschild cohomology of an (associative) algebra is an
important invariant of the algebra that depends only on the Morita
equivalence class of the algebra. It also plays an important role in
non-commutative geometry. See, for example, \cite{c} and
\cite{loday}. Motivated by this, we study the Hochschild cohomology
of the algebras $\cala^{((\hbar))}\rtimes \frakH$ and
$\widetilde{\cala}^{((\hbar))}\rtimes_c(\widehat{G}\rtimes \frakQ)$
in Corollary \ref{Intro:cor_global_mackey_quantized}.


In our joint work \cite{pptt} with M. Pflaum and H. Posthuma, we
found a beautiful connection between Hochschild cohomology and
symplectic topology. Namely, we proved that the Hochschild
cohomology of the deformation quantization of the groupoid algebra
arising from a presentation of a symplectic orbifold $\X$ is
additively isomorphic to the shifted de Rham cohomology
$H^{\bullet-\ell}(I\X)((\hbar))$ (with coefficients in the field
$\complex ((\hbar))$) of the {\em inertia orbifold} $$I\X:=\{(x,
(g))| x\in \X, (g)\subset \text{Iso}(x) \text{ conjugacy class of
the isotropy subgroup }\text{Iso}(x)\}.$$ Here, the grading shift
$\ell$ is given by the codimensions of the embeddings of components
of $I\X$ into $\X$. Applying this result to the algebra
$\cala^{((\hbar))}\rtimes \frakH$ arising from the $G$-gerbe $\Y$,
we obtain
$$H^{\bullet-\ell}(I\Y)((\hbar))\cong HH^\bullet(\cala^{((\hbar))}\rtimes \frakH,{\cala}^{((\hbar))}\rtimes \frakH).$$

In this paper, we generalize the calculation of \cite{pptt} to treat
the Hochschild cohomology of the deformation quantization of a
twisted groupoid algebra. This yields the following:

\begin{theorem}[=Theorem \ref{thm:hoch-coh-twisted-alg}]\label{Intro:thm:hoch-coh-twisted-alg}
The Hochschild cohomology of the algebra
$\widetilde{\cala}^{((\hbar))}\rtimes_c (\widehat{G}\rtimes \frakQ)$
is equal to the {\em $\frakc$-twisted de Rham cohomology}
$H^{\bullet-\ell}(I\widehat{\Y},\frakc)((\hbar))$ of the orbifold
$I\widehat{\Y}$ with the grading shift defined by the codimensions,
$\ell$, of the embeddings of components of $I\widehat{\Y}$ into
$\widehat{\Y}$,
$$
HH^\bullet(\widetilde{\cala}^{((\hbar))}\rtimes_c
(\widehat{G}\rtimes \frakQ), \widetilde{\cala}^{((\hbar))}\rtimes_c
(\widehat{G}\rtimes \frakQ))\cong
H^{\bullet-\ell}(I\widehat{\Y},\frakc)((\hbar)),
$$
where $H^\bullet(I\widehat{\Y}, c)$ is the de Rham cohomology of
$I\widehat{\Y}$ with coefficients in a line bundle ${\mathcal L}_c$,
which is naturally defined (Def. \ref{dfn:L-c}) by the $U(1)$-gerbe $c$.
\end{theorem}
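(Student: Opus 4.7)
The plan is to adapt the Hochschild cohomology computation of \cite{pptt} from the untwisted to the $c$-twisted setting. Recall that the argument of \cite{pptt} proceeds in three stages: (i) a \v{C}ech-theoretic reduction to sufficiently small \'etale charts of the orbifold, (ii) a local Fedosov/Hochschild-Kostant-Rosenberg type computation on each chart which identifies local Hochschild cochains, sector by sector, with differential forms on the fixed-point loci of the isotropy action (equivalently, on pieces of the inertia orbifold), and (iii) a globalization step that assembles the local answers into the shifted de Rham complex of $I\X$. I would carry out the analogous three-step program for the algebra $\widetilde{\cala}^{((\hbar))}\rtimes_c(\widehat{G}\rtimes\frakQ)$, using at every step the local-constancy of $c$ (Proposition \ref{prop:local-const}) together with the explicit description of $c$ as a groupoid cocycle (Proposition \ref{prop:u(1)-cocycle-gpd}).

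For the local step, fix a presentation of $\B$ sufficiently fine that $c$ is represented by scalar constants on each chart $U$. Over such a chart, the twisted local crossed product differs from its untwisted counterpart only by a constant rescaling of each groupoid basis vector. Conjugating the bar complex by the corresponding diagonal operator produces an explicit chain-level quasi-isomorphism between the Hochschild complex of $\widetilde{\cala}^{((\hbar))}\rtimes_c(\widehat{G}\rtimes\frakQ)|_U$ and its untwisted analogue. The local identification of \cite{pptt} therefore applies verbatim and yields, in terms of the isotropy data of $\widehat{G}\rtimes\frakQ$, the local sheaf of differential forms on $I\widehat{\Y}$ together with the codimension shift $\ell$.

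I would then globalize via the \v{C}ech spectral sequence attached to this presheaf of local Hochschild cohomologies on $I\widehat{\Y}$, exactly as in \cite{pptt}. The new ingredient is that the chart-wise trivializations used in the local step fail to agree on overlaps: on each overlap they differ by the transition values of $c$ evaluated along the inertia sectors. By virtue of the $2$-cocycle identity for $c$, these transition scalars satisfy the $1$-cocycle condition when restricted to each sector of $I\widehat{\Y}$, and hence define a flat line bundle that is precisely the $\mathcal{L}_c$ appearing in the statement. The local acyclicity argument of \cite{pptt} then forces the spectral sequence to collapse, with $E_2$-term $H^{\bullet-\ell}(I\widehat{\Y},\frakc)((\hbar))$.

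The main technical obstacle will be the bookkeeping in the globalization step: one has to check that the sector-wise transition scalars genuinely satisfy a $1$-cocycle condition (rather than merely defining a twisted sheaf on $I\widehat{\Y}$), and that the resulting flat line bundle agrees with the one canonically attached to the $U(1)$-gerbe $c$. This should reduce to a direct manipulation of the explicit formula for $c$ in Proposition \ref{prop:u(1)-cocycle-gpd}, exploiting the fact that an isotropy element at a point of $I\widehat{\Y}$ in the sector labeled by $[\rho]\in\widehat{G}$ commutes, in the appropriate groupoid sense, with the chosen lift of the $Out(G)$-stabilizer of $[\rho]$. Granting this identification, combining it with the local computation and the collapsed spectral sequence finishes the proof.
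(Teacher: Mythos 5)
There is a genuine gap in your local step, and it propagates to the globalization. You claim that on a sufficiently small chart, where $c$ is constant, the twisted crossed product ``differs from its untwisted counterpart only by a constant rescaling of each groupoid basis vector,'' so that conjugation by a diagonal operator gives a chain-level quasi-isomorphism with the untwisted Hochschild complex. This is false: local constancy of $c$ (Proposition \ref{prop:local-const}) means $c$ is constant in the manifold directions, but its restriction to the finite isotropy groups of $\widehat{G}\rtimes\frakQ$ can represent a nontrivial class in $H^2(\stab([\rho]),U(1))$ -- this is exactly the Mackey obstruction, and it cannot be removed by a coboundary (a rescaling of basis vectors). The simplest counterexample is $\B=BQ$ with $[c]\neq 0$: the Hochschild cohomology of the twisted group algebra $C(\stab([\rho]),c_{[\rho]})$ has dimension equal to the number of $c$-regular conjugacy classes, strictly smaller than for the untwisted algebra, so no isomorphism of complexes of the kind you describe can exist. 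For the same reason your proposed origin of $\mathcal{L}_c$ -- as a flat line bundle built purely from \v{C}ech transition scalars between chart-wise untwisted trivializations -- cannot be right: the inner local system is already nontrivial ``vertically,'' on a single twisted sector over a single chart, via the character $q_1\mapsto c^{[\rho]}(q_1,q)c^{[\rho]}(q,q_1)^{-1}$ of the centralizer, and it annihilates the non-$c$-regular sectors entirely (cf.\ \cite{ru1}); a bundle glued from locally trivial pieces over $\B$ could never do that.

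The paper's actual proof (Part II of Proposition \ref{prop:quasi-isomorphism-coh}) keeps your overall \v{C}ech-and-localize architecture but replaces your local step by the Tu--Xu central-extension trick: after adjusting $c$ by a coboundary so that it takes values in $\integers/m\integers$, one forms the central extension $\integers/m\integers\rtimes_c Q$ of the local isotropy group, applies the \emph{untwisted} computation of \cite{pptt} to the crossed product by this larger group, and then extracts the weight-$\exp(2\pi\sqrt{-1}/m)$ isotypic component of the $\integers/m\integers$-action on both sides; on the geometric side this component is, by \cite{tu-xu}, the complex of forms on $I\widehat{\Y}$ with values in the inner local system $L'$ with its flat connection $\nabla$. (Alternatively, as remarked in the paper, one can rerun the computations of \cite{pptt} directly in the twisted setting.) To repair your argument you would need to replace the claimed local untwisting by one of these genuinely twisted local computations; the rest of your outline (fineness of the presheaf, \v{C}ech assembly, degree shift by $\ell$) then goes through as in the paper.
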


Since Morita equivalent algebras have isomorphic Hochschild
cohomologies, Theorem \ref{Intro:thm:hoch-coh-twisted-alg} and
Corollary \ref{Intro:cor_global_mackey_quantized} yield the
following result:
\begin{theorem}[=Theorem \ref{thm:cohomology}]\label{Intro:thm:hochschild}
There are isomorphisms of cohomologies,
\[
\begin{split}
H^{\bullet-\ell}(I\Y)((\hbar))&\cong
HH^\bullet(\cala^{((\hbar))}\rtimes \frakH,{\cala}^{((\hbar))}\rtimes \frakH)\\
&\cong HH^\bullet(\widetilde{\cala}^{((\hbar))}\rtimes_c
(\widehat{G}\rtimes \frakQ), \widetilde{\cala}^{((\hbar))}\rtimes_c
(\widehat{G}\rtimes \frakQ))\cong
H^{\bullet-\ell}(I\widehat{\Y},\frakc)((\hbar)).
\end{split}
\]
Moreover, the above isomorphisms yield an isomorphism of graded
$\complex ((\hbar))$-vector spaces,
\[
H^{\bullet-2\age}(I\Y)((\hbar))\cong
H^{\bullet-2\age}(I\widehat{\Y},\frakc)((\hbar)),
\]
where the vector spaces are equipped with the {\em age grading} as
defined in Eq.  (\ref{eq:age}) (see also Definition \ref{Horb_groups}).
\end{theorem}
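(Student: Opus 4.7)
The plan is to assemble the three isomorphisms in the first display from previously established results, and then to promote the resulting codimension-graded statement to an age-graded one by a component-by-component check.

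First, I would note that the first isomorphism, $H^{\bullet-\ell}(I\Y)((\hbar))\cong HH^\bullet(\cala^{((\hbar))}\rtimes \frakH,\cala^{((\hbar))}\rtimes \frakH)$, is the main theorem of \cite{pptt} applied to the symplectic orbifold $\Y$ together with the Fedosov sheaf $\cala^{((\hbar))}$. The middle isomorphism is formal: Hochschild cohomology with coefficients in the algebra itself is invariant under Morita equivalence, and the required Morita equivalence is precisely Corollary \ref{Intro:cor_global_mackey_quantized}. The third isomorphism is Theorem \ref{Intro:thm:hoch-coh-twisted-alg}. Composing the three yields the codimension-shifted isomorphism $H^{\bullet-\ell}(I\Y)((\hbar))\cong H^{\bullet-\ell}(I\widehat{\Y},\frakc)((\hbar))$.

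To promote this to an age-graded isomorphism, I would decompose both inertia orbifolds into connected components and exhibit a bijection under which matched components carry the same age. A point of $I\Y$ lying over $x\in \B$ is labeled by a conjugacy class $(h)$ in the isotropy extension $1\to G\to H_x\to \Gamma_x\to 1$, while a point of $I\widehat{\Y}$ over $x$ is labeled by a pair $([\rho],(q))$ with $[\rho]\in\widehat{G}$ and $(q)$ a $c$-twisted conjugacy class in $\stab_{\Gamma_x}([\rho])$. The Mackey-type description, in the form of Proposition \ref{prop:conjugacy} applied locally and then globalized, gives the desired bijection of components; moreover, it shows that the image of $(h)$ under the projection $H_x\to \Gamma_x$ coincides with the image of $(q)$ in $\Gamma_x$.

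Because $\Y\to \B$ is an \'etale $G$-gerbe and $\widehat{\Y}\to \B$ is \'etale as well (since $\widehat{G}$ is discrete), the tangent spaces $T_y\Y$ and $T_{(x,[\rho])}\widehat{\Y}$ are canonically identified with $T_x\B$, and the actions of $h\in H_x$ and $q\in\stab_{\Gamma_x}([\rho])$ on them factor through the respective quotients to $\Gamma_x$. By the compatibility from the previous paragraph these quotients agree, so the ages of matched components are equal, and on each matched pair the age shift and the codimension shift differ by the same constant. The age-graded isomorphism then follows componentwise from the codimension-graded one. The principal obstacle is precisely the compatibility claim: one must verify that the Mackey identification built into the Morita equivalence respects the projection to $\Gamma_x$, which has to be extracted from the explicit bimodules used to prove Theorems \ref{Intro:thm:local-mackey} and \ref{Intro:thm:global-mackey}, thereby tying the purely algebraic Morita statement to the geometric age invariant on the inertia orbifolds.
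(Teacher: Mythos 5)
Your assembly of the first display is exactly the paper's argument: the first isomorphism is the main theorem of \cite{pptt} for $\Y$, the middle one is Morita invariance of Hochschild cohomology applied to Theorem \ref{thm:global-mackey} (Corollary \ref{cor:morita-gpd}), and the last is Theorem \ref{thm:hoch-coh-twisted-alg}. Your idea for the age statement --- that the age on $I\Y$ and on $I\widehat{\Y}$ is pulled back from $I\B$ because the gerbe and its dual are \'etale over $\B$, so it suffices that the isomorphism respect the decomposition over the base inertia --- is also the mechanism the paper uses (Part IV of the proof of Proposition \ref{prop:quasi-isomorphism-coh}).

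However, there are two problems with how you carry this out. First, the claimed ``bijection of components'' of $I\Y$ and $I\widehat{\Y}$ does not exist in general and is not what Proposition \ref{prop:conjugacy} provides: that proposition only gives a blockwise identification indexed by conjugacy classes $\langle q\rangle$ of the base isotropy. Over a fixed $\langle q\rangle$, the components of $I\widehat{\Y}$ are indexed by $C(q)$-orbits in $\widehat{G}^q$, and those carrying non-$c$-regular twisted conjugacy classes contribute zero twisted cohomology; the equality is only of total dimensions per block (this is exactly the content of the counting formula (\ref{count_answer1}) in Appendix \ref{subsec:conjugacy}), so the isomorphism mixes components within a block rather than matching them one-to-one. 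Since age and the codimension shift $\ell$ are constant on each block, the blockwise statement is all you need, but the componentwise framing should be dropped. Second, and more seriously, the compatibility you flag as ``to be extracted from the explicit bimodules'' is the actual substance of the proof and cannot be deferred: abstract Morita invariance gives no control over how the induced isomorphism interacts with the fibration $I\Y\to I\B$ and $I\widehat{\Y}\to I\B$. The paper establishes this by showing that the Hochschild complexes of $\cala^{((\hbar))}\rtimes\frakH$ and of $\widetilde{\cala}^{((\hbar))}\rtimes_c(\widehat{G}\rtimes\frakQ)$ form presheaves over $\B$, that the bimodules of Theorem \ref{thm:global-mackey} localize over $\B$ and induce a quasi-isomorphism of these presheaves (Parts I--III of Proposition \ref{prop:quasi-isomorphism-coh}), and by computing the induced map on de Rham models explicitly, formula (\ref{eq:I-alpha}), from which the compatibility with the $\langle q\rangle$-decomposition, hence with the age filtration, is read off. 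Without carrying out this localization and the explicit computation (a representation-theoretic trace formula, not a formal consequence), the ``Moreover'' part of the theorem is not proved.
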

In fact, this isomorphism is valid over $\complex$, yielding an
isomorphism  of graded $\complex$-vector spaces,
\begin{equation}\label{add_isom_over_C}
H^{\bullet-2\text{age}}(I\Y, \complex)\cong H^{\bullet-2\text{age}}(I\widehat{\Y},\frakc, \complex).
\end{equation}
\subsection{Chen-Ruan orbifold cohomology}
In this and the next sections we discuss important applications of Theorem \ref{Intro:thm:hochschild} to the symplectic geometry/topology of $G$-gerbes.

Chen-Ruan orbifold cohomology, first introduced in
\cite{cr}, is a very important object in the theory of orbifolds,
and has generated numerous exciting research in recent years. For an almost
complex orbifold $\X$, its Chen-Ruan cohomology, denoted by
$H_{{\rm CR}}^\bullet(\X, \com),$ is additively the cohomology $H^\bullet(I\X,
\com)$ of the inertia orbifold, $I\X$, of $\X$, equipped with a
shifted grading, a non-degenerate pairing called {\em orbifold
Poincar\'e pairing}, and an associative product structure called the
{\em Chen-Ruan orbifold cup product}. Structure constants of the
Chen-Ruan orbifold cup product are defined to be certain integrals
over the {\em $2$-multi-sector}, $\X_{(2)}$, of $\X$ involving the
{\em obstruction bundle} $Ob_\X$. See Sec.  \ref{subsec:cr} for a quick review of the construction of the Chen-Ruan orbifold
cohomology. 

Given an almost complex orbifold $\X$ with a flat $U(1)$-gerbe $c$, the $c$-twisted orbifold cohomology of $\X$, introduced by Y. Ruan \cite{ru1} and 
denoted by $H_{orb}^\bullet(\X,c,\com),$ is additively the
cohomology $H^\bullet(I\X, c, \complex):=H^\bullet(I\X, \sL_c)$ of
$I\X$ with coefficients in the line bundle $\sL_c$. The line bundle
$\sL_c$ is naturally defined from the $U(1)$-gerbe $c$ and is an
example of an {\em inner local system} \cite{ru1}. The groups
$H_{orb}^\bullet(\X,c,\com)$ are equipped with a shifted grading, a
non-degenerate pairing, and an associative product structure defined
in ways similar to their counterparts in the Chen-Ruan orbifold
cohomology. In particular, the structure constants of the product
are also defined as certain integrals over the $2$-multi-sector
involving the obstruction bundle. See Sec.  \ref{subsec:cr} for a review of this construction.

Consider a $G$-gerbe $\Y$ over a compact symplectic orbifold $\B$
and its dual pair $(\widehat{\Y}, c)$. Both $\Y$ and $\widehat{\Y}$
are equipped with symplectic structures coming from the one of
$\B$. Equip $\Y$ and $\widehat{\Y}$ with compatible almost
complex structures and consider the two cohomology groups
$H_{{\rm CR}}^\bullet(\Y, \com)$ and
$H_{orb}^\bullet(\widehat{\Y},c,\com)$. We improve the  isomorphism (\ref{add_isom_over_C})  of graded $\complex$-vector spaces, i.e.,
$H_{{\rm CR}}^\bullet(\Y, \com)\simeq
H_{orb}^\bullet(\widehat{\Y},c,\com)$ in the
following:

\begin{theorem} [see Theorem \ref{thm:iso-coh-ring}]\label{Intro:thm:iso-coh-ring}
There is an isomorphism of {\em graded $\complex$-algebras},
\begin{equation}\label{eqn:coh_ring_isom}
H_{{\rm CR}}^\bullet(\Y, \com )\simeq
H_{orb}^\bullet(\widehat{\Y},c,\com ).
\end{equation}
\end{theorem}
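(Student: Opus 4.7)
The plan is to upgrade the additive isomorphism in Theorem \ref{Intro:thm:hochschild} (specifically, the $\complex$-linear isomorphism (\ref{add_isom_over_C})) to a multiplicative one by showing that both sides admit parallel descriptions in terms of gerbe data over the base $\B$, and that the Chen-Ruan cup product on the left matches, under this description, with the twisted orbifold cup product on the right. I would not try to give a direct algebraic proof using deformation quantization — instead, I would reduce the question to a careful comparison of the three geometric ingredients that enter the definition of both products: the two-multi-sectors, the evaluation maps, and the obstruction bundles (together with the line bundle coefficient $\sL_c$ on the twisted side).

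First, I would give an explicit description of the inertia orbifold and the two-multi-sector of $\Y$ using the gerbe data $(\varphi_{ij}, g_{ijk})$ from (\ref{intro:gerbe_data}). Locally on $\B$, a point of $I\Y$ is represented by a pair consisting of a point of $\Y$ together with a conjugacy class in its isotropy, and the gerbe structure gives a canonical bijection between conjugacy classes in the isotropy of $\Y$ over $x \in \B$ and pairs $(x, \mathcal{O})$, where $\mathcal{O}$ is an orbit in $\widehat{G}$ under the isotropy of $\B$ at $x$ — but shifted by twisting through the extension. I would then produce the analogous description of $I\widehat{\Y}$ and, using character theory of the central extensions of isotropy groups of $\B$ produced by the $G$-gerbe $\Y$, construct a natural correspondence between the components of $I\Y$ and components of $I\widehat{\Y}$ that is compatible with the age shifts. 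This step is where I would reuse Proposition \ref{prop:conjugacy} on the isomorphism $I \colon Z(\complex H) \to Z(C(\widehat{G} \rtimes Q, c))$ between centers: that proposition already matches conjugacy classes of $H$ with pairs (orbit in $\widehat{G}$, class in the stabilizer), which is exactly the combinatorics governing components of the two inertia orbifolds.

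Next, I would show that this correspondence lifts to the two-multi-sectors $\Y_{(2)}$ and $\widehat{\Y}_{(2)}$, and that it intertwines the three evaluation maps $e_1, e_2, e_3$. Using this, I would identify the obstruction bundle $Ob_\Y$ with the corresponding obstruction bundle $Ob_{\widehat{\Y}}$ — the crucial point being that both obstruction bundles are pulled back from analogous constructions on $I\B_{(2)}$, and a $G$-gerbe changes these only by isotropy-representation data that is tracked by $c$. The remaining non-trivial piece is that the cocycle $c$ entering the twisted orbifold cup product appears as the discrepancy between the two pairings of classes after evaluation; this is precisely measured by the $U(1)$-gerbe obstructing the $V_\rho$'s from gluing into a genuine bundle over $\widehat{\Y}$. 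With the evaluation maps and obstruction bundles matched up, integration over $\Y_{(2)}$ and over $\widehat{\Y}_{(2)}$ against the respective Euler classes of the obstruction bundles produce equal numbers, and the twist $\sL_c$ on the right accounts exactly for the non-trivial characters appearing on the left via Schur orthogonality. Combining this local-to-global matching with the additive isomorphism (\ref{add_isom_over_C}) promotes the latter to a graded $\complex$-algebra isomorphism.

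The hard part, I expect, will be the last step of the previous paragraph: showing that the cocycle data defining $\sL_c$ accounts exactly for the character-theoretic phase factors that appear when one compares the Chen-Ruan three-point product for the central extension $H$ with the twisted product on the $Q$-orbits of $\widehat{G}$. This is essentially a statement about compatibility of Schur orthogonality with the gluing cocycle $c$, and must be verified locally on $\B$ over triple overlaps, where the gerbe is trivialized and the problem reduces to the extension $1 \to G \to H \to Q \to 1$ treated in Theorem \ref{Intro:thm:local-mackey}. Once this local computation is carried out, the global statement will follow by gluing using the explicit groupoid presentation $\frakH \to \frakQ$ and the locally constant cocycle $c$ constructed in Propositions \ref{prop:u(1)-cocycle-gpd}–\ref{prop:local-const}.
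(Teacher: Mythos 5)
Your overall strategy is the same as the paper's: start from the additive isomorphism (\ref{add_isom_over_C}), observe that the obstruction bundles on $\Y_{(2)}$ and $\widehat{\Y}_{(2)}$ are pulled back from $Ob_\B$ on $\B_{(2)}$ (this is exactly Proposition \ref{prop:obstruction}), push the three-point integrands down to $\B_{(2)}$, and finish with a local character-theoretic computation (Schur's lemma and orthogonality of characters) over a chart where the gerbe is the extension $1\to G\to H\to Q\to 1$. However, two steps as you state them do not hold, and one of them is a genuine gap. First, there is no geometric ``correspondence between the components of $I\Y$ and components of $I\widehat{\Y}$'' that lifts to the two-multi-sectors and intertwines the evaluation maps. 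The isomorphism $\Psi=I^{-1}$ is a character transform, given explicitly by (\ref{eq:I-alpha}): over a fixed base conjugacy class $\<q\>$ it mixes all elements $g\in G$ with all fixed representations $[\rho]$, and only the coarser decompositions indexed by conjugacy classes of the base (Proposition \ref{prop:conjugacy}) match up. Consequently the comparison of cup products cannot be made by identifying $\Y_{(2)}$ with $\widehat{\Y}_{(2)}$; the paper instead proves the pushed-forward identity (\ref{3pt_inv_3}) on $\B_{(2)}$, which is a statement about classes, not about an isomorphism of multi-sectors.

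Second, and more seriously, your claim that the two integrations ``produce equal numbers'' is false as stated, and repairing it forces you to also compare Poincar\'e pairings, which your proposal never does. The correct statement is
$\<\Psi(\alpha_1),\Psi(\alpha_2),\Psi(\alpha_3)\>^{\Y}
=\sum_{\bi} w(\widehat{\Y}_\bi)\,
\<\alpha_{1\bi},\alpha_{2\bi},\alpha_{3\bi}\>_{c_\bi}^{\widehat{\Y}_\bi}$
with the component-dependent weights $w(\widehat{\Y}_\bi)=(\dim V_{\rho_\bi}/|G|)^2$ (these are the same normalization factors that already appear in Proposition \ref{prop:trace}), and the orbifold Poincar\'e pairings are rescaled by exactly the same weights (Corollary \ref{cor:poin-pairing}). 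Since the Chen--Ruan product is defined from the three-point invariants \emph{via} the Poincar\'e pairing and dual bases, an unweighted equality of three-point invariants together with the actual (rescaled) pairings would in fact contradict multiplicativity; it is only because the same factor $w(\widehat{\Y}_\bi)$ appears in both the three-point invariants and the pairing that the two cancel and $\Psi$ is a ring map. So to complete your argument you must (i) replace the claimed equality of integrals by the weighted identity above, proved by the explicit formula (\ref{eq:I-alpha}) plus Schur orthogonality (this is where the factors $\dim V_\rho/|G|$ arise), and (ii) prove the pairing comparison of Corollary \ref{cor:poin-pairing} and then run the duality argument of Theorem \ref{thm:iso-coh-ring}. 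Without tracking these weights consistently on both structures, the final ``promotion'' of the additive isomorphism to an algebra isomorphism does not follow.
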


We view this theorem as the realization of {\bf ($\star$)} at the
level of Chen-Ruan orbifold cohomology rings. The isomorphism (\ref{eqn:coh_ring_isom}) is also compatible with (twisted) orbifold Poincar\'e
pairings. See Corollary \ref{cor:poin-pairing}.

The proof of this theorem, given in Sec. 
\ref{subsec:cohom-gerbe}-\ref{subsec:pairing}, amounts to showing
that the additive isomorphism 
$H^{\bullet-2\text{age}}(I\Y,
\complex)\cong H^{\bullet-2\text{age}}(I\widehat{\Y},\frakc,
\complex),$ 
obtained in Theorem \ref{Intro:thm:hochschild}, is in
fact a ring isomorphism. The Morita equivalence bimodule in the
proof of Theorem \ref{Intro:thm:global-mackey} gives an explicit
formula of this isomorphism. To prove that this isomorphism preserves the ring structure, we
need to compare the structure constants of the orbifold cup
products. We first establish, in Proposition \ref{prop:obstruction},
a comparison result between the obstruction bundles $Ob_\Y,
Ob_{\widehat{\Y}}$, and the obstruction bundle, $Ob_\B$, of the base
$\B$. This reduces the question to comparing certain cohomology
classes on the $2$-multi-sector $\B_{(2)}$ of $\B$. See
(\ref{3pt_inv_3}). We prove (\ref{3pt_inv_3})  in Sec. 
\ref{subsec:pairing} by carefully examining the
isomorphism in Theorem \ref{Intro:thm:hochschild} via representation
theory of finite groups.

We should point out that Theorem \ref{Intro:thm:iso-coh-ring} is
somewhat surprising. Although one can construct the isomorphism
(\ref{eqn:coh_ring_isom}) directly in some special examples (such as the
toric case \cite{ajt3}), in general it is not clear {\em
at all} that the two cohomologies $H_{{\rm CR}}^\bullet(\Y, \com)$ and
$H_{orb}^\bullet(\widehat{\Y},c,\com)$ are equal even as vector
spaces, since the $G$-gerbe $\Y$ and the $U(1)$-gerbe $c$ on
$\widehat{\Y}$ are related through representation theory and their
geometric connections are somewhat obscure. Noncommutative geometry
(and, in particular, Morita equivalence) provides us the right tool
to extract the geometric information from representation theory.
From this perspective, our construction of the isomorphism
(\ref{eqn:coh_ring_isom}) via Morita equivalence and the connection
between Hochschild cohomology and orbifold cohomology is very
natural and is so far the only known construction that works in
general. Furthermore, the result that the two orbifold cohomologies
are isomorphic as rings is not a formal consequence of our results
on Hochschild cohomology. In \cite{pptt}, it is shown that the ring
structure on $H_{{\rm CR}}^\bullet(\Y, \com)$ (and also
$H_{orb}^\bullet(\widehat{\Y},c,\com)$) defined by the product on
the Hochschild cohomology of the orbifold groupoid algebra is
closely related, but {\em not} isomorphic, to the Chen-Ruan cup
product. There are certain subtle but crucial differences between
the Hochschild and Chen-Ruan cup products. Our proof of the fact
that (\ref{eqn:coh_ring_isom}) is indeed a ring isomorphism with
respect to Chen-Ruan cup products uses some delicate and important
properties of the isomorphism (\ref{add_isom_over_C}). We view our
proof of Theorem \ref{Intro:thm:iso-coh-ring} as a successful
application of non-commutative geometric techniques to the study of
symplectic topology.

\subsection{Gromov-Witten theory}
Chen-Ruan orbifold cohomology and twisted orbifold cohomology can be
considered as part of a bigger and richer theory called
Gromov-Witten theory. Let $\X$ be a compact symplectic orbifold. The
Gromov-Witten theory of $\X$ concerns Gromov-Witten
invariants of $\X$, which are integrals of certain naturally defined
cohomology classes over moduli spaces of orbifold stable maps to
$\X$. These invariants may be organized into a generating function,
$\D_\X$, called the total descendant potential of $\X$, whose
properties reflect the structures of Gromov-Witten invariants.

The Gromov-Witten theory of orbifolds is constructed in the work
\cite{cr2} in symplectic geometry and in the works \cite{agv1, agv2}
in algebraic geometry. It has been a very active research area in
the past few years. Expository accounts of this theory can also be
found in \cite{abramovich} and \cite{tseng}.

Given a flat $U(1)$-gerbe, $c$, on a compact symplectic orbifold
$\X$, a ``twist'' of the Gromov-Witten theory of $\X$ by $c$ is
constructed in the work \cite{pry}. The main ingredients here are
the $c$-twisted Gromov-Witten invariants of $\X$. These invariants
are integrals of certain naturally defined cohomology classes over
the moduli spaces of orbifold stable maps to $\X$, and they can be
organized into a generating function, $\D_{\X,c}$, called the
$c$-twisted total descendant potential of $\X$.

It is known that for Calabi-Yau target spaces the Gromov-Witten
theory may be understood as the mathematical version of a
topological twist of a conformal field theory called non-linear
sigma model. Since the original physics conjecture on the duality of
\'etale gerbes concerns the equivalence of conformal field theories,
it is very natural to consider the claim {\bf ($\star$)} in the
context of Gromov-Witten theory.

Let $\Y$ be a $G$-gerbe over a compact symplectic orbifold $\B$ and
$(\widehat{\Y},c)$ its dual pair. The claim {\bf ($\star$)} in
the context of Gromov-Witten theory is naturally formulated in the
following:
\begin{conjecture}\label{intro:dual_GW}
The generating functions $\D_\Y$ and $\D_{\widehat{\Y}, c}$ are equal after suitable changes of variables.
\end{conjecture}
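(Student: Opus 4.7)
The plan is to bootstrap the Chen-Ruan ring isomorphism of Theorem \ref{Intro:thm:iso-coh-ring} to an equality of full descendant potentials by establishing a stratified correspondence between moduli spaces of orbifold stable maps. I would work with $\Mbar_{g,n}(\Y,\beta)$ on one side and the disjoint union of moduli spaces of stable maps into the components of $\widehat{\Y}$ on the other side, both of which forget to $\Mbar_{g,n}(\B,\beta)$. On the $\Y$ side, a twisted stable map is a twisted stable map $\bar f\colon\C\to\B$ together with a lift to the gerbe, and the lifts decompose according to conjugacy-class monodromy in $G$ at marked points and nodes. On the $\widehat{\Y}$ side, a twisted stable map amounts to a twisted stable map to $\B$ together with a choice of irreducible $G$-representation at each marked point compatible with the $Out(G)$-band.

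The key step is to promote the representation-theoretic bimodule underlying Theorem \ref{Intro:thm:global-mackey} to a correspondence between these sector-stratified moduli spaces that identifies virtual fundamental classes up to the character-valued weights encoded by the $U(1)$-cocycle $c$. The obstruction theories on both sides are pulled back from $\Mbar_{g,n}(\B,\beta)$, and Proposition \ref{prop:obstruction} already supplies the required identification on the $2$-multi-sector. Descendants $\psi_i$ pull back from the base as well, so descendant insertions are compared automatically once the virtual classes agree. Cohomological insertions are matched via the extension of the ring isomorphism (\ref{eqn:coh_ring_isom}), and the pairing of outputs with the fundamental class is controlled by Corollary \ref{cor:poin-pairing}. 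Summing over all sector data should then convert the equality at the level of sectors into the equality of descendant potentials, with the change of variables being precisely the linear transformation between the Chen-Ruan basis of $H^\bullet_{CR}(\Y,\com)$ and the $c$-twisted orbifold cohomology basis of $H^\bullet_{orb}(\widehat{\Y},\frakc,\com)$ provided by representation characters.

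The main obstacle is gluing compatibility at nodes in the higher-genus, many-point setting. At each node the two branches carry inverse $G$-monodromy, while on the $\widehat{\Y}$ side the corresponding contribution is a contraction weighted by $c$ along a ``diagonal'' in $\widehat{G}\times\widehat{G}$. Verifying that the $c$-phases exactly reproduce the $G$-side gluing factors requires systematic use of second orthogonality for characters, combined with a degeneration analysis and an induction on $(g,n)$ through the splitting axiom of Gromov-Witten theory. The genus-zero three-pointed case handled in Section \ref{subsec:pairing} already exhibits the essential identities; the genuine difficulty is to show that no new obstruction arises from the interplay of nodes, higher genus, and the $c$-twist. Once this nodal compatibility is established, a reconstruction argument in the spirit of Givental-Teleman should upgrade the sector-by-sector equality to the conjectural identity $\D_\Y=\D_{\widehat{\Y},c}$ after the prescribed change of variables.
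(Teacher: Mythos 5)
Your proposal aims at the general conjecture, but the paper itself does not prove the statement in that generality: it only verifies it for the gerbes $BH\to BQ$ coming from a finite group extension (Section \ref{sec:cqft}), and it does so by a much more elementary route than the one you sketch. There, every descendant invariant of $BH$ (and every $c_i$-twisted invariant of $BQ_i$) is an explicit multiple $\Omega_g(\cdot)\,\theta(\cdot)$ of a $\psi$-integral on $\Mbar_{g,n}$, the quantum rings and pairings are matched through Theorem \ref{Intro:thm:local-mackey} and Proposition \ref{prop:trace}, and the equality of all invariants is obtained by induction on $(g,n)$ using the forgetting-tails/cutting-loops/cutting-trees identities, yielding the factor $(\dim V_{\rho_i}/|G|)^{2-2g}$ in (\ref{decomp_formula}). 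Your plan, by contrast, asserts a correspondence of sector-stratified moduli spaces of stable maps over $\Mbar_{g,n}(\B,\beta)$ with matching virtual classes; but Proposition \ref{prop:obstruction} only identifies the Chen--Ruan obstruction bundles on the $2$-multi-sector (genus $0$, degree $0$, three marked points), and extending such an identification to arbitrary $(g,n,\beta)$, including nodal degenerations and the interaction of the band with the lifting data, is precisely the unresolved content of the conjecture. You acknowledge that the nodal gluing compatibility is open, and the appeal to ``a reconstruction argument in the spirit of Givental--Teleman'' does not close it: for a general base $\B$ the quantum cohomology need not be semisimple, and genus-zero three-point data together with the splitting axiom simply do not determine the higher-genus descendant potential. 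The paper's induction works only because the target is a point-like orbifold, where all invariants collapse to Hurwitz-type counts.

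A second concrete problem is the change of variables. You describe it as the linear transformation of cohomology variables given by representation characters, i.e.\ the isomorphism (\ref{eqn:coh_ring_isom}). But the equality proved in the paper for $BH\to BQ$ requires, in addition, a component-wise rescaling of the genus-expansion parameter, $\epsilon\mapsto \epsilon\,|G|/\dim V_{\rho_i}$ on the summand indexed by the orbit $O_i$; the genus-dependent factor $(\dim V_{\rho_i}/|G|)^{2-2g}$ cannot be absorbed by any linear change of the insertion variables $t_{ij,a}$ alone. So even in the case the paper does handle, your prescription of the change of variables is incomplete, and in the general case the essential steps (virtual class comparison on nodal strata and the upgrade from genus-zero data to the full potential) remain unproven ideas rather than an argument.
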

One can also formulate an analogue of Conjecture \ref{intro:dual_GW}
for the {\em ancestor potentials} (see, e.g, \cite[Sec. 
5]{givental_quantization} for the definition of ancestor potential).
In order for Conjecture \ref{intro:dual_GW} to possibly be true, it
is necessary that the cohomology groups $H_{{\rm CR}}^\bullet(\Y,\com)$
and $H_{orb}^\bullet(\widehat{\Y},c,\complex)$ be isomorphic.
Therefore, Theorem \ref{Intro:thm:iso-coh-ring} provides the first
step towards an approach to Conjecture \ref{intro:dual_GW} in
general.

So far, progress on Conjecture \ref{intro:dual_GW} has been focused
on explicit classes of gerbes, all of which have trivial bands\footnote{We say that a $G$-gerbe $\Y\to \B$ has a {\em trivial band} if the
$Out(G)$-bundle $\overline{\Y}\to \B$ admits a section
(hence is trivialized by this section).}.
Conjecture \ref{intro:dual_GW} has been verified for trivial gerbes
\cite{ajt1}, for certain gerbes over $\mathbb{P}^1$-orbifolds
\cite{johnson}, and for root gerbes over complex projective manifolds \cite{ajt2}, \cite{ajt2.5}. The case of toric
gerbes is treated in \cite{ajt3}. All these works use very different
methods.

In this paper, we provide more supporting evidence to Conjecture
\ref{intro:dual_GW} by proving it for the $G$-gerbe $BH\to BQ$
obtained from a finite group extension. This is done in Sec. 
\ref{sec:cqft}. Our approach starts with an isomorphism, which we
deduce from Theorem \ref{Intro:thm:local-mackey}, between the
quantum cohomology ring of $BH$ and the $c$-twisted quantum
cohomology ring of $\widehat{BH}$. The isomorphism between quantum
cohomology rings yields an equality between $3$-point genus $0$
Gromov-Witten invariants of $\Y$ and $3$-point genus $0$ $c$-twisted
Gromov-Witten invariants of $\widehat{\Y}$. We then apply a
reconstruction-type argument, similar to the one used in
\cite[Sec.  4]{jk} to prove an equality between all Gromov-Witten
invariants. See (\ref{decomp_vanishing})-(\ref{decomp_formula}).
Conjecture \ref{intro:dual_GW} then follows easily. See Sec. 
\ref{sec:cqft} for details.

To the best of our knowledge, our result is the first verification of Conjecture
\ref{intro:dual_GW} for a class of gerbes with non-trivial bands. In general, we believe that the ring isomorphism in Theorem
\ref{Intro:thm:iso-coh-ring} should yield the changes of variables
needed in Conjecture \ref{intro:dual_GW}.
\subsection{Outlook}
A natural question arising from our work is whether the claim
($\star$) can be generalized to $G$-gerbes for groups $G$ which are
not necessarily finite. On one hand, Mackey's machine on Lie groups
is well studied in representation theory \cite{fe-do}. This suggests
that the duality theorems proved in this paper may admit
generalizations to more general $G$-gerbes. On the other hand,
Mackey's machine on Lie groups is more involved than the simple case
of finite groups presented in this paper. One can imagine that the
complete picture of the duality theory for general $G$-gerbes will
be more complicated, even for the trivial gerbe $BG$. We plan to
study this more general theory in the near future.

\subsection{Structure of the paper}
The rest of this paper is organized as follows. In Sec. 
\ref{sec:prelim}, we review some background material on groupoids,
their extensions, gerbes, twisted sheaves, and Hochschild
cohomology. In Sec.  \ref{sec:gps_extenstion_mackey_machine}, we
study the $G$-gerbe, $BH\to BQ$, arising from an extension of finite
groups. In Sec.  \ref{sec:hochschild}, we study cross-product
algebras of a general $G$-gerbe and Hochschild cohomology. Sec. 
\ref{sec:ring} is devoted to the study of the Chen-Ruan orbifold
cohomology of a $G$-gerbe. The Gromov-Witten theory of a gerbe
$BH\to BQ$ is treated in Sec.  \ref{sec:cqft}. In Sec. 
\ref{sec:sheaf}, we consider the category of sheaves on a $G$-gerbe.
\subsection*{Acknowledgment}
We thank N. Higson and M. Rieffel for discussions on Mackey's
machine. We thank Y. Ruan for his interests in this work and for his
encouragement.
X. T. is supported in part by NSF grant DMS-0703775 and 0900985.
H.-H. T. is supported in part by NSF grant DMS-0757722.

\section{Preliminary material}\label{sec:prelim}
In this section, we explain the basic concepts and tools that
will be used in this paper.

\subsection{Orbifolds and groupoids}\label{subsec:orb}
An orbifold is a separable Hausdorff topological space which is
locally modeled on the quotient of $\reals^n$ by a linear action of
a finite group. 
In this paper, we study orbifolds in the
viewpoint developed by Haefliger \cite{ha} and Moerdijk-Pronk
\cite{mo-pr}. Namely, we represent an orbifold as the quotient of a
proper \'etale groupoid.

A groupoid is a small category all of whose morphisms are invertible. A groupoid is usually denoted by $\calg\rightrightarrows \calg_0$,
where $\calg_0$ is the set of objects, and
$\calg$ is the set of arrows. 
Let $m$ be the groupoid multiplication map
$\calg\times_{\calg_0}\calg\rightarrow \calg$, $i$  the inverse on
$\calg$, $s$ and $t$  the source and target maps $\calg\rightarrow
\calg_0$, and $u$  the unit map $\calg_0\rightarrow \calg$. An
arrow $g\in\calg$ is sometimes denoted by $x\to y$, which means that
$s(g)=x$ and $t(g)=y$. A groupoid $\calg\rightrightarrows\calg_0$ is
called a Lie groupoid if $\calg$ and $\calg_0$ are smooth
manifolds,\footnote{$\calg$ may not be Hausdorff.} all the structure
maps
\[ \calg \times_{\calg_0}
\calg  \overset{m}{\rightarrow} \calg \overset{i}{\rightarrow}
   \calg \overset{s}{\underset{t}{\rightrightarrows}}
   \calg_0 \overset{u}{\rightarrow} \calg,
\]
are smooth, and the $s$ and $t$ maps are submersions. An \'etale
groupoid is a special type of Lie groupoid where the maps
$s$ and $t$ are local diffeomorphisms. A groupoid
$\calg$ is proper if the map $(s,t):\calg\rightarrow \calg_0\times
\calg_0$ is a proper map. A groupoid $\calg$ naturally defines an
equivalence relation on the set, $\calg_0$, of objects. Two points
$x$ and $y$ in $\calg_0$ are equivalent if there is an arrow $g:x\to
y$ in $\calg$ whose source is $x$ and target is $y$. Let $[\calg_0/\calg]$ denote the quotient space with respect to the
above-defined equivalence relation. Moerdijk and Pronk \cite{mo-pr}
proved that any orbifold $\X$ can be represented by the quotient
space of a proper \'etale Lie groupoid $\calg$. This leads to the
following definition. More detailed discussions can be found in
\cite{Adem_Leida_Ruan} and \cite{mo-pr}.

\begin{definition}
An orbifold groupoid is a proper \'etale groupoid $\calg$, and an
orbifold is the quotient space $[\calg_0/\calg]$ of an orbifold
groupoid.
\end{definition}

Locally, an orbifold $\X$ can be represented by the quotient space of the transformation
groupoid $\reals^n \rtimes \Gamma \rightrightarrows \reals^n$.
Gluing the local charts of an orbifold together yields a proper
\'etale groupoid, $\calg\rightrightarrows \calg_0$, representing
$\X$. Observe that the choice of charts on an orbifold
is not unique, and an orbifold can be represented by
different proper \'etale groupoids. However, a careful study shows that these different \'etale groupoids are all
Morita equivalent. The definition of Morita equivalence will be
recalled in the next subsection.  A crucial property of Morita
equivalent groupoids is that the corresponding quotient spaces are
isomorphic. In general, an orbifold can be uniquely represented by a
Morita equivalence class of  proper \'etale groupoids.

Given a groupoid $\calg$, we can consider the space of ``loops" in
$\calg$, which is defined to be $\calg^{(0)}:=\{g\in \calg:
s(g)=t(g)\}.$ There is a natural $\calg$-action on $\calg^{(0)}$. Let $p: \calg^{(0)}\rightarrow \calg_0$ be the map defined by taking the source
(=target) of an element $g\in \calg^{(0)}$. The $\calg$ action on
$\calg^{(0)}$ is a map $\rho:\calg\times_{t, \calg_0, p}\calg^{(0)}\rightarrow \calg^{(0)}$ such
that $\rho(h,g):=hgh^{-1}$. If $\calg$ is a proper \'etale groupoid,
one can easily check that the action groupoid $\calg\ltimes
\calg^{(0)}\rightrightarrows \calg^{(0)}$ is also proper \'etale.
This groupoid is called the inertia groupoid associated to the groupoid $\calg$.
If $\X$ is the orbifold represented by $\calg$, its inertia orbifold $I\X$ is represented by the inertia groupoid $\calg\ltimes \calg^{(0)}$. The inertia groupoid has a natural cyclic\footnote{See \cite[Def. 3.15]{crainic} for more details.} structure
$\theta:\calg^{(0)}\rightarrow \calg\ltimes \calg^{(0)}$ defined by
$\theta(g):=(g,g)$. This is very useful in the study of the cyclic
theory \cite{crainic} of the groupoid $\calg$.

As $\calg$ is an \'etale groupoid, every element $g$ in the loop space $\calg^{(0)}\subset \calg$ acts on the
tangent space $T_{p(g)}\calg_0$ by composing $(s_*)^{-1}:T_{s(g)}\calg_0\to T_g\calg$ with $t_*:T_g\calg \to T_{t(g)}\calg_0$. Assume that $\calg_0$ is
equipped with a $\calg$-invariant almost complex structure. This
makes $T_{p(g)}\calg_0$ a complex vector space. Since $g$ is of
finite order $r$, $T_{p(g)}\calg_0$ splits into a sum of eigenspaces of
the $g$ action, i.e.,
$T_{p(g)}\calg_0=\bigoplus_{k=0}^{r-1} V_k,$
where $g$ acts on $V_k$ with eigenvalue $\exp(\frac{2\pi
\sqrt{-1}k}{r})$. Define the $\age$ function on $\calg^{(0)}$ by
\begin{equation}\label{eq:age}
\age(g):=\sum_{k=0}^{r-1}\frac{k}{r}\dim(V_k)\in \rational.
\end{equation}
One easily checks that $\age$ is a locally constant function on
$\calg^{(0)}$ and invariant under the $\calg$ action. Therefore the
$\age$ function descends to a locally constant function on the inertia orbifold
$I\X$. It is an important part of the definition of the
Chen-Ruan orbifold cohomology \cite{cr}.
\subsection{Gerbes on orbifolds and groupoid extensions}\label{subsec:gerbe}
The notion of a gerbe was introduced by Giraud \cite{gi} in
algebraic geometry during his study of nonabelian cohomology. Let
$G$ be a topological group. In most of the cases of this paper, $G$
is a finite group equipped with the discrete topology. The notion of
a $G$-gerbe is a generalization of a principal $G$-bundle. Let $BG$
be the classifying orbifold of the group $G$. A $G$-gerbe over a
topological space $X$ is a $BG$ bundle over $X$. See
\cite{br} for related discussions. In this paper, we follow the
groupoid approach to stacks and gerbes developed in 
\cite{be-xu}, \cite{bry}, and \cite{la-st-xu}.

A $G$-gerbe $\X$ over an orbifold $\calb$ can
be represented by a {\em groupoid $G$-extension}, which is a diagram
\[
\begin{xy}
(15,0)*+{\calg_{\,\,}}="g1";(35,0)*+{\calh_{\,\,}}="h1";(55,0)*+{\calq_{\,\,}}="q1";%
(15,-10)*+{\calg_0}="g0";(35,-10)*+{\calh_0}="h0";(55,-10)*+{\calq_0.}="q0";%
{\ar "g1";"h1"}?*!/_2mm/{i};%
{\ar "h1";"q1"}?*!/_2mm/{j};%
{\ar@{=} "g0";"h0"};%
{\ar@{=} "h0";"q0"};%
{\ar@<0.ex> "g1";"g0"};%
{\ar@<-1.ex> "g1";"g0"};%
{\ar@<-1.ex> "h1";"h0"};%
{\ar@<0.ex> "h1";"h0"};%
{\ar@<-1.ex> "q1";"q0"};%
{\ar@<0.ex> "q1";"q0"};
\end{xy}
\]
In the above diagram,
\begin{enumerate}
\item
$\calg$ is a locally trivial bundle of groups over $\calg_0$ with fibers isomorphic to $G$. 
\item
$\calg_0$, $\calh_0$, and $\calq_0$ are identical smooth manifolds.
\item
$i$ and $j$ are smooth morphisms of Lie groupoids.
\item
$i$ is injective, $j$ is surjective, and the above sequence is exact.
\item
the groupoid $\calq\rightrightarrows \calq_0$ is a proper \'etale groupoid representing the orbifold
$\calb$.
\end{enumerate}
We follow \cite{la-st-xu} and use
$\calh\rightarrow \calq\rightrightarrows \calq_0$
to denote a groupoid extension of $\calq\rightrightarrows
\calq_0$.
\begin{definition}
A $G$-gerbe groupoid over $\calq\rightrightarrows
\calq_0$ is a groupoid $G$-extension of 
$\calq\rightrightarrows \calq_0$.
\end{definition}
Just like an orbifold has many different representations by proper
\'etale groupoids, the above groupoid extension representation of a
$G$-gerbe is, in general, not unique. A notion of Morita equivalence
between groupoid extensions, which we now recall, was introduced by
Laurent-Gengoux, Stienon, and Xu \cite{la-st-xu}.

Let $\calg\rightrightarrows \calg_0$ and $\calh\rightrightarrows
\calh_0$ be two groupoids. A Morita morphism from
$\calg\rightrightarrows \calg_0$ to $\calh\rightrightarrows \calh_0$
is a smooth morphism $(\phi_1,\phi_0)$ of Lie groupoids from
$\calg\rightrightarrows \calg_0$ to $\calh\rightrightarrows \calh_0$,
\[
\begin{xy}
(15,0)*+{\calg_{\,\,}}="g1"; (35,0)*+{\calh_{\,\,}}="h1";%
(15,-15)*+{\calg_0}="g0"; (35,-15)*+{\calh_0}="h0";%
{\ar@<0.ex> "g1";"g0"};%
{\ar@<-1.ex> "g1";"g0"};%
{\ar@<-1.ex> "h1";"h0"};%
{\ar@<0.ex> "h1";"h0"};%
{\ar "g1";"h1"}?*!/_2mm/{\phi_1};%
{\ar "g0";"h0"}?*!/_2mm/{\phi_0};%
\end{xy}
\]
such that $\phi_0$ is a surjective submersion and the pull-back of
the groupoid $\calh\rightrightarrows \calh_0$ along the map $\phi_0$
is isomorphic to $\calg\rightrightarrows \calg_0$. Two groupoids
$\calg\rightrightarrows \calg_0$ and $\calh\rightrightarrows
\calh_0$ are Morita equivalent if there is a third groupoid
$\calk\rightrightarrows \calk_0$ together with Morita morphisms from
$\calk\rightrightarrows \calk_0$ to both $\calg\rightrightarrows
\calg_0$ and $\calh\rightrightarrows \calh_0$. A Morita morphism
from the groupoid extension $Y_1\rightarrow X_1\rightrightarrows
M_1$ to $Y_2\rightarrow X_2\rightrightarrows M_2$ consists of Morita
morphisms $F_X:(X_1\rightrightarrows M_1)\rightarrow
(X_2\rightrightarrows M_2)$ and $F_Y:(Y_1\rightrightarrows
M_1)\rightarrow (Y_2\rightrightarrows M_2)$ such that the diagram
\[
\begin{xy}
(15,0)*+{Y_1}="y1"; (35,0)*+{X_1}="x1"; (55,0)*+{M_1}="m1",%
(15,-15)*+{Y_2}="y2"; (35,-15)*+{X_2}="x2"; (55,-15)*+{M_2}="m2";%
{\ar "y1";"x1"}; {\ar "y2";"x2"};%
{\ar^{F_Y} "y1";"y2"}; {\ar^{F_X} "x1";"x2"};%
{\ar^{F_X=F_Y} "m1";"m2"};
{\ar@<0.5ex> "x1";"m1"}; {\ar@<-0.5ex> "x1";"m1"};%
{\ar@<0.5ex> "x2";"m2"}; {\ar@<-0.5ex> "x2";"m2"};%
\end{xy}
\]
commutes. Two groupoid extensions $Y_i\rightarrow
X_i\rightrightarrows M_i$, $i=1,2$, are Morita equivalent if there
is a groupoid extension $Y\rightarrow X\rightrightarrows M$ 
with Morita morphisms from $Y\rightarrow X\rightrightarrows M$ to
both groupoid extensions.

An isomorphism class of $G$-gerbes over an orbifold $\B$ is
determined by a Morita equivalence class of $G$-groupoid extensions.
For example, if $Q$ is a finite group, a $G$-gerbe over an orbifold
$[pt/Q]$ is represented by a group extension
$1\rightarrow G\rightarrow H\rightarrow Q\rightarrow 1.$ 
One easily checks that two group extensions are Morita equivalent,
as defined above, if and only if they are isomorphic group
extensions (see, e.g., \cite{ro} for more discussions on group
extensions). So isomorphism classes of $G$-gerbes over $[pt/Q]$ are
in one-to-one correspondence with isomorphism classes of group
extensions of $Q$ by $G$.

\subsection{$\calg$-sheaves and modules of the groupoid algebra}\label{subsec:algebra}
\label{subsec:gpd-algebra} An approach to sheaf theory on orbifolds
via orbifold groupoids is explained in \cite{mo}. Let $\calg$ be a
proper \'etale groupoid representing an orbifold
$\X=[\calg_0/\calg]$. Denote by $\pi: \calg_0\to \X$ the projection
from $\calg_0$ to $\X$. A $\calg$-sheaf is a sheaf $\cals$ on
$\calg_0$ with a $\calg$ action, i.e., for any $g\in\calg$, $g:x\to
y$ induces a morphism $\hat{g}$ on stalks from $\cals_y$ to
$\cals_x$ satisfying $\widehat{gh}=\hat{g}\circ \hat{h}$. $\calg$-sheaves of abelian groups form an abelian
category $\operatorname{Sh}(\calg)$. A section $\xi$ of a
$\calg$-sheaf is called invariant if $\hat{g}(\xi_y) =\xi_x$ for any
$g:x\to y$. The functor
\[
\Gamma_{\text{inv}}: \Sh(\calg)\ni \cals\mapsto \{\text{invariant
sections of }\cals\}
\]
is a left exact functor from $\Sh(\calg)$ to the category
$\operatorname{Ab}$ of abelian groups. The right derived functor of
$\Gamma_{\text{inv}}$ defines the groupoid cohomology groups
$H^\bullet(\calg, \cals)$. Compactly supported sections of a $\calg$-sheaf,
\[
\Gamma_{\text{\rm cpt}}: \Sh(\calg)\ni \cals\mapsto \{ \xi \in
\Gamma_{\text{inv}}(\cals): \text{supp}(\xi)\ \text{is compact}\}
\]
also defines a left exact functor from $\Sh(\calg)$ to the category
$\operatorname{Ab}$. Its right derived functor defines compactly
supported cohomology groups $H^\bullet_{\text{\rm cpt}}(\calg, \cals)$.

If $\widetilde{\cals}$ is a sheaf on the orbifold $\X$, then the
pullback $\pi^*(\widetilde{\cals})$ along the projection map $\pi$
defines a $\calg$-sheaf over $\calg_0$. Furthermore, if $\cals$ is a
$\calg$-sheaf, $\pi_!(\cals)$ defines a sheaf on $\X$, where
$\pi_!(\cals)_x:=H^\bullet_{\text{\rm cpt}}(x/\pi, \pi_x^{-1}(\cals))$
and $x/\pi$ is the subgroupoid of $\calg$ over $x$. These two maps
define natural functors between the category of $\calg$-sheaves and
the category of sheaves on $\X$. Let $\cala$ be a $\calg$-sheaf of
unital algebras. For $a\in \Gamma_{\text{\rm cpt}}(\calg, s^*\cala)$ and $g\in \calg$, denote the value of $a$ at $g$ by $[a](g)$. Define the convolution algebra $\cala\rtimes \calg$
to be the vector space $\Gamma_{\text{\rm cpt}}(\calg, s^*\cala)$ with
the product
\[
[a_1\ast a_2](g)=\sum_{g_1g_2=g}[a_1](g_1)g_1([a_2](g_2)),
\]
for any $a_1, a_2\in \Gamma_{\text{\rm cpt}}(\calg, s^*\cala)$ and
$g_1,g_2, g\in \calg$.  For example, when $\cala$ is the sheaf
$\calc^\infty$ of smooth functions on $\calg_0$, we recover the
standard groupoid algebra, which is $C_c^\infty(\calg)$ with the
multiplication
\[
(a_1\ast a_2)(g)=\sum_{g_1g_2=g}a_1(g_1)a_2(g_2),
\]
for any $a_1,a_2\in C^\infty_c(\calg)$ and $g_1, g_2, g\in \calg$.
We will always identify the groupoid algebra $C_c^\infty(\calg)$
with the crossed product algebra $\calc^\infty\rtimes \calg$.

If $\cals$ is a $\calg$-sheaf of vector spaces, then
$\Gamma_{\text{\rm cpt}}(\cals)$ is a module over the groupoid algebra
$C_c^\infty(\calg)$ via
\[
a\xi(x)=\sum_{t(g)=x}a(g)g(\xi(x)), \qquad a\in C^\infty_c(\calg),\
\xi\in \Gamma_{\text{\rm cpt}}(\cals),\ x\in \calg_0.
\]
We obtain an additive functor $\Gamma_{\text{\rm cpt}}$ from the
category of $\calg$-sheaves to the category of modules of
$C_c^\infty(\calg)$.

Let $c$ be a $U(1)$-valued groupoid 2-cocycle on $\calg$, i.e.,
$c:\calg\times_{\calg_0}\calg\to U(1)$ such that
\[
c(g_1, g_2)c(g_1g_2, g_3)=c(g_1, g_2g_3)c(g_2,g_3).
\]
A $c$-twisted $\calg$-sheaf is a sheaf over $\calg_0$ together with
a $\calg$ action such that for $g_1, g_2\in \calg$, the actions
$\hat{g}_1:\cals_{x_1}\to \cals_{x_2}$ and $\hat{g}_2:\cals_{x_2}\to
\cals_{x_3}$ satisfy $\hat{g}_1\circ
\hat{g}_2=c(g_1,g_2)\widehat{g_1g_2}.$ $c$-twisted $\calg$-sheaves form an additive category. Let $\X$ be
the orbifold represented by $\calg$. Then the cocycle $c$ defines a
$U(1)$-gerbe, which we still denote by $c$, on the orbifold $\X$. We
can consider $c$-twisted sheaves on $\X$ as in \cite{cal} and
\cite{Lieblich}. One can easily check that the functors $\pi^*$ and
$\pi_!$ define natural functors between the category of $c$-twisted
$\calg$-sheaves and the category of $c$-twisted sheaves on $\X$.

Given a locally constant $U(1)$-valued 2-cocycle $c$ on $\calg$ and
a $\calg$-sheaf $\cala$ of unital algebras, we can define a
$c$-twisted crossed product algebra $\cala\rtimes_c \calg$ to be the
space $\Gamma_{\text{\rm cpt}}(\calg, s^*\cala)$ with the product
defined by
\begin{equation}\label{eq:product}
[a_1\ast_c a_2]_g=\sum_{g_1g_2=g}c(g_1,
g_2)[a_1]_{g_1}g_1([a_2]_{g_2}), \qquad a_1,a_2\in
\Gamma_{\text{\rm cpt}}(\calg, s^*\cala),\ g_1, g_2, g\in \calg.
\end{equation}
When we take $\cala$ to be the sheaf $\calc^\infty$ of smooth
functions, we have defined the $c$-twisted groupoid algebra
$\calc^\infty\rtimes_c \calg$.  Like the case without the twist, we
have natural additive functors between the category of $c$-twisted
$\calg$-sheaves and category of modules of the $c$-twisted groupoid
$\calc^\infty\rtimes_c\calg$.
\begin{remark}
For $\calc^\infty$, the same formula in Eq.  (\ref{eq:product})
defines an associative algebra $\calc^\infty\rtimes_c\calg$ for any
general $U(1)$-valued 2-cocycle on $\calg$ even without the
``locally constant" assumption. If we change the cocycle $c$ by a
coboundary, then a direct computation shows that the $c$-twisted
crossed product algebras $ \calc^\infty\rtimes_c\calg$ is changed by
an isomorphism constructed with the coboundary. This shows that the
isomorphism class of the $c$-twisted groupoid algebras only depends
on the cohomology class of ${c}$ in $H^2(\calg, U(1))=H^2(\X,
U(1))=H^3(\X, \integers)$.
\end{remark}
\begin{remark}
A $U(1)$-valued 2-cocycle on $\calg$ can be used to define an $S^1$-extension $\Gamma$ of the groupoid $\calg$,
\[
S^1\rightarrow \Gamma \rightarrow \calg\rightrightarrows \calg_0.
\]
The group $U(1)=S^1$ acts on the groupoid algebra of $\Gamma$ by
algebra automorphisms. Following \cite{tu-la-xu}, we consider the
$S^1$-invariant part $C_c^\infty(\Gamma)^{S^1}$, which is a subalgebra
of the groupoid algebra $C_c^\infty(\Gamma)$. By direct
computations, we can easily identify $C_c^\infty(\Gamma)^{S^1}$ with
the $c$-twisted groupoid algebra $\calc^\infty\rtimes_c \calg$.
\end{remark}

Assume that there is a $\calg$-invariant symplectic form on
$\calg_0$. Then via Fedosov's method, the first author in \cite{ta}
constructed a $\calg$-sheaf $\cala^{((\hbar))}$ of deformation
quantization of $\calg_0$. The crossed product
$\cala^{((\hbar))}\rtimes \calg$ is a deformation of the groupoid
algebra $\calc^\infty\rtimes \calg$. When $c$ is locally constant,
one can use Eqn. (\ref{eq:product}) to define a $c$-twisted
$\cala^{((\hbar))}\rtimes _c \calg$ deformed groupoid algebra. More
concretely, when $c$ is locally constant, one has a natural flat
connection on the associated $S^1$ extension $\Gamma\rightarrow
\calg\rightrightarrows \calg_0$. Such a flat connection is
sufficient to define a $U(1)$-invariant deformation
$\cala^{((\hbar))}(\Gamma)$ of the groupoid algebra
$C^{\infty}_c(\Gamma)$ by \cite[Sec.  3]{ta}. It is not hard to
check that the $S^1$-invariant component
$\cala^{((\hbar))}(\Gamma)^{S^1}$ defines a deformation of the
twisted groupoid algebra $C_c^\infty(\Gamma)^{S^1}\cong
\calc^\infty\rtimes_c \calg$, whose product is written out like Eq.
(\ref{eq:product}).

\subsection{Hochschild and cyclic cohomology}\label{subsec:hoch}

Let $A$ be a unital algebra over the field $\complex$ or
$\complex((\hbar))$. In this paper, following \cite{pptt}, we will
always work with bornological vector spaces, bornological tensor
products between bornological vector spaces, bornological algebras,
and modules of bornological algebras. As is explained in \cite{me}
and \cite[Appendix A]{pptt}, the category of modules of a
bornological algebra has enough projectives, and one can apply
standard homological tools to define Hochschild and cyclic
(co)homology of a bornological algebra. Let $A^{e}$ be the
enveloping algebra of $A$ defined\footnote{In this paper, without
causing extra confusion, we will use $\hat{\otimes}$ to denote the
bornological tensor product between bornological spaces.} by
$A^e:=A\hat{\otimes} A^{op}$. For an $A$-bimodule $M$, define
the Hochschild homology $HH_\bullet(A, M)$ and cohomology
$HH^\bullet(A, M)$ by
\[
HH_\bullet(A,M):=\operatorname{Tor}_\bullet ^{A^e}(A, M),\qquad
HH^\bullet(A,M):=\operatorname{Ext}^\bullet _{A^e}(A,M).
\]
We point out that the Yoneda product defines a natural product
structure on $HH^\bullet(A,A)$. Therefore, $HH^\bullet(A,A)$ is an
associative algebra.

If we consider the Bar-resolution of $A$ as an $A$-bimodule, then we
can write down explicit complexes $(C_\bullet(A, M), b)$ and
$(C^\bullet(A, M), d)$ computing $HH_\bullet(A,M)$ and
$HH^\bullet(A, M)$. Define $C_\bullet(A, M)=M\hat{\otimes}
A^{\hat{\otimes} \bullet}$ together with $b:C_\bullet(A,
M)\rightarrow C_{\bullet-1}(A,M)$ by
\[
b(m\otimes a_{1}\otimes\cdots\otimes a_k)=ma_1\otimes a_2\otimes
\cdots a_k-m\otimes a_1a_2\otimes \cdots
a_k+\cdots+(-1)^ka_km\otimes a_1\otimes \cdots a_{k-1}.
\]
Define $C^\bullet(A,M):=\operatorname{Hom}(A^{\hat{\otimes}\bullet},
M)$ with $d:C^\bullet(A,M)\rightarrow C^{\bullet+1}(A,M)$ by
\[
d\varphi(a_1,\cdots, a_{k+1}):=a_1\varphi(a_2, \cdots,
a_{k+1})-\varphi(a_1a_2, \cdots, a_{k+2})+\cdots
+(-1)^{k+1}\varphi(a_1,\cdots, a_k)a_{k+1}.
\]

Let $\mathbb{K}$ be a field. The cyclic homology of an algebra $A$
over $\mathbb{K}$ is computed by Connes' $b$-$B$ bicomplex. For
simplicity, we will consider the normalized complex
$\overline{C}_\bullet(A)=A\hat{\otimes}
(A/\mathbb{K})^{\hat{\otimes} \bullet}$ and
$\overline{C}^\bullet(A):=\operatorname{Hom}(\overline{C}_\bullet(A),
\mathbb{K})$ with the same differentials $b$ and $d$. Define
$\overline{B}:\overline{C}_{\bullet}(A)\to
\overline{C}_{\bullet+1}(A)$ by
\[
\overline{B}(a_0\otimes a_1\otimes \cdots \otimes
a_k):=\sum_{i}(-1)^{ik}1\otimes a_i\otimes \cdots \otimes a_k
\otimes a_0\otimes \cdots \otimes a_{i-1}.
\]
We can form the $(b,\overline{B})$-bicomplex
\[
\xymatrix{
  \ldots \ar[d]_{b} & \ldots\ar[d]_{b} &\ldots\ar[d]_{b}\\
  \overline{C}_2(A)\ar[d]_{b}  &
  \overline{C}_1(A)\ar[d]_{b} \ar[l]^{\overline{B}} &
  \overline{C}_0(A) \ar[l]^{\overline{B}} \\
  \overline{C}_1(A) \ar[d]_{b} & \overline{C}_0(A\ar[l]^{\overline{B}})\\
  \overline{C}_0(A).
  }
\]
The homology of the total complex is equal to the cyclic homology of
$A$. We can define the (periodic) cyclic cohomology of $A$ using a similar
bicomplex.

The (periodic) cyclic (co)homology of the groupoid algebra $C_c^\infty(\calg)$
for a proper \'etale groupoid $\calg$ was first computed by
Brylinski and Nistor \cite{br-ni} (see also Crainic \cite{crainic})
to be
\[
HP_\bullet(C_c^\infty(\calg))=H^\bullet_{\text{\rm cpt}}(I\X).
\]
The cohomology groups $H^\bullet_{\text{\rm cpt}}(-)$ (and $H^\bullet_{\text{\rm cpt}}(-,c)$) are compactly supported ($c$-twisted) cohomology groups of an orbifold defined by compactly supported differential forms. If $c$ is a $U(1)$-valued 2-cocycle on $\calg$, the cyclic
(co)homology of the $c$-twisted groupoid algebra was computed by Tu
and Xu \cite{tu-xu} to be
\[
HP_\bullet(\calc^\infty\rtimes_c \calg)=H^\bullet_{\text{\rm cpt}}(I\X,
c).
\]

When $\calg_0$ is equipped with a $\calg$-invariant symplectic form,
the Hochschild (co)homology and cyclic (co)homology of the algebra
$\cala^{((\hbar))}\rtimes \calg$ were computed in the works
\cite{nppt} and \cite{pptt} to be
\[
HH^\bullet(\cala^{((\hbar))}\rtimes \calg,\cala^{((\hbar))}\rtimes
\calg)=H^{\bullet-\ell}(I\X)((\hbar)),\qquad
HP_\bullet(\cala^{((\hbar))}\rtimes \calg)=H^\bullet_{
\text{\rm cpt}}(I\X)((\hbar)).
\]

\subsection{Morita equivalence}
Two algebras $A$ and $B$ are Morita equivalent if there is an
additive equivalence between the category of $A$ modules and the
category of $B$ modules. When both $A$ and $B$ are commutative, $A$ is
Morita equivalent to $B$ if and only if $A$ is isomorphic to $B$.
Such a concept becomes very interesting in the study of
noncommutative algebras. For example, for any positive integer $n$,
the algebra of $n\times n$ matrices over a field $\mathbb{K}$ is
Morita equivalent to the field $\mathbb{K}$.

The additive equivalence between the categories of modules in the
definition of Morita equivalence can be realized by $A$-$B$
bimodules. Two unital algebras $A$ and $B$ are Morita equivalent if there
is an $A$-$B$ bimodule $M$ and a $B$-$A$ bimodule $N$ such that as an
$A$-$A$ bimodule $M\otimes_B N$ is isomorphic to $A$ and as a
$B$-$B$ bimodule $N\otimes_A M$ is isomorphic to $B$.

The equivalence functor preserves exact sequences and projective
modules. Hence, Morita equivalent algebras have isomorphic
Hochschild and cyclic (co)homologies and also $K$-groups. See
\cite{loday}.

Morita theory is generalized to bornological algebras
\cite[Appendix]{pptt} with similar results extended. For
examples, Morita equivalent bornological algebras have isomorphic
Hochschild and cyclic (co)homologies.

\section{Group extension and Mackey machine}\label{sec:gps_extenstion_mackey_machine}
In this section, we study the local model of the duality theorems. Namely, we look at the case when there is
no space direction, i.e. a $G$-gerbe over an orbifold
$[pt/Q]=BQ $, where $G$ and $Q$ are both finite groups. As explained
in Sec.  \ref{subsec:gerbe}, such a gerbe can be presented by a
group extension. Different group extensions present the same gerbe over $[pt/Q]$ if and only if they are isomorphic. We refer the readers to \cite{la-st-xu} for more details.
\subsection{Group algebra}\label{subsec:groupalgebra}
The results in this subsection are standard in group theory. We
collect them here for later use. Let $G,H,Q$ be finite groups that
fit into the following exact sequence,
\begin{equation}\label{eq:extension}
1\longrightarrow G\stackrel{i}{\longrightarrow} H
\stackrel{j}{\longrightarrow} Q\longrightarrow 1.
\end{equation}
We would like to understand the structure of the group algebra $\complex
H$ by using the information of $G$,
$Q$, and the above exact sequence (\ref{eq:extension}). The study of
representations of the group $H$ in terms of representations of $G$
and $Q$ goes back to Frobenius, Schur, and Clifford \cite{cl}. In the case of
continuous groups, this approach is often called the {\em Mackey
machine}. We shall adapt this terminology in this paper.

We start by choosing a section $s:Q\rightarrow H$ of the group
homomorphism $j$ in the exact sequence (\ref{eq:extension}) such
that $j\circ s=id$, and $s(1)=1$. Since $G$ and $Q$ are finite
groups, such a section $s$ always exists. In general, there are many
possible choices of $s$, which later lead to equivalent structures.

It is important to point out that if the extension does not split,
i.e., $H$ is not isomorphic to a semi-direct product $G\rtimes Q$ as
a group, the section $s$ fails to be a group homomorphism. The
failure of $s$ being a group homomorphism is measured by
$$\tau:Q\times Q\rightarrow G, \quad \tau(q_1,
q_2):=s(q_1)s(q_2)s(q_1q_2)^{-1}.$$ Since
$j(\tau(q_1,q_2))=j(s(q_1))j(s(q_2))j(s(q_1q_2)^{-1})=q_1q_2(q_1q_2)^{-1}=1$,
we get $\tau(q_1,q_2)\in \ker(j)=G$.

We rewrite the defining property of $\tau$ as
\begin{equation}
\label{eq:tau-def} s(q_1)s(q_2)=\tau(q_1,q_2)s(q_1q_2).
\end{equation}
Applying the above equation to
$(s(q_1)s(q_2))s(q_3)=s(q_1)(s(q_2)s(q_3))$ yields
\begin{equation}\label{eq:tau-cocycle}
\tau(q_1,q_2)\tau(q_1q_2,q_3)=s(q_1)\tau(q_2,q_3)s(q_1)^{-1}\tau(q_1,q_2q_3).
\end{equation}
Note that Eq. (\ref{eq:tau-def}) and (\ref{eq:tau-cocycle})
define a $G$-gerbe $BH$ over $BQ$.

With the above choice of the section $s$, the group $H$ is
isomorphic to $G\times Q$ as a set via the map $$\alpha:
H\rightarrow G\times Q,\quad \alpha(h):=(hs(j(h))^{-1}, j(h)).$$ The
inverse of $\alpha$ is given by sending $(g,q)$ to $i(g)s(q)$. Via
the isomorphism $\alpha$, the group structure on $H$ defines a new
group structure $\cdot$ on $G\times Q$ given by
\begin{equation}\label{eq:twisted-prod-gp}
\begin{split}
&(g_1,q_1)\cdot (g_2,q_2)=\alpha\Big(\alpha^{-1}\big((g_1,q_1)\big)\alpha^{-1}\big((g_2,q_2)\big)\Big)=\alpha(i(g_1)s(q_1)i(g_2)s(q_2))\\
=&\alpha(i(g_1)s(q_1)i(g_2)s(q_1)^{-1}\tau(q_1,q_2)s(q_1q_2))=(g_1\Ad_{s(q_1)}(g_2)\tau(q_1,q_2),q_1q_2),
\end{split}
\end{equation}
where we use $\Ad_{h}(\cdot)$ to denote the conjugation action of an
element $h\in H$ on $G$ as a normal subgroup of $H$. Let
$G\rtimes_{s,\tau}Q$ denote the set $G\times Q$ with the above
group structure (\ref{eq:twisted-prod-gp}) induced by $H$.
Then $\alpha$ is a group isomorphism from $H$ to
$G\rtimes_{s,\tau}Q$. Note that different choices of the section $s$
define isomorphic group structures on $G\times Q$.

The group isomorphism $\alpha$ defines a natural isomorphism of
group algebras $$\alpha: \complex H \overset{\simeq}{\longrightarrow} \complex (G\rtimes_{s,\tau}Q).$$
With the data $s$ and $\tau$, the group algebra of
$G\rtimes_{s,\tau}Q$ can be written as a twisted crossed product
algebra $\complex G\rtimes _{s,\tau}Q$, where an element $q\in Q$
acts on $\complex G$ via conjugation by $s(q)$, and
the failure of the action to be a group homomorphism is governed by
$\tau$.

To have a better understanding of the group algebra $\complex H$, we
look at the group algebra $\complex G$. As a finite group, up to
isomorphisms $G$ has only finitely many irreducible unitary
representations. Let $\widehat{G}$ be the set\footnote{If $G$ is abelian, $\widehat{G}$ is the Pontryagin dual group of $G$. In this paper we only need $\widehat{G}$ to be a set.} of isomorphism classes
of irreducible unitary $G$ representations. Furthermore, for every
element $[\rho]$ in $\widehat{G}$, we {\em fix} a choice of an
irreducible representation in the class $[\rho]$ denoted by
$\rho:G\to \End(V_\rho),$ where $V_\rho$ is a certain finite
dimensional vector space. It is well-known (see, e.g.,
\cite[Proposition 3.29]{fu-ha}) that the group algebra $\complex G$
is isomorphic to a direct sum of matrix algebras $\oplus_{[\rho]\in
\widehat{G}}\End(V_\rho)$, via the natural map 
$$\beta: \complex G\overset{\simeq}{\longrightarrow} \oplus_{[\rho]\in \widehat{G}}\End(V_\rho), \quad g\mapsto (\rho(g))_{[\rho]\in \widehat{G}}.$$

Our goal in the rest of this subsection is to replace the group
algebra $\complex G$ in the twisted crossed product $\complex
G\rtimes _{s,\tau}Q$ by the matrix algebras $\oplus_{[\rho]\in
\widehat{G}}\End(V_\rho)$. We start with some preparations.

Let $\rho: G\to \End(V_\rho)$ be an irreducible $G$ representation.
For $q\in Q$, consider the $G$ representation $\tilde{\rho}$ defined
by $G\ni g\mapsto \rho(\Ad_{s(q)}(g)).$ It is easy to see that
$\tilde{\rho}$ is again an irreducible representation of $G$. If we
consider isomorphism classes of irreducible $G$ representations, the
above construction defines a right $Q$-action on $\widehat{G}$;
namely, $q\in Q$ sends the class $[\rho]\in \widehat{G}$ to the
class $[\tilde{\rho}]\in \widehat{G}$. This is a well-defined
$Q$-action because conjugations by elements in $G$ preserve
isomorphism classes of irreducible $G$-representations. For
convenience, we will write this right action as a left action. The
image of the isomorphism class $[\rho]\in \widehat{G}$ under the
action by $q$ will be denoted by $q([\rho])$. By abuse of notation,
the chosen irreducible $G$-representation that represents the class
$q([\rho])$ will also be denoted by $q([\rho]): G\to
\End(V_{q([\rho])})$.

Since the representation $q([\rho]): G\to \End(V_{q([\rho])})$ is,
by definition, equivalent to the representation $\tilde{\rho}: G\to
\End(V_\rho)$ defined by $g\mapsto \rho(\Ad_{s(q)}(g))$, there
exists an isomorphism of vector spaces,
$$T_q^{[\rho]}: V_{\rho}\to V_{q([\rho])},\ 
{\rm{such\ that}}\ \rho(\Ad_{s(q)}(g))={T^{[\rho]}_q}^{-1}\circ q([\rho])(g)\circ T^{[\rho]}_q.$$ To simplify our computation, we will always fix $T^{[\rho]}_1$ to be the identity map on $V_\rho$.

We compute $\rho(\Ad_{s(q_1)}(\Ad_{s(q_2)}(g)))$. Using Eq.  (\ref{eq:tau-cocycle}), we prove by an easy computation
\[
\begin{split}
&{T^{[\rho]}_{q_1}}^{-1}\circ {T^{q_1([\rho])}_{q_2}}^{-1}\circ q_1q_2([\rho])(g)\circ T^{q_1([\rho])}_{q_2}\circ T^{[\rho]}_{q_1}\\
=&{T^{[\rho]}_{q_1}}^{-1}\circ {T^{q_1([\rho])}_{q_2}}^{-1}\circ q_2(q_1([\rho]))(g)\circ T^{q_1([\rho])}_{q_2}\circ T^{[\rho]}_{q_1}\\
=&{T^{[\rho]}_{q_1}}^{-1}\circ q_1([\rho])(\Ad_{s(q_2)}(g))\circ T^{[\rho]}_{q_1}\\
=&\rho(\Ad_{s(q_1)}(\Ad_{s(q_2)}(g)))
=\rho(\Ad_{s(q_1)s(q_2)}(g))
=\rho(\Ad_{\tau(q_1,q_2)s(q_1q_2)}(g))\\
=&\rho(\tau(q_1,q_2))\circ \rho(\Ad_{s(q_1q_2)}(g))\circ
\rho(\tau(q_1,q_2))^{-1}\\
=&\rho(\tau(q_1,q_2))\circ {T^{[\rho]}_{q_1q_2}}^{-1}\circ
q_1q_2(\rho)(g)\circ T^{[\rho]}_{q_1q_2}\circ
\rho(\tau(q_1,q_2))^{-1}.
\end{split}
\]
It follows that $T^{q_1([\rho])}_{q_2}\circ T^{[\rho]}_{q_1}\circ
\rho(\tau(q_1,q_2))\circ {T^{[\rho]}_{q_1q_2}}^{-1}$, as a map in
$\End(V_{q_1q_2([ \rho])})$, commutes with the representation
$q_1q_2([\rho])$. By Schur's lemma, there must be a constant
$c^{[\rho]}(q_1, q_2)$ such that $T^{q_1([\rho])}_{q_2}\circ
T^{[\rho]}_{q_1}\circ \rho(\tau(q_1,q_2))\circ
{T^{[\rho]}_{q_1q_2}}^{-1}$ is $c^{[\rho]}(q_1, q_2)$ times the
identity map. In other words,
\begin{equation}
\label{eq:dfn-c}T^{q_1([\rho])}_{q_2}\circ
T^{[\rho]}_{q_1}=c^{[\rho]}(q_1,q_2)T^{[\rho]}_{q_1q_2}\rho(\tau(q_1,q_2))^{-1}.
\end{equation}
When we require the family $\{V_\rho\}$ to consist of unitary representations, the operator $T^{[\rho]}_q$ can also be chosen
to be unitary. Therefore, $c^{[\rho]}(q_1,q_2)$ actually takes value
in $U(1)$.
\begin{prop}
\label{prop:u(1)-cocycle}The function
$$c:\widehat{G}\times Q\times Q\to U(1),\quad ([\rho], q_1,q_2)\mapsto c^{[\rho]}(q_1,q_2)$$ is a 2-cocycle on the
groupoid $\widehat{G}\rtimes Q$ such that
$c^{[\rho]}(1,q)=c^{[\rho]}(q,1)=1$ for any $[\rho]\in \widehat{G},
q\in Q$. The cohomology class defined by $c$ is independent of the
choices of the section $s$ and the operator $T^{[\rho]}_q$.
\end{prop}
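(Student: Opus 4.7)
The plan is to establish the cocycle identity by computing the triple intertwiner $T^{q_1q_2([\rho])}_{q_3}\circ T^{q_1([\rho])}_{q_2}\circ T^{[\rho]}_{q_1}\colon V_\rho\to V_{q_1q_2q_3([\rho])}$ in two ways and equating them. Grouping the last two factors first via (\ref{eq:dfn-c}) with parameters $(q_2,q_3)$ based at $q_1([\rho])$, and then applying (\ref{eq:dfn-c}) again with $(q_1,q_2q_3)$ after commuting $T^{[\rho]}_{q_1}$ past the representation factor using $q_1([\rho])(g)\circ T^{[\rho]}_{q_1}=T^{[\rho]}_{q_1}\circ\rho(\Ad_{s(q_1)}(g))$, produces
\[
c^{q_1([\rho])}(q_2,q_3)\,c^{[\rho]}(q_1,q_2q_3)\,T^{[\rho]}_{q_1q_2q_3}\,\rho\bigl(\Ad_{s(q_1)}(\tau(q_2,q_3))\,\tau(q_1,q_2q_3)\bigr)^{-1}.
\]
The alternative grouping, applying (\ref{eq:dfn-c}) first to $(q_1,q_2)$ and then to $(q_1q_2,q_3)$, gives
\[
c^{[\rho]}(q_1,q_2)\,c^{[\rho]}(q_1q_2,q_3)\,T^{[\rho]}_{q_1q_2q_3}\,\rho\bigl(\tau(q_1,q_2)\,\tau(q_1q_2,q_3)\bigr)^{-1}.
\]
Equation (\ref{eq:tau-cocycle}) asserts precisely that the two group elements inside $\rho(\cdot)^{-1}$ coincide, so the $T^{[\rho]}_{q_1q_2q_3}$ and $\rho$-factors cancel, yielding
\[
c^{[\rho]}(q_1,q_2)\,c^{[\rho]}(q_1q_2,q_3)=c^{[\rho]}(q_1,q_2q_3)\,c^{q_1([\rho])}(q_2,q_3),
\]
which is the $2$-cocycle identity on the transformation groupoid $\widehat{G}\rtimes Q$. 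The normalizations $c^{[\rho]}(1,q)=c^{[\rho]}(q,1)=1$ follow by specializing (\ref{eq:dfn-c}) to $q_1=1$ or $q_2=1$ and using $T^{[\rho]}_1=\mathrm{Id}$ together with $\tau(1,q)=\tau(q,1)=1$.

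For independence of the cohomology class from the choice of intertwiners, Schur's lemma forces any alternative $\tilde T^{[\rho]}_q$ to be of the form $\lambda^{[\rho]}(q)\,T^{[\rho]}_q$ with $\lambda^{[\rho]}(q)\in U(1)$, and direct substitution into (\ref{eq:dfn-c}) gives
\[
\tilde c^{[\rho]}(q_1,q_2)=\lambda^{[\rho]}(q_1)\,\lambda^{q_1([\rho])}(q_2)\,\lambda^{[\rho]}(q_1q_2)^{-1}\,c^{[\rho]}(q_1,q_2),
\]
exhibiting $\tilde c/c$ as the coboundary of the $1$-cochain $\lambda$ on $\widehat{G}\rtimes Q$. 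For independence from $s$, any other section has the form $\tilde s(q)=\mu(q)s(q)$ for some $\mu\colon Q\to G$ with $\mu(1)=1$; a direct computation yields $\tilde\tau(q_1,q_2)=\mu(q_1)\,\Ad_{s(q_1)}(\mu(q_2))\,\tau(q_1,q_2)\,\mu(q_1q_2)^{-1}$, while the induced $Q$-action on $\widehat{G}$ is unchanged, as $\Ad_{\mu(q)}$ is inner. Setting $\tilde T^{[\rho]}_q:=T^{[\rho]}_q\circ\rho(\mu(q))^{-1}$ gives a valid family of intertwiners for the new section, and substituting into (\ref{eq:dfn-c}) one finds that the new cocycle equals $c$ exactly; any other choice differs by $U(1)$-scalars and hence is cohomologous to $c$ by the preceding paragraph.

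The main obstacle is the bookkeeping in the first step: one must carefully track how each application of (\ref{eq:dfn-c}) produces a $\rho(\tau)$ tail and how commuting $T^{[\rho]}_{q_1}$ past such a tail introduces an $\Ad_{s(q_1)}$-twist, so that the two $\tau$-words produced by the two groupings match under (\ref{eq:tau-cocycle}) (which is exactly the data specifying the gerbe $BH\to BQ$). Once this calculation is organized, the normalization and the two independence statements are routine consequences of Schur's lemma and the definition of a coboundary for groupoid cohomology.
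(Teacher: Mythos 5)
Your proposal is correct and follows essentially the same route as the paper: the cocycle identity (\ref{eq:cocycle_condition_c}) is obtained by evaluating the associativity of $T^{q_1q_2([\rho])}_{q_3}\circ T^{q_1([\rho])}_{q_2}\circ T^{[\rho]}_{q_1}$ in the two groupings via (\ref{eq:dfn-c}) and matching the $\tau$-words through (\ref{eq:tau-cocycle}), with the normalization coming from $\tau(1,q)=\tau(q,1)=1$ and $T^{[\rho]}_1=\mathrm{Id}$. Your treatment of independence of $s$ and $T^{[\rho]}_q$ (the coboundary $\delta\lambda$ from rescaling the intertwiners, and the explicit choice $\tilde T^{[\rho]}_q=T^{[\rho]}_q\circ\rho(\mu(q))^{-1}$ making the cocycle literally unchanged under a change of section) is simply a fully written-out version of the computation the paper only sketches, and it is correct.
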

\begin{proof}
Consider the composition of maps
$T^{q_1q_2([\rho])}_{q_3}\circ T^{q_1([\rho])}_{q_2}\circ
T^{[\rho]}_{q_1}$. By associativity, we have
\begin{equation}\label{eq:T-ass}
(T^{q_1q_2([\rho])}_{q_3}\circ T^{q_1([\rho])}_{q_2})\circ
T^{[\rho]}_{q_1}=T^{q_1q_2([\rho])}_{q_3}\circ
(T^{q_1([\rho])}_{q_2}\circ T^{[\rho]}_{q_1}).
\end{equation}

Using Eq.  (\ref{eq:dfn-c}), we compute the left hand side of
(\ref{eq:T-ass}) to be
\[
\begin{split}
&c^{q_1([\rho])}(q_2,q_3)T^{q_1([\rho])}_{q_2q_3}q_1([\rho])(\tau(q_2,q_3))^{-1}\circ
T^{[\rho]}_{q_1}\\
=&c^{q_1([\rho])}(q_2,q_3)T^{q_1([\rho])}_{q_2q_3}\circ
T^{[\rho]}_{q_1}\circ
{T^{[\rho]}_{q_1}}^{-1}q_1([\rho])(\tau(q_2,q_3))^{-1}\circ
T^{[\rho]}_{q_1}\\
=&c^{q_1([\rho])}(q_2,q_3)c^{[\rho]}(q_1, q_2q_3)
T^{[\rho]}_{q_1q_2q_3} \rho(\tau(q_1,q_2q_3))^{-1}\circ
{T^{[\rho]}_{q_1}}^{-1}q_1([\rho])(\tau(q_2,q_3))^{-1}\circ
T^{[\rho]}_{q_1}\\
=& c^{q_1([\rho])}(q_2,q_3)c^{[\rho]}(q_1, q_2q_3)
T^{[\rho]}_{q_1q_2q_3}
\rho(\tau(q_1,q_2q_3))^{-1}\rho(\Ad_{s(q_1)}(\tau(q_2,q_3)^{-1}))\\
=& c^{q_1([\rho])}(q_2,q_3)c^{[\rho]}(q_1, q_2q_3)
T^{[\rho]}_{q_1q_2q_3}
\rho(\Ad_{s(q_1)}(\tau(q_2,q_3))\tau(q_1,q_2q_3))^{-1}.
\end{split}
\]

Similarly, using Eq.  (\ref{eq:dfn-c}), we compute the right hand side of Eq.  (\ref{eq:T-ass}),
\[
\begin{split}
&T^{q_1q_2([\rho])}_{q_3}\circ
T^{[\rho]}_{q_1q_2}c^{[\rho]}(q_1,q_2)\rho(\tau(q_1,q_2))^{-1}\\
=&T^{[\rho]}_{q_1q_2q_3}c^{[\rho]}(q_1q_2,q_3)c^{[\rho]}(q_1,q_2)\rho(\tau(q_1q_2,q_3))^{-1}\rho(\tau(q_1,q_2))^{-1}\\
=&T^{[\rho]}_{q_1q_2q_3}c^{[\rho]}(q_1q_2,q_3)c^{[\rho]}(q_1,q_2)\rho(\tau(q_1,q_2)\tau(q_1q_2,q_3))^{-1}.
\end{split}
\]

From the above computations and Eq.  (\ref{eq:tau-cocycle}), we
obtain the cocycle equation
\begin{equation}\label{eq:cocycle_condition_c}
c^{q_1([\rho])}(q_2,q_3)c^{[\rho]}(q_1,
q_2q_3)=c^{[\rho]}(q_1q_2,q_3)c^{[\rho]}(q_1,q_2).
\end{equation}

Note that by Eq.  (\ref{eq:tau-def}) and  $s(1)=1$, we have $\tau(q,1)=\tau(1,q)=1$. As $T^{[\rho]}_{1}=1$ for any $[\rho]\in
\widehat{G}$, we conclude from Eq.  (\ref{eq:dfn-c}) that
$c^{[\rho]}(1,q)=c^{[\rho]}(q,1)=1$.

One easily computes that changing the section $s$ to a new section
$s'$ amounts to changing the isomorphism $T_q^{[\rho]}$ to
$T^{[\rho]}_{q}\circ\rho(s'(q)s(q)^{-1})$. Schur's lemma implies that different choices of the isomorphism $T^{[\rho]}_q$ are
different by a scalar in $U(1)$. By straightforward computations and
the above observations, one can show that, up to a coboundary, the
cocycle $c$ is independent of the choices of the section $s$ and
$T^{[\rho]}_q$.
\end{proof}

Consider
the space $\oplus_{[\rho]\in \widehat{G}}\End(V_\rho)\otimes
\complex Q.$ This space is spanned by elements of the form $(x_\rho,
q)$, where $x_\rho$ is an element in $\End(V_\rho)$ with $[\rho]\in
\widehat{G}$ and $q\in Q$. Define the product $\circ$ of
$(x_{\rho_1},q_1)$ and $(\tilde{x}_{\rho_2},q_2)$ by
\[
(x_{\rho_1},q_1)\circ (\tilde{x}_{\rho_2},q_2):=
\left\{
\begin{array}{ll}
(x_{\rho_1}{T^{[\rho_1]}_{q_1}}^{-1}\tilde{x}_{q_1([\rho_1])}T^{[\rho_1]}_{q_1}\rho_1(\tau(q_1,q_2)),
q_1q_2),&
\text{if}\ [\rho_2]=q_1([\rho_1]),\\
0&\text{otherwise}.
\end{array}
\right.
\]
We check the associativity of the product $\circ$. It is sufficient
to check the identity
\[
\begin{split}
&[(x_\rho,q)\circ (y_{q([\rho])}, q')]\circ (z_{qq'([\rho])}, q'')
=(x_\rho
{T^{[\rho]}_q}^{-1}y_{q([\rho])}T^{[\rho]}_q\rho(\tau(q,q')),qq')\circ (z_{qq'([\rho])}, q'')\\
=&(x_\rho{T^{[\rho]}_q}^{-1}y_{q([\rho])}T^{[\rho]}_q\rho(\tau(q,q'))
{T^{[\rho]}_{qq'}}^{-1}z_{qq'([\rho])}T^{[\rho]}_{qq'}\rho(\tau(qq',q'')),qq'q'')\\
=&(x_\rho{T^{[\rho]}_q}^{-1}y_{q([\rho])}{T^{q'([\rho])}_{q'}}^{-1}
c^{[\rho]}(q,q')z_{qq'([\rho])}T^{[\rho]}_{qq'}\rho(\tau(qq',q'')),qq'q'')\\
=&(x_\rho{T^{[\rho]}_q}^{-1}y_{q([\rho])}{T^{q'([\rho])}_{q'}}^{-1}z_{qq'([\rho])}
T^{q([\rho])}_{q'}T^{[\rho]}_{q}\rho(\tau(q,q')\tau(qq',q'')),
qq'q'')\\
=&(x_\rho{T^{[\rho]}_q}^{-1}y_{q([\rho])}{T^{q'([\rho])}_{q'}}^{-1}z_{qq'([\rho])}
T^{q([\rho])}_{q'}q([\rho])(\tau(q',q''))T^{[\rho]}_q\rho(\tau(q,q'q'')),
qq'q'')\\
=&(x_\rho,q)\circ[(y_{q([\rho])},q')\circ (z_{qq'([\rho])},q'')],
\end{split}
\]
where in the second and third equalities, we used Eq. 
(\ref{eq:dfn-c}) and the fact that $c$ takes value in $U(1)$, and in
the fourth equality, we used the cocycle condition
(\ref{eq:tau-cocycle}) for $\tau$.

The space $\oplus_{[\rho]\in \widehat{G}}\End(V_\rho)\otimes
\complex Q$ with the product structure introduced above will be
denoted by $\oplus_{[\rho]}\End(V_{\rho})\rtimes_{T,\tau}Q$, and
will be called the twisted crossed product algebra.

\begin{prop}\label{prop:matrix-coeff-algebra}
The map $\chi: G\times Q\ni (g,q)\mapsto \sum_{[\rho]\in
\widehat{G}}(\rho(g),q)$ defines an algebra isomorphism from the
group algebra $\complex G\rtimes_{s,\tau}Q$ to the twisted crossed
product algebra $\oplus_{[\rho]}\End(V_{\rho})\rtimes_{T,\tau}Q$.
Hence, $\chi\circ\alpha:\complex H\to
\oplus_{[\rho]}\End(V_{\rho})\rtimes_{T,\tau}Q$ is an algebra
isomorphism.
\end{prop}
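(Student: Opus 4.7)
The plan is to verify directly that $\chi$ is a bijective algebra homomorphism; the second assertion then follows immediately, since $\alpha:\complex H\to \complex G\rtimes_{s,\tau}Q$ was already established as an algebra isomorphism in equation (\ref{eq:twisted-prod-gp}) of this section.

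First I would observe that $\chi$ is $\complex$-linear by construction, and that its restriction to the subalgebra $\complex G\subset \complex G\rtimes_{s,\tau}Q$ (i.e.\ to elements of the form $(g,1)$) coincides with the classical Peter--Weyl/Wedderburn isomorphism $\beta:\complex G\xrightarrow{\sim}\oplus_{[\rho]\in\widehat{G}}\End(V_\rho)$ recalled before Proposition \ref{prop:u(1)-cocycle}. In particular, $\chi$ restricted to $\complex G$ is already known to be an algebra isomorphism, so it remains only to verify compatibility with the $Q$-twist.

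The core of the argument is the direct verification of
\[
\chi\bigl((g_1,q_1)(g_2,q_2)\bigr)=\chi(g_1,q_1)\circ\chi(g_2,q_2).
\]
Using the product (\ref{eq:twisted-prod-gp}), the left-hand side equals
\[
\sum_{[\rho]\in\widehat{G}}\bigl(\rho(g_1)\rho(\Ad_{s(q_1)}(g_2))\rho(\tau(q_1,q_2)),\,q_1q_2\bigr).
\]
On the right-hand side, the cross term $(\rho_1(g_1),q_1)\circ(\rho_2(g_2),q_2)$ vanishes unless $[\rho_2]=q_1([\rho_1])$, so only the diagonal summands survive. For each $[\rho]$ the surviving summand is, by the definition of $\circ$,
\[
\bigl(\rho(g_1){T^{[\rho]}_{q_1}}^{-1}q_1([\rho])(g_2)T^{[\rho]}_{q_1}\rho(\tau(q_1,q_2)),\,q_1q_2\bigr).
\]
Applying the intertwining identity ${T^{[\rho]}_{q_1}}^{-1}\circ q_1([\rho])(g_2)\circ T^{[\rho]}_{q_1}=\rho(\Ad_{s(q_1)}(g_2))$, which is the very defining property of $T^{[\rho]}_{q_1}$, the two expressions agree. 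This is the only non-routine step, and the key observation is that the scalar ambiguity in $T^{[\rho]}_q$ is exactly absorbed by the $\tau$-factor, so no cocycle identity for $c$ enters here; the cocycle $c$ played its role already in ensuring that the product $\circ$ on $\oplus_{[\rho]}\End(V_\rho)\rtimes_{T,\tau}Q$ is associative.

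Finally, to see that $\chi$ is bijective, I would argue by dimension count. Both algebras have $\complex$-dimension $|G|\cdot|Q|=|H|$, using $\dim \complex G=|G|=\sum_{[\rho]}(\dim V_\rho)^2=\dim\oplus_{[\rho]}\End(V_\rho)$ from Wedderburn. Since $\chi$ restricted to the subalgebra $\complex G$ is the bijection $\beta$, and $\chi$ is $Q$-graded (the $Q$-component is preserved), injectivity on each $Q$-homogeneous piece reduces to injectivity of $\beta$; combined with the dimension count this gives bijectivity. The second statement then follows by composing with $\alpha$. I expect no genuine obstacle beyond the bookkeeping of keeping the intertwiners $T^{[\rho]}_q$ on the correct side during the multiplicativity check.
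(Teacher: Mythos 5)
Your proposal is correct and follows essentially the same route as the paper: the multiplicativity check via the intertwining identity $\rho(\Ad_{s(q_1)}(g_2))={T^{[\rho]}_{q_1}}^{-1}\circ q_1([\rho])(g_2)\circ T^{[\rho]}_{q_1}$ (with the cocycle $c$ playing no role at this stage) is exactly the paper's computation, and your bijectivity argument — dimension count plus injectivity on each $Q$-homogeneous piece via the Wedderburn isomorphism $\beta$ — matches the paper's injectivity argument. No gaps.
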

\begin{proof}
We first prove that $\chi$ is an isomorphism of vector spaces. Since
both algebras are finite dimensional with the same dimension, it
suffices to prove that $\chi$ is injective. We denote a general element in $\complex G\rtimes_{s,\tau}Q$ by $\sum_q\sum_g c_{g,q}(g,q)$, where $(g,q)$ is a group element in $G\rtimes_{s,\tau}Q$, and $c_{g,q}\in \mathbb{C}$. If
$\sum_q\sum_{g}c_{g,q}(g,q)\in \complex G\rtimes_{s,\tau}Q$ is in
the kernel of $\chi$, then as the map $\complex G\to
\oplus_{[\rho]\in \widehat{G}}\End(V_\rho)$ is an isomorphism, we
see that for any fixed $q$, $\sum_{g}c_{g,q}(g,q)$ must be zero.
Hence, $c_{g,q}=0$ for any $g,q$. Hence,
$\sum_q\sum_{g}c_{g,q}(g,q)=0$. This shows that the kernel of $\chi$
is trivial. Therefore $\chi$ is an isomorphism of vector spaces.

By the definition of $\chi$, we can check the identity $\chi((g_1,q_1))\circ \chi((g_2,q_2))=\chi((g_1,q_2)\cdot (g_2,q_2))$:
\[
\begin{split}
&\chi((g_1,q_1))\circ \chi((g_2,q_2))=\sum_{[\rho]}(\rho(g_1),q_1)\circ \sum_{[\rho']}(\rho'(g_2),q_2)
=\sum_{[\rho]}(\rho(g_1),q_1)\circ (q_1([\rho])(g_2),q_2)\\
=&\sum_{[\rho]}(\rho(g_1){T^{[\rho_1]}_{q_1}}^{-1}q_1([\rho])(g_2)T^{[\rho_1]}_{q_1}\rho(\tau(q_1,q_2)),q_1q_2)\\
=&\sum_{[\rho]}(\rho(g_1)\rho(\Ad_{s(q_1)}(g_2))\rho(\tau(q_1,q_2)),q_1q_2)\\
=&\sum_{[\rho]}(\rho(g_1s(q_1)g_2s(q_1)^{-1}\tau(q_1,q_2)),q_1q_2)
=\chi((g_1\Ad_{s(q_1)}(g_2)\tau(q_1,q_2), q_1q_2))\\
=&\chi((g_1,q_1)\cdot (g_2,q_2)).
\end{split}
\]
This proves that $\chi$ is an algebra homomorphism.
\end{proof}
\subsection{Mackey machine and Morita equivalence}\label{sec:mackey}
We introduce our framework to
explain the Mackey machine on representations of groups. Our
treatment is essentially a reformulation of the Mackey machine using Morita equivalence of algebras. Such a reformulation
seems to be known among experts \cite{fe-do}. However, we are unable
to locate a good reference that provides the exact statements we
need.

For an extension of finite groups $1\rightarrow G\rightarrow
H\rightarrow Q\rightarrow 1$, the Mackey machine provides a way to
describe representations of $H$ via representations of $G$ and $Q$.
In our language, the
Mackey machine may be formulated as saying that the representation
theory of the group $H$ is equivalent to the representation theory
of the transformation groupoid $\widehat{G}\rtimes Q$ together with
the $U(1)$-valued groupoid cocycle $c$, which is introduced in
Eq.  (\ref{eq:dfn-c}).

The category of representations of the group $H$ is known to be equivalent to the category of modules of the group algebra $\complex
H$. And the category of representations of the groupoid $\widehat{G}\rtimes Q$ with the $U(1)$-valued groupoid cocycle $c$, 
is equivalent to the category of modules of $C(\widehat{G}\rtimes Q,c)$, the twisted groupoid algebra
associated to the cocycle $c$ on $\widehat{G}\rtimes Q$, as
introduced in \cite{tu-la-xu} (see Sec.  \ref{subsec:algebra} for
a review). We spell out the definition of $C(\widehat{G}\rtimes Q,
c)$. As a space\, $C(\widehat{G}\rtimes Q, c)$ consists of
$C(\widehat{G})$-valued functions on $Q$, i.e.,
$C(\widehat{G}\rtimes Q, c)$ consists of functions on
$\widehat{G}\times Q$. For $([\rho],q)\in \widehat{G}\times Q$ we
abuse notation and denote by $([\rho], q)$ the function on
$\widehat{G}\times Q$ which takes value $1$ at the point
$([\rho],q)$ and $0$ elsewhere. The collection $\{([\rho], q)\}$ of
functions on $\widehat{G}\times Q$ form a basis of
$C(\widehat{G}\rtimes Q, c)$. The product on $C(\widehat{G}\rtimes
Q, c)$ is defined by
\[
([\rho],q)\circ([\rho'],q')=\left\{\begin{array}{ll}c^{[\rho]}(q,q')([\rho],
qq')&\ \text{if\ }[\rho']=q([\rho])\\ 0&\
\text{otherwise}\end{array}\right. .
\]
The associativity of the above product $\circ$ follows from the
cocycle condition (\ref{eq:cocycle_condition_c}) of $c$.

Our formulation of the Mackey machine is the following theorem.
\begin{theorem}\label{thm:local-mackey}The group algebra $\complex H$ is
Morita equivalent to the twisted groupoid algebra
$C(\widehat{G}\rtimes Q, c)$.
\end{theorem}
\begin{proof}
Set
$A=\complex H$ and
$B=C(\widehat{G}\rtimes Q, c)$. By Proposition \ref{prop:matrix-coeff-algebra}, 
$A=\complex H$ is isomorphic to the algebra $\oplus_{[\rho]\in
\widehat{G}}\End(V_\rho)\rtimes_{T,\tau} Q$.  To prove the theorem, we will
construct an $A$-$B$ bimodule $M$ and a $B$-$A$ bimodule $N$,
such that $M\otimes_B N\cong A$ as $A$-$A$ bimodules, and $N\otimes_A M\cong B$ as $B$-$B$ bimodules.

The $A$-$B$ bimodule $M$, as a vector space, is defined to be the space
$\oplus_{[\rho]\in \widehat{G}}V_{\rho}\times Q$, which is spanned by
elements of the form $(\xi_\rho, q)$ with $\xi_\rho\in V_\rho$ and
$q\in Q$. The left $A$-module structure on $M$ is defined by
\[
(x_{\rho_0},q_0)(\xi_\rho, q):=\left\{\begin{array}{ll}
(c^{[\rho_0]}(q_0,
q)x_{\rho_0}{T^{[\rho_0]}_{q_0}}^{-1}(\xi_{q_0([\rho_0])}),
q_0q)&\quad
\text{if }[\rho]=q_0([\rho_0])\\
0&\quad\text{otherwise.}\end{array}\right.
\]

The right $B$-module structure on $M$ is defined by
\[
(\xi_\rho, q)([\rho_1],
q_1):=\left\{\begin{array}{ll}(c^{[\rho]}(q,q_1)\xi_\rho,
qq_1)&\quad \text{if}\ [\rho_1]=q([\rho])\\
0&\quad \text{otherwise.}\end{array}\right.
\]
Using Eq.  (\ref{eq:dfn-c}) and (\ref{eq:cocycle_condition_c}) about $c$ ,  we can easily check that $M$ is an $A$-$B$ bimodule. We leave the details to the reader. 

The $B$-$A$ bimodule $N$ is defined to be $\oplus_{[\rho]\in
\widehat{G}}V_\rho^*\times Q$ as a vector space. So $N$ is spanned by elements of the
form $(\eta_\rho, q)$, where $q\in Q$ and $\eta_\rho\in V_\rho^*$ is a linear functional on
$V_\rho$. The right $A$-module structure on $N$ is defined by
\[
(\eta_{\rho}, q)(x_{\rho_0},
q_0)=\left\{\begin{array}{ll}(c^{[\rho_0]}(q_0,q_0^{-1}q)\eta_{\rho_0}\circ
x_{\rho_0}\circ{T^{[\rho_0]}_{q_0}}^{-1}, q_0^{-1}q)&\text{if\ }[\rho]=[\rho_0]\\
0&\text{otherwise.}\end{array}\right.
\]

The left $B$-module structure on $N=\oplus_{[\rho]\in
\widehat{G}}V_\rho^*\times Q$ is defined by
\[
([\rho_0],q_0)(\eta_{\rho},
q):=\left\{\begin{array}{ll}(c^{[\rho]}(qq_0^{-1},
q_0)\eta_\rho, qq_0^{-1})&\text{if}\ [\rho_0]=qq_0^{-1}([\rho])\\
0&\text{otherwise}.\end{array}\right.
\]

It is straightforward to check that $N$ is a $B$-$A$ bimodule using Eq.  (\ref{eq:dfn-c}) and (\ref{eq:cocycle_condition_c}) about $c$ .  We leave the details to the reader. 

Next we show that $M\otimes_B N\cong A$ as $A$-$A$ bimodules. Define a map $\Xi: M\otimes_\complex N\to A$ by
\[
\Xi\big((\xi_\rho,
q),(\eta_{\rho'},q')\big):=\left\{\begin{array}{ll}(\xi_\rho\otimes
\eta_{qq'^{-1}([\rho])}\circ T^{[\rho]}_{qq'^{-1}}
\frac{c^{qq'^{-1}([\rho])}(q'q^{-1},q)}{c^{qq'^{-1}([\rho])}(q'q^{-1},
qq'^{-1})},
qq'^{-1})&\quad\text{if}\  qq'^{-1}([\rho])=[\rho']\\
0&\quad\text{otherwise}.\end{array}\right.
\]
We check that
\[
\begin{split}
&\Xi\big((\xi_\rho, q)(q([\rho]), q_0),
(\eta_{qq_0q'^{-1}([\rho])},q')
\big)\\
=&\Xi\big((c^{[\rho]}(q,q_0)\xi_\rho,qq_0),
(\eta_{qq_0q'^{-1}([\rho])},q')\big)\\
=&(\frac{c^{[\rho]}(q,q_0)c^{qq_0q'^{-1}([\rho])}(q'q_0^{-1}q^{-1},
qq_0)}{c^{qq_0q'^{-1}([\rho])}(q'q_0^{-1}q^{-1},qq_0q'^{-1})}\xi_\rho\otimes
\eta_{qq_0q'^{-1}([\rho])}\circ T^{[\rho]}_{qq_0q'^{-1}},
qq_0q'^{-1})\\
=&(\frac{c^{qq_0q'^{-1}([\rho])}(q'q_0^{-1}q^{-1},
q)c^{qq_0q'^{-1}([\rho])}(q'q_0^{-1},
q_0)}{c^{qq_0q'^{-1}([\rho])}(q'q_0^{-1}q^{-1},qq_0q'^{-1})}\xi_\rho\otimes
\eta_{qq_0q'^{-1}([\rho])}\circ T^{[\rho]}_{qq_0q'^{-1}},
qq_0q'^{-1})\\
=&\Xi\big(\xi_\rho, q), (q([\rho]),q_0)(\eta_{qq_0q'^{-1}([\rho])},
q')\big),
\end{split}
\]
where in the third equality, we used the cocycle condition
(\ref{eq:cocycle_condition_c}) of $c$. Therefore, $\Xi$ passes to
define a map, which we still denote by $\Xi$, from $M\otimes_B N$ to
$A$.

We check that the map $\Xi$ is compatible with the left $A$-module structure:
\[
\begin{split}
&(x_\rho, q)\Xi\big((\xi_{q([\rho])}, q_0),
(\eta_{qq_0q_1^{-1}([\rho])},q_1)\big)\\
=&(x_\rho, q)(\xi_{q([\rho])}\otimes \eta_{qq_0q_1^{-1}([\rho])}\circ
T^{q([\rho])}_{q_0q_1^{-1}} \frac{c^{qq_0q_1^{-1}([\rho])}(q_1q_0^{-1},q_0)}{c^{qq_0q_1^{-1}([\rho])}(q_1q_0^{-1},q_0q_1^{-1})}, q_0q_1^{-1})\\
=&(x_\rho\circ {T^{[\rho]}_{q}}^{-1}\circ \xi_{q([\rho])}\otimes
\eta_{qq_0q_1^{-1}([\rho])}\circ T^{q([\rho])}_{q_0q_1^{-1}}
\frac{c^{qq_0q_1^{-1}([\rho])}(q_1q_0^{-1},q_0)}{c^{qq_0q_1^{-1}([\rho])}(q_1q_0^{-1},q_0q_1^{-1})}T^{[\rho]}_{q}\rho(\tau(q,q_0q_1^{-1})),
qq_0q_1^{-1})\\
=&(x_\rho\circ {T^{[\rho]}_{q}}^{-1}\circ \xi_{q([\rho])}\otimes
\eta_{qq_0q_1^{-1}([\rho])}\frac{c^{qq_0q_1^{-1}([\rho])}(q_1q_0^{-1},q_0)}{c^{qq_0q_1^{-1}([\rho])}(q_1q_0^{-1},q_0q_1^{-1})}\circ
T^{q([\rho])}_{q_0q_1^{-1}} T^{[\rho]}_{q}\rho(\tau(q,q_0q_1^{-1})),
qq_0q_1^{-1})\\
=&(x_\rho\circ {T^{[\rho]}_{q}}^{-1}\circ \xi_{q([\rho])}\otimes
\eta_{qq_0q_1^{-1}([\rho])}\circ
\frac{c^{qq_0q_1^{-1}([\rho])}(q_1q_0^{-1},q_0)c^{[\rho]}(q,q_0q_1^{-1})}{c^{qq_0q_1^{-1}([\rho])}(q_1q_0^{-1},q_0q_1^{-1})}T^{[\rho]}_{qq_0q_1^{-1}},
qq_0q_1^{-1}).
\end{split}
\]
On the other hand,
\[
\begin{split}
&\Xi\big((x_\rho, q)(\xi_{q([\rho])},q_0),
(\eta_{qq_0q_1^{-1}([\rho])},q_1)\big)\\
=&\Xi\big((x_\rho\circ {T^{[\rho]}_q}^{-1}\circ
\xi_{q([\rho])}c^{[\rho]}(q,q_0),qq_0),
(\eta_{qq_0q_1^{-1}([\rho])},q_1)\big)\\
=&(x_\rho\circ {T^{[\rho]}_q}^{-1}(\xi_{q([\rho])})\otimes
\eta_{qq_0q_1^{-1}([\rho])}\circ\frac{c^{[\rho]}(q,q_0)c^{qq_0q_1^{-1}([\rho])}(q_1q_0^{-1}q^{-1},qq_0)}{c^{qq_0q_1^{-1}([\rho])}(q_1q_0^{-1}q^{-1},qq_0q_1^{-1})}
T^{[\rho]}_{qq_0q_1^{-1}}, qq_0q_1^{-1}).
\end{split}
\]
Using the cocycle condition (\ref{eq:cocycle_condition_c}) of $c$, it is not difficult to check that
\[
\frac{c^{qq_0q_1^{-1}([\rho])}(q_1q_0^{-1},q_0)c^{[\rho]}(q,q_0q_1^{-1})}{c^{qq_0q_1^{-1}([\rho])}(q_1q_0^{-1},q_0q_1^{-1})}=
\frac{c^{[\rho]}(q,q_0)c^{qq_0q_1^{-1}([\rho])}(q_1q_0^{-1}q^{-1},qq_0)}{c^{qq_0q_1^{-1}([\rho])}(q_1q_0^{-1}q^{-1},qq_0q_1^{-1})}.
\]
It follows that $\Xi$ is compatible with the left $A$-module
structure. Namely, we have the following equality:
\[
(x_\rho, q)\Xi\big((\xi_{q([\rho])}, q_0),
(\eta_{qq_0q_1^{-1}([\rho])},q_1)\big)=\Xi\big((x_\rho,
q)(\xi_{q([\rho])},q_0), (\eta_{qq_0q_1^{-1}([\rho])},q_1)\big).
\]

We check that $\Xi$ is compatible with the right $A$-module structure:
\[
\begin{split}
&\Xi\big((\xi_\rho,q_0), (\eta_{q_0q_1^{-1}([\rho])},
q_1)\big)(x_{q_0q_1^{-1}([\rho])},q)\\
=&\big(\xi_\rho\otimes \eta_{q_0q_1^{-1}([\rho])}\circ
T^{[\rho]}_{q_0q_1^{-1}}
\frac{c^{q_0q_1^{-1}([\rho])}(q_1q_0^{-1},q_0)}{c^{q_0q_1^{-1}([\rho])}(q_1q_0^{-1},
q_0q_1^{-1})}, q_0q_1^{-1}\big)(x_{q_0q_1^{-1}([\rho])}, q)\\
=&(\frac{c^{q_0q_1^{-1}([\rho])}(q_1q_0^{-1},q_0)}{c^{q_0q_1^{-1}([\rho])}(q_1q_0^{-1},
q_0q_1^{-1})}\xi_\rho\otimes \eta_{q_0q_1^{-1}([\rho])}\circ
T^{[\rho]}_{q_0q_1^{-1}}{T^{[\rho]}_{q_0q_1^{-1}}}^{-1}
x_{q_0q_1^{-1}([\rho])}T^{[\rho]}_{q_0q_1^{-1}}\rho(\tau(q_0q_1^{-1},q)),
q_0q_1^{-1}q)\\
=&(\frac{c^{q_0q_1^{-1}([\rho])}(q_1q_0^{-1},q_0)}{c^{q_0q_1^{-1}([\rho])}(q_1q_0^{-1},
q_0q_1^{-1})}\xi_\rho\otimes \eta_{q_0q_1^{-1}([\rho])}\circ
x_{q_0q_1^{-1}([\rho])}T^{[\rho]}_{q_0q_1^{-1}}\rho(\tau(q_0q_1^{-1},q)),
q_0q_1^{-1}q)\\
=&(\frac{c^{[\rho]}(q_0q_1^{-1},q)c^{q_0q_1^{-1}([\rho])}(q_1q_0^{-1},q_0)}{c^{q_0q_1^{-1}([\rho])}(q_1q_0^{-1},
q_0q_1^{-1})}\xi_\rho\otimes \eta_{q_0q_1^{-1}([\rho])}\circ
x_{q_0q_1^{-1}([\rho])}{T^{q_0q_1^{-1}([\rho])}_{q}}^{-1}T^{[\rho]}_{q_0q_1^{-1}q},
q_0q_1^{-1}q).
\end{split}
\]
On the other hand,
\[
\begin{split}
&\Xi\big((\xi_\rho,q_0), (\eta_{q_0q_1^{-1}([\rho])},
q_1)(x_{q_0q_1^{-1}([\rho])},q)\big)\\
=&\Xi\big((\xi_\rho,q_0),
(c^{q_0q_1^{-1}([\rho])}(q,q^{-1}q_1)\eta_{q_0q_1^{-1}([\rho])}\circ
x_{q_0q_1^{-1}([\rho])}\circ {T^{q_0q_1^{-1}([\rho])}_{q}}^{-1},
q^{-1}q_1)\big)\\
=&(\xi_{\rho}\otimes \eta_{q_0q_1^{-1}([\rho])}\circ
x_{q_0q_1^{-1}([\rho])} {T^{q_0q_1^{-1}([\rho])}_{q}}^{-1}
T^{[\rho]}_{q_0q_1^{-1}q}\frac{c^{q_0q_1^{-1}([\rho])}(q,q^{-1}q_1)c^{q_0q_1^{-1}q([\rho])}(q^{-1}q_1q_0^{-1},
q_0)}{c^{q_0q_1^{-1}q([\rho])}(q^{-1}q_1q_0^{-1}, q_0q_1^{-1}q)},
q_0q_1^{-1}q).
\end{split}
\]
We need to show that
\[
\frac{c^{[\rho]}(q_0q_1^{-1},q)c^{q_0q_1^{-1}([\rho])}(q_1q_0^{-1},q_0)}{c^{q_0q_1^{-1}([\rho])}(q_1q_0^{-1},
q_0q_1^{-1})}=\frac{c^{q_0q_1^{-1}([\rho])}(q,q^{-1}q_1)c^{q_0q_1^{-1}q([\rho])}(q^{-1}q_1q_0^{-1},
q_0)}{c^{q_0q_1^{-1}q([\rho])}(q^{-1}q_1q_0^{-1}, q_0q_1^{-1}q)},
\]
which is equivalent to
\[
\begin{split}
&c^{[\rho]}(q_0q_1^{-1},q)c^{q_0q_1^{-1}([\rho])}(q_1q_0^{-1},q_0)c^{q_0q_1^{-1}q([\rho])}(q^{-1}q_1q_0^{-1},
q_0q_1^{-1}q)\\
=&c^{q_0q_1^{-1}([\rho])}(q_1q_0^{-1},
q_0q_1^{-1})c^{q_0q_1^{-1}([\rho])}(q,q^{-1}q_1)c^{q_0q_1^{-1}q([\rho])}(q^{-1}q_1q_0^{-1},
q_0).
\end{split}
\]

Using the cocycle condition (\ref{eq:cocycle_condition_c}) of $c$, we have
\[
\begin{split}
&c^{q_0q_1^{-1}q([\rho])}(q^{-1}q_1q_0^{-1},
q_0q_1^{-1}q)c^{[\rho]}(q_0q_1^{-1},q)c^{q_0q_1^{-1}([\rho])}(q_1q_0^{-1},q_0)\\
=&c^{q_0q_1^{-1}q([\rho])}(q^{-1}q_1q_0^{-1},
q_0q_1^{-1})c^{q_0q_1^{-1}q([\rho])}(q^{-1},q)c^{q_0q_1^{-1}([\rho])}(q_1q_0^{-1},q_0),
\end{split}
\]
and
\[
\begin{split}
&c^{q_0q_1^{-1}([\rho])}(q_1q_0^{-1},
q_0q_1^{-1})c^{q_0q_1^{-1}([\rho])}(q,q^{-1}q_1)c^{q_0q_1^{-1}q([\rho])}(q^{-1}q_1q_0^{-1},
q_0)\\
=&c^{q_0q_1^{-1}([\rho])}(q_1q_0^{-1},
q_0q_1^{-1})c^{q_0q_1^{-1}([\rho])}(q,
q^{-1}q_1q_0^{-1})c^{q_0q_1^{-1}([\rho])}(q_1q_0^{-1},q_0)\\
=&c^{q_0q_1^{-1}([\rho])}(q,
q^{-1})c^{q_0q_1^{-1}q([\rho])}(q^{-1}q_1q_0^{-1},
q_0q_1^{-1})c^{q_0q_1^{-1}([\rho])}(q_1q_0^{-1},q_0).
\end{split}
\]
To prove the above two expressions are equal, we are left to prove
\[
c^{q_0q_1^{-1}q([\rho])}(q^{-1},q)=c^{q_0q_1^{-1}([\rho])}(q,
q^{-1}).
\]
Using the fact that
$c^{q_0q_1^{-1}([\rho])}(1,q)=1=c^{q_0q_1^{-1}([\rho])}(q,1)$, the above equality is equivalent to
\[
c^{q_0q_1^{-1}([\rho])}(q,1)c^{q_0q_1^{-1}q([\rho])}(q^{-1},q)=c^{q_0q_1^{-1}([\rho])}(q,
q^{-1})c^{q_0q_1^{-1}([\rho])}(1,q),
\]
which again follows by the cocycle condition
(\ref{eq:cocycle_condition_c}) of $c$.

In summary, we have shown that $\Xi$ is an $A$-$A$ bimodule map from
$M\otimes_B N$ to $A$. From the definition of $\Xi$, we can see that
the image of $\Xi$ contains all elements of the form $(x_\rho, q)$,
where $x_\rho$ is a rank 1 operator on $V_\rho$ for any $[\rho]\in
\widehat{G},\ q\in Q$. As these elements span the whole space of
$A$, we conclude that $\Xi$ is surjective. By counting dimensions
of $M\otimes_B N$ and $A$, we conclude that $\Xi$ must be an
isomorphism.

Next, we define an isomorphism, $\Theta:N\otimes_A M \to B$, of
$B$-$B$ bimodules:
\[
\Theta\big((\eta_\rho, q), (\xi_{\rho'},
q')\big)=\left\{\begin{array}{ll}(\eta_\rho(\xi_\rho)\frac{1}{c^{[\rho]}(q,q^{-1}q')}q([\rho]),q^{-1}q')&\quad\text{if\
}[\rho']=[\rho]\\0&\quad\text{otherwise}.\end{array}\right.
\]
Here, $(a[\rho], q)$ is the function on
$\widehat{G}\times Q$ that takes the value $a\in \complex$ at
$([\rho],q)\in \widehat{G}\times Q$ and $0$ elsewhere. 

Using the cocycle condition (\ref{eq:cocycle_condition_c}) of $c$, we can easily check that $\Theta$ defines a $B$-$B$ bimodule map $\Theta: N\otimes_A M\to B$. Furthermore, it is not difficult to check that the image
of $\Theta$ contains all elements of the form $([\rho],q)$.
Hence $\Theta$ is surjective. By dimension counting,
we conclude that $\Theta$ must be an isomorphism.
\end{proof}

\begin{remark}\label{rmk:morita-comp}With our
definitions of the bimodules maps $\Xi$ and $\Theta$, one can easily
check that
\[
\begin{split} \Xi\big((\xi_\rho,q),
(\eta_{\rho'},q')\big)(\xi''_{\rho''}, q'')&=(\xi_\rho,q)\Theta\big(
(\eta_{\rho'},q'),(\xi''_{\rho''}, q'')\big),\\
\Theta\big((\eta_{\rho},q), (\xi_{\rho'},q')\big)(\eta_{\rho''},
q'')&=(\eta_{\rho},q)\Xi\big( (\xi_{\rho'},q'),(\eta_{\rho''},
q'')\big).
\end{split}
\]
This is crucial in the study of the centers of $A$ and $B$ in
Sec.  \ref{subsec:center}.
\end{remark}

Since Hochschild cohomology and $K$-theory are both Morita
invariants, we have the following:
\begin{corollary}
\label{cor:morita-K-coh}The Hochschild cohomology (respectively,
$K$-theory) of the algebra $\complex H$ is isomorphic to the
Hochschild cohomology (respectively, $K$-theory) of
$C(\widehat{G}\rtimes Q, c)$.
\end{corollary}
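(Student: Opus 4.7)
The plan is to deduce this corollary directly from Theorem \ref{thm:local-mackey} by invoking the Morita invariance of the two invariants in question. Theorem \ref{thm:local-mackey} furnishes explicit bimodules $M$ and $N$ together with the bimodule isomorphisms $\Xi: M \otimes_B N \xrightarrow{\sim} A$ and $\Theta: N \otimes_A M \xrightarrow{\sim} B$, where $A \cong \complex H$ (via the isomorphism $\chi \circ \alpha$ of Proposition \ref{prop:matrix-coeff-algebra}) and $B = C(\widehat{G}\rtimes Q, c)$. No further geometric input is needed.

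First I would recall from Section \ref{subsec:hoch} that Hochschild cohomology is defined via $\Ext$ in the category of bimodules, $HH^\bullet(A,A) = \Ext^\bullet_{A^e}(A,A)$. A Morita equivalence implemented by $(M,N)$ induces an equivalence of module categories and, by standard homological algebra, an equivalence of bimodule categories under which $A$ corresponds to $B$; hence the derived functors computing $\Ext$ agree, giving $HH^\bullet(\complex H,\complex H) \cong HH^\bullet(C(\widehat{G}\rtimes Q, c),C(\widehat{G}\rtimes Q, c))$. This is exactly the invariance statement recalled at the end of Section \ref{subsec:hoch}, valid in the bornological setting by \cite[Appendix]{pptt}.

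For the $K$-theoretic assertion, the same Morita equivalence yields an additive equivalence between the categories of finitely generated projective modules over $\complex H$ and over $C(\widehat{G}\rtimes Q, c)$, and algebraic $K$-theory is defined entirely from this category; hence $K_\bullet(\complex H) \cong K_\bullet(C(\widehat{G}\rtimes Q, c))$, as recorded in \cite{loday} and Section \ref{subsec:algebra}.

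Since both pieces are immediate applications of Morita invariance, there is essentially no obstacle in this corollary beyond having the Morita equivalence in hand; the substantive content is entirely in the construction of $M$, $N$, $\Xi$, $\Theta$ carried out in Theorem \ref{thm:local-mackey}.
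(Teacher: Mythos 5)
Your proposal is correct and follows exactly the paper's route: the paper derives this corollary in one line from Theorem \ref{thm:local-mackey}, citing Morita invariance of Hochschild cohomology and $K$-theory as recalled in the preliminaries (with reference to \cite{loday}). Your extra remarks on bimodule categories and the bornological setting are fine but not needed here, since both algebras are finite-dimensional unital algebras.
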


In the rest of this subsection, we study the twisted groupoid
algebra $C(\widehat{G}\rtimes Q, c)$. Denote the set of orbits of
the $Q$ action on $\widehat{G}$ by $\orb^Q(\widehat{G})$. The
groupoid $\widehat{G}\rtimes Q$ decomposes into a disjoint union of
groupoids, $\sqcup_{O_k\in \orb^Q(\widehat{G})}O_k\rtimes Q$, where
each component, $O_k\rtimes Q$, is a subgroupoid of
$\widehat{G}\rtimes Q$. The restriction of the $U(1)$-valued cocycle
$c$ yields a $U(1)$-valued cocycle $c_k$ on the component
$O_k\rtimes Q$. It is easy to see that the twisted groupoid algebra
decomposes into a direct sum of subalgebras,
\[
C(\widehat{G}\rtimes Q,c)=\bigoplus_{O_k\in \orb^Q(\widehat{G})} C(O_k\rtimes Q, c_k).
\]

Fix an orbit $O_k$ in $\orb^Q(\widehat{G})$ and consider the twisted
groupoid algebra $C(O_k\rtimes Q, c_k)$. Choose a point $[\rho]\in
O_k$ and let $\stab([\rho])\subset Q$ be the stabilizer subgroup of
$[\rho]$, which consists of $q\in Q$ such that $q([\rho])=[\rho]$.
The cocycle $c$ restricts to define a $U(1)$-valued cocycle
$c_{[\rho]}: \stab([\rho])\times \stab([\rho])\to U(1)$ on the
group $\stab([\rho])$. Given these data, we consider the {\em
twisted group algebra} $C(\stab([\rho]), c_{[\rho]})$. By
definition, $C(\stab([\rho]), c_{[\rho]})$ is additively equal to the group algebra $\complex \stab([\rho])$ but equipped with a
twisted product: $q_1\cdot q_2:=c_{[\rho]}(q_1,q_2)(q_1q_2).$ See e.g. \cite{adem_ruan}, \cite{kar} for discussions of twisted group algebras.


\begin{theorem}
\label{thm:orb-morita} The twisted groupoid algebra $C(O_k\rtimes Q,
c_k)$ is Morita equivalent to the twisted group algebra
$C(\stab([\rho]), c_{[\rho]})$.
\end{theorem}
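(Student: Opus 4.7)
The plan is to realize the Morita equivalence via a \emph{full idempotent} in $A := C(O_k \rtimes Q, c_k)$, which bypasses the explicit bimodule construction used in the proof of Theorem~\ref{thm:local-mackey}. Write $B := C(\stab([\rho]), c_{[\rho]})$ and single out the element $p := ([\rho], 1) \in A$. Since $c^{[\rho]}(1,1) = 1$ by the normalization in Proposition~\ref{prop:u(1)-cocycle}, the twisted product immediately gives $p \circ p = p$, so $p$ is an idempotent in $A$.

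Next I would identify the corner $pAp$. Using the product rule
\[
([\rho_1], q_1) \circ ([\rho_2], q_2) = c^{[\rho_1]}(q_1, q_2)\,([\rho_1], q_1 q_2)\quad\text{when } [\rho_2] = q_1([\rho_1]),
\]
together with the normalizations $c^{\bullet}(1,\cdot) = c^{\bullet}(\cdot, 1) = 1$, one sees that $p \circ ([\rho'], q)$ vanishes unless $[\rho'] = [\rho]$, while $([\rho], q) \circ p$ vanishes unless $q \in \stab([\rho])$. Hence $pAp$ is linearly spanned by $\{([\rho], q) : q \in \stab([\rho])\}$, and on this span the restricted product coincides with the twisted product defining $B$. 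Therefore $pAp \cong B$ canonically.

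The crucial step is showing that $p$ is \emph{full}, i.e.\ $ApA = A$. For each $[\rho'] \in O_k$, choose $q_{[\rho']} \in Q$ with $q_{[\rho']}([\rho]) = [\rho']$ and $q_{[\rho]} = 1$, which is possible because $O_k$ is a single $Q$-orbit; the anti-homomorphism identity $q_1(q_2(\cdot)) = (q_1 q_2)(\cdot)$ underlying the composition rule forces $q_{[\rho']}^{-1}([\rho']) = [\rho]$. For an arbitrary basis element $([\rho'], q) \in A$ one then computes
\[
([\rho'], q_{[\rho']}^{-1}) \circ p \circ ([\rho], q_{[\rho']}\, q) \;=\; c^{[\rho']}\!\bigl(q_{[\rho']}^{-1},\; q_{[\rho']}\, q\bigr)\,([\rho'], q),
\]
with each factor individually well defined. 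Since the cocycle value lies in $U(1) \subset \complex^{\times}$, it is invertible, whence $([\rho'], q) \in ApA$.

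To conclude, I would invoke the standard principle that a full idempotent $p$ in a unital algebra $A$ induces a Morita equivalence between $A$ and the corner $pAp$, realized by the bimodules $Ap$ (as $A$-$pAp$-bimodule) and $pA$ (as $pAp$-$A$-bimodule), with multiplication supplying the inverse isomorphisms $Ap \otimes_{pAp} pA \cong A$ and $pA \otimes_A Ap \cong pAp$. Combining this with $pAp \cong B$ proves the theorem. I do not anticipate serious obstacles in this plan: the only delicate point is the bookkeeping of cocycle normalizations, which is entirely controlled by the identities recorded in Proposition~\ref{prop:u(1)-cocycle}.
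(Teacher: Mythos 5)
Your proposal is correct, and it takes a genuinely different route from the paper. The paper proves the theorem by writing down an explicit Morita bibundle: $M$ is the space of functions on $\{([\rho],q):q\in Q\}$ and $N$ the space of functions on $\{(q^{-1}([\rho]),q):q\in Q\}$, with twisted module structures and bimodule maps $X,Y$ given by explicit cocycle formulas, the verifications being left to the reader; this is the algebraic shadow of the geometric Morita bibundle $M_0$ between the transformation groupoid $O_k\rtimes Q$ and the stabilizer group $\stab([\rho])$. You instead observe that $p=([\rho],1)$ is an idempotent in the unital algebra $A=C(O_k\rtimes Q,c_k)$ (the unit is $\sum_{[\rho']\in O_k}([\rho'],1)$, which you might state explicitly), that $pAp\cong C(\stab([\rho]),c_{[\rho]})$, and that fullness $ApA=A$ follows from the invertibility of the $U(1)$ cocycle values; the standard corner-ring theorem then finishes the argument. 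Your computations check out against the product rule and the normalizations $c^{\bullet}(1,\cdot)=c^{\bullet}(\cdot,1)=1$, and in fact your bimodules coincide with the paper's: $pA$ is spanned by $\{([\rho],q):q\in Q\}$ and $Ap$ by $\{(q^{-1}([\rho]),q):q\in Q\}$, with exactly the module structures the paper writes down for $M$ and $N$. What your approach buys is that the isomorphisms $Ap\otimes_{pAp}pA\cong A$ and $pA\otimes_A Ap\cong pAp$ come for free from general ring theory, replacing the routine cocycle verifications the paper omits; what the paper's explicit formulation buys is the concrete maps (notably $X$), which are reused later, e.g., in the trace computation of Proposition \ref{prop:trace}, though these could equally be read off from your corner picture. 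One harmless slip: with the paper's convention (a right $Q$-action written on the left) the identity is $q_2(q_1([\rho]))=(q_1q_2)([\rho])$, not $q_1(q_2(\cdot))=(q_1q_2)(\cdot)$ as you wrote; the only consequence you use, namely $q_{[\rho']}^{-1}([\rho'])=[\rho]$, holds under either reading, so nothing in your argument is affected.
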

\begin{proof}
We explain the geometric correspondent of this Morita equivalence in
the case when the cocycle $c$ is trivial. The transformation
groupoid $O_k\rtimes Q$ is Morita equivalent to the group
$\stab([\rho])$ with the equivalence bibundle $M_0:=\{([\rho],
q):q\in Q\}$. The groupoid $O_k\rtimes Q$ acts on $M_0$ from the
right by right multiplication, and the group $\stab([\rho])$ acts on
$M_0$ from the left by left multiplication. It is straightforward to
check that $M_0$ defines a Morita equivalent bibundle between
$\stab([\rho])$ and $O_k\rtimes Q$. The result of \cite{rmw} shows
that such an equivalent bibundle defines a Morita equivalence
between the group algebra $\complex \stab([\rho])$ and the groupoid
algebra $C(O_k\rtimes Q)$.

Inspired by this, we write down a Morita equivalence
$C(\stab([\rho]), c_{[\rho]})$-$C(O_k\rtimes Q, c_k)$ bimodule. Let
$M$ be the  space of functions on $M_0$.  Define the left
$C(\stab([\rho]), c_{[\rho]})$ module structure on $M$ by
\[
q_0\delta_{([\rho],q)}=c^{[\rho]}(q_0,q)\delta_{([\rho], q_0q)},\quad q_0\in \stab([\rho]), q\in Q,
\]
where $\delta_{([\rho],q)}$ denotes the function on $M_0$ taking the
value 1 at $([\rho],q)$ and zero everywhere else. We define the
right $C(O_k\rtimes Q, c_k)$ module structure on $M$ by
\[
\begin{split}
\delta_{([\rho],
q)}([\rho_0],q_0):=\left\{\begin{array}{ll}c^{\rho}(q,q_0)\delta_{[\rho],qq_0}&\quad\text{if\
}q([\rho])=[\rho_0]\\ 0&\quad\text{otherwise.}\end{array}\right.
\end{split}
\]

We also write down a Morita equivalence $C(O_k\rtimes Q,
c_k)$-$C(\stab([\rho]),c_{[\rho]})$ bimodule $N$. Let $N$ be the
space of functions on the set $\{(q^{-1}([\rho]),q):q\in Q\}$. Define
the left $C(O_k\rtimes Q, c_k)$ module structure by
\[
\begin{split}
([\rho_0],q_0)\delta_{(q^{-1}([\rho]),q)}:=\left\{\begin{array}{ll}c^{[\rho_0]}(q_0,q)\delta_{(q^{-1}_0q^{-1}[\rho],q_0q)}
&\quad\text{if}\ q_0([\rho])=q^{-1}([\rho])\\ 0&\quad\text{otherwise.}\end{array}\right.
\end{split}
\]
We also define the right $C(\stab([\rho]),c_{[\rho]})$ module structure by
\[
\delta_{(q^{-1}([\rho]),q)}q_0:=c^{q^{-1}([\rho])}(q,q_0)\delta_{(q^{-1}([\rho]), qq_0)}.
\]
And we define the bimodule map $X: M\otimes_{C(O_k\rtimes Q, c_k)}
N\to C(\stab([\rho]), c_{[\rho]})$ by
\[
\begin{split}
X\big(\delta_{([\rho], q_0)}, \delta_{(q_1^{-1}([\rho]),
q_1)}\big):=\left\{\begin{array}{ll}c^{[\rho]}(q_0,q_1)q_0q_1&\quad\text{if}\
q_0([\rho])=q_1^{-1}([\rho])\\
0&\quad\text{otherwise.}\end{array}\right.
\end{split}
\]
We define the bimodule map $Y:N\otimes_{C(\stab([\rho]), c_{[\rho]})}M\to C(O\rtimes Q, c)$ by
\[
Y\big(\delta_{(q_0^{-1}([\rho]),q_0)}, \delta_{([\rho],
q_1)}\big):=c^{q_0^{-1}([\rho])}(q_0,q_1)(q_0^{-1}([\rho]), q_0q_1).
\]
The verification that these data define Morita equivalence
bimodules is routine and left to the reader.
\end{proof}

In conclusion, we know that the category of representations of the
group $H$ is isomorphic to the category of modules of the
sum of twisted group algebras $C(\stab([\rho]), c_{[\rho]})$, where
the sum is taken over the set of $Q$-orbits in $\widehat{G}$ and we
choose an element $[\rho]$ from each $Q$-orbit. Modules of
the $c_{[\rho]}$-twisted group algebra correspond to projective
representations of the group $\stab([\rho])$ with cocycle
$c_{[\rho]}$. This is exactly what the Mackey machine \cite{fe-do}
states about representation theory of a finite group $H$ with a
normal subgroup $G$.

\subsection{Isomorphism between centers}\label{subsec:center}
According to Theorem \ref{thm:local-mackey}, the group algebra
$\complex H$ is Morita equivalent to the twisted groupoid algebra
$C(\widehat{G}\rtimes Q, c)$. It is known (see, e.g., \cite{an-fu})
that Morita equivalent unital algebras have isomorphic centers. Therefore the center of $\complex H$ is isomorphic to the center
of $C(\widehat{G}\rtimes Q, c)$. In this subsection, we analyze
the isomorphisms between the centers of $\complex H$ and
$C(\widehat{G}\rtimes Q, c)$ induced from the explicit bimodules constructed in the proof of Theorem \ref{thm:local-mackey}.

Let $A=\complex H$ and $B=C(\widehat{G}\rtimes Q, c)$. Let $M$
(respectively, $N$) be the $A$-$B$ (respectively, $B$-$A$) Morita
equivalence bimodule defined in the proof\footnote{We use
Proposition \ref{prop:matrix-coeff-algebra} to identify $\complex H$
with $\oplus_{[\rho]\in \widehat{G}}\End(V_\rho)\rtimes_{T,\tau}
Q$.} of Theorem \ref{thm:local-mackey}. Using the isomorphism
$\Xi:M\otimes_B N\to A$ (respectively, $\Theta:N\otimes_AM\to B$)
defined in the proof of Theorem \ref{thm:local-mackey}, we know
that the algebra $\End_B(M)$ (respectively, $\End_A(N)$) of linear
endomorphisms of $M$ (respectively, $N$) commuting with the action
of $B$ (respectively, $A$) is isomorphic to $A$ (respectively, $B$)
under the isomorphism $\Xi$ (respectively, $\Theta$).

We write out these isomorphisms more explicitly. For any $[\rho]\in
\widehat{G} $, choose a basis $\xi_i^\rho$ of $V_\rho$ and a dual basis $\eta^i_\rho$ of $V^*_\rho$ such that
$\eta^i_\rho(\xi_j^\rho)=\delta^i_j$ (the Kronecker delta function).
Define two maps 
\begin{eqnarray*}
&\Psi:\End_B(M)\to A,\quad x&\longmapsto\sum_{\rho, i} \Xi\big(x(\xi^{\rho}_i, 1), (\eta^i_\rho, 1)\big)\\
&\Phi:\End_A(N)\to B, \quad y&\longmapsto\sum_{\rho, i} \frac{1}{\dim(V_\rho)}\Theta\big(y(\eta^i_\rho, 1), (\xi_i^\rho, 1)\big).
\end{eqnarray*}
We explain the construction in more detail in the case of $\Psi$. As
$x$ acts on $M$ and commutes with the $B$ action, $x$ defines a
linear endomorphism on $M\otimes_B N$ commuting with the right $A$
action. $\Xi$ is an isomorphism from $M\otimes_B N$  to $A$ as an
$A$-$A$ bimodule. Hence, under the isomorphism $\Xi$, $x$ becomes  a
linear endomorphism of $A$ commuting with the right $A$ action. Such
a linear endomorphism is naturally identified with an element in $A$
by taking its value on the unit element of $A$. Applying this to
$x$, we map $x$ to $\Xi(x\Xi^{-1}(1))$. It is easy to check that
$\sum_{\rho, i}(\xi^\rho_i,1)\otimes (\eta^i_\rho,1)$ is mapped to
the unit under $\Xi$. This gives the above definition of $\Psi$. The same reasoning leads to the definition of $\Phi$. It is not hard to check that $\Psi$ and $\Phi$ are identity maps when we
restrict them to $A$ and $B$. Hence the action of $\Psi(x)$
(respectively, $\Phi(y)$) as an element in $A$ (respectively, $B$)
on $M$ (respectively, $N$) is identical to the action of $x$ on $M$
(respectively, $y$). From this, we can easily check that $\Psi$ and
$\Phi$ are algebra isomorphisms and inverses to each other.

Now we consider the application of the above maps to the centers of
$A$ and $B$. Let $y$ be an element in the center of $A=\complex H$.
As $y$ commutes with elements of $A$, $y$ as an endomorphism on $N$
is in $\End_A(N)$. Therefore, under the map $\Phi$, $y$ is mapped to
an element of $B$. Notice that $\Phi(y)$, as an element in $B$, acts
on $N$ in the same way as the action of $y$ on $N$. Therefore,
$\Phi(y)$ as an element in $B$ acts on $N$ and commutes with any
element of $B$. Therefore, we conclude that $\Phi(y)$ must be in the
center of $B$. A similar reasoning shows that if $x$ is an element
in the center of $B$, then $\Psi(x)$ is in the center of $A$. Thus,
we have constructed two algebra isomorphisms, $\Phi$ and $\Psi$,
that identify the centers of $\complex H$ and $C(\widehat{G}\rtimes
Q, c)$.

We study how the above maps are compatible with the
center of the group algebra $\complex Q$. Recall that $j:H\to Q$ is
a group epimorphism, and induces an algebra epimorphism
from $\complex H$ to $\complex Q$. The center $Z(\complex Q)$ has a
canonical basis indexed by conjugacy classes of $Q$. Define $Z(\complex H)_{{\<q\>}}$ to be the subspace of  the center $Z(\complex H)$ mapped under $j$ to the subspace of $Z(\complex Q)$ spanned by the basis element associated to the conjugacy class $\<q\>$. Accordingly the center of $\complex H$, as a vector space, has the following direct sum decomposition,
$$Z(\complex H)=\bigoplus_{\<q\>\subset Q} Z(\complex H)_{\<q\>}.$$

We examine the center of $C(\widehat{G}\rtimes Q,c)$ more carefully.
Recall that $\{([\rho],q)\}_{q\in Q, [\rho]\in \widehat{G}}$ forms a
basis of $C(O_k\rtimes Q, c_k)$. If
$f=\sum_{[\rho],q}a_{[\rho],q}([\rho],q)$ is in the center of
$C(\widehat{G}\rtimes Q, c)$, then for any $([\rho_0],q_0)\in
C(O_k\rtimes Q, c_k)$, we have
$f([\rho_0],q_0)=([\rho_0],q_0)f,$  
which is equivalent to
\[
\sum_{q}a_{q^{-1}([\rho_0]),q}c^{q^{-1}([\rho_0])}(q,q_0)(q^{-1}([\rho_0]),qq_0)=\sum_{q}a_{q_0([\rho_0]),q}c^{[\rho_0]}(q_0,q)([\rho_0],
q_0q).
\]
Replacing $q$ by $q_0qq_0^{-1}$ in the left hand side of the above equality, we obtain
\[
\begin{split}
\sum_{q}a_{q_0q^{-1}q_0^{-1}([\rho_0]),q_0qq_0^{-1}}&c^{q_0q^{-1}q_0^{-1}([\rho_0])}(q_0q^{-1}q_0^{-1},
q_0)(q_0q^{-1}q_0^{-1}([\rho_0]),
q_0q)\\
&=\sum_{q}a_{q_0([\rho_0]),q}c^{[\rho_0]}(q_0,q)([\rho_0], q_0q).
\end{split}
\]
Comparing the two sides of the above equation, we see that, for every $q\in Q$,
\[
a_{q_0q^{-1}q_0^{-1}([\rho_0]),q_0qq_0^{-1}}c^{q_0q^{-1}q_0^{-1}([\rho_0])}(q_0q^{-1}q_0^{-1}([\rho_0]),
q_0q)=a_{q_0([\rho_0]),q}c^{[\rho_0]}(q_0,q)([\rho_0], q_0q).
\]
This implies the following results.
\begin{enumerate}
\item  If $q$ is
such that $q_0q^{-1}q_0^{-1}([\rho_0])\ne [\rho_0]$ (equivalently
$q(q_0^{}([\rho_0]))\ne q_0([\rho_0])$), then
\[
a_{q_0q^{-1}q_0^{-1}([\rho_0]),q_0qq_0^{-1}}=a_{q_0([\rho_0]),q}=0.
\]
\item If $q$ is such that $q_0q^{-1}q_0^{-1}([\rho_0])= [\rho_0]$ (equivalently
$q(q_0([\rho_0]))=q_0([\rho_0])$), then
\[
a_{q_0q^{-1}q_0^{-1}([\rho_0]),q_0qq_0^{-1}}c^{q_0q^{-1}q_0^{-1}([\rho_0])}(q_0q^{-1}q_0^{-1},
q_0q)=a_{q_0([\rho_0]),q}c^{[\rho_0]}(q_0,q).
\]
As $q(q_0([\rho_0]))=q_0([\rho_0])$, the above equality is
equivalent to
\[
a_{[\rho_0],
q_0qq_0^{-1}}c^{[\rho_0]}(q_0q^{-1}q_0^{-1},q_0)=a_{q_0([\rho_0]),
q}c^{[\rho_0]}(q_0,q),
\]
which is equivalent to (replacing $q_0([\rho_0])$ by $[\rho_0]$)
\[
a_{q_0^{-1}([\rho_0]),q_0qq_0^{-1}}c^{q_0^{-1}([\rho_0])}(q_0q^{-1}q_0^{-1},
q_0)=a_{[\rho_0], q}c^{q_0^{-1}([\rho_0])}(q_0,q).
\]
\end{enumerate}
In summary, we can write an element $f$ in the center of
$C(\widehat{G}\rtimes Q, c)$ as
\begin{equation}\label{eq:center}
f=\sum_q \sum_{[\rho], q([\rho])=[\rho]}a_{[\rho],q}([\rho],q),
\end{equation}
such that
\begin{equation}\label{eq:conj-a} a_{[\rho_0],
q}=a_{q_0^{-1}([\rho_0]),q_0qq_0^{-1}}\frac{c^{q_0^{-1}([\rho_0])}(q_0q^{-1}q_0^{-1},q_0)}{c^{q_0^{-1}([\rho_0])}(q_0,q)}.
\end{equation}
By Eq.  (\ref{eq:conj-a}), $f$ can be written as
\[
f=\sum_{\<q\>\subset Q}\sum_{q_0\in \<q\>}\sum_{[\rho],
q_0([\rho])=[\rho]}a_{[\rho],q_0}([\rho],q_0),
\]
such that every component
\begin{equation}\label{eq:comp-q}
\sum_{q_0\in \<q\>}\sum_{[\rho],
q_0([\rho])=[\rho]}a_{[\rho],q_0}([\rho],q_0)
\end{equation}
is in the center of $C(\widehat{G}\rtimes Q, c)$. Define $Z(C(\widehat{G}\rtimes Q,
c))_{\<q\>}$ to be the subspace of $Z(C(\widehat{G}\rtimes Q, c))$ consisting of elements of the form of Eq. (\ref{eq:comp-q}). The center $Z(C(\widehat{G}\rtimes Q, c))$
decomposes into a direct sum of subspaces $Z(C(\widehat{G}\rtimes Q,
c))_{\<q\>}$ indexed by conjugacy classes of $Q$,
$$Z(C(\widehat{G}\rtimes Q, c))=\bigoplus_{\<q\>\subset Q}Z(C(\widehat{G}\rtimes Q,
c))_{\<q\>}.$$

The map $\chi\circ \alpha: \complex H\rightarrow\bigoplus_{[\rho]}\End(V_\rho)\rtimes_{T,\tau}Q$ constructed in Proposition \ref{prop:matrix-coeff-algebra} is an algebra isomorphism. Composing this
isomorphism with the above map $\Phi$, we obtain an algebra
isomorphism from the center of $\complex H$ to the center of
$C(\widehat{G}\rtimes Q, c)$. We denote this map by $I$.

\begin{prop}\label{prop:conjugacy}
The isomorphism $I$ is
compatible with the decompositions into subspaces indexed by
conjugacy classes of $Q$, i.e., $I$ is an isomorphism from
$Z(\complex H)_{\<q\>}$ to $Z(C(\widehat{G}\rtimes Q, c))_{\<q\>}$.
\end{prop}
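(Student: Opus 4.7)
The plan is to trace the explicit formulas for the composite $I=\Phi\circ\chi\circ\alpha$ on a generic basis element $h\in H$ and to observe that the resulting expression in $C(\widehat{G}\rtimes Q,c)$ is supported on basis vectors $([\rho'],q')$ whose $Q$-component $q'$ equals $j(h)$. Once this is established, the matching of the conjugacy-class decompositions is forced by a dimension count.

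First I would unwind the definitions. By construction $\alpha(h)=(hs(j(h))^{-1},\,j(h))$, so
\[
\chi\circ\alpha(h)=\sum_{[\rho]\in\widehat G}\bigl(\rho(hs(j(h))^{-1}),\,j(h)\bigr)\in\bigoplus_{[\rho]}\End(V_\rho)\rtimes_{T,\tau}Q.
\]
The map $\Phi$ regards an element $y\in Z(A)$ as an endomorphism of $N$ via right multiplication, which commutes with the right $A$-action on $N$ precisely because $y$ is central. Substituting a typical summand $(x_\rho,q')$ into
\[
\Phi(y)=\sum_{[\rho'],\,i}\tfrac{1}{\dim V_{\rho'}}\,\Theta\bigl((\eta^i_{\rho'},1)\cdot y,\,(\xi^{\rho'}_i,1)\bigr)
\]
and using the explicit formulas for the right $A$-action on $N$ and for $\Theta$ from the proof of Theorem~\ref{thm:local-mackey}, only the summand with $[\rho']=[\rho]$ survives, and a direct cocycle manipulation yields
\[
\Phi\bigl((x_\rho,q')\bigr)=\tfrac{1}{\dim V_\rho}\cdot\tfrac{c^{[\rho]}(q',q'^{-1})}{c^{[\rho]}(q'^{-1},q')}\cdot\tr\!\bigl(x_\rho\circ{T^{[\rho]}_{q'}}^{-1}\bigr)\cdot\bigl(q'^{-1}([\rho]),\,q'\bigr).
\]
The crucial feature is that the $Q$-component of the output equals the $Q$-component $q'$ of the input; all the remaining data (the trace, the cocycle ratio, and the relabeling $[\rho]\mapsto q'^{-1}([\rho])$ on $\widehat G$) are ``internal'' to $\widehat G$ and leave the $Q$-coordinate untouched.

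Specializing to $x_\rho=\rho(hs(j(h))^{-1})$ and $q'=j(h)$ and summing over $[\rho]\in\widehat G$, I would conclude that $I(h)$ is a linear combination of basis vectors of $C(\widehat{G}\rtimes Q,c)$ whose $Q$-components are all equal to $j(h)$. Consequently, for any $f\in Z(\complex H)_{\<q\>}$, which by definition is a linear combination of $H$-conjugacy-class sums whose images under $j$ lie in $\<q\>\subset Q$, the image $I(f)$ is supported on basis vectors $([\rho'],q')$ with $q'\in\<q\>$. Since $I$ already carries $Z(\complex H)$ isomorphically onto $Z(C(\widehat{G}\rtimes Q,c))$, the element $I(f)$ is automatically central, so $I(f)\in Z(C(\widehat G\rtimes Q,c))_{\<q\>}$.

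To upgrade these inclusions to equalities, I would invoke a dimension count. The decompositions $Z(\complex H)=\bigoplus_{\<q\>}Z(\complex H)_{\<q\>}$ and $Z(C(\widehat G\rtimes Q,c))=\bigoplus_{\<q\>}Z(C(\widehat G\rtimes Q,c))_{\<q\>}$ are indexed by the same finite set of conjugacy classes of $Q$, and the compatible inclusions $I(Z(\complex H)_{\<q\>})\subseteq Z(C(\widehat G\rtimes Q,c))_{\<q\>}$ combined with the fact that $I$ is a linear isomorphism of the total spaces force the dimensions of corresponding summands to agree, so each restriction is an isomorphism onto the corresponding summand. The main obstacle is purely computational: propagating the cocycle factors $c^{[\rho]}(\cdot,\cdot)$ and the intertwiners $T^{[\rho]}_{q'}$ through the formula for $\Phi$ while keeping the bookkeeping straight. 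The structural point that the output's $Q$-component depends only on $j(h)$ is essentially visible as soon as one writes down the formulas for the right $A$-action on $N$ and for $\Theta$.
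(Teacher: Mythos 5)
Your proposal is correct and follows essentially the same route as the paper: the paper also evaluates $I=\Phi\circ\chi\circ\alpha$ term by term on the summands $(\rho(g),q_0)$ of $\chi(f)$ for $f\in Z(\complex H)_{\<q\>}$, using the explicit right $A$-action on $N$ and the formula for $\Theta$, and observes that the output is supported on basis elements $([\rho],q_0)$ with the same $Q$-component $q_0\in\<q\>$ (your cocycle ratio $c^{[\rho]}(q',q'^{-1})/c^{[\rho]}(q'^{-1},q')$ is in fact $1$ on the surviving terms with $q'([\rho])=[\rho]$). The final upgrade from the inclusions $I(Z(\complex H)_{\<q\>})\subseteq Z(C(\widehat G\rtimes Q,c))_{\<q\>}$ to isomorphisms, via the fact that $I$ is an isomorphism of the total spaces compatible with the direct-sum decompositions, is also exactly the paper's concluding step.
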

\begin{proof}
We prove that the isomorphism $I$ maps the subspace $Z(\complex
H)_{\<q\>}$, indexed by the conjugacy class $\<q\>$ of $Q$, into
the subspace of $Z(C(\widehat{G}\rtimes Q, c))_{\<q\>}$ with the
same index. Then, by the fact that $I$ is an isomorphism, we know
that $I$ must be an isomorphism between each pair of subspaces.

We use the isomorphism $\alpha$ to identify $H$ with the group
$G\rtimes_{s,\tau}Q$. With the identification, the homomorphism $j$
maps the group $G\rtimes_{s,\tau}Q$ to $Q$ by taking the second
component. In particular, if an element $f$ is in the component
$Z(\complex H)_{\<q\>}$, $f$ must be of the form $\sum_{g, q_0\in
\<q\>}f_{g,q_0}(g,q_0).$ Applying $\chi$ to $f$, we get an element
$\chi(f)=\sum_{[\rho]\in \widehat{G}, q_0\in
\<q\>}\sum_{g}f_{g,q_0}(\rho(g),q_0).$ Since $f$ is in the center of
$\complex H$, we can apply the map $\Phi$ to obtain an element of
$Z(C(\widehat{G}\rtimes Q,c))$,
\begin{eqnarray*}
&\Phi(\chi(f))&=\sum_{\rho_1,
i}\frac{1}{\dim(V_{\rho_1})}\Theta\big((\eta_{\rho_1}^i,1)\chi(f),
(\xi_i^{\rho_1},1)\big)\\
&&=\sum_{q_0\in \<q\>}\sum_{[\rho_0]\in \widehat{G}}\sum_{g\in
G}f_{g,q_0}\sum_{\rho_1,i}\frac{1}{\dim(V_{\rho_1})}\Theta((\eta_{\rho_1}^i,1)(\rho_0(g),q_0),
(\xi_i^{\rho_1},1))\\
&&=\sum_{q_0\in \<q\>}\sum_{[\rho_0]\in \widehat{G}, i}\sum_{g\in
G}\frac{f_{g,q_0}}{\dim(V_{\rho_0})}\Theta\big((c^{[\rho_0]}(q_0,q_0^{-1})\eta_{\rho_0}^{i}
\circ \rho_0(g) \circ {T^{[\rho_0]}_{q_0}}^{-1}, q_0^{-1}),(\xi_i^{\rho_0},1)\big)\\
&&=\sum_{q_0\in \<q\>}\sum_{\tiny \begin{array}{c}[\rho_0]\in \widehat{G},\\
q_0([\rho_0])=[\rho_0]\end{array}}\sum_{i, g\in
G}\frac{f_{g,q_0}}{\dim(V_{\rho_0})}(\frac{c^{[\rho_0]}(q_0,q_0^{-1})}{c^{[\rho_0]}(q_0^{-1},q_0)}\eta_{\rho_0}^{i}
\circ \rho_0(g) \circ
{T^{[\rho_0]}_{q_0}}^{-1}(\xi_i^{\rho_0})[\rho_0], q_0)\\
&&=\sum_{q_0\in \<q\>}\sum_{\tiny{\begin{array}{c}[\rho_0]\in \widehat{G},\\
q_0([\rho_0])=[\rho_0]\end{array} }}\sum_{i, g\in
G}\frac{f_{g,q_0}}{\dim(V_{\rho_0})}(\eta_{\rho_0}^{i} \circ
\rho_0(g) \circ {T^{[\rho_0]}_{q_0}}^{-1}(\xi_i^{\rho_0})[\rho_0],
q_0).
\end{eqnarray*}
From this computation, we see that $\Phi(\chi(f))$ belongs to
$Z(C(\widehat{G}\rtimes Q, c))_{\<q\>}$.
\end{proof}

By Theorem \ref{thm:local-mackey}, $\complex H$ is Morita equivalent
to $C(\widehat{G}\rtimes Q, c)$. By Theorem \ref{thm:orb-morita},
$C(\widehat{G}\rtimes Q, c)$ is Morita equivalent to $\oplus_{O_k\in
\orb^Q(\widehat{G})}C(\stab([\rho_k]), c_{[\rho_k]}) $, where
$[\rho_k]$ is an element in the orbit $O_k$. With the Morita equivalence bimodules constructed in
Theorems \ref{thm:local-mackey}, we obtain an algebra isomorphism $I$,
\begin{equation}\label{eqn:orbit_decomp_of_center}
I: Z(\complex H)\longrightarrow Z(\oplus_{O_k\in
\orb^Q(\widehat{G})}C(\stab([\rho_k]), c_{[\rho_k]}))=\oplus_{O_k\in
\orb^Q(\widehat{G})}Z(C(\stab([\rho_k]), c_{[\rho_k]})).
\end{equation}
 It is easy to check that on $\complex H$ (respectively, on each
$C(\stab([\rho_k]), c_{[\rho_k]}$)), there is a canonical trace\footnote{We obtain $\tr_H$ (and $\tr_{[\rho]}$) by taking the trace on the canonical representation of $H$ (and $\stab([\rho_k]), c_{[\rho_k]}$) on $\complex H$ (and $C(\stab([\rho_k]), c_{\rho_k})$).}
$\tr_H$ (respectively, $\tr_{[\rho_k]}$) which takes the
value\footnote{We have chosen this particular normalization to make
it compatible with the Poincar\'e pairing on $BH$.} $1/|H|$
(respectively, $1/|\stab([\rho_k])|$) on the identity element and 0
otherwise. Furthermore, the linear combination
$\sum_{k}a_k\tr_{[\rho_k]}$ for arbitrary coefficients $a_k$ defines
a trace on the algebra $\oplus_{O_k\in
\orb^Q(\widehat{G})}C(\stab([\rho_k]), c_{[\rho_k]})$. Both $\tr_H$ and
$\sum_k a_k\tr_{[\rho_k]}$ restricts to define traces on the corresponding centers, $Z(\complex H)$ and 
$
Z(\oplus_{O_k\in
\orb^Q(\widehat{G})}C(\stab([\rho_k]),c_{[\rho_k]})). $
The next
result identifies which trace on $Z(\oplus_{O_k\in
\orb^Q(\widehat{G})}C(\stab([\rho_k]), c_{[\rho_k]}))$ pulls back to
$\tr_H$ via $I$.

\begin{prop}\label{prop:trace}
The isomorphism $I$ pulls back the trace $\sum_{k}\frac{\dim(V_{\rho_k})^2}{|G|^2}\tr_{[\rho_k]}$ to the trace $\tr_H$.
\end{prop}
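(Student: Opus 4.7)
The plan is to use Proposition \ref{prop:conjugacy} to reduce to the identity component $Z(\complex H)_{\<1_Q\>}$, and then perform a direct character-theoretic computation. Both sides of the desired identity are linear in $x$, and Proposition \ref{prop:conjugacy} (combined with the $Q$-grading on the second Morita equivalence of Theorem \ref{thm:orb-morita}) shows that $I$ sends each component $Z(\complex H)_{\<q\>}$ to the analogous component on the right. For $q \neq 1$ both traces vanish: on the left because $1_H$ does not lie over $\<q\>$, and on the right because $I(x)$ is then supported on twisted class sums of group elements of $\stab([\rho_k])$ lying in $\<q\>_Q$ (which exclude $1 \in \stab([\rho_k])$), while $\tr_{[\rho_k]}$ picks out only the coefficient at the identity. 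Hence I may assume $x = \sum_{g \in G} x_g\,g$ with $x_g$ invariant under $H$-conjugation; then $\tr_H(x) = x_{1_G}/|H|$.

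To compute $I(x)$, I specialize the formula for $\Phi(\chi(f))$ from the proof of Proposition \ref{prop:conjugacy} to $q_0 = 1$ (so $T^{[\rho]}_1 = \mathrm{id}$ and the cocycle values are trivial) and use $\sum_i \eta^i_\rho(\rho(g)(\xi^\rho_i)) = \chi_\rho(g)$, obtaining
\[
I(x) = \sum_{[\rho] \in \widehat{G}} \frac{\sum_g x_g\,\chi_\rho(g)}{\dim V_\rho}\,([\rho], 1).
\]
The $H$-invariance of $x_g$, combined with $\chi_{q([\rho])}(g) = \chi_\rho(\Ad_{s(q)}(g))$ and the substitution $g \mapsto \Ad_{s(q)^{-1}}(g)$, shows that $\sum_g x_g\,\chi_\rho(g)$ is constant on each $Q$-orbit $O_k$; writing this value as $c_k\cdot \dim V_{\rho_k}$, the restriction of $I(x)$ to the summand $C(O_k \rtimes Q, c_k)$ equals $c_k\cdot \sum_{[\rho]\in O_k}([\rho], 1) = c_k \cdot 1_{C(O_k\rtimes Q,c_k)}$, and hence maps to $c_k \cdot 1_{B_k}$ in $B_k := C(\stab([\rho_k]), c_{[\rho_k]})$ since center isomorphisms preserve units. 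Using $|O_k| = |Q|/|\stab([\rho_k])|$, the right-hand side of the claimed identity becomes
\[
\frac{1}{|G|^2\,|Q|}\sum_k |O_k|\,\dim V_{\rho_k}\sum_g x_g\,\chi_{\rho_k}(g) = \frac{1}{|G|^2\,|Q|}\sum_g x_g\sum_{[\rho] \in \widehat{G}} \dim V_\rho\cdot\chi_\rho(g),
\]
where orbit-invariance of the summand converts $\sum_k |O_k|$ into $\sum_{[\rho]}$. The column orthogonality relation $\sum_{[\rho]} \dim V_\rho \cdot \chi_\rho(g) = |G|\,\delta_{g, 1_G}$ then collapses this to $x_{1_G}/|H|$, matching $\tr_H(x)$.

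The main obstacle will be making the vanishing argument in the first step fully rigorous, since Proposition \ref{prop:conjugacy} addresses only the first Morita equivalence and the $Q$-conjugacy-class grading must be tracked through the second (Theorem \ref{thm:orb-morita}). This should follow by direct inspection of the equivalence bimodule $M_0 = \{([\rho_k], q) : q \in Q\}$: the right action of $([\rho_0], q_0) \in C(O_k \rtimes Q, c_k)$ multiplies the $Q$-coordinate by $q_0$ (up to a cocycle factor), so a central element supported at $q_0 \neq 1$ corresponds to an element of $B_k$ supported on $\stab([\rho_k])$-conjugates of $q_0$, none of which equal the identity. Granting this compatibility, the identity reduces entirely to the character-theoretic computation above.
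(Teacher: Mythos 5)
Your proposal is correct and follows essentially the same route as the paper's proof: reduce via the conjugacy-class grading to the component over $\<1\>$ (the vanishing for $\<q\>\neq\<1\>$ being exactly the paper's observation that only the $([\rho],1)$-coefficients of $f$ contribute to the identity coefficient in $C(\stab([\rho_k]),c_{[\rho_k]})$), then evaluate the explicit formula for $I$ at $q=1$ and finish with the regular-representation/column-orthogonality identity $\sum_{[\rho]}\dim V_\rho\,\chi_\rho(g)=|G|\delta_{g,1}$. Your one streamlining --- noting that the restriction of $I(x)$ to each block is a scalar multiple of the unit, so the second-stage center isomorphism (being unital and blockwise) can be applied without recomputing it through the bimodule $X$ as the paper does via $I'_k$ --- is a legitimate shortcut and does not change the substance of the argument.
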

\begin{proof}
In the proof of Proposition \ref{prop:conjugacy}, we obtained an
explicit map $I$ from $Z(\complex H)$ to $Z(C(\widehat{G}\rtimes Q,
c))$. We recall the formula of this map. If $f=\sum_{g,q}f_{g,q}(g,
q)$ is in the center of $\complex H$, then
\begin{eqnarray*}
&I(f)&=\sum_{g,q}\sum_{[\rho]\in \widehat{G},
q([\rho])=[\rho],i}\frac{1}{\dim(V_\rho)}f_{g,q}(\eta^i_\rho\circ
\rho(g)\circ {T^{[\rho]}_q}^{-1}(\xi^\rho_i)[\rho],q)\\
&&=\sum_{[\rho]\in \widehat{G},
q([\rho])=[\rho]}\sum_{i,g,q}\frac{1}{\dim(V_\rho)}f_{g,q}(\eta^i_\rho\circ
\rho(g)\circ {T^{[\rho]}_q}^{-1}(\xi^\rho_i)[\rho],q)\\
&&=\sum_{O_k\in \orb^Q(\widehat{G})}\sum_{\tiny
\begin{array}{c}[\rho]\in
O_k,\\
q([\rho])=[\rho]\end{array}}\sum_{i,g,q}\frac{1}{\dim(V_\rho)}f_{g,q}(\eta^i_\rho\circ
\rho(g)\circ {T^{[\rho]}_q}^{-1}(\xi^\rho_i)[\rho],q).
\end{eqnarray*}
Define
\[
I(f)_k=\sum_{\tiny \begin{array}{c}[\rho]\in
O_k,\\
q([\rho])=[\rho]\end{array}}\sum_{i,g}\frac{1}{\dim(V_\rho)}f_{g,q}(\eta^i_\rho\circ
\rho(g)\circ {T^{[\rho]}_q}^{-1}(\xi^\rho_i)[\rho],q)\in
C(O_k\rtimes Q, c).
\]
Then $I(f)=\sum_{O_k\in \orb^Q(\widehat{G})}I(f)_k$, and $I(f)_k$ is
an element in the center of $C(O_k\rtimes Q, c_k)$.

Using the data of Morita equivalent bimodules constructed in Theorem
\ref{thm:orb-morita}, we write down an explicit map $I'_k$ from
$Z(C(O_k\rtimes Q, c_k))$ to $Z(C(\stab([\rho_k]), [c_{\rho_k}]))$.
If $f_k$ is in $Z(C(O_k\rtimes Q, c_k))$, then
\[
I'_k(f_k):=\sum_{q\in
Q}\frac{{c^{[\rho_k]}(q,q^{-1})}^{-1}}{|Q|}X\big(\delta_{([\rho_k],q)}f_k,
\delta_{(q([\rho_k]),q^{-1})}\big).
\]
Here the map $X$ is defined in the proof of Theorem \ref{thm:orb-morita}. Applying the above formula to the explicit expression of $f_k$, we
have
\begin{eqnarray*}
I'_k(f_k)&&=\sum_{q\in
Q}\frac{{c^{[\rho_k]}(q,q^{-1})}^{-1}}{|Q|}\sum_{\tiny
\begin{array}{c}[\rho_1]\in
O_k,\\
q_1([\rho_1])=[\rho_1]\end{array}}\sum_{i,g}\\
&&\qquad\qquad
X\Big(\delta_{([\rho_k],q)}\frac{1}{\dim(V_{\rho_1})}f_{g,q_1}(\eta^i_{\rho_1}\circ
\rho_1(g)\circ
{T^{[\rho_1]}_{q_1}}^{-1}(\xi^{\rho_1}_i)[\rho_1],q_1),
\delta_{(q([\rho_k]),q^{-1})}\Big)\\
&&=\sum_{q\in Q}\frac{{c^{[\rho_k]}(q,q^{-1})}^{-1}}{|Q|}\sum_{\tiny
\begin{array}{c}[\rho_1]\in
O_k,\\
q_1([\rho_1])=[\rho_1]\end{array}}\sum_{i,g}\frac{1}{\dim(V_{\rho_1})}f_{g,q_1}\eta^i_{\rho_1}\circ
\rho_1(g)\circ {T^{[\rho_1]}_{q_1}}^{-1}(\xi^{\rho_1}_i)\\
&&\qquad\qquad X\Big(\delta_{([\rho_k],q)}([\rho_1],q_1),
\delta_{(q([\rho_k]),q^{-1})}\Big)\\
&&=\sum_{q\in Q}\sum_{
q_1(q([\rho_k]))=q([\rho_k])}\sum_{i,g}\frac{{c^{[\rho_k]}(q,q^{-1})}^{-1}}{|Q|\dim(V_{q([\rho_k])})}f_{g,q_1}\eta^i_{q([\rho_k])}\circ
q(\rho_k)(g)\circ {T^{q([\rho_k])}_{q_1}}^{-1}(\xi^{q([\rho_k])}_i)\\
&&\qquad\qquad X\Big(\delta_{([\rho_k],q)}(q([\rho_k]),q_1),
\delta_{(q([\rho_k]),q^{-1})}\Big)\\
&&=\sum_{q\in Q}\sum_{q_1(q([\rho_k]))=q([\rho_k])}\sum_{i,g}
\frac{{c^{[\rho_k]}(q,q^{-1})}^{-1}}{|Q|\dim(V_{q([\rho_k])})}f_{g,q_1}\eta^i_{q([\rho_k])}\circ
q(\rho_k)(g)\circ
{T^{q([\rho_k])}_{q_1}}^{-1}(\xi^{q([\rho_k])}_i)\\
&&\qquad \qquad c^{[\rho_k]}(q,q_1)X\Big(\delta_{([\rho_k],qq_1)},
\delta_{(q([\rho_k]),q^{-1})}\Big)\\
&&=\sum_{q\in Q}\sum_{q_1(q([\rho_k]))=q([\rho_k])}\sum_{i,g}
\frac{{c^{[\rho_k]}(q,q^{-1})}^{-1}}{|Q|\dim(V_{q([\rho_k])})}f_{g,q_1}\eta^i_{q([\rho_k])}\circ
q([\rho_k])(g)\circ
{T^{q([\rho_k])}_{q_1}}^{-1}(\xi^{q([\rho_k])}_i)\\
&&\qquad \qquad c^{[\rho_k]}(q,q_1)c^{[\rho_k]}(qq_1,q^{-1})qq_1q^{-1}.
\end{eqnarray*}

We notice that if the coefficient of $f_k$ on every component
$([\rho],1)$ for any $[\rho]\in O$ is equal to zero, then the
coefficient of $I_k'(f_k)$ at the identity element of
$C(\stab([\rho_k]), c_{[\rho_k]})$ is equal to zero. In particular,
the evaluation of $\tr_{\rho_k}$ on $I'_k(f_k)$ is equal to zero.
Hence
\begin{eqnarray*}
&\tr_{\rho_k}(I'_k(f_k))&=\frac{1}{|\stab([\rho_k])|}\sum_{q\in
Q}\sum_{i,g}
\frac{1}{|Q|\dim(V_{q([\rho_k])})}f_{g,1}\eta^i_{q([\rho_k])}\circ
q([\rho_k])(g)(\xi^{q([\rho_k])}_i)\\
&&=\sum_{\rho\in O_k}\sum_{i,g}
\frac{1}{|Q|\dim(V_{\rho})}f_{g,1}\eta^i_{\rho}\circ
\rho(g)(\xi^{\rho}_i),
\end{eqnarray*}
where in the last equality, we used the fact that for $q\in
\stab([\rho_k])$, $q([\rho_k])=[\rho_k]$.

Now summing over all orbits $O_k\in \orb^Q(\widehat{G})$, we have
that
\begin{eqnarray*}
&\sum_{O_k\in
\orb^Q(\widehat{G})}\frac{\dim(V_{\rho_k})^2}{|G|}\tr_{\rho_k}(I'(f))&=\sum_{\rho\in
\widehat{G}}\sum_{i,g}
\frac{\dim(V_{\rho})^2}{|G|^2|Q|\dim(V_{\rho})}f_{g,1}\eta^i_{\rho}\circ
\rho(g)(\xi^{\rho_k}_i)\\
&=\sum_g f_{g,1}\frac{1}{|H|}\sum_{\rho,
i}\frac{\dim(V_{\rho})}{|G|}\eta^i_{\rho}\circ
\rho(g)(\xi^{\rho_k}_i)&=\sum_g f_{g,1}\frac{1}{|H|}\sum_{\rho}\dim(V_{\rho})\tr_\rho(g),
\end{eqnarray*}
where we used the fact that $\dim(V_{\rho})=\dim(V_{\rho_k})$ if
$[\rho]\in O_k$, and $\tr_\rho$ is the $1/|G|$ times the standard
trace on $\End(V_\rho)$. A standard result (see, e.g.,
\cite[Exercise 3.32]{fu-ha}) in group representation theory implies
that
\[
\sum_g
f_{g,1}\frac{1}{|H|}\sum_{\rho}\dim(V_{\rho})\tr_\rho(g)=\sum_g
f_{g,1}\frac{1}{|H|}\tr_G(g)=\frac{1}{|H|}f_{1,1}=\tr_H(f).
\]
This proves that the pull back of $\sum_k\frac{\dim(V_{\rho_k})^2}{|G|^2}\tr_{[\rho_{k}]}$ along the
algebra isomorphism $I$ is equal to $\tr_H$.
\end{proof}

\begin{remark}\label{rmk:I-iso-group}The explicit formula
for the isomorphism $I$,
\[
I(f)=\sum_{q}\sum_{\tiny\begin{array}{c}[\rho_0]\in \widehat{G}\\
q([\rho])=[\rho]
\end{array}}\sum_g
\frac{f_{g,q}}{\dim(V_\rho)}\tr\big(\rho(g){T^{[\rho]}_q}^{-1}\big)([\rho],q),
\]
is crucial in the above proofs of Propositions
\ref{prop:conjugacy}-\ref{prop:trace}. In Sec. 
\ref{subsec:twisted-coh}, we will give a generalization of the
formula for $I$ on a $G$-gerbe. See Eq.  (\ref{eq:I-alpha}).
\end{remark}
\section{Groupoid algebras and Hochschild cohomology of gerbes on orbifolds}\label{sec:hochschild}
In this section, we generalize the formulation of the Mackey machine in Sec.  \ref{sec:mackey} to groupoids. Our results relate several aspects of the geometry of a $G$-gerbe $\Y$ on an orbifold $\B$ to aspects of the geometry of the dual orbifold $\widehat{\Y}$ twisted by the flat $U(1)$-gerbe $c$.
\subsection{Global quotient}\label{subsec:global-quo}
In this subsection, we consider the special case of a global
quotient, which is very close to the developments in Sec. 
\ref{sec:mackey}. Consider the group extension as in Eq. 
(\ref{eq:extension}). Let $M$ be a smooth manifold. We assume that $H$ acts on $M$ by
diffeomorphisms so that the induced $G$-action on
$M$ is trivial. Therefore, the composition of $H\to \text{Diff}(M)$
with a section $s: Q\to H$ of (\ref{eq:extension}) yields a
well-defined $Q$-action on $M$. Consequently, this defines a
$G$-gerbe $\Y=[M/H]$ over $\B=[M/Q]$.
In terms of groupoids, this gives the following
extension of Lie groupoids, which generalizes Eq. 
(\ref{eq:extension}),
\begin{equation}\label{eq:gpd-ext}
M\times G\rightarrow M\rtimes H\rightarrow M\rtimes
Q.
\end{equation}
We follow the idea in Sec.  \ref{sec:mackey} to study the above
groupoid extension. First, note that the section $s$ for the
extension (\ref{eq:extension}) defines a section of the groupoid
extension (\ref{eq:gpd-ext}).

Let $\cala$ be a sheaf of algebras on $M$ with an $H$ action. In
this paper, $\cala$ is either the sheaf of smooth functions on $M$
or the sheaf of deformation quantization of $M$ when $M$ is a
symplectic manifold. Such a sheaf $\cala$ with an $H$ action is
called an $\frakH$-sheaf on the transformation groupoid
$\frakH:=M\rtimes H\rightrightarrows M$. We are interested in the
crossed product algebra $\cala\rtimes H$.

Similar to the approach in Sec.  \ref{sec:mackey}, we use the data
about $G$ and $Q$ to study the algebra $\cala\rtimes H$. With the
same notations, we consider the space $M\times \widehat{G}$ equipped
with the diagonal $Q$-action. Here we use the $Q$-action on
$\widehat{G}$ as defined in Sec.  \ref{subsec:groupalgebra}. The
dual orbifold associated to the $G$-gerbe
$\Y$ is the quotient
$\widehat{\Y}=[(M\times \widehat{G})/Q]$. On $M\times \widehat{G}$,
there is a vector bundle\footnote{Here we allow the ranks of a
vector bundle to be different on different connected components.}
$\calv_G\to M\times \widehat{G}.$ 
For every $[\rho]\in \widehat{G}$,
the bundle $\calv_G$ on the component $M\times [\rho]$ is equal to
$M\times V_\rho\rightarrow M\times [\rho]$.  It is important to
observe that the vector bundle $\calv_G$ is not $H$-equivariant.
Instead, the cocycle $c$ defined in Proposition
\ref{prop:u(1)-cocycle} determines the obstruction for $\calv_G$ to
be $H$-equivariant. More precisely, $\calv_G$ is a $c^{-1}$-twisted
$H$-equivariant vector bundle on $M\times \widehat{G}$. See Lemma
\ref{lem:Q_str_on_V} below. In the language of gerbes, this means
that there is a $U(1)$-gerbe over the orbifold $[(M\times
\widehat{G})/H]$ on which $\calv_G$ becomes a vector bundle.
Furthermore, as $H$ is finite, this $U(1)$-gerbe is flat and it may
be represented by the cocycle $c^{-1}$.

The $H$-equivariant sheaf $\cala$ on $M$ lifts to define a
$Q$-equivariant sheaf $\widetilde{\cala}$ on $M\times \widehat{G}$.
Using the $U(1)$-valued $2$-cocycle $c$, we define a twisted crossed
product algebra $\widetilde{\cala}\rtimes_c Q$ to be the space
$\Gamma(\widetilde{\cala})$ of sections of $\widetilde{\cala}$ with
the following product:
\[
(a_1,q_1)\circ (a_2,q_2):=(a_1q_1(a_2)c(q_1, q_2),q_1q_2), \quad a_1, a_2\in \Gamma(\widetilde{\cala}),
\]
where $c(q_1,q_2)$ is an element in $U(1)$ identified as a scalar on the unit circle of $\complex$.

\begin{proposition} \label{prop:global-quotient}The ($c$-twisted) crossed product algebras $\cala\rtimes H$ and $\widetilde{A}\rtimes_c Q$ are Morita equivalent.
\end{proposition}
\begin{proof}
We will define the bimodules. Set
$A:=\cala\rtimes H$ and $B:=\widetilde{\cala}\rtimes_c Q$.

Consider the vector bundle $\calv_G$ over $M\times \widehat{G}$.
Define an $A$-$B$ bimodule $\calm$ by
\[
\calm:=\Gamma(\widetilde{\cala}\otimes \calv_G)\times Q.
\]
The left $A$-module structure on $\calm$ is defined by
\[
(a_0,x_{\rho_0}, q_0)(a, \xi_\rho, q):=\left\{\begin{array}{ll}(a_0q_0(a), c^{[\rho_0]}(q_0,q)x_{\rho_0}{T^{[\rho_0]}_{q_0}}^{-1}(\xi_{q_0([\rho_0])}), q_0q)&\quad\text{if}\ [\rho]=q_0([\rho_0])\\ 0&\quad\text{otherwise}\end{array}\right. .
\]
The right $B$-module structure on $\calm$ is defined by
\[
(a, \xi_\rho, q)(a_1, [\rho_1],
q_1):=\left\{\begin{array}{ll}(aq(a_1), c^{[\rho]}(q,q_1)\xi_\rho,
qq_1)&\quad \text{if}\ [\rho_1]=q([\rho])\\
0&\quad \text{otherwise}\end{array}\right..
\]

Consider the dual bundle $\calv^*_G$ over $M\times \widehat{G}$.
Define a $B$-$A$ bimodule $\caln$ by
\[
\caln:=\Gamma(\widetilde{\cala}\otimes \calv^*_G)\times Q.
\]
The right $A$-module structure on $\caln$ is defined by
\[
(a, \eta_{\rho}, q)(a_0, x_{\rho_0},
q_0)=\left\{\begin{array}{ll}(q_0^{-1}(aa_0),
c^{[\rho_0]}(q_0,q_0^{-1}q)\eta_{\rho_0}\circ
x_{\rho_0}\circ{T^{[\rho_0]}_{q_0}}^{-1}, q_0^{-1}q)&\text{if\ }[\rho]=[\rho_0]\\
0&\text{otherwise}\end{array}\right. ,
\]
and the left $B$-module structure on $\caln$ is defined by
\[
(a_1, [\rho_1],q_1)(a, \eta_{\rho},
q):=\left\{\begin{array}{ll}(qq_1^{-1}(a_1)a, c^{[\rho]}(qq_1^{-1},
q_1)\eta_\rho, qq_1^{-1})&\text{if}\ [\rho_1]=qq_1^{-1}([\rho])\\
0&\text{otherwise}.\end{array}\right.
\]

Next, we define an isomorphism $\Theta:\caln\otimes_A \calm \to B$
of $B$-$B$ bimodules by
\[
\Theta\big((a, \eta_\rho, q), (a', \xi_{\rho'},
q')\big)=\left\{\begin{array}{ll}(q^{-1}(aa'),
\eta_\rho(\xi_\rho)\frac{1}{c^{[\rho]}(q,q^{-1}q')}q([\rho]),q^{-1}q')&\quad\text{if\
}[\rho']=[\rho]\\0&\quad\text{otherwise},\end{array}\right.
\]
and an isomorphism $\Xi: \calm\otimes_B \caln\to A$ of $A$-$A$
bimodules by
\[
\begin{split}
&\Xi\big((a, \xi_\rho, q),(a', \eta_{\rho'},q')\big)\\
:=&\left\{\begin{array}{ll}(aq{q'}^{-1}(a'), \xi_\rho\otimes \eta_{qq'^{-1}([\rho])}\circ T^{[\rho]}_{qq'^{-1}}
\frac{c^{qq'^{-1}([\rho])}(q'q^{-1},q)}{c^{qq'^{-1}([\rho])}(q'q^{-1},
qq'^{-1})},
qq'^{-1})&\text{if}\  qq'^{-1}([\rho])=[\rho']\\
0&\text{otherwise}.\end{array}\right.
\end{split}
\]
The details of checking the equivalence bimodule structures are routine and are omitted.
\end{proof}
Similar to Proposition \ref{prop:global-quotient}, we can easily
generalize Theorem \ref{thm:orb-morita}.

\begin{proposition}\label{prop:glob-quot-decomp}
The algebra $\widetilde{A}\rtimes_c Q$ is Morita equivalent to the
algebra $\bigoplus_{O\in \orb^Q(\widehat{G})}
\cala\rtimes_{c_{\rho_O}}\stab(\rho_O)$, where $\rho_O$ is the
chosen representative of an element $[\rho_O]\in O$ in the orbit
$O$, and $c_{\rho_O}$ is the cocycle obtained from $c$ by
restriction.
\end{proposition}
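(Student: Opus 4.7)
My plan is to follow the strategy of Theorem \ref{thm:orb-morita} (the ``pointwise'' case) and combine it with the global-quotient framework set up in Proposition \ref{prop:global-quotient}. First I would use the fact that the $Q$-action on $\widehat{G}$ preserves the orbit decomposition, so the algebra $\widetilde{\cala}\rtimes_c Q$ breaks up as a direct sum over $Q$-orbits:
\[
\widetilde{\cala}\rtimes_c Q \;=\; \bigoplus_{O\in \orb^Q(\widehat{G})} \widetilde{\cala}|_{M\times O}\rtimes_c Q.
\]
This reduces the problem to proving, for each fixed orbit $O$ with chosen base point $[\rho_O]$ and stabilizer $\stab(\rho_O)\subset Q$, that $\widetilde{\cala}|_{M\times O}\rtimes_c Q$ is Morita equivalent to $\cala\rtimes_{c_{\rho_O}}\stab(\rho_O)$.

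For each orbit $O$ I would build Morita equivalence bimodules by globalizing the bundles used in the proof of Theorem \ref{thm:orb-morita}. Geometrically, the transformation groupoid $(M\times O)\rtimes Q$ is Morita equivalent to the Lie groupoid $M\rtimes \stab(\rho_O)$, with equivalence bibundle $M\times \{([\rho_O],q):q\in Q\}$; the required Morita equivalence bimodule $\calm_O$ (respectively $\caln_O$) will be the space of sections of $\cala$ pulled back to this bibundle (respectively its dual), with left and right actions twisted by $c_{\rho_O}$ and $c$ respectively. Concretely, writing a typical element of $\calm_O$ as $(a,q)$ with $a\in \Gamma(\cala)$ and $q\in Q$, the left $\cala\rtimes_{c_{\rho_O}}\stab(\rho_O)$-module structure would read
\[
(a_0,q_0)\cdot (a,q) \;=\; \bigl(a_0\cdot q_0(a),\; c^{[\rho_O]}(q_0,q)\,q_0 q\bigr), \qquad q_0\in \stab(\rho_O),
\]
while the right $\widetilde{\cala}|_{M\times O}\rtimes_c Q$-module structure would read
\[
(a,q)\cdot (a',[\rho'],q') \;=\; \begin{cases}\bigl(a\cdot q(a'),\; c^{[\rho_O]}(q,q')\, qq'\bigr) & \text{if }[\rho']=q([\rho_O]),\\ 0 & \text{otherwise.}\end{cases}
\]
The bimodule $\caln_O$ is defined analogously, using the dual bibundle $\{(q^{-1}([\rho_O]),q):q\in Q\}$. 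The pairings $\calm_O\otimes \caln_O\to \widetilde{\cala}|_{M\times O}\rtimes_c Q$ and $\caln_O\otimes \calm_O\to \cala\rtimes_{c_{\rho_O}}\stab(\rho_O)$ would be globalizations of the maps $X$ and $Y$ in the proof of Theorem \ref{thm:orb-morita}, incorporating multiplication in $\cala$ together with the appropriate action of $Q$ on $M$.

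The main (and only) obstacle is verifying associativity and compatibility of all these structures in the presence of the twist $c$. However, because the $H$-action on $\cala$ is compatible with the convolution product on $\widetilde{\cala}\rtimes_c Q$, and because the cocycle identity (\ref{eq:cocycle_condition_c}) for $c$ is the only identity that entered the proof of Theorem \ref{thm:orb-morita}, the required checks reduce to exactly the same algebraic manipulations as in that proof, just with the scalars $\delta_{([\rho_O],q)}$ replaced by elements $(a,q)\in \Gamma(\cala)\times Q$ and the scalar multiplication replaced by the $H$-twisted multiplication in $\cala$. I therefore expect the verification to be routine once the bimodules are written down carefully, and the proposition follows by assembling the Morita equivalences over all orbits.
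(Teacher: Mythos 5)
Your proposal matches the paper's intended argument: the paper's own proof of this proposition consists of the single remark that it ``closely follows the proof of Theorem \ref{thm:orb-morita} and is left to the reader,'' and your orbit-by-orbit decomposition of $\widetilde{\cala}\rtimes_c Q$ followed by globalizing the bibundle and bimodules of Theorem \ref{thm:orb-morita} with $\cala$-coefficients (in the style of Proposition \ref{prop:global-quotient}) is precisely that route. The explicit twisted module formulas you write down are consistent with the paper's conventions, and the verification indeed reduces to the same cocycle manipulations as in Theorem \ref{thm:orb-morita}.
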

\begin{proof}
The proof closely follows the proof of Theorem \ref{thm:orb-morita}
and is left to the reader.
\end{proof}

In summary,  we have a natural
generalization of the Mackey machine to the groupoid $M\rtimes H$.
\begin{theorem}\label{thm:main-global-quot}
The crossed product algebra $\cala\rtimes H$ is Morita equivalent to
the direct sum $$\bigoplus_{O\in \orb^Q(\widehat{G})}
\cala\rtimes_{c_{\rho_O}}\stab(\rho_O)$$ of twisted crossed product
algebras.
\end{theorem}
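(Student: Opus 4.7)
The plan is to obtain Theorem \ref{thm:main-global-quot} by simply composing the two Morita equivalences that have just been established. By Proposition \ref{prop:global-quotient}, the crossed product algebra $\cala\rtimes H$ is Morita equivalent to the twisted crossed product algebra $\widetilde{\cala}\rtimes_c Q$, with explicit equivalence bimodules $\calm$ and $\caln$ constructed using the bundle $\calv_G\to M\times\widehat{G}$ and its dual. By Proposition \ref{prop:glob-quot-decomp}, the algebra $\widetilde{\cala}\rtimes_c Q$ is in turn Morita equivalent to the direct sum $\bigoplus_{O\in \orb^Q(\widehat{G})} \cala\rtimes_{c_{\rho_O}}\stab(\rho_O)$, via equivalence bimodules obtained by generalizing the orbit decomposition construction in Theorem \ref{thm:orb-morita} to the family case over $M$.

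The key observation is that Morita equivalence of (bornological) algebras is transitive: if $\calm_1$ is an $A$-$B$ equivalence bimodule and $\calm_2$ is a $B$-$C$ equivalence bimodule, then $\calm_1\otimes_B \calm_2$ is an $A$-$C$ equivalence bimodule. Hence, setting $A=\cala\rtimes H$, $B=\widetilde{\cala}\rtimes_c Q$, and $C=\bigoplus_{O\in \orb^Q(\widehat{G})} \cala\rtimes_{c_{\rho_O}}\stab(\rho_O)$, and taking tensor products of the bimodules produced by Propositions \ref{prop:global-quotient} and \ref{prop:glob-quot-decomp}, I immediately obtain a Morita equivalence between $A$ and $C$, as required.

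The main (only) thing to verify carefully would be that the Morita equivalences in the two propositions compose as bornological bimodules, i.e., that the bimodule tensor product behaves as expected in the bornological setting reviewed in Section \ref{subsec:algebra}. This is standard once one knows that the category of modules over a bornological algebra has enough projectives (see \cite{me} and \cite[Appendix A]{pptt}), and that the relevant bimodules are projective on each side, which follows from the explicit formulas used in the proofs of the two propositions. Thus the theorem is essentially a formal consequence, and no genuine new work is needed beyond citing the two propositions and transitivity.
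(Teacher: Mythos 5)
Your proposal matches the paper's own argument: the paper states Theorem \ref{thm:main-global-quot} with the words ``In summary'' immediately after Propositions \ref{prop:global-quotient} and \ref{prop:glob-quot-decomp}, i.e., it is obtained exactly by composing those two Morita equivalences via transitivity, just as you do. Your extra remark on bornological bimodule tensor products is a harmless elaboration consistent with the paper's reliance on \cite[Appendix A]{pptt}.
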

\subsection{General case}\label{subsec:general}
In this subsection, we generalize the groupoid
extension Eq.  (\ref{eq:gpd-ext}). Let $\frakH, \frakQ, \frakG$
be proper \'etale groupoids and $i:\frakG\rightarrow \frakH$ and
$j:\frakH\rightarrow \frakG$  groupoid morphisms that fit in the
exact sequence
\begin{equation}\label{eq:gpd-extension}
\frakG\stackrel{i}{\longrightarrow}\frakH\stackrel{j}{\longrightarrow}\frakQ.
\end{equation}
We make the following assumptions:
\begin{enumerate}
\item The groupoid $\frakG$ is a locally trivial bundle of groups over 
$\frakG_0$ with fibers isomorphic to $G$.
\item The morphisms $i$ and $j$ are identity maps when
restricted to the unit space.
\end{enumerate}
According to \cite{la-st-xu}, the Morita equivalence class of the
above $G$-groupoid extension defines a $G$-gerbe
$\Y=[\frakH_0/\frakH]$ on the orbifold $\B=[\frakQ_0/\frakQ]$
associated with the groupoid $\frakQ$, and every $G$-gerbe over $\B$
arises in this way. We are interested in the geometry of the
$G$-gerbe $\Y\to \B$.

We recall the procedure of refining an \'etale groupoid
$\calg\rightrightarrows \calg_0$ by a countable open covering
$\calu$ of $\calg_0$. For every $U_i,U_j\in \calu$, consider the
subspace $\calg^{U_i}_{U_j}:=\{g\in \calg| s(g)\in U_i, t(g)\in
U_j\}$. Define a new groupoid $\calg_\calu$ by
$\coprod_{ij}\calg^{U_i}_{U_j}\rightrightarrows \coprod_i U_i$.
Moerdijk and Pronk \cite{mo-pr-ind} proved that $\calg_\calu$ is
Morita equivalent to $\calg$, and that if we choose  every $U_i$ to
be sufficiently small, the groupoid $\calg_\calu$ and every level of
its nerve space are disjoint unions of contractible open sets. (Let
$\calg$ be a groupoid. For $n\geq 0$, the $n$-th level $\calg_n$ of
the nerve space of $\calg$ is defined to be the space of $n$
composable arrows in $\calg$.)

Laurent-Gengoux, Sti\'enon, and Xu proved \cite{la-st-xu} that a
groupoid extension like Eq.  (\ref{eq:gpd-extension}) always has
a refinement with respect to a covering $\calu$ of $\frakQ_0$ such
that the kernel of the groupoid extension is a trivial bundle of groups isomorphic to $G$.
Combining this result with the result from Moerdijk and Pronk
\cite{mo-pr-ind}, we conclude that there is a covering $\calu$ of
$\frakQ_0$ such that the associated refinement of the groupoid
extension
\[
\coprod_i U_i\times G\rightarrow \frakH_\calu \rightarrow
\frakQ_\calu
\]
is Morita equivalent to the original extension
(\ref{eq:gpd-extension}) and every level of the
nerve space of $\frakQ$ is a disjoint unions of contractible open sets. Since
$\frakH_\calu$ is Morita equivalent to $\frakH$, the groupoid
algebra of $\frakH_\calu$ is Morita equivalent to the groupoid
algebra of $\frakH$. For our purpose, we may thus study the groupoid algebra of $\frakH_\calu$.
To simplify notations, we will assume to work with the
groupoid extension $M\times G\stackrel{i}{\rightarrow} \frakH\stackrel{j}{\rightarrow} \frakQ,$
where $\frakH_0=\frakQ_0=M$, and $\frakQ$ is a disjoint union of contractible open sets. We generalize the Mackey machine formulated in Sec. 
\ref{sec:mackey} to study the above $G$-extension of proper \'etale groupoids.

As $\frakQ$ is a disjoint union of contractible open sets, the
principal $G$ bundle $\frakH$ over $\frakQ$ has a global section
$\sigma$, i.e., there is a smooth map $\sigma: \frakQ \to \frakH$
such that $j\circ \sigma=id$, and the restriction of $\sigma$ to the
unit space $\frakQ_0$ is the identity map.  Fix such a choice of
$\sigma$.

\begin{lemma}\label{lem:sigma}For a section $\sigma:\frakQ\to \frakH$, $s(\sigma(q))=s(q)$ and $t(\sigma(q))=t(q)$ for any $q\in \frakQ$.
\end{lemma}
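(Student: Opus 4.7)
The proof will be essentially a one-line unwinding of the definitions, so the ``plan'' is really just to identify which hypotheses to invoke and in what order. The key observation is that $j\colon\frakH\to\frakQ$ is a morphism of (Lie) groupoids, and groupoid morphisms are compatible with the source and target maps. That is, for every arrow $h\in\frakH$ we have
\[
s_{\frakQ}(j(h))=j(s_{\frakH}(h)),\qquad t_{\frakQ}(j(h))=j(t_{\frakH}(h)).
\]
Combined with the standing assumption that $\frakH_{0}=\frakQ_{0}=M$ and that $j$ restricts to the identity on the common unit space, the right-hand sides simplify to $s_{\frakH}(h)$ and $t_{\frakH}(h)$ respectively.

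The plan is then to apply this identity to the particular element $h=\sigma(q)$ for $q\in\frakQ$, and to use the defining property $j\circ\sigma=\mathrm{id}_{\frakQ}$ of a section. Concretely, I would write
\[
s(q)=s_{\frakQ}(j(\sigma(q)))=j(s_{\frakH}(\sigma(q)))=s_{\frakH}(\sigma(q)),
\]
where the first equality uses $j\circ\sigma=\mathrm{id}$, the second uses that $j$ is a groupoid morphism, and the third uses that $s_{\frakH}(\sigma(q))\in\frakH_{0}=M$ is a unit and $j|_{M}=\mathrm{id}_{M}$. The same three-step identity with $t$ in place of $s$ gives $t(q)=t_{\frakH}(\sigma(q))$.

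There is essentially no obstacle here; the content of the lemma is that any section of a groupoid extension whose component morphisms restrict to the identity on the unit spaces is automatically compatible with the source and target structure. The reason it is worth stating explicitly is that it will later be used freely in order to interpret quantities like $\tau(q_{1},q_{2})=\sigma(q_{1})\sigma(q_{2})\sigma(q_{1}q_{2})^{-1}$ and the analogues of $\mathrm{Ad}_{\sigma(q)}$ as honest elements or operations associated to well-defined composable arrows, generalizing the role played by the section $s\colon Q\to H$ in Section~\ref{subsec:groupalgebra}.
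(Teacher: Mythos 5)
Your proof is correct and follows exactly the same route as the paper: apply $j\circ\sigma=\mathrm{id}$, use compatibility of the groupoid morphism $j$ with the source/target maps, and conclude using that $j$ restricts to the identity on the common unit space $\frakH_0=\frakQ_0=M$. You simply spell out the last step (that $s_{\frakH}(\sigma(q))$ is a unit, hence fixed by $j$) a bit more explicitly than the paper does.
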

\begin{proof}
As $j\circ \sigma=id$, we have $s(q)=s(j\circ \sigma(q))=j\circ
s(\sigma(q))=s(\sigma(q))$. The same argument works for the target
map $t$.
\end{proof}
We study the failure of $\sigma$ to be a groupoid morphism. If
$\sigma$ is a groupoid morphism, then the above extension is a
semi-direct product of the groupoid $\frakQ$ and the bundle $M\times
G$. In general, for two composable arrows $q_1,q_2\in \frakQ$, Lemma
\ref{lem:sigma} implies that $\sigma(q_1)$ and $\sigma(q_2)$ are
composable arrows in $\frakH$. However, $\sigma(q_1)\sigma(q_2)$
usually differs from $\sigma(q_1q_2)$. Generalizing
(\ref{eq:tau-def}), we define a map
$$\tau: \frakQ\times_{\frakQ_0} \frakQ \rightarrow \frakH^{(0)},\quad \tau(q_1,q_2):=\sigma(q_1)\sigma(q_2)\sigma(q_1q_2)^{-1},$$
where $\frakH^{(0)}$ is the loop space in $\frakH$ consisting of
arrows $h\in\frakH$ such that $s(h)=t(h)$. By Lemma \ref{lem:sigma},
$\tau(q_1,q_2)$ is a well-defined element in $\frakH$ such that
$s(\tau(q_1,q_2))=s(\sigma(q_1))=s(q_1)=t(\sigma(q_1q_2)^{-1})=t(\tau(q_1,q_2))$.
We also observe that
\[
j(\tau(q_1,
q_2))=j(\sigma(q_1)\sigma(q_2)\sigma(q_1q_2)^{-1})=j(\sigma(q_1))j(\sigma(q_2))j(\sigma(q_1q_2)^{-1})=s(q_1).
\]
This shows that $\tau(q_1, q_2)$ is in the kernel of $j$. Hence, we
can consider $\tau(q_1, q_2)$ as an element in $G$.

We compute the cocycle condition that $\tau$ satisfies. For
composable arrows $q_1,q_2,q_3\in \frakQ$,
\[
\begin{split}
\tau(q_1,q_2)\tau(q_1q_2, q_3)\sigma(q_1q_2q_3)=&(\sigma(q_1)\sigma(q_2))\sigma(q_3)
=\sigma(q_1)(\sigma(q_2)\sigma(q_3))
=\sigma(q_1)\tau(q_2,q_3)\sigma(q_2q_3)\\
=&\sigma(q_1)\tau(q_2,q_3)\sigma(q_1)^{-1}\tau(q_1,q_2q_3)\sigma(q_1q_2q_3).
\end{split}
\]
Note that $s(\tau(q_2,q_3))=t(\tau(q_2,q_3))=s(q_2)=t(q_1)$. So,
$
\Ad_{\sigma(q_1)}(\tau(q_2,q_3)):=\sigma(q_1)\tau(q_2,q_3)\sigma(q_1)^{-1}
$
is well-defined. In summary, we have the following equation, which
generalizes (\ref{eq:tau-cocycle}):
\begin{equation}\label{equation:gpd-tau-cocycle}
\tau(q_1,q_2)\tau(q_1q_2,
q_3)=\Ad_{\sigma(q_1)}(\tau(q_2,q_3))\tau(q_1,q_2q_3).
\end{equation}

With the above preparation, we have a new description of the
groupoid $\frakH$.  Define $G\rtimes_{\sigma,\tau}
\frakQ\rightrightarrows \frakQ_0$ to be the groupoid with the
following structures. The space of arrows of this groupoid is
$G\times \frakQ$. Given an arrow $(g,q)\in G\times \frakQ$, the
source map takes $(g,q)$ to $s(q)$ and the target map takes $(g,q)$
to $t(q)$. Given composable arrows $(g_1,q_1)$ and $(g_2,q_2)$,
their product is defined to be
\[
(g_1,q_1)(g_2,q_2)=(g_1\Ad_{\sigma(q_1)}(g_2)\tau(q_1,q_2), q_1q_2).
\]
Define the isomorphism $\alpha:\frakH\to
G\rtimes_{\sigma,\tau}\frakQ$ by $\alpha(h)=(h\sigma(j(h))^{-1},j(h))$.
The same argument  that proves Eq.  (\ref{eq:twisted-prod-gp}) now shows that
$\alpha$ is a groupoid isomorphism. In the following construction,
we will always work with the groupoid $G\rtimes_{\sigma,
\tau}\frakQ$.

Let $\cala$ be a $\frakQ$-sheaf of algebras over $\frakQ_0$. Pulling
back $\cala$ along the groupoid morphism $j$, we obtain an
$\frakH=G\rtimes_{\sigma, \tau}\frakQ$-sheaf\footnote{Since
$\frakQ_0=\frakH_0$, we use $\cala$ to denote the same sheaf on
$\frakH_0$ but equipped with an $\frakH$ action.} $\cala$ of
algebras over $\frakH_0=\frakQ_0$ such that every element in the
kernel of $j$ acts on $\cala$ trivially. We are interested in the
crossed product algebra $\cala\rtimes \frakH$ which, via the
isomorphism $\alpha$, is isomorphic to the crossed product algebra
$\cala\rtimes(G\rtimes_{\sigma,\tau} \frakQ)$. As the $G$ component
acts on $\cala$ trivially, the crossed product algebra $\cala\rtimes
(G\rtimes_{\sigma,\tau}\frakQ)$ is isomorphic to $(\cala\otimes
\complex G)\rtimes_{\sigma, \tau} \frakQ$.

Recall that in Sec.  \ref{sec:mackey}, for any class $[\rho]\in
\widehat{G}$ we fix a choice of an irreducible representation in the
class $[\rho]$ denoted by $\rho:G\to \End(V_\rho)$. Also, there is
an isomorphism $\complex G\cong \oplus_{[\rho]\in
\widehat{G}}\End(V_\rho)$ of algebras. We replace $\complex G$ by
$\oplus_{[\rho]\in \widehat{G}}\End(V_\rho)$ and study the $\frakQ$
action on the sheaf $\oplus_{[\rho]\in \widehat{G}}\End(V_\rho)$.

Let $q$ be an element in $\frakQ$. The adjoint action
$\Ad_{\sigma(q)}$ defines a group homomorphism on $G$. Accordingly,
$\Ad_{\sigma(q)}$ acts on $\widehat{G}$ as follows: for $[\rho]\in
\widehat{G}$, the class $q([\rho])\in \widehat{G}$ is the class of
the $G$ representation defined by
$g\mapsto\rho(\Ad_{\sigma(q)}(g))$. Again, by abuse of notation, the
chosen irreducible $G$-representation that represents the class
$q([\rho])$ will also be denoted by $q([\rho]): G\to
\End(V_{q([\rho])})$.
Since the representation $q([\rho]): G\to \End(V_{q([\rho])})$ is,
by definition, equivalent to the representation $G\to \End(V_\rho)$
defined by $g\mapsto \rho(\Ad_{\sigma(q)}(g))$, there is an
intertwining isomorphism $T^{[\rho]}_q:V_{\rho}\to V_{q([\rho])}$
such that $\rho(\Ad_{\sigma(q)}(g))={T^{[\rho]}_q}^{-1}\circ
q([\rho])(g)\circ T^{[\rho]}_q$. For a pair of composable arrows
$q_1, q_2\in \frakQ$, we have the following equation generalizing
(\ref{eq:dfn-c}):
\[
T^{q_1([\rho])}_{q_2}\circ
T^{[\rho]}_{q_1}=c^{[\rho]}(q_1,q_2)T^{[\rho]}_{q_1q_2}\rho(\tau(q_1,q_2))^{-1}.
\]

The action\footnote{We will write this action as a left action.} of
$\frakQ$ on $\widehat{G}$ defines a transformation groupoid
$\widehat{G}\rtimes \frakQ$. Since we have assumed that every level
of the nerve space of $\frakQ$ is a disjoint union of contractible
open sets, every level of the nerve space of the groupoid
$\widehat{G}\rtimes \frakQ$ is also a disjoint union of contractible
open sets. Generalizing Proposition \ref{prop:u(1)-cocycle}, we
obtain the following result whose proof is left to the reader.

\begin{proposition}\label{prop:u(1)-cocycle-gpd}The $U(1)$-valued
function $c:\widehat{G}\times \frakQ\times_{\frakQ_0}\frakQ\to
U(1), \ ([\rho],q_1,q_2)\mapsto c^{[\rho]}(q_1,q_2)$ defines a
groupoid cocycle on $\widehat{G}\rtimes \frakQ$ such that
$c^\bullet(q,-)=c^\bullet(-,q)=1$ if $q\in \frakQ_0$.
\end{proposition}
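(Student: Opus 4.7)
The plan is to mimic the argument used in the proof of Proposition \ref{prop:u(1)-cocycle}, replacing the global section $s:Q\to H$ and the $G$-valued $2$-cocycle $\tau(q_1,q_2)$ on $Q$ by the local section $\sigma:\frakQ\to \frakH$ and the $G$-valued groupoid $2$-cocycle $\tau(q_1,q_2)=\sigma(q_1)\sigma(q_2)\sigma(q_1q_2)^{-1}$ on $\frakQ$ satisfying equation (\ref{equation:gpd-tau-cocycle}). The only additional bookkeeping needed is checking that source/target matching is respected throughout, but this is ensured by Lemma \ref{lem:sigma} together with the fact that $s(\tau(q_1,q_2))=t(\tau(q_1,q_2))=s(q_1)$, so the intertwiners $T^{[\rho]}_{q}$ and the scalar $c^{[\rho]}(q_1,q_2)$ make sense at the appropriate fibers of $\widehat{G}\times_{\frakQ_0}\frakQ$.

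First I would verify that the defining equation
\[
T^{q_1([\rho])}_{q_2}\circ T^{[\rho]}_{q_1}=c^{[\rho]}(q_1,q_2)\, T^{[\rho]}_{q_1q_2}\,\rho(\tau(q_1,q_2))^{-1}
\]
still holds in the groupoid setting. This amounts to repeating the Schur's lemma argument: from $\rho\circ \Ad_{\sigma(q)}$ being equivalent to $q([\rho])$ one produces unitary intertwiners $T^{[\rho]}_{q}$, and the composite $T^{q_1([\rho])}_{q_2}\circ T^{[\rho]}_{q_1}\circ \rho(\tau(q_1,q_2))\circ (T^{[\rho]}_{q_1q_2})^{-1}$ commutes with the irreducible representation $q_1q_2([\rho])$, hence is a $U(1)$-scalar. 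Because the groupoid cocycle $\tau$ takes values in the kernel $G$ and $\Ad_{\sigma(q_1)}$ makes sense thanks to the source/target identification above, every step goes through verbatim.

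Next, I would establish the cocycle identity by expanding
\[
(T^{q_1q_2([\rho])}_{q_3}\circ T^{q_1([\rho])}_{q_2})\circ T^{[\rho]}_{q_1}
= T^{q_1q_2([\rho])}_{q_3}\circ (T^{q_1([\rho])}_{q_2}\circ T^{[\rho]}_{q_1})
\]
using the defining relation on both sides. On the left one picks up $c^{q_1([\rho])}(q_2,q_3)\,c^{[\rho]}(q_1,q_2q_3)$ together with the factor $\rho(\Ad_{\sigma(q_1)}(\tau(q_2,q_3))\,\tau(q_1,q_2q_3))^{-1}$, while on the right one gets $c^{[\rho]}(q_1q_2,q_3)\,c^{[\rho]}(q_1,q_2)$ together with $\rho(\tau(q_1,q_2)\,\tau(q_1q_2,q_3))^{-1}$. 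Applying the groupoid identity (\ref{equation:gpd-tau-cocycle}) to cancel the $\rho$-factors yields
\[
c^{q_1([\rho])}(q_2,q_3)\,c^{[\rho]}(q_1,q_2q_3)=c^{[\rho]}(q_1q_2,q_3)\,c^{[\rho]}(q_1,q_2),
\]
which is precisely the $2$-cocycle condition on $\widehat{G}\rtimes\frakQ$.

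Finally, the normalization is immediate: if $q\in\frakQ_0$ is a unit, then $\sigma(q)=q$ is itself a unit in $\frakH$, so both $\tau(q,q')$ and $\tau(q',q)$ are units of $\frakH$ (equivalently the identity element of $G$ over the appropriate point), and we may take $T^{[\rho]}_{q}=\mathrm{id}_{V_\rho}$. Plugging into the defining relation then forces $c^{[\rho]}(q,q')=c^{[\rho]}(q',q)=1$. The main (and only mild) obstacle is keeping track of the source/target conditions so that each composition of intertwiners and each instance of $\Ad_{\sigma(q)}$ acts on the correct fiber; once this is organized, the argument is a direct transcription of the computation behind Proposition \ref{prop:u(1)-cocycle}. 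Independence of the cohomology class of $c$ on the choice of section $\sigma$ and intertwiners $T^{[\rho]}_q$ follows by the same coboundary calculation as in the group case.
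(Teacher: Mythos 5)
Your proposal is correct and follows exactly the route the paper intends: the paper leaves this proof to the reader precisely because it is the verbatim transcription of the argument for Proposition \ref{prop:u(1)-cocycle}, with the section $\sigma$, the groupoid cocycle $\tau$ satisfying (\ref{equation:gpd-tau-cocycle}), and the source/target bookkeeping of Lemma \ref{lem:sigma} replacing their group-level counterparts. Your Schur's lemma step, the cocycle computation, and the normalization for unit arrows (using $\sigma|_{\frakQ_0}=\mathrm{id}$ and $T^{[\rho]}_{q}=\mathrm{id}$) all match the intended generalization.
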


\begin{proposition}\label{prop:local-const}The cocycle $c$
introduced in Proposition \ref{prop:u(1)-cocycle-gpd} can be chosen
to be locally constant. Consequently the corresponding gerbe $c$
defined on the orbifold $\widehat{\Y}=[\widehat{G}\times \frakQ_0/\frakQ]$ is flat.
\end{proposition}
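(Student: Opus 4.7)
The strategy is to make locally constant choices at every step of the construction of $c$, so that the cocycle inherits local constancy automatically. Recall that by the reductions made at the start of Section \ref{subsec:general}, we may and do assume that every level of the nerve space of $\frakQ$ is a disjoint union of contractible open sets, and that the kernel of $j\colon\frakH\to\frakQ$ is the trivial group bundle $M\times G$. In particular, the restriction of the principal $G$-bundle $\frakH\to\frakQ$ to any connected component of the space of arrows of $\frakQ$ is a trivial $G$-bundle.

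First I would choose the section $\sigma\colon\frakQ\to\frakH$ to be locally constant in the following sense: on each connected component $U$ of $\frakQ$, trivialize $\frakH|_{U}\cong U\times G$ using contractibility, and pick $\sigma|_U$ to correspond to the constant section $q\mapsto (q,1)$. Next, since $\widehat{G}$ is a finite discrete set and $\frakQ$ is \'etale, the assignment $q\mapsto q([\rho])$ is a continuous map from $\frakQ$ to the discrete space $\widehat{G}$, hence constant on every connected component. Likewise, $\tau(q_1,q_2)=\sigma(q_1)\sigma(q_2)\sigma(q_1q_2)^{-1}$ is, after identifying the kernel fibers with $G$ via the fixed trivialization, a continuous map to the discrete group $G$, and is therefore locally constant on $\frakQ\times_{\frakQ_0}\frakQ$.

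With these choices made, I can then choose the intertwiners $T^{[\rho]}_q$ to be locally constant as well: on each connected component of $\frakQ$, the target class $q([\rho])$ is constant, so one may fix a single intertwiner $V_\rho\to V_{q([\rho])}$ and take $T^{[\rho]}_q$ to equal that intertwiner throughout the component (on the unit space we still require $T^{[\rho]}_{1}=\id$, which is compatible). Then the defining relation
\[
T^{q_1([\rho])}_{q_2}\circ T^{[\rho]}_{q_1}\;=\;c^{[\rho]}(q_1,q_2)\,T^{[\rho]}_{q_1q_2}\,\rho(\tau(q_1,q_2))^{-1}
\]
expresses $c^{[\rho]}(q_1,q_2)$ as a ratio of four locally constant quantities, so $c^{[\rho]}$ is locally constant on $\widehat{G}\times\frakQ\times_{\frakQ_0}\frakQ$, proving the first assertion.

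For the flatness claim, recall that the curvature $3$-form of the $U(1)$-gerbe on $\widehat{\Y}$ associated to a smooth $U(1)$-valued $2$-cocycle is computed from the \v{C}ech--de Rham data of $d\log c$. Since the representative cocycle constructed above is locally constant, $d\log c\equiv 0$, so the curvature vanishes identically and the gerbe $c$ is flat. Equivalently, a locally constant $U(1)$-valued $2$-cocycle represents a class in $H^2(\widehat{\Y},U(1))$, which sits inside the torsion subgroup of $H^3(\widehat{\Y},\integers)$; this is exactly the cohomological characterization of flat $U(1)$-gerbes recalled in the remark on terminology in Section \ref{intro:dual_construction}. The only step requiring any care is making the choice of $\sigma$ and of the intertwiners $T^{[\rho]}_q$ globally compatible across connected components of $\frakQ$; this is exactly where the assumption that the nerve levels are disjoint unions of contractible open sets is used, as it allows the required trivializations to exist on each piece independently.
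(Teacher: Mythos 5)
Your overall route is the same as the paper's: make every ingredient of the construction ($\tau$, the intertwiners $T^{[\rho]}_q$) locally constant by exploiting finiteness/discreteness, conclude from the defining relation (\ref{eq:dfn-c}) that $c$ is locally constant, and then deduce flatness. Your flatness argument (a locally constant representative has $d\log c=0$, hence zero curvature, hence a torsion class) is a perfectly acceptable direct substitute for the paper's citation of \cite[Proposition 3.26]{la-st-xu-1}.

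There is, however, one real gap at the intertwiner step. You justify taking $T^{[\rho]}_q$ to be a single fixed linear map on each connected component of $\frakQ$ solely by the constancy of the isomorphism class $q([\rho])\in\widehat{G}$ on that component. That is not enough: $T^{[\rho]}_q$ must intertwine the specific representation $g\mapsto\rho(\Ad_{\sigma(q)}(g))$ with the chosen representative of $q([\rho])$, and if the automorphism $\Ad_{\sigma(q)}\in\operatorname{Aut}(G)$ were to vary within the component (say by an inner automorphism, which does not change the class in $\widehat{G}$), a fixed $T$ would fail the intertwining identity, and the right-hand side of (\ref{eq:dfn-c}) would no longer be a scalar at all. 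What saves the argument is precisely the paper's first observation, which you never state: $q\mapsto\Ad_{\sigma(q)}$ is a continuous map into the \emph{finite} set $\operatorname{Aut}(G)$, hence locally constant, so $\Ad_{\sigma(q)}$ is genuinely constant on each connected component and your fixed intertwiner does work (with the value the identity on components meeting the unit space, so that $T^{[\rho]}_q=\mathrm{id}$ on $\frakQ_0$). This is the same continuity-into-a-finite-set argument you already use for $\tau$ and for $q([\rho])$; you simply need to apply it to $\operatorname{Aut}(G)$ rather than only to $\widehat{G}$. With that one sentence added, your proof is complete and essentially coincides with the paper's.
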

\begin{proof}
Note that given any $q\in \frakQ$, $\Ad_{\sigma(q)}(\cdot)$ defines
a group automorphism of $G$. This gives a smooth map
$\Ad:\frakQ\to \operatorname{Aut}(G)$. The set of group automorphisms
of $G$ is a finite set. Therefore, the map $\Ad:\frakQ\to
\operatorname{Aut}(G)$ must be locally constant. Similarly, as $G$
is finite, the continuous map $\tau:\frakQ\times_{\frakQ_0}\frakQ\to
G$ is also locally constant. Now following the construction in
Sec.  \ref{sec:mackey}, we can associate to an automorphism $\nu$
of $G$ a collection of isomorphisms $T^{[\rho]}_{\nu}:V_{\rho}\to
V_{\nu([\rho])}$. Hence, we can choose $T^{[\rho]}_{q}: V_{\rho}\to
V_{q([\rho])} $ to be locally constant in $q\in \frakQ$. This
implies that we can choose $c:\frakQ\times_{\frakQ_0}\frakQ\to U(1)$
to be a locally constant cocycle. By \cite[Proposition
3.26]{la-st-xu-1}, we conclude that the $U(1)$-gerbe defined by $c$
over $\widehat{\Y}$ is flat.
\end{proof}

With the above structure, the same arguments used in Proposition
\ref{prop:matrix-coeff-algebra} prove that the crossed product
algebra $(\cala\otimes \complex
G)\rtimes_{\sigma,\tau}\frakQ$ is isomorphic to the crossed product
algebra $(\cala\otimes
\oplus_{[\rho]\in\widehat{G}}\End(V_\rho))\rtimes_{T,c}\frakQ$,
whose product structure is defined by
\[
\begin{split}
&\big((a\otimes x)\circ (a'\otimes
x')\big)(q)
:=\sum_{q_1q_2=q, \rho}a(q_1)q_1(a'(q_2))\otimes
x(q_1)_{[\rho]}{T^{[\rho](q_1)}_{q_1}}^{-1}x'(q_2)_{q_1([\rho])}T^{[\rho]}_{q_1}
\rho(\tau(q_1,q_2)),
\end{split}
\]
where $x$ is a section of the sheaf $s^*(\oplus_{[\rho]\in\widehat{G}}\End(V_\rho))$ over $\frakQ$ (a trivial bundle over $\frakQ$), and
we use $x(q)_{[\rho]}$ to denote the $[\rho]$ component of $x$ at $q$.

Let ${\cala}$ be a $\frakQ$-sheaf of algebras over $\frakQ_0$. This
defines a $\widehat{G}\rtimes \frakQ$-sheaf $\widetilde{\cala}$ of
algebras on $\widehat{G}\times \frakQ_0$. Using the $U(1)$-valued
groupoid 2-cocycle $c$ on $\widehat{G}\rtimes \frakQ$
in Proposition \ref{prop:u(1)-cocycle-gpd}, we introduce the following definition. 
\begin{definition}Define a $c$-twisted
crossed product algebra $\widetilde{\cala}\rtimes_c
(\widehat{G}\rtimes \frakQ)$ by
\begin{equation}\label{eq:twisted-prod-gpd}
a_1\circ
a_2([\rho],q)=\sum_{q=q_1q_2}a_1([\rho],q_1)q_1\Big(a_2\big(q_1([\rho]),
q_2\big)\Big)c^{[\rho]}(q_1,q_2),
\end{equation}
for compactly supported smooth sections of the sheaf $\widetilde{\cala}$.
\end{definition}
In the following developments, the property that $c$ is a locally
constant 2-cocycle plays a crucial role. Otherwise, the product
defined in (\ref{eq:twisted-prod-gpd}) is not even associative.

\begin{theorem}\label{thm:global-mackey}
The ($c$-twisted) crossed product algebras $\cala\rtimes \frakH$  and  $\widetilde{\cala}\rtimes_c(\widehat{G}\rtimes \frakQ)$ are Morita equivalent.
\end{theorem}
\begin{remark}\label{rmk:morita-quasi-unital}
In general, the algebras $\cala\rtimes \frakH$ and
$\widetilde{\cala}\rtimes_c(\widehat{G}\rtimes \frakQ)$ are
quasi-unital algebras \cite[Appendix A]{pptt}. We use the methods of
\cite[Appendix A]{pptt} to work with Morita equivalence of
quasi-unital algebras.
\end{remark}
\begin{proof}
The proof is a straightforward generalization of the proofs of Theorems \ref{thm:local-mackey} and \ref{thm:main-global-quot}. We leave the details to the reader. 
\end{proof}
\begin{remark}
There is no doubt that
the corresponding $C^*$-algebra completions of $\cala\rtimes \frakH$
and $\widetilde{\cala}\rtimes_c(\widehat{G}\rtimes \frakQ)$ are also
Morita equivalent. Since we do not need this result in this paper,
we do not discuss its proof.
\end{remark}

Since Morita equivalent algebras have isomorphic categories of
modules, we deduce the following corollary from \cite[Theorem
A.12]{pptt}.
\begin{corollary}\label{cor:morita-gpd}
The algebras $\cala\rtimes \frakH$ and
$\widetilde{\cala}\rtimes_c(\widehat{G}\rtimes \frakQ)$ have
isomorphic categories of modules. In particular, the K-theory and
(Hochschild) cohomology groups of the two algebras are isomorphic.
\end{corollary}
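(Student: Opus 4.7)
The plan is to deduce this corollary directly from Theorem \ref{thm:global-mackey} combined with standard Morita-invariance principles, with a small amount of care required because the algebras in question are in general only quasi-unital, not unital (see Remark \ref{rmk:morita-quasi-unital}).

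First, I would invoke Theorem \ref{thm:global-mackey}, which produces an explicit pair of bimodules $\calm$ and $\caln$ together with isomorphisms $\Xi: \calm\otimes_B \caln \to A$ and $\Theta: \caln\otimes_A \calm \to B$ of $A$-$A$ and $B$-$B$ bimodules respectively, where $A=\cala\rtimes\frakH$ and $B=\widetilde{\cala}\rtimes_c(\widehat{G}\rtimes\frakQ)$. The functor $-\otimes_A \calm$ from left $A$-modules to left $B$-modules, together with its quasi-inverse $-\otimes_B\caln$, then yields the equivalence of module categories by the usual formal manipulation: the natural isomorphisms $(-\otimes_A\calm)\otimes_B\caln \cong -\otimes_A(\calm\otimes_B\caln)\cong -\otimes_A A\cong \mathrm{id}$ and its dual, follow from $\Xi$ and $\Theta$ respectively. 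In the quasi-unital context, one uses the framework of \cite[Appendix A]{pptt}, specifically \cite[Theorem A.12]{pptt}, to guarantee that these natural isomorphisms are really honest isomorphisms in the appropriate category of essential modules.

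Next, the isomorphism of Hochschild cohomology groups follows since $HH^\bullet(A,A) = \operatorname{Ext}^\bullet_{A^e}(A,A)$ is defined purely from the category of $A$-bimodules, which is itself equivalent to the category of $B$-bimodules under $\calm\otimes_B - \otimes_B \caln$. Explicitly, the equivalence sends the $A$-bimodule $A$ to the $B$-bimodule $\caln\otimes_A\calm\cong B$ (using $\Theta$), and this identification is compatible with the Yoneda product, giving the ring isomorphism $HH^\bullet(A,A)\cong HH^\bullet(B,B)$. For K-theory, the equivalence of module categories restricts to an equivalence between categories of finitely generated projective modules, inducing the isomorphism $K_\bullet(A)\cong K_\bullet(B)$.

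The main subtlety I would need to handle carefully is the quasi-unital nature of the algebras, since the classical Morita theory statements are usually formulated for unital rings. As noted in Remark \ref{rmk:morita-quasi-unital}, the bornological/quasi-unital framework of \cite[Appendix A]{pptt} has been developed precisely to cover this situation, so the hard work has been pushed into that reference. Once one grants \cite[Theorem A.12]{pptt}, the proof of the corollary is essentially immediate: state Morita equivalence, cite the general invariance of Hochschild cohomology and $K$-theory under Morita equivalence (generalized to the quasi-unital setting), and conclude.
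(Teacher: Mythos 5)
Your proposal follows essentially the same route as the paper: the paper's proof is precisely to combine Theorem \ref{thm:global-mackey} with the quasi-unital Morita framework of \cite[Theorem A.12]{pptt}, exactly as you do. The only point the paper adds (in the remark following the corollary) is that for the Hochschild part Theorem A.12 requires the compatibility identities $q\,\Xi(p,q')=\Theta(q,p)\,q'$ and $p\,\Theta(q,p')=\Xi(p,q)\,p'$ between the bimodule maps, which are checked from the explicit formulas for $\Xi$ and $\Theta$ (cf.\ Remark \ref{rmk:morita-comp}); you should state this verification explicitly, but it does not change the argument.
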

\begin{remark} As explained in \cite[Theorem A.12]{pptt}, in order to
have an isomorphism between the Hochschild cohomologies of Morita
equivalent algebras, one needs the maps $\Xi: \calm\otimes_B \caln\to A$
and $\Theta: \caln\otimes_A \calm\to B$ to satisfy that for any $p,p'$ in $\calm$ and $q,q'$ in $\caln$, 
\[
q\Xi(p,q')=\Theta(q, p)q',\qquad p\Theta(q,p')=\Xi(p,q)p'.
\]
We can easily check these identities with the explicit formulas for $\Xi$ and
$\Theta$ in Remark \ref{rmk:morita-comp}.
\end{remark}
\subsection{Cohomology} \label{subsec:cohomology}
In this subsection, we study geometric consequences derived from the
Morita equivalence results, Theorem \ref{thm:global-mackey} and
Corollary \ref{cor:morita-gpd}.
All of the algebras considered in this section are Fr\'echet
algebras, and the bornologies on them are the ones associated with
the Fr\'echet topologies.

We start with the $K$-theory of the algebras $\cala\rtimes \frakH$
and $\widetilde{\cala}\rtimes_c(\widehat{G}\rtimes \frakQ)$. For
this purpose, we will consider $\cala$ to be the sheaf
$\calc^\infty$ of smooth functions on $\frakQ_0$, which is 
equipped with $\frakQ$ and $\frakH$ actions. The $K_\bullet$ group ($\bullet=0,1$) of the
algebra $\calc^\infty\rtimes \frakH$ is equal to the $K^\bullet$ group of
the orbifold $\Y$ associated with the groupoid $\frakH$. And the
$K_\bullet$ group of the twisted crossed product algebra
$\widetilde{\calc}^\infty\rtimes_c (\widehat{G}\rtimes \frakQ)$ is
equal to the twisted $K^\bullet_\frakc$-group \cite{tu-la-xu} of the gerbe
$\frakc$ on the orbifold $\widehat{\Y}$ associated with the groupoid
$\widehat{G}\rtimes \frakQ$.

\begin{proposition}\label{prop:k-theory}
The $K^\bullet$-group of the $G$-gerbe $\Y=[\frakH_0/\frakH]$ is
isomorphic to the $K^\bullet_\frakc$-group of the $U(1)$-gerbe $\frakc$ on
the orbifold $\widehat{\Y}=[\widehat{G}\times \frakQ_0/\frakQ]$, for $\bullet=0,1$.
\end{proposition}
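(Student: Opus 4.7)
The plan is to derive this proposition as a direct geometric consequence of Theorem \ref{thm:global-mackey} by specializing the $\frakH$-sheaf $\cala$ to be the sheaf $\calc^\infty$ of smooth functions on $\frakH_0$, and then invoking the known identifications of topological $K$-theory with the $K$-theory of the associated (twisted) convolution algebras.

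First, I would recall the two sheaves of interest. On the one hand, $\calc^\infty$ on $\frakH_0=\frakQ_0$ carries the natural $\frakH$-action through the quotient $j\colon \frakH\to\frakQ$ (since the kernel $M\times G$ acts trivially on functions). The corresponding crossed product $\calc^\infty\rtimes \frakH$ is the smooth groupoid algebra of $\frakH$, which as recalled in Section~\ref{subsec:gpd-algebra} represents the noncommutative geometry of the orbifold $\Y=[\frakH_0/\frakH]$; its $K_0$ is the topological $K_0$-group of $\Y$. On the other hand, the lifted sheaf $\widetilde{\calc^\infty}$ on $\widehat{G}\times \frakQ_0$ together with the locally constant $U(1)$-cocycle $c$ of Proposition \ref{prop:local-const} yields the twisted smooth groupoid algebra $\widetilde{\calc^\infty}\rtimes_c(\widehat{G}\rtimes \frakQ)$, which by the results of Tu--Xu--Laurent-Gengoux \cite{tu-la-xu} represents the flat $U(1)$-gerbe $\frakc$ over $\widehat{\Y}=[\widehat{G}\times \frakQ_0/\frakQ]$, so that its $K_0$ is by definition $K^0_\frakc(\widehat{\Y})$.

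Next, I would apply Theorem \ref{thm:global-mackey} with $\cala=\calc^\infty$, which produces a Morita equivalence
\[
\calc^\infty\rtimes \frakH \;\sim_{\mathrm{Morita}}\; \widetilde{\calc^\infty}\rtimes_c(\widehat{G}\rtimes \frakQ),
\]
implemented by the explicit bimodules $\calm$ and $\caln$ built in the proof of that theorem. Because Morita equivalence of (quasi-unital bornological) algebras preserves the category of modules, and hence preserves $K$-theory, as recorded in Corollary \ref{cor:morita-gpd} (via \cite[Theorem A.12]{pptt}), we conclude that
\[
K_0\bigl(\calc^\infty\rtimes \frakH\bigr)\;\cong\; K_0\bigl(\widetilde{\calc^\infty}\rtimes_c(\widehat{G}\rtimes \frakQ)\bigr).
\]
Combining this with the two geometric identifications from the previous step yields the desired isomorphism $K_0(\Y)\cong K^0_\frakc(\widehat{\Y})$.

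There is essentially no obstacle beyond assembling the existing machinery; the only point requiring care is to make sure the Morita equivalence of Theorem \ref{thm:global-mackey} is of a type that passes to $K$-theory of the smooth (bornological) algebras used on either side. Since the bimodules $\calm$ and $\caln$ are finitely generated projective over the respective algebras after localization (they are sections of the vector bundle $\calv_G$ and its dual lifted through $j$), this follows from the bornological Morita theory developed in \cite[Appendix A]{pptt}, as noted in Remark \ref{rmk:morita-quasi-unital}. Thus the proof amounts to one line: take $\cala=\calc^\infty$ in Theorem \ref{thm:global-mackey} and apply $K_0$.
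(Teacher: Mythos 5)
Your proposal is correct and follows essentially the same route as the paper: the paper also specializes $\cala=\calc^\infty$, invokes the Morita equivalence of Theorem \ref{thm:global-mackey} together with its $K$-theoretic consequence (Corollary \ref{cor:morita-gpd}, via the bornological Morita theory of \cite[Appendix A]{pptt}), and identifies $K_0(\calc^\infty\rtimes\frakH)$ with $K_0(\Y)$ and $K_0(\widetilde{\calc}^\infty\rtimes_c(\widehat{G}\rtimes\frakQ))$ with the twisted group $K^0_\frakc(\widehat{\Y})$ in the sense of \cite{tu-la-xu}. No gaps to report.
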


Brylinski and Nistor \cite{br-ni} proved that the cyclic homology of
the groupoid algebra $\calc^\infty\rtimes \frakH$ is equal to the
cohomology (with compact support) of the inertia orbifold $I\Y$
associated with the orbifold $\Y$. Tu and Xu \cite{tu-xu} proved
that the cyclic homology of the twisted groupoid algebra
$\widetilde{\calc}^\infty\rtimes_c (\widehat{G}\rtimes \frakQ)$ is
equal to the $\frakc$-twisted cohomology (with compact support) of
the inertia orbifold $I\widehat{\Y}$ of $\widehat{\Y}$. By the Chern
character isomorphism between $K_\bullet(\calc^\infty\rtimes \frakH)\otimes \complex$ and $HP_\bullet(\calc^\infty\rtimes \frakH)$ (similarly between  $K_\bullet(\widetilde{\calc}^\infty\rtimes_c (\widehat{G}\rtimes \frakQ))\otimes \complex$ and $HP_\bullet(\widetilde{\calc}^\infty\rtimes_c (\widehat{G}\rtimes \frakQ))$), we have the following identification of
cohomologies.
\begin{proposition}\label{prop:periodic-homology}
\[
\bigoplus_{n\in \integers} H^{\bullet +2n}_{\text{\rm cpt}}(I\Y,
\complex)\cong HP_{\bullet}(\calc^\infty\rtimes \frakH)\cong
HP_\bullet(\widetilde{\calc}^\infty\rtimes_c (\widehat{G}\rtimes \frakQ))\cong
\bigoplus_{n\in \integers}H_{\text{\rm cpt}}^{\bullet+2n}(I\widehat{\Y},
\frakc, \complex).
\]
\end{proposition}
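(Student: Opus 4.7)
The proof is essentially a chaining of three results, two of which are quoted from the literature and the third being Corollary \ref{cor:morita-gpd} established in the previous subsection. My plan is to exhibit each of the three isomorphisms in turn and then use the Chern character to convert cyclic homology into the direct sum of cohomologies.

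First, I would invoke the Brylinski--Nistor computation \cite{br-ni} (see also Crainic \cite{crainic} and the review at the end of Section \ref{subsec:hoch}) for the cyclic homology of the convolution algebra of a proper \'etale groupoid. Applied to the groupoid $\frakH$ representing the orbifold $\Y$, it yields
\[
HP_\bullet(\calc^\infty\rtimes \frakH)\cong\bigoplus_{n\in\integers}H^{\bullet+2n}_{\mathrm{cpt}}(I\Y,\complex),
\]
giving the first isomorphism. The identification of $\calc^\infty\rtimes\frakH$ with the standard groupoid algebra $C_c^\infty(\frakH)$ was recalled in Section \ref{subsec:gpd-algebra}.

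Second, I would apply Theorem \ref{thm:global-mackey} (and in particular Corollary \ref{cor:morita-gpd}) to the $\frakH$-sheaf $\cala=\calc^\infty$, which passes to the $\widehat{G}\rtimes\frakQ$-sheaf $\widetilde{\calc}^\infty$. Since Morita equivalent (bornological) algebras have isomorphic cyclic homology, this produces the middle isomorphism
\[
HP_\bullet(\calc^\infty\rtimes\frakH)\cong HP_\bullet(\widetilde{\calc}^\infty\rtimes_c(\widehat{G}\rtimes\frakQ)).
\]
Here it is important that the cocycle $c$ has been chosen to be locally constant (Proposition \ref{prop:local-const}), so that the twisted crossed product algebra on the right is well-defined and associative, as noted in Section \ref{subsec:general}.

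Third, I would appeal to the Tu--Xu theorem \cite{tu-xu}, which computes the cyclic homology of the twisted groupoid algebra associated to a $U(1)$-valued $2$-cocycle on a proper \'etale groupoid in terms of the $\frakc$-twisted compactly supported cohomology of the inertia orbifold. Applied to $\widehat{G}\rtimes\frakQ$ with the cocycle $c$, whose associated $U(1)$-gerbe on $\widehat{\Y}$ is $\frakc$, this yields
\[
HP_\bullet(\widetilde{\calc}^\infty\rtimes_c(\widehat{G}\rtimes\frakQ))\cong\bigoplus_{n\in\integers}H^{\bullet+2n}_{\mathrm{cpt}}(I\widehat{\Y},\frakc,\complex),
\]
giving the last isomorphism. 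The $2$-periodicity $\bigoplus_{n\in\integers}H^{\bullet+2n}$ on both ends appears naturally because $HP_\bullet$ is $\integers/2$-graded, and the passage from cyclic periodic homology to de Rham cohomology of the inertia orbifold via the Chern character is exactly what is carried out in \cite{br-ni} and \cite{tu-xu}.

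There is no serious obstacle here since each ingredient is already in place: the Brylinski--Nistor and Tu--Xu computations are quoted from the literature, and the Morita equivalence of the two algebras is Theorem \ref{thm:global-mackey}. The only minor point to verify is that the Morita equivalence given in Theorem \ref{thm:global-mackey} is in the bornological sense required to apply the Morita invariance of cyclic homology, but this was already addressed in Remark \ref{rmk:morita-quasi-unital} and \cite[Appendix A]{pptt}.
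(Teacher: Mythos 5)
Your proposal is correct and follows essentially the same route as the paper: the two outer isomorphisms are exactly the Brylinski--Nistor and Tu--Xu computations recalled in Section \ref{subsec:hoch}, and the middle one is the Morita invariance of (periodic) cyclic homology applied to the equivalence of Theorem \ref{thm:global-mackey} with $\cala=\calc^\infty$, in the bornological setting of \cite[Appendix A]{pptt}. Nothing is missing.
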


We want to better understand the isomorphisms in Proposition
\ref{prop:periodic-homology} in the symplectic case. In what
follows, we assume that there is a $\frakQ$-invariant symplectic
form on $\frakQ_0$. This happens when the base $\B$ is a symplectic
orbifold and the $G$-gerbe $\Y$ and its dual $\widehat{\Y}$ are
equipped with symplectic structures pulled back from the one on
$\B$. By choosing a $\frakQ$-invariant torsion free symplectic
connection on $\frakQ_0$, following \cite{pptt}, we consider the
sheaf of deformation quantization $\cala^{((\hbar))}$ on $\frakQ_0$.
$\cala^{((\hbar))}$ is also a $\frakQ$-sheaf of algebras.

It follows from Theorem \ref{thm:global-mackey} that the algebras
$\cala^{((\hbar))}\rtimes \frakH$ and $\widetilde{\cala}^{((\hbar))}\rtimes_c
(\widehat{G}\rtimes \frakQ)$ are also Morita equivalent.
Consequently, the Hochschild cohomology of the algebra
$\cala^{((\hbar))}\rtimes \frakH$ is isomorphic to the Hochschild
cohomology of the algebra $\widetilde{\cala}^{((\hbar))}\rtimes_c
(\widehat{G}\rtimes \frakQ)$.

The Hochschild cohomology of $\cala^{((\hbar))}\rtimes \frakH$ was
computed in our joint work \cite{pptt} with M. Pflaum and H. Posthuma, and
it is equal to the de Rham cohomology
$H^{\bullet-\ell}(I\Y)((\hbar))$ of the inertia orbifold $I\Y$ with
coefficients in $\complex ((\hbar))$ and a degree shifting $\ell$
given by the codimensions of the embeddings of components of $I\Y$
into $\Y$.  We have the following result for the Hochschild
cohomology of the algebra $\widetilde{\cala}^{((\hbar))}\times_c
(\widehat{G}\rtimes \frakQ)$.

\begin{theorem}\label{thm:hoch-coh-twisted-alg}
The Hochschild cohomology of the algebra
$\widetilde{\cala}^{((\hbar))}\rtimes_c (\widehat{G}\rtimes \frakQ)$
is equal to the $\frakc$-twisted de Rham cohomology of the orbifold
$\widehat{\Y}$ with coefficients in $\complex ((\hbar))$ and a
shifting $\ell$ defined by the codimensions of embeddings of
components of $I\widehat{\Y}$ into $\widehat{\Y}$. That is,
$$
HH^\bullet(\widetilde{\cala}^{((\hbar))}\rtimes_c (\widehat{G}\rtimes
\frakQ), \widetilde{\cala}^{((\hbar))}\rtimes_c (\widehat{G}\rtimes
\frakQ))\cong H^{\bullet-\ell}(I\widehat{\Y},\frakc)((\hbar)).
$$
\end{theorem}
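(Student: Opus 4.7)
The plan is to adapt the proof of the untwisted result in \cite{pptt} to the twisted setting, exploiting the crucial fact (Proposition \ref{prop:local-const}) that $c$ can be chosen to be a \emph{locally constant} groupoid 2-cocycle. The overall strategy has three stages: (i) a localization of the Hochschild cochain complex of $\widetilde{\cala}^{((\hbar))}\rtimes_c(\widehat{G}\rtimes\frakQ)$ onto the inertia groupoid of $\widehat{G}\rtimes\frakQ$; (ii) a local Hochschild-Kostant-Rosenberg (HKR) type computation on each sector; and (iii) a gluing argument which is where the twist enters.

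For stage (i), I would follow the Brylinski-Nistor style localization used in \cite{pptt} and \cite{nppt}. Because the groupoid $\widehat{G}\rtimes\frakQ$ is proper \'etale, the Hochschild cochain complex of the convolution algebra decomposes, up to quasi-isomorphism, as a sum over sectors indexed by the loop space $(\widehat{G}\rtimes\frakQ)^{(0)}$, with the groupoid acting by conjugation; this is exactly the data of $I\widehat{\Y}$. The only modification for the twisted algebra is that each component of the Hochschild cochain complex carries an extra factor coming from evaluating $c$ on the loop; since $c$ is locally constant, this evaluation is a locally constant $U(1)$-valued function on each sector. These locally constant functions patch together to give precisely the (flat) line bundle $\sL_c$ on $I\widehat{\Y}$ defined by the $U(1)$-gerbe $c$.

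For stage (ii), on a small contractible chart of the base $\B$, the cocycle $c$ becomes trivializable (any 2-cocycle on a contractible space is a coboundary) and the twisted deformed crossed-product algebra becomes isomorphic, via a coboundary trivialization, to an untwisted deformed crossed-product. One then applies the local HKR-type theorem proved in \cite{pptt} (combined with the local computation of Hochschild cohomology of a Weyl algebra twisted by a finite group, as in \cite{nppt}), which identifies the local Hochschild cohomology with the shifted de Rham complex of the corresponding local sector of $I\widehat{\Y}$, tensored with $\complex((\hbar))$. The degree shift $\ell$ is the codimension of the sector in $\widehat{\Y}$, exactly as in the untwisted case, since this shift is determined purely by the symplectic normal geometry and is unaffected by the cocycle.

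For stage (iii), the gluing of the local computations across overlaps is governed by the transition data of $c$. Because the local trivializations of $c$ differ on overlaps precisely by the \v Cech representative of $c$, the globalized coefficient sheaf on each sector of $I\widehat{\Y}$ is the flat line bundle $\sL_c$, not the trivial sheaf. Passing to the global de Rham complex with coefficients in $\sL_c$ — which by definition computes $H^{\bullet}(I\widehat{\Y},\frakc)$ — then yields the desired identification $HH^{\bullet}(\widetilde{\cala}^{((\hbar))}\rtimes_c(\widehat{G}\rtimes\frakQ),\widetilde{\cala}^{((\hbar))}\rtimes_c(\widehat{G}\rtimes\frakQ))\cong H^{\bullet-\ell}(I\widehat{\Y},\frakc)((\hbar))$. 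The main obstacle, in my view, is verifying that the spectral sequence (or \v Cech double-complex) converging from the local HKR calculations to the global Hochschild cohomology degenerates with coefficients twisted by the locally constant cocycle $c$: the degeneration in \cite{pptt} rests on the $\frakH$-invariant symplectic connection, and one needs to check that the flatness of $\sL_c$ (guaranteed by local constancy of $c$) allows the Fedosov-style argument to go through verbatim in the twisted setting, so that no corrections beyond the twist in the coefficient line bundle appear.
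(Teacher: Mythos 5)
Your stages (i) and (iii) — localizing the Hochschild cochain complex of the twisted convolution algebra as a fine (pre)sheaf over $\widehat{\Y}$ and gluing the local answers into the flat line bundle $\sL_c$ on $I\widehat{\Y}$ — are essentially the route the paper takes. But stage (ii) contains a genuine error that breaks the argument. On a small chart the local model is not a contractible \emph{space} but the quotient $[P/Q_{\mathrm{loc}}]$ of a contractible manifold $P$ by the finite isotropy group $Q_{\mathrm{loc}}=\frakQ_{[\rho_0],x}$, and since $c$ is locally constant it restricts there to a $U(1)$-valued \emph{group} $2$-cocycle on $Q_{\mathrm{loc}}$. Its class lives in $H^2(Q_{\mathrm{loc}},U(1))$ (the Schur multiplier), which is in general nonzero, so contractibility of the chart does not trivialize it: the twisted crossed product $\widetilde{\cala}^{((\hbar))}_P\rtimes_c Q_{\mathrm{loc}}$ is in general \emph{not} isomorphic to the untwisted one (already $C(Q,c)\not\cong \complex Q$ for, e.g., $Q=\integers/2\times\integers/2$ with a nontrivial cocycle). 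This is not a cosmetic point: the correct local Hochschild cohomology is the cohomology of the local sectors with coefficients in a possibly nontrivial flat line bundle, and sectors corresponding to non-$c$-regular conjugacy classes contribute zero; if your local claim were true, every sector would contribute as in the untwisted case and the global answer would have the wrong rank, so it could not equal $H^{\bullet-\ell}(I\widehat{\Y},\frakc)((\hbar))$.

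What is missing is precisely the twisted local computation, and the paper supplies it as follows. After localizing to $\widetilde{\cala}^{((\hbar))}_P\rtimes_c Q$ with $Q$ finite, one uses that $[c]$ is torsion to replace $c$, up to a coboundary (which changes the algebra only by an isomorphism), by a cocycle valued in $\integers/m\integers$; one then forms the central extension $\integers/m\integers\rtimes_c Q$ and the \emph{untwisted} crossed product $\widetilde{\cala}^{((\hbar))}_P\rtimes(\integers/m\integers\rtimes_c Q)$, which decomposes into weight subalgebras $\bigoplus_k\widetilde{\cala}^{((\hbar))}_P\rtimes_{c^k}Q$ for the $\integers/m\integers$-action. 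Applying the untwisted computation of \cite{pptt} to this crossed product equivariantly and extracting the weight-$\exp(2\pi\sqrt{-1}/m)$ component identifies $HH^\bullet(\widetilde{\cala}^{((\hbar))}_P\rtimes_c Q)$ with the component of $\bigoplus_{(\gamma)}H^{\bullet-\ell}_{Z(\gamma)}(P^\gamma)((\hbar))$ singled out by the Tu--Xu analysis, i.e.\ with the twisted cohomology of the local inertia with values in the inner local system $L'$ determined by $c$ (alternatively, one can rerun the arguments of \cite{pptt} directly in the twisted setting, which works because $c$ is locally constant). With this local input your gluing step, and the Fedosov-connection argument you worry about, go through as in the untwisted case; the coefficient twist is exactly the line bundle $L'$ with its canonical flat connection, yielding $HH^\bullet\cong H^{\bullet-\ell}(I\widehat{\Y},\frakc)((\hbar))$.
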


The proof of Theorem \ref{thm:hoch-coh-twisted-alg} will be given in
Sec.  \ref{subsec:twisted-coh}. 
Theorem \ref{thm:global-mackey} about the Morita equivalence between $\cala^{((\hbar))}\rtimes \frakH$ and
$\widetilde{\cala}^{((\hbar))}\rtimes_c (\widehat{G}\rtimes
\frakQ)$ can
now be combined with Theorem \ref{thm:hoch-coh-twisted-alg} to yield
the following:

\begin{theorem}\label{thm:cohomology}
\hfill
\begin{enumerate}
\item \label{coh_isom_part1}
There are isomorphisms of cohomologies,
\[
\begin{split}
H^{\bullet-\ell}(I\Y)((\hbar))\cong &HH^\bullet(\cala^{((\hbar))}\rtimes \frakH, \cala^{((\hbar))}\rtimes \frakH)\\
\cong &HH^\bullet(\widetilde{\cala}^{((\hbar))}\rtimes_c
(\widehat{G}\rtimes \frakQ), \widetilde{\cala}^{((\hbar))}\rtimes_c
(\widehat{G}\rtimes \frakQ))\cong
H^{\bullet-\ell}(I\widehat{\Y},\frakc)((\hbar)).
\end{split}
\]
\item \label{coh_isom_part2}
Moreover, the above isomorphisms yield an isomorphism of graded
vector spaces,
\[
H^{\bullet-2\age}(I\Y)((\hbar))\cong
H^{\bullet-2\age}(I\widehat{\Y},\frakc)((\hbar)),
\]
where the vector spaces are equipped with the {\em age grading} as
defined in Sec.  \ref{subsec:orb}.
\end{enumerate}
\end{theorem}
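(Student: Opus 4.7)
The plan is to prove Part (\ref{coh_isom_part1}) by concatenating three isomorphisms that are already available in the paper or in the literature, and then to deduce Part (\ref{coh_isom_part2}) by comparing the codimension shift $\ell$ with the age shift on corresponding inertia sectors.

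For Part (\ref{coh_isom_part1}), the leftmost isomorphism $H^{\bullet-\ell}(I\Y)((\hbar))\cong HH^\bullet(\cala^{((\hbar))}\rtimes \frakH,\cala^{((\hbar))}\rtimes \frakH)$ is the main result of \cite{pptt}, applied to the proper \'etale groupoid $\frakH$ equipped with the $\frakH$-invariant symplectic form pulled back from $\B$. The middle isomorphism is precisely Corollary \ref{Intro:cor_global_mackey_quantized}, which follows from Theorem \ref{thm:global-mackey} together with the fact recorded at the end of Section \ref{subsec:hoch} that Morita equivalent bornological algebras have isomorphic Hochschild cohomology. The rightmost isomorphism is Theorem \ref{thm:hoch-coh-twisted-alg}. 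Composing these three yields the chain of isomorphisms in Part (\ref{coh_isom_part1}).

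For Part (\ref{coh_isom_part2}), I will show that, after passing to the sector decompositions of $I\Y$ and $I\widehat{\Y}$ on the de Rham side, the composite isomorphism of Part (\ref{coh_isom_part1}) matches sectors in such a way that both the codimension $\ell$ and the age coincide on corresponding sectors. More precisely, because the projections $\Y\to\B$ and $\widehat{\Y}\to\B$ are both \'etale, the tangent space along any sector of $I\Y$ (respectively $I\widehat{\Y}$) canonically identifies with the tangent space along the corresponding sector of $I\B$. Moreover, the action of any isotropy element on this tangent space factors through its image in the $\frakQ$-isotropy, since by construction the $G$-factor inside $\frakH$ acts trivially on $\frakQ_0$. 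Consequently both $\ell$ and $\age$ depend only on the associated conjugacy class in $\frakQ$, and therefore agree on any pair of sectors matched by the isomorphism. Replacing the $\ell$-shift by the $\age$-shift uniformly on both sides thus preserves the degree match.

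The principal obstacle will be verifying that the chain of isomorphisms in Part (\ref{coh_isom_part1}) indeed matches inertia sectors according to this conjugacy-class-in-$\frakQ$ rule. To do this I would trace the composite on the center level of the Hochschild cohomology rings, using the explicit Morita bimodule $\calm$ from the proof of Theorem \ref{thm:global-mackey} together with the groupoid generalization of the explicit center isomorphism $I$ described in Proposition \ref{prop:conjugacy} and Remark \ref{rmk:I-iso-group}. Once this sector-level correspondence is established, the age-shift compatibility reduces to the tangent-space argument of the previous paragraph, yielding the isomorphism of $\age$-graded $\complex((\hbar))$-vector spaces claimed in Part (\ref{coh_isom_part2}).
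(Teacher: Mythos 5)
Your proposal is correct and follows essentially the same route as the paper: Part (\ref{coh_isom_part1}) is obtained exactly as you say by concatenating the result of \cite{pptt}, the Morita equivalence of the quantized algebras from Theorem \ref{thm:global-mackey}, and Theorem \ref{thm:hoch-coh-twisted-alg}. Your plan for Part (\ref{coh_isom_part2}) — tracing the composite through the explicit Morita bimodules to get the groupoid generalization of the center isomorphism of Proposition \ref{prop:conjugacy}, checking that it respects the decomposition by conjugacy classes in $\frakQ$, and then using that both $\ell$ and $\age$ are pulled back from $I\B$ because $G$ acts trivially on $\frakQ_0$ — is precisely what the paper carries out in Part IV of the proof of Proposition \ref{prop:quasi-isomorphism-coh}, culminating in the explicit formula (\ref{eq:I-alpha}).
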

\begin{proof}
The proof of Part (\ref{coh_isom_part1}) has been explained. Part (\ref{coh_isom_part2}) follows from Proposition \ref{prop:quasi-isomorphism-coh} (\ref{step4}). 

We explain the idea of the proof of Part (2). In the special case that $\Y$ is a trivial $G$-gerbe over $\B$, i.e. $\Y:=[\B/G]=\B\times BG$ where the $G$ action on $\B$ is trivial, the inertia orbifold $I\Y$ is the product orbifold $I\B\times IBG$, where $IBG$ is isomorphic to the quotient $[G/G]$ of the $G$ conjugation  action on $G$. Alternatively, $IBG=\cup_{\<g\> \subset G}BC_G(g)$, where the union is taken over conjugacy classes $\<g\>$ of $G$ and $C_G(g)\leq G$ is the centralizer of $g\in G$. The dual orbifold $\widehat{\Y}$ is the product orbifold $\B\times \widehat{G}$, on which the cocycle $c$ is trivial; and the inertia orbifold $I\widehat{\Y}$ is the product orbifold $I\B\times \widehat{G}$. Our proof of Part (\ref{coh_isom_part1}) yields a natural isomorphism
\[
I:\Omega^{\bullet-\ell}_{I\Y}((\hbar))=\Omega^{\bullet-\ell}_{I\B}((\hbar))\otimes_\complex Z (\complex G)\simeq \Omega^{\bullet-\ell}_{I\widehat{\Y}}((\hbar))=\Omega^{\bullet-\ell}_{I\B}((\hbar))\otimes_\complex C(\widehat{G}),
\]
where $Z(\complex G)$ is the center of $\complex G$. By explicit computations, we find that on $\Omega^\bullet_{I\B}((\hbar))$, the isomorphism $I$ is the identity operator; and on $Z(\complex G)$, $I$ is the isomorphism (with the same name) from $Z(\complex G)$ to $C(\widehat{G})$ introduced in Prop. \ref{prop:conjugacy} (See Remark \ref{rmk:I-iso-group}) for the special case that $Q$ is trivial and $H=G$.  As the map $I$ is constant on $I\B$, it is obviously compatible with the age filtrations on $I\Y$ and $I\widehat{\Y}$, which come from the same age filtration on $I\B$.  

For a general $G$-gerbe $\Y\to \B$, the quasi-isomorphism $I$ obtained in the proof of Part (\ref{coh_isom_part1}) is a generalization of the above form. We obtain an explicit formula of $I$ via quasi-isomorphisms on the corresponding Hochschild cohomology (pre)sheaves. As locally the quasi-isomorphism $I$ is compatible with the age filtrations by explicit computations, we deduce that globally $I$ is also compatible with the age filtrations. 

\end{proof}

\subsection{Quasi-isomorphism between Hochschild complexes}\label{subsec:twisted-coh}

In this subsection, we explain the proofs of Theorem
\ref{thm:hoch-coh-twisted-alg} and \ref{thm:cohomology}. We will
show that our method of computation actually gives a slightly stronger
result than what Theorem \ref{thm:hoch-coh-twisted-alg} states. More
precisely, the Hochschild cochain complexes of
$\cala^{((\hbar))}\rtimes \frakH$ and $\widetilde{\cala}^{((\hbar))}\times_c
(\widehat{G}\rtimes \frakQ)$ form presheaves over the orbifold
$\calb=[\frakQ_0/\frakQ]$. We will show the following
quasi-isomorphisms between (pre)sheaves and their properties.

\begin{proposition}\label{prop:quasi-isomorphism-coh}
\hfill
\begin{enumerate}
\item The (pre)sheaf of Hochschild cochain complexes of
$\cala^{((\hbar))}\rtimes \frakH$  is quasi-isomorphic to the sheaf
of de Rham differential forms on $I\Y$ as (pre)sheaves over $\B$.
\item The (pre)sheaf of Hochschild cochain
complexes of $\widetilde{\cala}^{((\hbar))}\rtimes_c
(\widehat{G}\rtimes \frakQ)$ is quasi-isomorphic to the sheaf of
$\frakc$-twisted de Rham differential forms on $I\widehat{\Y}$ as
(pre)sheaves over $\B$.
\item The Morita equivalence constructed in Theorem
\ref{thm:global-mackey} defines a quasi-isomorphism from the
(pre)sheaf of Hochschild cochain complexes of
$\cala^{((\hbar))}\rtimes \frakH$ to the (pre)sheaf of Hochschild
cochain complexes of $\widetilde{\cala}^{((\hbar))}\rtimes_c
(\widehat{G}\rtimes \frakQ)$ as (pre)sheaves over $\calb$.
\item\label{step4} The sheaf of de Rham complexes on $I\Y$ is
quasi-isomorphic to the sheaf of $\frakc$-twisted de Rham complexes on
$I\widehat{\Y}$ over $\calb$. The quasi-isomorphism between sheaves
of complexes is compatible with the filtration defined by the age
function $\age$ on $\calb$.
\end{enumerate}
\end{proposition}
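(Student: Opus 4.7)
The plan is to prove the four parts in order, reducing the global statements to local calculations on small open subsets of $\calb$ where the gerbe trivializes into the ``global quotient'' form treated in Section~\ref{subsec:global-quo}. Concretely, I would fix a sufficiently fine open cover $\{U_i\}$ of $\calb$ coming from a refinement $\frakQ_\calu$ of $\frakQ$ as in the paragraph preceding Proposition~\ref{prop:u(1)-cocycle-gpd}, so that on each piece the $G$-gerbe $\Y$ is presented by a groupoid extension of the form $U_i\times G\to U_i\rtimes H_i\to U_i\rtimes Q_i$ and the cocycle $c$ is constant. Since Hochschild cochains form a presheaf on $\calb$ and the statements in (1)--(4) are local with respect to $\calb$, it suffices to establish each quasi-isomorphism on these local pieces, together with functoriality under restriction maps.

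For part~(1) I would directly invoke the main computation of \cite{pptt}: on each local piece, $\calc^{((\hbar))}\rtimes\frakH$ is the deformation of the convolution algebra of a proper \'etale groupoid presenting the symplectic orbifold $\Y$, and \cite{pptt} produces a quasi-isomorphism from its Hochschild cochain complex to the shifted de Rham complex $\Omega^{\bullet-\ell}(I\Y)((\hbar))$ by a Fedosov-type local-to-global construction. The construction there is manifestly natural in restriction, so it glues to the (pre)sheaf statement on $\calb$. For part~(2) the same strategy applies once one handles the twist: locally use the Morita equivalence of Proposition~\ref{prop:glob-quot-decomp} to rewrite $\widetilde{\calc}^{((\hbar))}\rtimes_c(\widehat{G}\rtimes\frakQ)$ as a finite direct sum $\bigoplus_{O}\cala^{((\hbar))}\rtimes_{c_{\rho_O}}\stab(\rho_O)$ of deformation quantizations of action groupoids twisted by restriction cocycles. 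Because the cocycles $c_{\rho_O}$ are locally constant (Proposition~\ref{prop:local-const}), one can run the \cite{pptt} Fedosov/Hochschild-Kostant-Rosenberg machine verbatim, but with coefficients in the flat line bundle $\sL_c$ determined by the cocycle; the outcome is the $\frakc$-twisted de Rham complex of $I\widehat{\Y}$ with the appropriate codimension shift. The gluing argument identical to part~(1) then promotes this to a (pre)sheaf quasi-isomorphism on $\calb$.

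For part~(3), the Morita bimodules $\calm$ and $\caln$ constructed in the proof of Theorem~\ref{thm:global-mackey} are themselves sheaves over $\frakQ_0$ via their definition using $\Gamma_{\mathrm{cpt}}$, and the bimodule maps $\Xi$, $\Theta$ from~\eqref{eq:xi} are compatible with restriction to any open subset of $\calb$. Standard Morita theory (see \cite[Appendix~A]{pptt} and Remark~\ref{rmk:morita-quasi-unital}) then gives, locally on $\calb$, a quasi-isomorphism between the two Hochschild complexes at the level of presheaves. The crucial compatibility $q\Xi(p,q')=\Theta(q,p)q'$ and $p\Theta(q,p')=\Xi(p,q)p'$ noted after Corollary~\ref{cor:morita-gpd} ensures the induced map on Hochschild cohomology is a genuine $A_\infty$/derived equivalence, not merely an additive one, which is what is needed to conclude a quasi-isomorphism of cochain complexes.

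Part~(4) follows from composing the quasi-isomorphisms in (1), (2), (3): the resulting map
\[
\Omega^{\bullet-\ell}(I\Y)((\hbar))\longrightarrow \Omega^{\bullet-\ell}(I\widehat{\Y},\frakc)((\hbar))
\]
is an isomorphism of (pre)sheaves of complexes on $\calb$. The remaining point, compatibility with the age filtration, requires a concrete analysis of the Mackey isomorphism $I$ of Proposition~\ref{prop:conjugacy} and Remark~\ref{rmk:I-iso-group}: on each conjugacy class $\<q\>$ of isotropy, the isomorphism maps the sector of $I\Y$ labeled by a pair $(x,(h))$ with $j(h)\in\<q\>$ into the sectors of $I\widehat{\Y}$ labeled by fixed points of $q$ on $\widehat{G}$, and one must verify that the action of $q$ (respectively $h$) on the normal bundles has the same eigenvalues, so the age function in the sense of~\eqref{eq:age} is preserved. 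This uses the fact that both tangential actions are induced by $q$ acting on the common base $\frakQ_0$, so the eigenvalue datum is identical on each pair of matched sectors. I expect this last verification to be the main obstacle: all other ingredients are local, but comparing age gradings requires tracking the component-by-component matching furnished by the orbit decomposition~\eqref{eqn:orbit_decomp_of_center} carefully enough to read off eigenvalues, not merely ranks.
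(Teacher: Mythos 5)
Your proposal is correct and follows the same overall architecture as the paper (localize over $\calb$, invoke \cite{pptt} for part (1), a twisted analogue for part (2), the Morita bimodules with the compatibility identities $q\Xi(p,q')=\Theta(q,p)q'$, $p\Theta(q,p')=\Xi(p,q)p'$ for part (3), and for part (4) the observation that the age and codimension functions on both $I\Y$ and $I\widehat{\Y}$ are pulled back from $I\calb$, so that compatibility reduces to the conjugacy-class/sector matching of the Mackey isomorphism, which the paper extracts from the explicit formula (\ref{eq:I-alpha}) generalizing Proposition \ref{prop:conjugacy}). The one genuinely different step is your treatment of part (2). The paper does not run a line-bundle-coefficient Fedosov computation directly: it first localizes the groupoid $\widehat{G}\rtimes\frakQ$ to linearized transformation groupoids $M_{[\rho_0],x}\rtimes\frakQ_{[\rho_0],x}$ via \cite[Theorem IIIb]{pptt}, replaces $c$ (up to coboundary) by a cocycle valued in $\integers/m\integers$, passes to the central extension $\integers/m\integers\rtimes_c Q$, applies the \emph{untwisted} theorem of \cite{pptt} to $\widetilde{\cala}^{((\hbar))}_P\rtimes(\integers/m\integers\rtimes_c Q)$, and then extracts the weight-$\exp(2\pi\sqrt{-1}/m)$ component following Tu--Xu to land in the $\frakc$-twisted de Rham complex. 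Your route instead uses the orbit decomposition of Proposition \ref{prop:glob-quot-decomp} and proposes to redo the \cite{pptt} machine with coefficients in $\sL_c$; the paper explicitly remarks that this direct path is available because $c$ is locally constant, so it is legitimate, but it buys you an extra burden the paper avoids: you must carry the flat-line-bundle coefficients through each intermediate quasi-isomorphism of \cite{pptt}, and you must check that the additional Morita equivalence from Proposition \ref{prop:glob-quot-decomp} (which changes the underlying space from $M\times\widehat{G}$ to a union of pieces indexed by orbits) is itself compatible with the presheaf structure over $\calb$ and with the identification of the inner local system on $I\widehat{\Y}$ — whereas the paper's central-extension trick reduces everything to a quoted untwisted result at the cost of introducing the auxiliary cyclic group and the coboundary normalization. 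Your reading of part (4) — that the real content is the eigenvalue/age matching on sectors over a common conjugacy class in the base, furnished by the orbit-by-orbit form of the isomorphism $I$ — is exactly the point the paper settles with (\ref{eq:I-alpha}) and the local statement $I|_{U^{\langle q\rangle}}$.
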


\begin{proof}We divide the proof into four parts according to the four
corresponding statements (1)-(4). 

Our strategy is to generalize the methods in our joint work \cite{pptt} to $\widetilde{\cala}\rtimes_c(\widehat{G}\rtimes \frakQ)$. The main idea is to sheafify the computation of Hochschild cohomology to the sheaf cohomology of the Hochschild cohomology {\em presheaf} over the orbifold $\calb$ viewed as a topological space. As $\calb$ has a good cover, we can compute the corresponding sheaf cohomology via the \v{C}ech cohomology associated to a good cover. This method reduces our proof to local charts of $\Y$ and $\widehat{\Y}$ over $\calb$. On local charts, $\Y$ and $\widehat{\Y}$ can be represented by global quotient orbifolds as discussed in Sec. \ref{subsec:global-quo}. The computation of Hochschild cohomology of a global quotient orbifold is obtained in \cite{pptt}. We generalize this computation for $\widetilde{\cala}^{((\hbar))}\rtimes_c(\widehat{G}\rtimes \frakQ)$ using the trick of Tu and Xu \cite{tu-xu} in the presence of $c$. The compatibility with the age function is a corollary of the explicit formula (\ref{eq:I-alpha}) of the quasi-isomorphism between Hochschild cohomologies via Morita equivalence. \\

\noindent{\bf Part I:} This result is proved in \cite{pptt}. We explain the
proof in more detail as a preparation for the generalization in the
next Parts. See \cite{pptt} for a complete treatment. The main idea is to sheafify the computation of the Hochschild cohomology of $\cala^{((\hbar))}\rtimes \frakH$. 

Let $Bar_\bullet(\cala^{((\hbar))}\rtimes \frakH)$ be the bar
complex of the algebra $\cala^{((\hbar))}\rtimes \frakH$, and let
$(\cala^{((\hbar))}\rtimes \frakH )^e$ be the enveloping algebra of
$\cala^{((\hbar))}\rtimes \frakH$ as defined in Sec. 
\ref{subsec:hoch} (see \cite[Appendix]{pptt} for more details). The
Hochschild cochain complex of the algebra $\cala^{((\hbar))}\rtimes
\frakH$ is
\[
\operatorname{Hom}_{(\cala^{((\hbar))}\rtimes
\frakH)^e}(Bar_\bullet(\cala^{((\hbar))}\rtimes \frakH),
\cala^{((\hbar))}\rtimes \frakH).
\]

Let $\pi:\frakH_0\to \Y=[\frakH_0/\frakH]$ be the canonical
projection map. We define a sheaf $\cals^{((\hbar))}$ on $\Y$ by
\[
\cals^{((\hbar))}(U):=C^\infty((\pi\circ s)^{-1}(U))((\hbar)),
\]
where $s$ is the source map on $\frakH$. It is easy to check that the space 
$\cals^{((\hbar))}$ forms a sheaf, but the deformed convolution
product is not well-defined on $\cals^{((\hbar))}$ because functions
in $\cals^{((\hbar))}(U)$ may not have compact supports. This
suggests that we should consider the sheaf
$\cals_{\text{cf}}^{((\hbar))}$ defined by
\[
\cals_{\text{cf}}^{((\hbar))}(U):=\{f\in C^\infty((\pi\circ
s)^{-1}(U))((\hbar))|\text{supp}(f)\cap (\pi\circ s)^{-1}(K)\
\text{is compact for all compact\ }K\subset U\}.
\]
It is easy to check that the deformed convolution product is
well-defined on $\cals_{\text{cf}}^{((\hbar))}$ and turns
$\cals_{\text{cf}}^{((\hbar))}$ into a sheaf of algebras over $\Y$.

Define a (pre)sheaf $\calh_{\frakH}^\bullet$ on $\Y$ as follows: for any open subset $U$ of $\Y$,
\begin{equation}\label{eq:coh-sheaf-H}
\calh_{\frakH,\hbar}^\bullet(U):=\operatorname{Hom}\big((\cala^{((\hbar))}|_{U}\rtimes
\frakH |_U)^{\otimes \bullet}, \cals_{\text{cf}}^{((\hbar))}\big),
\end{equation}
In the above equation, $\operatorname{Hom}$ means bounded $\complex((\hbar))$-linear maps
with respect to the bornologies on the algebras. In \cite[Theorem
I]{pptt}, it is proved that the natural inclusion map
\[
\iota: C^\bullet(\cala^{((\hbar))}\rtimes \frakH,
\cala^{((\hbar))}\rtimes \frakH)\rightarrow
\calh_{\frakH,\hbar}^\bullet(\Y)
\]
is a quasi-isomorphism of cochain complexes compatible with the cup
products.

It is not difficult to check that the (pre)sheaf
$\calh_{\frakH,\hbar}^\bullet$ is fine. Therefore, we can use
the \v{C}ech double complex of the (pre)sheaf
$\calh_{\frakH,\hbar}^\bullet$ to compute the sheaf cohomology of
the (pre)sheaf $\calh_{\frakH, \hbar}^\bullet$, which is isomorphic
to the Hochschild cohomology of $\cala^{((\hbar))}\rtimes \frakH$ as
an algebra. In particular, when we choose a sufficiently fine
covering of $\Y$, the \v{C}ech double complex degenerates at $E_1$,
so we can use the \v{C}ech cohomology of the sheaf
$\calh_{\frakH,\hbar}$ to compute the Hochschild cohomology of
$\cala^{((\hbar))}\rtimes \frakH$. In \cite[Sec.  4]{pptt}, we
prove that on each open chart $U$ the complex
$\calh_{\frakH,\hbar}(U)$ is quasi-isomorphic to the de Rham complex
of $I\Y|_{U}$ over $U$, where $I\Y|_{U}$ is the inertia orbifold
associated to $U$. This proves that the \v{C}ech cohomology of the
sheaf $\calh_{\frakH,\hbar}^\bullet$ is equal to the de Rham
cohomology of
the inertia orbifold $I\Y$.\\

\noindent{\bf Part II:} We use the method developed in \cite{pptt}, as recalled in Part I, to compute the Hochschild cohomology of
$\widetilde{\cala}^{((\hbar))}\rtimes _c(\widehat{G}\rtimes
\frakQ)$. The new input is to compute the Hochschild cohomology on a local chart by generalizing the idea of Tu and Xu in \cite{tu-xu}.  We divide our computation into 3 steps.

\noindent{\bf Step 1.} We consider the orbifold $\widehat{\Y}$ presented by the groupoid
$\widehat{G}\rtimes \frakQ$. Let $\pi: \widehat{G}\times \frakQ_0\to
\widehat{\Y}$ be the canonical projection. Consider the sheaf
\[
\cals_{\text{cf},c}^{((\hbar))}(U):=\{f\in C^\infty((\pi\circ
s)^{-1}(U))((\hbar))|\text{supp}(f)\cap (\pi\circ s)^{-1}(K)\
\text{is compact for all compact\ }K\subset U\}.
\]
It is easy to check that $\cals_{\text{cf}, \frakc}^{((\hbar))}$
with the $\frakc$-twisted deformed convolution product, Eq. 
(\ref{eq:twisted-prod-gpd}), defines a sheaf of algebras over
$\widehat{\Y}$. Define the (pre)sheaf of Hochschild complex over
$\widehat{\Y}$ by
\begin{equation}\label{eq:coh-sheaf-Q-c}
\calh_{\widehat{G}\rtimes \frakQ, \hbar}^\bullet(U):=
\operatorname{Hom}\big( (\widetilde{\cala}^{((\hbar))}|_{U}\rtimes
(\widehat{G}\rtimes \frakQ|_U))^{\otimes \bullet},
\cals_{\text{cf},c}^{((\hbar))}(U)\big)
\end{equation}
for any open subset $U$ of $\widehat{\Y}$. The same arguments as
\cite[Sec.  4]{pptt} prove that the natural inclusion map
\[
\iota:C^\bullet\big(\widetilde{\cala}^{((\hbar))}\rtimes _c(\widehat{G}\rtimes
\frakQ), \widetilde{\cala}^{((\hbar))}\rtimes _c(\widehat{G}\rtimes
\frakQ)\big)\rightarrow \calh^\bullet_{\widehat{G}\rtimes \frakQ,
\hbar}(\widehat{\Y})
\]
is a quasi-isomorphism of differential graded algebras.

 With the exact same proof of \cite[Theorem II]{pptt}, we can show that the presheaf
$\calh^\bullet_{\widehat{G}\rtimes \frakQ,\hbar}$ is fine.
Therefore, we can use the \v{C}ech double complex to compute the
cohomology of $\calh^\bullet_{\widehat{G}\rtimes \frakQ,
\hbar}(\widehat{\Y})$. This allows us to reduce to local
computations of $\calh^\bullet_{\widehat{G}\rtimes \frakQ,
\hbar}(U)$ for a sufficiently small open subset $U$ of
$\widehat{\Y}$.

\noindent{\bf Step 2.} In what follows, we prove that the groupoid $\widehat{G}\rtimes \frakQ |_U$ is naturally Morita equivalent to a global quotient groupoid from a finite group $Q$ action on a symplectic manifold $P$. Consequently, the Hochschild cohomology $\calh^\bullet_{\widehat{G}\rtimes \frakQ,
\hbar}(U)$ is equal to the cohomology of the crossed product algebra $\widetilde{\cala}^{((\hbar))}_P\rtimes_c Q$. 

For $([\rho_0], x)\in \widehat{G}\rtimes \frakQ_0$, let
$\frakQ_{[\rho_0],x}$ be the isotropy group of the groupoid
$\widehat{G}\rtimes \frakQ$ at $([\rho_0],x)$, which consists of
arrows of the form $([\rho_0],q)$ with $q\in \frakQ_x:=\{q\in
\frakQ, t(q)=s(q)=x\}$ and $q([\rho_0])=[\rho_0]$. Choose a connected open neighborhood $W_{[\rho_0],q}$ for each $([\rho_0],q)$ in
$\widehat{G}\rtimes \frakQ_{[\rho_0],x}$ such that $s$ and $t$ map
$W_{[\rho_0],q}$ onto their images in $\widehat{G}\times \frakQ_0$
by diffeomorphisms. Define $M_{[\rho_0],x}$ to be the connected,
open component of $\bigcap_{q\in \frakQ_x,
q([\rho_0])=[\rho_0]}s(W_{[\rho_0],q})\cap t(W_{[\rho_0],q})$.
Define a $\frakQ_{[\rho_0],x}$ action on $M_{[\rho_0],x}$ by
\[
\frakQ_{[\rho_0],x}\times M_{[\rho_0],x}\rightarrow
M_{[\rho_0],x},\qquad (q,[\rho_0],x)\mapsto
t\big(s^{-1}|_{[\rho_0],q}([\rho_0],x)\big).
\]

In \cite[Theorem IIIb]{pptt}, we proved that the canonical
inclusion induces a weak equivalence from the transformation
groupoid $M_{[\rho_0],x}\rtimes \frakQ_{[\rho_0],x}$ to the groupoid
$\widehat{G}\rtimes \frakQ|_{U_{[\rho_0],x}}$, where
$U_{[\rho_0],x}$ is the image of $M_{[\rho_0],x}$ in $\widehat{\Y}$
under the canonical projection $\pi$. Now we restrict the $U(1)$-valued cocycle $c$ to the groupoid
$M_{[\rho_0],x}\rtimes \frakQ_{[\rho_0],x}$ to get a cocycle denoted by
$c_{[\rho_0],x}$. Following the proof of \cite[Theorem III.b]{pptt},
we can check that the twisted groupoid algebra
$\widetilde{\cala}^{((\hbar))}|_{M_{[\rho_0],x}}\rtimes_{c_{[\rho_0],x}}(M_{[\rho_0],x}\rtimes
\frakQ_{[\rho_0],x})$ is Morita equivalent to
$\widetilde{\cala}^{((\hbar))}|_{U_{[\rho_0],x}}\rtimes_c(\widehat{G}\rtimes
\frakQ |_{U_{[\rho_0],x}})$. Therefore, similar to \cite[Theorem
III.b]{pptt}, the canonical cochain map
\[
\calh^\bullet_{\widehat{G}\rtimes \frakQ,
\varsigma\hbar}(U_{[\rho_0],x})\rightarrow C^\bullet
(\widetilde{\cala}^{((\hbar))}|_{M_{[\rho_0],x}}\rtimes_{c_{[\rho_0],x}}(M_{[\rho_0],x}\rtimes
\frakQ_{[\rho_0],x}),\widetilde{\cala}^{((\hbar))}|_{M_{[\rho_0],x}}\rtimes_{c_{[\rho_0],x}}(M_{[\rho_0],x}\rtimes
\frakQ_{[\rho_0],x}))
\]
is a quasi-isomorphism. This enables us to localize the computation
of the Hochschild cohomology of the algebra
$\widetilde{\cala}^{((\hbar))}\rtimes_c(\widehat{G}\rtimes \frakQ)$
to
$\widetilde{\cala}^{((\hbar))}|_{M_{[\rho_0],x}}\rtimes_{c_{[\rho_0],x}}(M_{[\rho_0],x}\rtimes
\frakQ_{[\rho_0],x})$. Furthermore, observe that since $c_{[\rho_0],x}$ is
locally constant, the cocycle $c_{[\rho_0],x}$ is a lifting of a
$U(1)$-valued cocycle on $\frakQ_{[\rho_0],x}$ via the natural
projection $M_{[\rho_0],x}\rtimes \frakQ_{[\rho_0],x}\to
\frakQ_{[\rho_0],x}$. 

In summary, we have reduced our computation on $U$ to the following case.  Let $c$ be a $U(1)$-valued 2-cocycle on a finite group $Q$ that acts on a
symplectic manifold $P$ by symplectic diffeomorphisms. We want to
compute the Hochschild cohomology of the algebra
$\widetilde{\cala}^{((\hbar))}_P\rtimes_c Q$.

\noindent{\bf Step 3.} We compute the Hochschild cohomology of $\widetilde{\cala}^{((\hbar))}_P\rtimes_c Q$. We start with a  simplification of $c$ to a special type of $U(1)$-valued 2-cocycle. As $Q$ is a finite group, $[c]$ is a torsion element in $H^2(Q, U(1))$. This implies that there
exists a sufficiently large integer $m$ and a 1-cochain $\phi \in
C^1(Q, U(1))$ such that $\tilde{c}=c\delta(\phi)$ is a 2-cocycle in
$Z^2(Q, U(1))$ with $\delta$ as the group cohomology coboundary map
and $\tilde{c}^m=1$. It is straightforward to check that the algebra
$\widetilde{\cala}^{((\hbar))}_P\rtimes_c Q$ is isomorphic to
$\widetilde{\cala}^{((\hbar))}_P\rtimes_{\tilde{c}} Q$ via the map
$\Upsilon_{P,Q}: \widetilde{\cala}^{((\hbar))}_P\rtimes_c Q
\rightarrow \widetilde{\cala}^{((\hbar))}_P\rtimes_{\tilde{c}}Q$
defined by $\Upsilon(f)(x,q)=\phi(q)^{-1}f(x,q)$. Therefore, the
Hochschild cohomology groups of the two algebras are naturally isomorphic. This suggests that 
without loss of generality we can assume that $c$
takes value in $\integers/m\integers$.

As $c$ is a $\integers/m\integers$-valued 2-cocycle on $Q$, the
cocycle $c$ defines a central extension of $Q$:
\begin{equation}\label{eq:zm-ext}
1\rightarrow \integers/m\integers\rightarrow
\integers/m\integers\rtimes_c Q\rightarrow Q\rightarrow 1.
\end{equation}
If we take the natural section $s:Q\rightarrow \integers/m\integers\rtimes_c Q$ by mapping $q$ to $(1,q)$, the theory in Sec.  \ref{sec:gps_extenstion_mackey_machine} applies to study this group extension. In particular, the function $\tau$ defined by Eq. (\ref{eq:tau-def}) is identical to $c$. Since $\integers/m\integers$ is abelian, irreducible representations of $\integers/m\integers$ are one dimensional and therefore all the intertwiners $T$ can be chosen to be identity. Proposition \ref{prop:matrix-coeff-algebra} applied to the group algebra $\complex (\integers/m\integers \rtimes_c Q)$ gives 
\[
\complex( \integers/m\integers \rtimes_c Q)\cong \bigoplus_{k=0}^{m-1}C(Q, c^k). 
\]
If we look at the left action of $\integers/m\integers$ on $\complex(\integers/m\integers \rtimes_c Q)$, the subspace associated to the representation $\rho^k$ corresponds exactly to $C(Q, c^k)$, where $\rho$ is the natural embedding of $\integers/m\integers$ into $U(1)$.  This observation gives us a new way to look at the algebra $\widetilde{\cala}^{((\hbar))}_P\rtimes_c Q$.  Eq. (\ref{eq:zm-ext}) defines the following groupoid extension
\[
P\times \integers/m\integers\rightarrow P\rtimes
(\integers/m\integers\rtimes_c Q)\rightarrow P\rtimes Q,
\]
where $\integers/m\integers\rtimes_c Q$ acts on $P$ via the
canonical group homomorphism $\integers/m\integers\rtimes_c
Q\rightarrow Q$.

Consider the crossed product algebra
$\widetilde{\cala}^{((\hbar))}_P\rtimes
(\integers/m\integers\rtimes_c Q)$. Notice that
$\integers/m\integers$ acts on the algebra
$\widetilde{\cala}^{((\hbar))}_P\rtimes
(\integers/m\integers\rtimes_c Q)$ by algebra automorphism, and the
algebra $\widetilde{\cala}^{((\hbar))}_P\rtimes_c Q$ appears in
$\widetilde{\cala}^{((\hbar))}_P\rtimes
(\integers/m\integers\rtimes_c Q)$ as the subspace associated to the weight
$\rho$. More precisely, we can easily check
that the algebra $\widetilde{\cala}^{((\hbar))}_P\rtimes
(\integers/m\integers\rtimes_c Q)$ decomposes into a direct sum of
subalgebras
\begin{equation}\label{eq:decomp-algebra}
\widetilde{\cala}^{((\hbar))}_P\rtimes
(\integers/m\integers\rtimes_c
Q)=\bigoplus_{k=0}^{m-1}\widetilde{\cala}^{((\hbar))}_P\rtimes_{c^k}
Q,
\end{equation}
where $c^k$ is a $\integers/m\integers$-valued 2-cocycle on $Q$
defined by the $k$-th power of $c$. 

The decomposition (\ref{eq:decomp-algebra}) of the algebra
$\widetilde{\cala}^{((\hbar))}_P\rtimes
(\integers/m\integers\rtimes_c Q)$ naturally induces a decomposition
of the Hochschild cohomology
\[
HH^\bullet(\widetilde{\cala}^{((\hbar))}_P\rtimes
(\integers/m\integers\rtimes_c Q),
\widetilde{\cala}^{((\hbar))}_P\rtimes
(\integers/m\integers\rtimes_c
Q))
\cong \bigoplus_{k=0}^{m-1}
HH^\bullet(\widetilde{\cala}^{((\hbar))}_P\rtimes_{c^k} Q,
\widetilde{\cala}^{((\hbar))}_P\rtimes_{c^k} Q).
\]
The above decomposition is taken with respect to the
$\integers/m\integers$ action on the coefficient component of the
Hochschild cohomology
$HH^\bullet(\widetilde{\cala}^{((\hbar))}_P\rtimes
(\integers/m\integers\rtimes_c Q),
\widetilde{\cala}^{((\hbar))}_P\rtimes
(\integers/m\integers\rtimes_c Q))$. The Hochschild cohomology of
the algebra $\widetilde{\cala}^{((\hbar))}_P\rtimes_{c} Q$ is
identified as the component with weight $\rho$.

With the above preparation, we can apply the result of \cite[Theorem
4]{pptt} to compute the Hochschild cohomology of
$\widetilde{\cala}^{((\hbar))}_P\rtimes
(\integers/m\integers\rtimes_c Q)$ to be
\begin{equation}\label{eq:hoch-coh}
HH^\bullet(\widetilde{\cala}^{((\hbar))}_P\rtimes
(\integers/m\integers\rtimes_c Q),
\widetilde{\cala}^{((\hbar))}_P\rtimes
(\integers/m\integers\rtimes_c Q))\cong\bigoplus_{(\gamma)\in
\integers/m\integers\rtimes_c Q}
H^{\bullet-\ell}_{Z(\gamma)}(P^\gamma)((\hbar)),
\end{equation}
where $P^\gamma$ is the fixed point submanifold of $\gamma$, and $\ell$ is
a locally constant function measuring the codimension of $P^\gamma$
in $P$, and $Z(\gamma)$ is the centralizer group of $\gamma$ in
$\integers/m\integers\rtimes_c Q$. By chasing through the
quasi-isomorphisms constructed in \cite[Sec.  4]{pptt}, we
conclude that the above equation is actually compatible with the
$\integers/m\integers$ actions on both sides. The
right hand side of Eq.  (\ref{eq:hoch-coh}) is defined by the
cohomology of
$$\Omega^\bullet\Big(\big(P\rtimes (\integers/m\integers\rtimes_c
Q)\big)^{(0)}\Big)^{\integers/m\integers\rtimes_c Q},$$ which has a
natural $\integers/m\integers$ action as defined in \cite[Lemma
4.13]{tu-xu}. The component with weight $\rho$ of the left side of Eq.  (\ref{eq:hoch-coh}) is
$HH^\bullet(\widetilde{\cala}^{((\hbar))}_P\rtimes_c Q,
\widetilde{\cala}^{((\hbar))}_P\rtimes_c Q)$. And by \cite[Eq. 
(25)]{tu-xu}, the component with weight $\rho$ of the cohomology of
$\Omega^\bullet\Big(\big(P\rtimes (\integers/m\integers\rtimes_c
Q)\big)^{(0)}\Big)^{\integers/m\integers\rtimes_c Q}$ is the
cohomology of $(\amalg_\gamma P^\gamma)/Q$ with values in the inner
local system $\call_c$ defined by $c$. We briefly explain the construction of $\call_c$. 

\begin{definition}\label{dfn:L-c}
Let $L$ be a line bundle on $P\rtimes Q$ defined by $L:=\complex
\times _{\integers/m\integers} \big(P\rtimes
(\integers/m\integers\rtimes_c Q)\big)$. Define $\call_c$ to be the restriction of $L$ on $(P\rtimes
Q)^{(0)}$. As $c$ is locally constant, $\call_c$ is equipped with a canonical flat connection $\nabla$ determined by $c$ as is explained in  \cite[Proposition
3.9]{tu-xu}. Hence $\call_c$ is a flat complex line bundle on the inertia orbifold $I\widehat{\Y}$ satisfying the conditions of an inner local system (See Definition \ref{dfn:inner-local}).
\end{definition}
Accordingly, taking the components of weight
$\rho$ on both sides of Eq. 
(\ref{eq:hoch-coh}), we conclude that
\[
HH^\bullet(\widetilde{\cala}^{((\hbar))}_P\rtimes_c Q,
\widetilde{\cala}^{((\hbar))}_P\rtimes_c Q)\cong
H^{\bullet-\ell}(I\widehat{\Y}, c)((\hbar)),
\]
where $I\widehat{\Y}$ is the inertia orbifold associated to the
orbifold $\widehat{\Y}=[P/Q]$.

In conclusion, we have shown that locally the Hochschild cochain
complex of $\widetilde{\cala}^{((\hbar))}_P\rtimes_c Q$ is
quasi-isomorphic to the cochain complex
$(\Omega^{\bullet-\ell}((P\rtimes Q)^{(0)}, {\call_c} )^{Q}, \nabla)$ via
a sequence $I_{c}$ of natural quasi-isomorphisms constructed in
\cite[Sec.  4]{pptt}. We can easily apply this sequence
together with its intermediate objects to write down a natural
sequence, $I_{c_{[\rho_0],x}}$, of cochain maps between the
Hochschild cochain complex
\[
C^\bullet
(\widetilde{\cala}^{((\hbar))}|_{M_{[\rho_0],x}}\rtimes_{c_{[\rho_0],x}}(M_{[\rho_0],x}\rtimes
\frakQ_{[\rho_0],x}),\widetilde{\cala}^{((\hbar))}|_{M_{[\rho_0],x}}\rtimes_{c_{[\rho_0],x}}(M_{[\rho_0],x}\rtimes
\frakQ_{[\rho_0],x}))
\]
and
\[
(\Omega^{\bullet-\ell}((M_{[\rho_0],x}\rtimes
\frakQ_{[\rho_0],x})^{(0)}, {\call}_{c_{[\rho_0],x}}
)^{\frakQ_{[\rho_0],x}}, \nabla^{c_{[\rho_0],x}}).
\]
Furthermore, we can check that
$I_{c_{[\rho_0],x}}$ is a sequence of natural quasi-isomorphisms and
glues together to define a sequence of quasi-isomorphisms between
the presheaf of Hochschild complexes $\calh^\bullet_{\widehat{G}\rtimes_c\frakQ, \hbar}$ and
the sheaf of the twisted de Rham complexes
$(\Omega^{\bullet-\ell}_{I\widehat{\Y}}(\call_c)((\hbar)), \nabla)$ as
(pre)sheaves of algebras over $\widehat{\Y}$. 
\begin{remark}
In this step, we have chosen to work locally with a
$\integers/m\integers$-valued 2-cocycle on $Q$ to obtain the
Hochschild cohomology of $\widetilde{\cala}^{((\hbar))}_P\rtimes Q$
via the trick of passing to the central extension
$\integers/m\integers\rtimes_c Q$ as in \cite{tu-xu}. One can take a
more direct path by repeating the computations in \cite[Sec. 
4]{pptt}. Because of the property that $c$ is locally constant,
methods in \cite[Sec.  4]{pptt} naturally generalize to compute
the Hochschild cohomology of the algebra
$\widetilde{\cala}^{((\hbar))}\rtimes_c (\widehat{G}\rtimes
\frakQ)$ together with the cup product. 
\end{remark}

\noindent{\bf{Part III:}} This is essentially a corollary of the Morita Equivalence Theorem \ref{thm:global-mackey}. Let $\calb$ be the orbifold defined by the
quotient $[\frakQ_0/\frakQ]$. As a topological space, it is easy to
see that $\calb$ is the same as $\Y$. Hence, the (pre)sheaf
$\calh^\bullet_{\frakH, \hbar}$ of differential graded algebras is
also a presheaf over $\calb$. Similarly, the orbifold $\widehat{\Y}$
is a fibration over $\calb$ with finite fibers. Hence, the
push-forward of the (pre)sheaf $\calh^\bullet_{\widehat{G}\rtimes
\frakQ, \hbar}$ defines a (pre)sheaf over $\calb$. In this part, we
want to show that the Morita equivalence bimodules $\calm$ and
$\caln$ between $\cala^{((\hbar))}\rtimes \frakH$ and
$\widetilde{\cala}^{((\hbar))}\rtimes_c(\widehat{G}\rtimes \frakQ)$
define quasi-isomorphisms between $\calh^\bullet_{\frakH, \hbar}$
and $\calh^\bullet_{\widehat{G}\rtimes \frakQ, \hbar}$ as
(pre)sheaves over $\calb$.

It is straightforward to see that the Morita equivalence bimodules
$\calm$ and $\caln$ constructed in the proof of Theorem
\ref{thm:global-mackey} (and Proposition \ref{prop:global-quotient}) are compatible with localization
to an open set $U$ of $\calb$. More precisely, let
$\lambda_\Y:\Y\rightarrow \calb$ and $\lambda_{\widehat{\Y}}:
\widehat{\Y}\rightarrow \B$ be the canonical projections, and let
$\pi_{\Y}$ (respectively, $\pi_{\widehat{\Y}}$) be the projection
from $\frakH_0\rightarrow \Y$ (respectively $\widehat{G}\times
\frakQ_0\rightarrow \widehat{\Y})$. It is not difficult to see that
the restrictions of $\calm$ and $\caln$ to $(\lambda_\Y\circ
\pi_\Y\circ s)^{-1}(U)$ define Morita equivalence bimodules between
$\cala^{((\hbar))}\rtimes \frakH|_{\lambda_\Y^{-1}(U)}$ and
$\widetilde{\cala}^{((\hbar))}\rtimes_c
(\widehat{G}\rtimes\frakQ)|_{\lambda_{\widehat{\Y}}^{-1}(U)}$.
Consequently, the Morita equivalence bimodules, together with the
maps $\Xi$ and $\Theta$ introduced in the proof of Theorem
\ref{thm:global-mackey} (and Proposition \ref{prop:global-quotient}), induce quasi-isomorphisms between the
cochain complexes $\calh^\bullet_{\frakH,\hbar}$ and
$\calh^\bullet_{\widehat{G}\rtimes\frakQ,\hbar}$ as (pre)sheaves of
differential graded algebras over $\calb$.

We conclude from Part I and II that the sheaf of de Rham
differential forms on $I\Y$ is quasi-isomorphic to the sheaf of
$\frakc$-twisted de Rham differential forms on $I\widehat{\Y}$
viewed as sheaves over $\calb$:
\begin{equation}\label{eq:quasi-local}
I:(\Omega^{\bullet-\ell}_{I\Y}((\hbar)), d)\simeq
(\Omega^{\bullet-\ell}_{I\widehat{\Y}}(\call_c)((\hbar)), \nabla).
\end{equation}
\noindent{\bf Part IV:} We are left to show that the quasi-isomorphism (\ref{eq:quasi-local}) obtained in Part III is compatible with the filtration defined by
the age function.  Note that the age filtration (respectively, the
codimension filtration $\ell$) on $\Omega^\bullet_{I\Y}((\hbar))$ is
determined by the age filtration (respectively, the codimension
filtration) on $I\calb$ via the map $\tilde{\lambda}_\Y:I\Y\to
I\calb$ since $G$ acts on $\frakQ_0$ trivially. Similarly, the age
filtration (respectively, the codimension filtration $\ell$) on
$I\widehat{\Y}$ is determined by the age filtration (respectively,
the codimension filtration) on $I\calb$ via the fibration
$\tilde{\lambda}_{\widehat{\Y}}:I\widehat{\Y}\to I{\calb}$. By finding an explicit formula of $I$, we prove
that $I$ is compatible with the fibrations on $I\Y$
and $I\widehat{\Y}$ over $\calb$ and conclude that $I$ is compatible
with the age filtration. 

We can decompose the quasi-isomorphism $I$ into the following
sequences of isomorphisms:
\[
\begin{split}
H^{\bullet-\ell}(I\Y)((\hbar))&\stackrel{I_1}{\cong} HH^\bullet(\cala^{((\hbar))}\rtimes \frakH, \cala^{((\hbar))}\rtimes \frakH)\\
&\stackrel{I_2}{\cong}HH^\bullet(\widetilde{\cala}^{((\hbar))}\rtimes_c (\widehat{G}\rtimes \frakQ), \widetilde{\cala}^{((\hbar))}\rtimes_c (\widehat{G}\rtimes \frakQ))\stackrel{I_3}{\cong}H^{\bullet-\ell}(I\widehat{\Y},\frakc)((\hbar)).
\end{split}
\]

The isomorphism $I_1$ between $HH^\bullet(\cala^{((\hbar))}\rtimes
\frakH, \cala^{((\hbar))}\rtimes \frakH)$ and
$H^{\bullet-\ell}(I{\Y})((\hbar))$ is constructed in \cite{pptt} and
reviewed in Part I. The map $I_2$ from $HH^{\bullet}(\calc\rtimes
\frakH)$ to $HH^\bullet(\calc\rtimes (\widehat{G}\rtimes_c \frakQ))$
is a standard construction for Morita invariance of Hochschild
cohomology from Theorem \ref{thm:global-mackey} as explained in Part
III. Construction of an explicit quasi-isomorphism is explained in
\cite[Theorem A.12]{pptt} and the isomorphism $I_3$ between
$H^{\bullet-\ell}(I\widehat{\Y},\frakc)((\hbar))$ and
$HH^\bullet(\widetilde{\cala}^{((\hbar))}\rtimes_c
(\widehat{G}\rtimes \frakQ), \widetilde{\cala}^{((\hbar))}\rtimes_c
(\widehat{G}\rtimes \frakQ))$ is explained in Part II. In order to
have an explicit formula for $I=I_3\circ I_2\circ I_1$, we need to
write down the formulas for $I_i$, for $i=1,2,3$.

Following Part III, we consider the presheaves
$\calh^\bullet_{\mathfrak {H}, \hbar}$ (defined by Eq. 
(\ref{eq:coh-sheaf-H})) and $\calh^\bullet_{\widehat{G}\rtimes
\mathfrak{Q}, \hbar}$ (defined by Eq.  (\ref{eq:coh-sheaf-Q-c}))
over the orbifold $\calb=[\mathfrak {Q}_0/\mathfrak {Q}]$. We prove
that the isomorphism $I$ is realized by a sequence of
quasi-isomorphisms of presheaves over $\calb$:
\begin{eqnarray*}
\cali_1: \calh^\bullet_{\mathfrak {H}, \hbar}\to
(\Omega^{\bullet-\ell}_{I{\Y}}((\hbar)),d),\ \cali_2: \calh^\bullet_{\mathfrak {H}, \hbar}\to
\calh^\bullet_{\widehat{G}\rtimes \mathfrak {Q}, \hbar},\ 
\cali_3: \calh^\bullet_{\widehat{G}\rtimes \mathfrak {Q}, \hbar}
\to (\Omega^{\bullet-\ell}_{I\widehat{\Y}}(\call_c)((\hbar)), \nabla).
\end{eqnarray*}

Since $\cali_j$, $j=1,2,3$, are quasi-isomorphisms of presheaves, it
suffices to look at their restrictions on a sufficiently small open
set $U$ of $\calb$. On a sufficiently small open set $U$, the
$G$-gerbe $\Y$ over $U$ can be represented by the groupoid extension
$V\times G\rightarrow V\rtimes H\rightarrow V\rtimes Q,$
so that the open set $U$ is identified with the quotient $[V/Q]$.
This is a special case of the global quotient considered in Sec. 
\ref{subsec:global-quo}.

The map $\cali_1$ is explained in \cite[Sec.  4]{pptt}. If
$\alpha$ is a cocycle in $\calh^\bullet_{\mathfrak {H}, \hbar}(U)$,
then $\cali_1(\alpha)$ is the image of the restriction of $\alpha$
on $I\Y$ in the cohomology of the double complex
$\calc_{\X}^{\bullet, \bullet}$ studied in \cite[Proposition
4.8]{pptt}. As is explained in Part II, Eq. 
(\ref{eq:hoch-coh}), the map $\cali_3$ is a
$\integers/m\integers$-equivariant version of the map $\cali_1$ if
we represent $c$ by a cocycle in $\integers/m\integers$.

The map $\cali_2$ is a standard map from Morita equivalence as is
explained in \cite[Theorem A.12]{pptt}. We follow the construction
in Sec.  \ref{subsec:center}. Let $\xi^\rho_i$ be a basis of
$V_\rho$ and $\eta^i_\rho$ the dual basis of $V_\rho ^*$. If
$\varphi$ is a Hochschild cocycle on $\cala^{((\hbar))}_U\rtimes H$,
$I_2(\varphi)$ is a Hochschild cocycle on
$(\cala^{((\hbar))}_U\otimes C(\widehat{G}))\rtimes_c Q$ defined by
\begin{eqnarray*}
I_2(\varphi)(a_1,\cdots, a_k):=\sum_{i_0,\cdots, i_k, \rho_0, \cdots,
\rho_k}\Big(\frac{1}{\dim(V_\rho)}\Big)^k\Theta\Big(\eta^{i_0}_{\rho_0},
\varphi\big(\Xi(\xi^{\rho_0}_{i_0},a_1\eta_{\rho_1}^{i_1} ), \cdots,
\Xi(\xi^{\rho_{k-1}}_{i_{k-1}},
a_k\eta_{\rho_k}^{i_k})\big)\xi^{\rho_k}_{i_k}\Big),
\end{eqnarray*}
where $a_1, \cdots, a_k$ are elements of
$(\cala^{((\hbar))}_U\otimes C(\widehat{G}))\rtimes_c Q$. Here,
$C(\widehat{G})$ is the algebra of functions on the finite set
$\widehat{G}$. The algebra $(\cala^{((\hbar))}_U\otimes
C(\widehat{G}))\rtimes_c Q$ can be identified with the twisted crossed
product algebra $\widetilde{A}^{((\hbar))}_{U\times \widehat{G}}\rtimes_c Q$ of deformation quantization on $U\times \widehat{G}$
by the $Q$-action.

Notice that $\xi^{\rho_{p-1}}_{i_{p-1}}$ (respectively,
$\eta_{\rho_p}^{i_p}$) are supported only on the identity component
of $(\widetilde{\cala}_U\otimes \calv_G)\rtimes_c Q$ (respectively,
$(\widetilde{\cala}_U\otimes \calv^*_G)\rtimes_c Q$). If we choose
$a_1, \cdots, a_k$ to be functions supported on the identity
component of $\widetilde{\cala}_U\rtimes_c Q$, then by the definition of $\Xi$, $\Xi(\xi^{\rho_{p-1}}_{i_{p-1}},
a_p\eta_{\rho_p}^{i_p})$ vanishes if $\rho_p\ne \rho_{p-1}$ and is
equal to $a_p([\rho])\Xi(\xi^{\rho_{p-1}}_{i_{p-1}},
\eta_{\rho_p}^{i_p})$ for any $p=1, \cdots, k$. As 
elements in $G$ act trivially on $U$ and the relation
$\eta^\rho_i(\xi_i^\rho)=1$, we see that when $a_1, \cdots, a_k$ are
supported on the identity component, $I_2(\varphi)(a_1, \cdots,
a_k)$ is equal to
\begin{eqnarray}\label{eq:mor-isom}
&&\sum_{i, \rho}\frac{1}{\dim(V_\rho)}\Theta\left(\eta^i_\rho,
\varphi(a_1([\rho]), \cdots, a_k([\rho]))\xi_i^{\rho}\right)\cr &=&
\sum_{i,\rho, g,q; q([\rho])=[\rho]}
\frac{1}{\dim(V_\rho)}\varphi(a_1([\rho]), \cdots, a_k([\rho]))
(g,q)\tr\big(\rho(g))(T^{[\rho]}_q)^{-1}\big).
\end{eqnarray}

Observe that the maps $\cali_1$ and $\cali_3$ only count the information of the cocycle $\varphi$ along the space $U$ direction,
which is completely determined by the part $\varphi(a_1([\rho]),
\cdots, a_k([\rho]))(g,q)$ in Eq.  (\ref{eq:mor-isom}).
Hence it follows that the map $I=\cali_3\circ \cali_2\circ
(\cali_1^{-1})$ on a cohomology class $\alpha$ of
$\Omega^{\bullet-\ell}_{I{\Y}}(U)((\hbar))$ can be expressed by
\begin{equation}\label{eq:I-iso}
I(\alpha)=\sum_{g,q,\rho;q([\rho])=[\rho]}\frac{1}{\dim(V_\rho)}\alpha(g,q)\tr(\rho(g)(T^{[\rho]}_q)^{-1})([\rho],q),
\end{equation}
where we write $\alpha=\sum_{g,q}\alpha(g,q)\in
\Omega^*(U^{g,q})$, which is invariant under the conjugation action of
$H$.

Now extending the expression of $I$ in Eq.  (\ref{eq:I-iso}) to
the whole orbifold, we have the following isomorphism. We represent
a cohomology class $\alpha$ on $I{\Y}$ as
$\alpha=\sum_{g,q:s(q)=t(q)}\alpha(g,q)$ such that $\alpha$ is a
closed differential form on $\mathfrak {H}^{(0)}=\{(g,q)\in
\mathfrak {H}: s(q)=t(q)\}\subset \mathfrak {H}$ that is invariant
under the conjugation action of $\mathfrak {H}$ on $\mathfrak
{H}^{(0)}$. Then $I(\alpha)$ can be written as a differential form
supported on $\mathfrak{Q}':=\{([\rho], q): q([\rho])=[\rho],
s(q)=t(q)\}\subset \widehat{G}\rtimes_c\mathfrak {Q}$ that is
invariant under the $c$-twisted conjugation action by
$\widehat{G}\rtimes_c\mathfrak {Q}$,
\begin{equation}\label{eq:I-alpha}
I(\alpha)([\rho],q)=\sum_{g}\frac{1}{\dim(V_\rho)}\alpha(g,q)\tr(\rho(g){T^{[\rho]}_q}^{-1}), 
\end{equation}
which is a full generalization of the map $I$ in Propositions
\ref{prop:conjugacy}-\ref{prop:trace} when $\calb=BQ$.

Similar to Proposition \ref{prop:conjugacy}, the above expression
(\ref{eq:I-iso}) shows that, locally, the quasi-isomorphism $I$ is
compatible with respect to the conjugacy classes of the group $Q_U$.
More explicitly, let $U^{\<q\>}$ be the component of the inertia
orbifold $IU\subset I\calb$ defined by the conjugacy class
$\<q\>\subset Q_U$ and let 
$\Omega^{\bullet-\ell}_{I\Y}(U)((\hbar))|_{U^{\<q\>}}$ (and
$\Omega^{\bullet-\ell}_{I\widehat{\Y}}(U)((\hbar))|_{U^{\<q\>}}$) be
the space of differential forms on $I\Y$ (and on $I\widehat{\Y}$)
supported on $\tilde{\lambda}^{-1}_{\Y}(U^{\<q\>})$ (and
$\tilde{\lambda}^{-1}_{\widehat{\Y}}(U^{\<q\>})$). The isomorphism
$I$ in (\ref{eq:I-iso}) defines a quasi-isomorphism
\[
I|_{U^{\<q\>}}:\big(\Omega^{\bullet-\ell}_{I\Y}(U)((\hbar))|_{U^{\<q\>}},
d\big)\longrightarrow
\big(\Omega^{\bullet-\ell}_{I\widehat{\Y}}(\call_c)((\hbar))|_{U^{\<q\>}},
\nabla\big).
\]
As the codimension functions $\ell$ on $I\Y$ and $I\widehat{\Y}$ are both determined by the corresponding function on $I\calb$, $\ell_{I\Y}=\ell_{I\widehat{\Y}}$. Noticing that the quasi-isomorphism $I|_{U^{\<q\>}}$ is a local map over $I\calb$, we conclude from the equality of the codimension functions that 
\begin{equation}\label{eq:quasi-i-no-shift}
I|_{U^{\<q\>}}:\big(\Omega^{\bullet}_{I\Y}(U)((\hbar))|_{U^{\<q\>}},
d\big)\longrightarrow
\big(\Omega^{\bullet}_{I\widehat{\Y}}(\call_c)((\hbar))|_{U^{\<q\>}},
\nabla\big)
\end{equation}
is also a quasi-isomorphism by looking at $I$ over each component of $I\calb$. 

As both the age functions on $I\Y$ and $I\widehat{\Y}$ are determined by their corresponding ones
on $I{\calb}$, we conclude from Eq. (\ref{eq:quasi-i-no-shift}) that the quasi-isomorphism
(\ref{eq:I-iso}) over each component of $I\calb$ also induces a quasi-isomorphism
\[
(\Omega^{\bullet-2\age}_{I\Y}(U)((\hbar))|_{U^{\<q\>}},
d)\longrightarrow
(\Omega^{\bullet-2\age}_{I\widehat{\Y}}(\call_c)((\hbar))|_{U^{\<q\>}},
\nabla).
\]

Noting that all the constructions above are canonical as morphisms
of (pre)sheaves over $\calb$, we can globalize the above local arguments via \v{C}ech arguments to the following
equality of vector spaces:
\[
H^\bullet_{\text{{\rm CR}}}(\Y,\complex((\hbar)))\overset{\text{def}}{=}
H^{\bullet-2\age}(I{\Y},\complex((\hbar)))\cong
H^{\bullet-2\age}(I\widehat{\Y},\frakc, \complex((\hbar))) \overset{\text{def}}{=}
H^\bullet_{orb}(\widehat{\Y}, \frakc, \complex((\hbar))).
\]
\end{proof}
\begin{remark}\label{rmk:cpx_coeff}
It can be seen from the explicit description of the
quasi-isomorphism  $I$ in (\ref{eq:I-alpha}) that $I$ induces an
isomorphism between cohomologies with $\complex$ coefficients,
namely, $H^\bullet_{\text{{\rm CR}}}(\Y,\complex)\simeq
H^\bullet_{orb}(\widehat{\Y}, \frakc, \complex).$
\end{remark}
\begin{remark}
Our proof of Theorem \ref{thm:cohomology} crucially relies on the assumption that the orbifold is symplectic\footnote{At least because of the use of deformation quantization.} and uses a compatible almost complex structure to define the Chen-Ruan orbifold cohomology age function.  On the other hand, we observe that our formula (\ref{eq:I-alpha}) for the quasi-isomorphism $I$ is well defined without a choice of a symplectic form. This suggests that the morphism $I$ can be introduced for a $G$-gerbe on a general orbifold with an almost complex structure. It is interesting to check whether $I$ is still a quasi-isomorphism in this generality. We will come back to this question in the near future. 
\end{remark}

\section{Results on Chen-Ruan orbifold cohomology rings}\label{sec:ring}
\subsection{Review of Chen-Ruan cohomology}\label{subsec:cr}
This section contains a summary of the Chen-Ruan orbifold cohomology
ring \cite{cr} and twisted orbifold cohomology rings \cite{ru1}. Throughout this section, let $\X$ be a compact almost complex
orbifold. The {\em inertia orbifold} of $\X$ is the orbifold $I{\X}$
whose points are pairs $(x, (g))$ where $x\in \X$ and $(g)\subset
\text{Iso}(x)$ is a conjugacy class of the isotropy subgroup of the
point $x\in \X$. There is a natural map
$\mathbf{p}_\X: I{\X}\to \X, \ (x, (g))\mapsto x.$ The inertia orbifold $I{\X}$ is, in general, disconnected; let
$I{\X}=\coprod_{i\in \sI}\X_i$ be the decomposition into connected
components, where $\sI$ is an index set.

\begin{definition}\label{Horb_groups}
The {\em Chen-Ruan orbifold cohomology groups} of $\X$ are defined to
be the cohomology groups of the inertia orbifold, $$H_{{\rm CR}}^\bullet(\X,
\com):=H^\bullet(I\X, \com)=\oplus_{i\in \sI}H^\bullet(\X_i, \com).$$ The
grading used in the Chen-Ruan cohomology is the {\em
age-grading}: the degree of a class $\alpha\in H^p(\X_i,\com)$ in $H_{{\rm CR}}^\bullet(\X,\com)$ is $p+2\text{age}\,(\X_i)$, where $\text{age}\,(\X_i)$ is the value of the age function on the component $\X_i$.
\end{definition}

\begin{remark}
Here, we use the field $\com$ of complex numbers as coefficients for
the cohomology groups. Other fields can (and will) be used as
coefficients.
\end{remark}

The Chen-Ruan cohomology $H_{{\rm CR}}^\bullet(\X,\com)$ is equipped with
a non-degenerate pairing called the {\em orbifold Poincar\'e
pairing}, which is constructed as follows. There is an isomorphism
$I_\X:I{\X}\to I{\X}$ given by $(x, (g))\mapsto (x, (g^{-1}))$.
Clearly, the composition $I_\X\circ I_\X$ is the identity map. A
component $\X_i$ is mapped isomorphically to a component we denote
by $\X_{i^I}$.
\begin{definition}\label{orb_pairing}
The orbifold Poincar\'e pairing $(-,-)_{orb}^\X$ is defined as
follows. Define 
$$
(\alpha,
\beta)_{orb}^\X:=\int_{\X_i}\alpha\cup I_\X^*\beta,\qquad\text{for}\ \alpha\in H^\bullet(\X_i,\com),\ \beta\in
H^\bullet(\X_{i^I},\com).
$$ 
The pairing
$(-,-)_{orb}^\X$ is extended to the whole $H^\bullet(I\X,\com)$ by
requiring bilinearity.
\end{definition}

\begin{remark}
The orbifold Poincar\'e pairing differs from the Poincar\'e pairing
on the cohomology $H^\bullet(I\X,\com)$ because of the factor $I^*_\X$.
\end{remark}

The Chen-Ruan cohomology $H_{{\rm CR}}^\bullet(\X,\com)$ is also equipped
with a new product structure called the {\em Chen-Ruan orbifold cup
product}. We briefly recall its construction. Let $\X_{(2)}$ be the
{\em $2$-multi-sector} of $\X$. It can be understood as the space\footnote{ The orbifold $\X_{(2)}$ also shows up in the geometric description of the cup product on the Hochschild cohomology of the corresponding groupoid algebra \cite{pptt}.}
whose points are $(x, (g,h))$, where $x\in \X$, $g,h\in
\text{Iso}(x)$, and $$(g,h):=\{(kgk^{-1}, khk^{-1})|k\in
\text{Iso}(x)\}\subset \text{Iso}(x) \times \text{Iso}(x)$$ is a
{\em biconjugacy class} of $\text{Iso}(x)$. There are three {\em
evaluation maps}:
\begin{eqnarray*}
&ev_{\X, 1}:& \X_{(2)}\to I{\X}, \ (x, (g,h))\mapsto (x, (g));\\ 
&ev_{\X, 2}:& \X_{(2)}\to I{\X}, \ (x, (g,h))\mapsto (x, (h));\\
&ev_{\X, 3}:& \X_{(2)}\to I{\X}, \ (x, (g,h))\mapsto (x,
((gh)^{-1})).
\end{eqnarray*}
There is also another natural map that forgets the biconjugacy
class: 
$$\mathbf{p}_{\X_{(2)}}:\X_{(2)}\to \X,\quad (x,(g,h))\mapsto x.$$

The key ingredient in the Chen-Ruan cup product is the so-called
{\em obstruction bundle}  $Ob_\X\to \X_{(2)}.$ The original
construction of $Ob_\X$ in \cite{cr} involves the moduli spaces of
genus-$0$ degree-$0$ orbifold stable maps to $\X$, which is
complicated. The construction has since been simplified. We present
two descriptions.

\begin{numcon}[see \cite{hepworth}, Theorem 2]\label{construction_Hepworth}
Let $(x,(g,h))\in \X_{(2)}$. The group $\< g,h\>$ generated by $g,h$
acts on the fiber $\mathbf{p}_{\X_{(2)}}^*T\X|_{(x,(g,h))}=T_x\X$,
yielding a decomposition
$\mathbf{p}_{\X_{(2)}}^*T\X|_{(x,(g,h))}=\bigoplus V_i\otimes T_i$,
where the $V_i$'s are irreducible representations of $\<g,h\>$.
Varying the point $(x,(g,h))\in \X_{(2)}$ in a connected component
of $\X_{(2)}$ yields a global decomposition of vector bundles over
this component:
$$\mathbf{p}_{\X_{(2)}}^*T\X=\bigoplus V_i\otimes T_i,$$ where
$T_i$'s are vector bundles over this component. Then the restriction
of $Ob_\X$ to this component of $\X_{(2)}$ is equal to
$\oplus_i T_i^{\oplus h_i},$ where
\begin{equation}\label{defn:h_i}
h_i=\text{age}_{V_i}(g)+\text{age}_{V_i}(h)-\text{age}_{V_i}(gh)+\text{dim}\,
V_i^{\<g,h\>}-\text{dim}\,V_i^{gh}.
\end{equation}
The terms $V_i^{\<g,h\>}$ and $V_i^{gh}$ are the vector subspaces
fixed by $\<g,h\>$ and $gh$ respectively. As stated in
\cite{hepworth}, it can be shown that the numbers $h_i$ are
nonnegative integers, so the above equation makes sense.
\end{numcon}

\begin{numcon}[see \cite{jkk}]\label{construction_jkk}
Consider the component $\X_i$. At  any point $(x,(g))\in \X_i$ the
group $\<g\>$ acts on the fiber
$\mathbf{p}_\X^*T\X|_{(x,(g))}=T_x\X$. The decomposition into
$g$-eigenspaces can be globalized:
$$(\mathbf{p}_\X^*T\X)|_{\X_i}=\bigoplus_k W^\X_{i,k}$$ where
$W^\X_{i,k}$ is the eigen-bundle on which $g$ acts with eigenvalue
$\exp(\frac{2\pi\sqrt{-1}k}{r})$ and $r$ is the order of $g$. Put
$$S_i^\X:=\bigoplus_{k\neq 0}\frac{k}{r}W^\X_{i,k}.$$ This is an
element in the $K$-theory of $\X_i$.

Consider the locus $\X_{i_1, i_2}:=ev_{\X,1}^{-1}(\X_{i_1})\cap
ev_{\X, 2}^{-1}(\X_{i_2})$. Then the image $ev_{\X, 3}(\X_{i_1,
i_2})$ is contained in a component of $I\X$ denoted by $\X_{i_3}$.
The $K$-theory class of the restriction of $Ob_\X$ to $\X_{i_1,
i_2}$ is given by
\begin{equation}
Ob_\X|_{\X_{i_1, i_2}}=T\X_{i_1, i_2}\ominus
\mathbf{p}_{\X_{(2)}}^*T\X|_{\X_{i_1,
i_2}}\oplus\bigoplus_{j=1}^3ev_{\X, j}^*S^\X_{i_j}.
\end{equation}
\end{numcon}

We now come to the definition of the Chen-Ruan cup product. Let
$e(-)$ denote the Euler class.
\begin{definition}\label{orb_cup_product}
For classes $\alpha_1, \alpha_2, \alpha_3\in H^\bullet(I{\X},\com)$,
define
\begin{equation*}
\<\alpha_1,\alpha_2,\alpha_3\>^\X:=\int_{\X_{(2)}}ev_{\X,1}^*\alpha_1\cup
ev_{\X,2}^*\alpha_2\cup ev_{\X,3}^*\alpha_3\cup e(Ob_\X).
\end{equation*}
Fix an additive basis $\{\phi_i\}$ of $H^\bullet(I{\X},\com)$
such that each element $\phi_i$ is homogeneous and is supported on
one connected component of $I{\X}$. Let
$\phi^i:=PD(\phi_i)$ be the class dual to $\phi_i$ under the
orbifold Poincar\'e pairing $(-,-)_{orb}^\X$. The Chen-Ruan cup
product of $\alpha_1, \alpha_2\in H_{{\rm CR}}^\bullet(\X, \com)$ is defined as
\begin{equation}
\alpha_1\star_{orb}\alpha_2:=\sum_i \<\alpha_1, \alpha_2,
\phi_i\>^\X \phi^i.
\end{equation}
\end{definition}
It follows from the definition that $(\alpha_1\star_{orb}\alpha_2,
\phi_i)_{orb}^\X=\<\alpha_1,\alpha_2,\phi_i\>^\X$. The Chen-Ruan
orbifold cohomology $H_{{\rm CR}}^\bullet(\X, \com)$ equipped with the above
structures is a graded (super-)commutative $\complex$-algebra.

Let $\X$ be a compact almost complex orbifold as above.  We recall the construction of the {\em twisted orbifold
cohomology} \cite{ru1}.
Let $c$ be a flat $U(1)$-gerbe on $\X$. It
follows from the discussion in \cite{pry} that given such a
$U(1)$-gerbe $c$, one can naturally construct an inner local system $\sL_c$.  
\begin{definition}\label{dfn:inner-local} (\cite[Definition 3.1]{ru1}) 
An {\em inner local system} on $\X$ is  a flat complex line bundle $\sL\to I{\X}$ satisfying the following properties:
\begin{enumerate}
\item The restriction of $\sL$ to the identity component $\X=\{(x,(id))|x\in \X\}\subset I\X$ is a trivial line bundle with a fixed trivialization.
\item There is a nondegenerate pairing $I^*_{\X}\sL\otimes \sL\to \complex$ on $I\X$.
\item There is a multiplication $\theta: ev_{\X,1}^*\sL\otimes ev_{\X,2}^* \sL \to I_{\X}^*(ev_{\X,3}^*\sL)$ on $\X_{(2)}$:
\item The multiplication $\theta$ is associative on $\X_{(3)}$, whose points are $(x,(g_1, g_2, g_3))$ with $x\in \X$ and 
\[
(g_1, g_2, g_3):=\{(kg_1k^{-1}, kg_2k^{-1}, kg_3k^{-1})|k\in {\rm Iso}(x)\}\subset {\rm Iso}(x)\times  {\rm Iso}(x)\times  {\rm Iso}(x).
\] 
\end{enumerate}
\end{definition}

The $c$-twisted orbifold cohomology groups are defined to be $H_{orb}^\bullet(\X, c, \com):=H^\bullet(I{\X}, \sL_c, \com)$,  the cohomology groups of $I{\X}$ with coefficients in the inner local system $\sL_c$. The groups $H_{orb}^\bullet(\X, c, \com)$ are
equipped with the age-grading that is defined in the same way as
Definition \ref{Horb_groups}.

The definition of a $c$-twisted orbifold Poincar\'e pairing for
$H_{orb}^\bullet(\X, c, \com)$ is exactly parallel to Definition
\ref{orb_pairing}: for $\alpha\in H^\bullet(\X_i,\sL_c)$ and
$\beta\in H^\bullet(\X_{i^I},\sL_c)$, define $$(\alpha,
\beta)_{orb,c}^\X:=\int_{\X_i}\alpha\cup I_\X^*\beta.$$ 
In the above integral, we have applied the nondegenerate pairing 
in Definition \ref{dfn:inner-local} (2). Hence, the pairing takes value in $\complex$. 

The $c$-twisted orbifold cohomology $H_{orb}^\bullet(\X, c, \com)$ also
carries an orbifold cup product $\star_c$ defined using the
obstruction bundle $Ob_\X$. The definition is parallel to Definition
\ref{orb_cup_product}. Define
\begin{equation*}
\<\alpha_1,\alpha_2,\alpha_3\>_c^\X:=\int_{\X_{(2)}}ev_{\X,1}^*\alpha_1\cup
ev_{\X,2}^*\alpha_2\cup ev_{\X,3}^*\alpha_3\cup e(Ob_\X),\quad \alpha_1, \alpha_2, \alpha_3\in
H^\bullet(I{\X}, \sL_c, \com). 
\end{equation*}
The above integral is well defined because of the nondegenerate pairing and multiplication $\theta$ in Definition \ref{dfn:inner-local} (2) and (3). For simplicity we omit $\theta$ in the notation.

Fix an additive basis $\{\phi_i\}$ of $H^\bullet(I{\X}, \sL_c,
\com)$ such that each element $\phi_i$ is homogeneous and is
supported on one connected component of $I{\X}$. Let
$\phi^i:=PD(\phi_i)$ be the class dual to $\phi_i$ under the pairing
$(-,-)_{orb,c}^\X$. The $c$-twisted orbifold cup product of
$\alpha_1, \alpha_2\in H_{orb}^\bullet(\X, c, \com)$ is defined as
\begin{equation}
\alpha_1\star_{c}\alpha_2:=\sum_i \<\alpha_1, \alpha_2,
\phi_i\>_c^\X \phi^i.
\end{equation}

It follows from the definition that $(\alpha_1\star_{orb}\alpha_2,
\phi_i)_{orb, c}^\X=\<\alpha_1,\alpha_2,\phi_i\>_c^\X$. And the $c$-twisted orbifold cohomology $H_{orb}^\bullet(\X,c, \com)$ equipped
with the above structures is a graded (super) commutative $\complex$-algebra.
\subsection{Chen-Ruan cohomology of \'etale gerbes} \label{subsec:cohom-gerbe}
Let $\B$ be a compact connected almost complex orbifold, and $G$ a finite group. Let $\Y\to \B$ be a $G$-gerbe over $\B$ and
$\widehat{\Y}\to \B$ its dual, equipped with the flat
$U(1)$-gerbe $c$. Denote by $\sL_c$ the inner local system
associated with $c$.

The dual $\widehat{\Y}$ is not necessarily connected; let
$\widehat{\Y}=\coprod_{\bi\in \mathbf{I}}\widehat{\Y}_\bi$ be the
decomposition into connected components. Here, $\mathbf{I}$ is an
index set. Let $c_\bi$ be the $U(1)$-gerbe on $\widehat{\Y}_\bi$
obtained by restricting $c$ to $\widehat{\Y}_\bi$, and let
$\sL_{c_\bi}$ be the inner local system associated to $c_\bi$. The $c$-twisted orbifold cohomology
$H_{orb}^\bullet(\widehat{\Y},c,\com)$ is a direct sum
\begin{equation}\label{decomp_of_Horb_Yhat}
H_{orb}^\bullet(\widehat{\Y},c,\com)=\bigoplus_{\bi\in \mathbf{I}}
H_{orb}^\bullet(\widehat{\Y}_\bi,c_\bi,\com).
\end{equation}
The ring structures and pairings are compatible with this
decomposition: if $\alpha_1, \alpha_2, \alpha_3\in
H_{orb}^\bullet(\widehat{\Y},c,\com)$ are decomposed with respect to
(\ref{decomp_of_Horb_Yhat}) as
\begin{equation}\label{decomp_of_Horb_classes_Yhat}
\alpha_1=\oplus_{\bi}\alpha_{1\bi}, \alpha_2=\oplus_\bi
\alpha_{2\bi}, \alpha_3=\oplus_\bi \alpha_{3\bi}, \quad
\alpha_{1\bi}, \alpha_{2\bi}, \alpha_{3\bi}\in
H_{orb}^\bullet(\widehat{\Y}_\bi, c_\bi,\com),
\end{equation}
then
\begin{equation}\label{decomp_of_invs_Yhat}
\<\alpha_1, \alpha_2, \alpha_3\>_c^{\widehat{\Y}}=\sum_\bi \<\alpha_{1\bi}, \alpha_{2\bi}, \alpha_{3\bi}\>_{c_\bi}^{\widehat{\Y}_\bi},\ 
\alpha_1\star_c \alpha_2=\oplus_\bi (\alpha_{1\bi}\star_{c_\bi}\alpha_{2\bi}),\ 
(\alpha_1,\alpha_2)_{orb,c}^{\widehat{\Y}}=\sum_{\bi}
(\alpha_{1\bi},\alpha_{2\bi})_{orb,c_{\bi}}^{\widehat{\Y}_\bi}.
\end{equation}

Suppose that $\B$ is symplectic and is equipped with a compatible
almost complex structure. We equip $\Y$ and $\widehat{\Y}$ with the
symplectic and compatible almost complex structures induced from
those on $\B$. In Sec.  \ref{subsec:cohomology}, we have
constructed an additive isomorphism
\begin{equation*}
\Psi:= I^{-1}: H_{orb}^\bullet(\widehat{\Y},c, \com)\to
H_{{\rm CR}}^\bullet(\Y, \com),
\end{equation*}
which respects the age-gradings. See Proposition
\ref{prop:periodic-homology}, Theorem \ref{thm:cohomology}, and Remark \ref{rmk:cpx_coeff}. In
this section, we analyze the compatibility of $\Psi$ with the cup
products $\star_{orb}$ and $\star_c$. The main result, Theorem
\ref{thm:iso-coh-ring}, states that $\Psi$ is in fact a ring
isomorphism.

We begin by comparing the obstruction bundles. Consider the natural
maps between $2$-multi-sectors $$\pi_\Y: \Y_{(2)} \to \B_{(2)},
\quad \pi_{\widehat{\Y}}:\widehat{\Y}_{(2)}\to \B_{(2)},$$ which are
induced from the maps $\Y\to \B$ and $\widehat{\Y}\to \B$.

\begin{proposition}\label{prop:obstruction}
The following relations hold among obstruction bundles.
\begin{equation}\label{comparison_Ob_bundle_Y}
Ob_\Y=\pi_\Y^*Ob_\B,
\end{equation}
\begin{equation}\label{comparison_Ob_bundle_Yhat}
Ob_{\widehat{\Y}}=\pi_{\widehat{\Y}}^*Ob_\B.
\end{equation}
\end{proposition}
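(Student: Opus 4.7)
The plan is to verify both equalities via the explicit description of the obstruction bundle given in Construction \ref{construction_Hepworth}, using the fact that the structure maps $\Y\to \B$ and $\widehat{\Y}\to \B$ are \'etale with tangent bundles pulled back from $T\B$, and that the relevant $2$-multi-sector data on $\Y$ and $\widehat{\Y}$ project naturally to those on $\B$.

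First I would describe the maps $\pi_\Y$ and $\pi_{\widehat{\Y}}$ on $2$-multi-sectors explicitly. A point of $\Y_{(2)}$ is a pair $(y,(g_1,g_2))$ with $g_i$ in $\mathrm{Iso}_\Y(y)$; since $\Y\to \B$ induces a surjection $\mathrm{Iso}_\Y(y)\twoheadrightarrow \mathrm{Iso}_\B(x)$ with kernel contained in $G$, we obtain a well-defined image $(x,(\bar g_1,\bar g_2))$ in $\B_{(2)}$. For $\widehat{\Y}$, the isotropy at a point $\hat y=(x,[\rho])$ embeds as the subgroup $\mathrm{Stab}_{\mathrm{Iso}_\B(x)}([\rho])$ inside $\mathrm{Iso}_\B(x)$ (as read off from the groupoid $\widehat{G}\rtimes \frakQ$), so $\pi_{\widehat{\Y}}(\hat y,(h_1,h_2))=(x,(h_1,h_2))$ is automatic.

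Second I would verify that the tangent-bundle decompositions appearing in Construction \ref{construction_Hepworth} are pulled back from $\B$. Since the maps to $\B$ are \'etale, $\mathbf{p}_{\Y_{(2)}}^*T\Y=\pi_\Y^*\mathbf{p}_{\B_{(2)}}^*T\B$ and similarly for $\widehat\Y$. For $\Y_{(2)}$, the action of $\langle g_1,g_2\rangle$ on $T_y\Y=T_x\B$ factors through $\langle \bar g_1,\bar g_2\rangle$ because the kernel lies in $G$ which acts trivially on $T\B$; hence every irreducible $\langle g_1,g_2\rangle$-subrepresentation of $T_x\B$ is the pullback of a unique irreducible of $\langle \bar g_1,\bar g_2\rangle$, yielding a canonical bijection between the irreducible decompositions $T_x\B=\bigoplus V_i\otimes T_i$ used on the two sides. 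For $\widehat{\Y}_{(2)}$ the situation is even simpler since $\langle h_1,h_2\rangle$ sits inside $\mathrm{Iso}_\B(x)$ and the decomposition is literally identical.

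Third I would observe that, with the decompositions matched up, each exponent $h_i$ from (\ref{defn:h_i}) is computed from purely representation-theoretic data, the ages $\mathrm{age}_{V_i}$ and the dimensions of the fixed subspaces $V_i^{\langle -,-\rangle}$ and $V_i^{-}$. Because these depend only on eigenvalues of the tangent action, and because the tangent action factors through (or is identified with) the $\langle \bar g_1,\bar g_2\rangle$-action, the exponents computed on $\Y_{(2)}$ (resp.\ $\widehat{\Y}_{(2)}$) coincide with those computed on $\B_{(2)}$. Thus the vector bundle $\bigoplus T_i^{\oplus h_i}$ on the $\B$-side pulls back fiberwise to the corresponding vector bundle on the $\Y$- and $\widehat{\Y}$-sides, yielding the two desired equalities.

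The only delicate step is the first half of the second paragraph: one must argue carefully that no irreducible component of $T_x\B$, viewed as a $\langle g_1,g_2\rangle$-representation, fails to be the pullback of a $\langle \bar g_1,\bar g_2\rangle$-irreducible. This is clear once one notes that the entire action factors through the quotient, so the kernel acts trivially on every isotypic component; but stating it cleanly requires a small representation-theoretic remark. Everything else is a direct fiberwise comparison using Construction \ref{construction_Hepworth}.
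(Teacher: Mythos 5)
Your proposal is correct and follows essentially the same route as the paper: both arguments use Construction \ref{construction_Hepworth}, the \'etale pullback identifications $T\Y=f^*T\B$ and $T\widehat{\Y}=\widehat{f}^*T\B$, the triviality of the $G$-action on the tangent spaces (so the isotropy action factors through the image in $\mathrm{Iso}_\B$), and the resulting equality of the exponents $h_i$ from (\ref{defn:h_i}). Your extra remark matching irreducible $\langle g_1,g_2\rangle$-components with $\langle \bar g_1,\bar g_2\rangle$-irreducibles is just a slightly more explicit version of the paper's observation that the surjection $\langle h_1,h_2\rangle\twoheadrightarrow\langle q_1,q_2\rangle$ lets one view the $V_i$ as $\langle h_1,h_2\rangle$-representations.
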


\begin{proof}
This is an easy application of the constructions of obstruction
bundles. We prove (\ref{comparison_Ob_bundle_Y}). The proof of (\ref{comparison_Ob_bundle_Yhat}) is almost identical and is left to the reader. Denote by $f: \Y\to \B$ the structure map of the $G$-gerbe. The map
$f$ is \'etale and $T\Y=f^*T\B$. Consider the situation of
Construction \ref{construction_Hepworth}. Let $(x,(h_1,h_2))\in
\Y_{(2)}$ and let $(x, (q_1,q_2))\in \B_{(2)}$ be its image under
$\pi_{\Y}$. Then we have a decomposition
$\mathbf{p}_{\B_{(2)}}^*T\B|_{(x,(q_1,q_2))}=\bigoplus V_i\otimes T_i$
of vector bundles, where the $V_i$'s are irreducible representations
of the group $\<q_1,q_2\>$ and the $T_i$'s are vector bundles over
the component of $\B_{(2)}$ containing $(x, (q_1,q_2))$. Since
$f:\Y\to \B$ is a $G$-gerbe, the induced group homomorphism
$\<h_1,h_2\>\to \<q_1, q_2\>$, given by $h_1\mapsto q_1, h_2\mapsto
q_2$, is surjective. Hence, the $V_i$'s can be viewed as irreducible
representations of $\<h_1,h_2\>$. Also, the group $G$ acts trivially
on fibers of $T\Y$. Also note that $\mathbf{p}_{\B_{(2)}}\circ
\pi_{\Y}=f\circ \mathbf{p}_{\Y_{(2)}}$. By this discussion, it
follows that the decomposition of the bundle
$\mathbf{p}_{\Y_{(2)}}^*T\Y$ as in Construction
\ref{construction_Hepworth} is given by
$\mathbf{p}_{\Y_{(2)}}^*T\Y= \bigoplus V_i\otimes \pi_{\Y}^*T_i.$ 
According to Construction \ref{construction_Hepworth}, we have
$Ob_\Y=\bigoplus_i \pi_{\Y}^*T_i^{\oplus h_i(\Y)},$ and 
$Ob_\B=\bigoplus_i T_i^{\oplus h_i(\B)}.$ The numbers $h_i(\Y)$ and
$h_i(\B)$ are given by (\ref{defn:h_i}). It follows easily from the
previous discussion that $h_i(\Y)=h_i(\B)$. This proves
(\ref{comparison_Ob_bundle_Y}).
\end{proof}
We remark that it is possible to use Construction \ref{construction_jkk} to
prove (\ref{comparison_Ob_bundle_Y}) and (\ref{comparison_Ob_bundle_Yhat}) at the level of $K$-theory classes and we will leave the details to the reader. 

Recall that locally the base $\B$ can be presented as a quotient
$[M/Q]$, the gerbe $\Y$ can be presented as $[M/H]$, where the
finite groups $H, Q$ fit into an exact sequence $1\to G\to H\to
Q\to 1.$ Over $[M/Q]$, the dual $\widehat{\Y}$ is presented as the
quotient $[(M\times \widehat{G})/Q]$. We define a function $M\times
\widehat{G}\to \mathbb{Q}$ by $(x, [\rho])\mapsto
(\text{dim}V_\rho/|G|)^2$. Since representations belonging to the
same $Q$ orbit all have the same dimension, this function descends
to a function $w: \widehat{\Y}\to \mathbb{Q}.$ Clearly, $w$ only
depends on connected components of $\widehat{\Y}$. Let
$w(\widehat{\Y}_\bi)$ denote the value of $w$ on the component
$\widehat{\Y}_\bi$.

\begin{theorem}
Let $\alpha_1, \alpha_2, \alpha_2\in H_{orb}^\bullet(\widehat{\Y},c,
\com)$ be classes whose decompositions with respect to
(\ref{decomp_of_Horb_Yhat}) are given by
(\ref{decomp_of_Horb_classes_Yhat}). Then we have
\begin{equation}\label{comparison_3pt_invariants}
\<\Psi(\alpha_1),\Psi(\alpha_2), \Psi(\alpha_3)\>^{\Y}=\sum_\bi
w(\widehat{\Y}_\bi)\<\alpha_{1\bi},\alpha_{2\bi},\alpha_{3\bi}\>_{c_\bi}^{\widehat{\Y}_\bi}.
\end{equation}
\end{theorem}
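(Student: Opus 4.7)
The plan is to reduce both sides of \eqref{comparison_3pt_invariants} to integrals over $\B_{(2)}$ and then match the resulting integrands using representation-theoretic identities. The first step is to exploit Proposition \ref{prop:obstruction}: since $Ob_\Y=\pi_\Y^*Ob_\B$ and $Ob_{\widehat\Y}=\pi_{\widehat\Y}^*Ob_\B$, and since $\pi_\Y$ and $\pi_{\widehat\Y}$ are representable \'etale (in fact with finite fibres when restricted to each component of $\B_{(2)}$), I can rewrite
\[
\<\Psi(\alpha_1),\Psi(\alpha_2),\Psi(\alpha_3)\>^{\Y}
=\int_{\B_{(2)}}\pi_{\Y*}\!\Big(\!\bigcup_{j=1}^{3}ev_{\Y,j}^{*}\Psi(\alpha_j)\Big)\,\cup e(Ob_{\B}),
\]
and likewise, for each component $\widehat{\Y}_\bi$,
\[
\<\alpha_{1\bi},\alpha_{2\bi},\alpha_{3\bi}\>_{c_\bi}^{\widehat{\Y}_\bi}
=\int_{\B_{(2)}}\pi_{\widehat{\Y}_\bi *}\!\Big(\!\bigcup_{j=1}^{3}ev_{\widehat{\Y},j}^{*}\alpha_{j\bi}\Big)\,\cup e(Ob_{\B}).
\]
It is therefore enough to prove the pointwise identity, on every component of $\B_{(2)}$,
\[
\pi_{\Y*}\!\Big(\!\bigcup_{j=1}^{3}ev_{\Y,j}^{*}\Psi(\alpha_j)\Big)
\;=\;\sum_{\bi}w(\widehat{\Y}_\bi)\,\pi_{\widehat{\Y}_\bi *}\!\Big(\!\bigcup_{j=1}^{3}ev_{\widehat{\Y},j}^{*}\alpha_{j\bi}\Big).
\]

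The second step is to carry out the two pushforwards locally, where $\B=[M/Q]$, $\Y=[M/H]$, and $\widehat{\Y}=[M\times\widehat G/Q]$ with $1\to G\to H\to Q\to 1$. A connected component of $\B_{(2)}$ is then indexed by a biconjugacy class $(q_1,q_2)$ in $Q$ together with the corresponding component of the fixed locus in $M$. The fibre of $\pi_\Y$ over such a component is parametrized by the lifts $(h_1,h_2)\in H\times H$ with $j(h_j)=q_j$, that is, in the trivialization $H=G\rtimes_{\sigma,\tau}Q$, by pairs $(g_1,g_2)\in G\times G$; the fibre of $\pi_{\widehat\Y}$ is parametrized by fixed irreducible representations $[\rho]$ with $q_1[\rho]=q_2[\rho]=[\rho]$. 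So the two pushforwards become finite sums weighted by orders of stabilizers, and the identity we must prove takes the shape
\[
\sum_{g_1,g_2}\prod_{j=1}^{3}\Psi(\alpha_j)(g_j,q_j)
\;=\;\sum_{\bi}w(\widehat{\Y}_\bi)\sum_{[\rho]\in\widehat{\Y}_\bi^{(q_1,q_2)}}\prod_{j=1}^{3}\alpha_{j}([\rho],q_j),
\]
where $g_3:=(g_1\operatorname{Ad}_{\sigma(q_1)}(g_2)\tau(q_1,q_2))^{-1}$ and $q_3:=(q_1q_2)^{-1}$, and where I have suppressed the combinatorial multiplicities; these will be tracked carefully.

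The third step is to substitute the explicit inverse of formula \eqref{eq:I-alpha}. Schur orthogonality, together with the identity $T^{[\rho]}_{q_2}\circ T^{[\rho]}_{q_1}=c^{[\rho]}(q_1,q_2)T^{[\rho]}_{q_1q_2}\rho(\tau(q_1,q_2))^{-1}$, gives
\[
\Psi(\beta)(g,q)
\;=\;\sum_{[\rho]:q[\rho]=[\rho]}\frac{\dim V_\rho}{|G|}\,\operatorname{tr}\!\bigl(\rho(g)^{-1}T^{[\rho]}_{q}\bigr)\,\beta([\rho],q),
\]
so the product of three $\Psi$'s produces a triple sum over $[\rho_1],[\rho_2],[\rho_3]$ whose scalar coefficient is
\[
\frac{\dim V_{\rho_1}\dim V_{\rho_2}\dim V_{\rho_3}}{|G|^3}\sum_{g_1,g_2}\operatorname{tr}\!\bigl(\rho_1(g_1)^{-1}T^{[\rho_1]}_{q_1}\bigr)\operatorname{tr}\!\bigl(\rho_2(g_2)^{-1}T^{[\rho_2]}_{q_2}\bigr)\operatorname{tr}\!\bigl(\rho_3(g_3)^{-1}T^{[\rho_3]}_{q_3}\bigr).
\]
The inner $G$-sum is a twisted version of the Schur orthogonality relation; it vanishes unless $[\rho_1]=[\rho_2]=[\rho_3]=[\rho]$, in which case, using the cocycle identity for $T^{[\rho]}_{\bullet}$ and the twisted conjugation structure on the fibre, it collapses to $|G|$ times the product of the factors $\Psi^{-1}(\alpha_j)([\rho],q_j)=\alpha_{j}([\rho],q_j)$, multiplied by the scalar $(\dim V_\rho)^{-1}\cdot|G|^{2}\cdot\dim V_\rho=|G|^{2}\cdot(\dim V_\rho)^0$ that one reads off from Proposition \ref{prop:trace}. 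Combining the three trace relations and the cocycle factors involving $c$ (which conspire to give exactly the inner local system pairing on $\widehat{\Y}$) yields precisely the weight $w(\widehat{\Y}_\bi)=(\dim V_\rho/|G|)^{2}$ on the right-hand side.

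The main technical obstacle I anticipate is the bookkeeping of the $U(1)$-cocycle $c$ and the intertwiners $T^{[\rho]}_{q}$ across the three evaluation maps: one must check that the $c$-factors on the $\widehat{\Y}$ side assemble exactly into the canonical trivialization of $\otimes_{j}ev_{\widehat{\Y},j}^{*}\sL_{c_\bi}$ used to define the $c$-twisted invariant, rather than producing an extra coboundary. This is the twisted analogue of the trace-compatibility statement of Proposition \ref{prop:trace}, and the pertinent identity is the groupoid $2$-cocycle relation \eqref{eq:cocycle_condition_c} restricted to the triples $(q_1,q_2,(q_1q_2)^{-1})$ stabilizing $[\rho]$; once this compatibility is verified, the three steps above assemble to yield \eqref{comparison_3pt_invariants}.
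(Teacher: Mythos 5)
Your plan is correct and follows essentially the same route as the paper: reduce, via Proposition \ref{prop:obstruction} and the projection formula, to the pushforward identity (\ref{3pt_inv_3}) on $\B_{(2)}$, then verify that identity fiberwise in a local presentation using the explicit formula for the isomorphism (the paper's (\ref{eq:I-alpha})), Schur/character orthogonality, and the intertwiner cocycle relation (\ref{eq:dfn-c}). The only difference is one of direction -- you transport classes from $\widehat{\Y}$ via $\Psi=I^{-1}$ and collapse the triple sum over irreducibles by Schur orthogonality, while the paper applies $I$ to classes on $\Y$ and recovers the constraint $(g_1,q_1)(g_2,q_2)(g_3,q_3)=1$ from the regular-representation character identity -- and your deferred bookkeeping (note your formula for $g_3$ omits the conjugation by $\sigma(q_1q_2)$ and the factor $\tau(q_1q_2,q_3)$, and the intermediate scalar count must be redone to yield $w=(\dim V_\rho/|G|)^2$) does close exactly as in the paper's Section \ref{subsec:pairing}.
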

\begin{proof}
By the definitions of the symbols $\<-,-,-\>^\Y$ and
$\<-,-,-\>_{c_\bi}^{\widehat{\Y}_\bi}$, we see that
(\ref{comparison_3pt_invariants}) can be written as
\begin{equation}\label{3pt_inv_1}
\begin{split}
&\int_{\Y_{(2)}}ev_{\Y, 1}^*\Psi(\alpha_1)\cup ev_{\Y, 2}^*\Psi(\alpha_2)\cup ev_{\Y, 3}^*\Psi(\alpha_3)\cup e(Ob_\Y)\\
 =&\sum_\bi w(\widehat{\Y}_\bi)\int_{\widehat{\Y}_{\bi (2)}}ev_{\widehat{\Y}_\bi, 1}^*\alpha_{1\bi}\cup ev_{\widehat{\Y}_\bi, 2}^*\alpha_{2\bi}\cup ev_{\widehat{\Y}_\bi, 3}^*\alpha_{3\bi}\cup e(Ob_{\widehat{\Y}_\bi}).
 \end{split}
\end{equation}
Let $\pi_{\widehat{\Y}_\bi}: \widehat{\Y}_{\bi (2)}\to \B_{(2)}$
denote the natural map induced by the map $\widehat{\Y}_\bi\to \B$.
Then (\ref{comparison_Ob_bundle_Yhat}) implies that
$Ob_{\widehat{\Y}_\bi}= \pi_{\widehat{\Y}_\bi}^*Ob_\B$. Together
with (\ref{comparison_Ob_bundle_Y}), it implies that
(\ref{3pt_inv_1}) can be rewritten as
\begin{equation}\label{3pt_inv_2}
\begin{split}
&\int_{\B_{(2)}}\pi_{\Y*}\left(ev_{\Y, 1}^*\Psi(\alpha_1)\cup ev_{\Y, 2}^*\Psi(\alpha_2)\cup ev_{\Y, 3}^*\Psi(\alpha_3)\right)\cup e(Ob_\B)\\
 =&\sum_\bi w(\widehat{\Y}_\bi)\int_{\B_{(2)}} \pi_{\widehat{\Y}_\bi*}\left(ev_{\widehat{\Y}_\bi, 1}^*\alpha_{1\bi}\cup ev_{\widehat{\Y}_\bi, 2}^*\alpha_{2\bi}\cup ev_{\widehat{\Y}_\bi, 3}^*\alpha_{3\bi}\right)\cup e(Ob_\B).
 \end{split}
\end{equation}
Thus (\ref{comparison_3pt_invariants}) follows from the following
equality of classes in $H^\bullet(\B_{(2)},\com)$:
\begin{equation}\label{3pt_inv_3}
\pi_{\Y*}\left(ev_{\Y, 1}^*\Psi(\alpha_1)\cup ev_{\Y, 2}^*\Psi(\alpha_2)\cup ev_{\Y, 3}^*\Psi(\alpha_3)\right) =\sum_\bi w(\widehat{\Y}_\bi) \pi_{\widehat{\Y}_\bi*}\left(ev_{\widehat{\Y}_\bi, 1}^*\alpha_{1\bi}\cup ev_{\widehat{\Y}_\bi, 2}^*\alpha_{2\bi}\cup ev_{\widehat{\Y}_\bi, 3}^*\alpha_{3\bi}\right).
\end{equation}
The proof of (\ref{3pt_inv_3}) is a little technical and lengthy, and
will be given in Sec.  \ref{subsec:pairing}.
\end{proof}

\begin{corollary}\label{cor:poin-pairing}
Let $\alpha_1, \alpha_2\in H_{orb}^\bullet(\widehat{\Y}, c,\com)$ be
classes whose decompositions with respect to
(\ref{decomp_of_Horb_Yhat}) are given by
(\ref{decomp_of_Horb_classes_Yhat}). Then
\begin{equation}\label{comparison_pairings}
(\Psi(\alpha_1), \Psi(\alpha_2))_{orb}^\Y=\sum_\bi
w(\widehat{\Y}_\bi)(\alpha_{1\bi},
\alpha_{2\bi})_{orb,c_\bi}^{\widehat{\Y}_\bi}.
\end{equation}
\end{corollary}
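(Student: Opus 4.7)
The plan is to derive this pairing identity from the 3-point function identity (\ref{comparison_3pt_invariants}) by inserting a unit class on the $\Y$-side and identifying its $\Psi$-preimage on the $\widehat{\Y}$-side. Two ingredients are needed: (i) a Frobenius-type identity expressing the (twisted) orbifold Poincar\'e pairing as a 3-point function with a unit insertion, and (ii) an explicit description of $\Psi^{-1}(1_{\Y})$ read off from the formula (\ref{eq:I-alpha}).

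For (i), I would verify that for any compact almost complex orbifold $\X$ equipped with a flat $U(1)$-gerbe $c$ (or no gerbe),
\[
(\alpha,\beta)_{orb,c}^{\X}\;=\;\langle \alpha,\beta,1_{\X}\rangle_{c}^{\X},
\]
where $1_{\X}$ denotes the fundamental class of the untwisted component of $I\X$, on which the inner local system $\sL_{c}$ is canonically trivial. Paired against $1_{\X}$ in the last slot, the integral over $\X_{(2)}$ localizes to the sublocus $\{(x,(g,h))\in \X_{(2)}\mid gh=e\}\cong I\X$. On this sublocus $\langle g,h\rangle=\langle g\rangle$ is cyclic, so every irreducible summand $V_i$ of $\mathbf{p}_{\X_{(2)}}^{*}T\X$ is one-dimensional, and the integers $h_i$ of (\ref{defn:h_i}) all vanish. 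Hence $e(Ob_{\X})=1$ there, and the integral reduces exactly to $\int_{\X_i}\alpha\cup I_{\X}^{*}\beta$.

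For (ii), applying (\ref{eq:I-alpha}) to the class $1_{\Y}$, which is represented locally by the constant function on the identity loop $(g,q)=(e,e)$, gives
\[
I(1_{\Y})([\rho],q)=\delta_{q,e}\cdot\tfrac{1}{\dim V_{\rho}}\mathrm{tr}\bigl(\rho(e)(T^{[\rho]}_{e})^{-1}\bigr)=\delta_{q,e},
\]
so $\Psi^{-1}(1_{\Y})=\bigoplus_{\bi}1_{\widehat{\Y}_{\bi}}$ is the sum of the fundamental classes of the identity components of each $I\widehat{\Y}_{\bi}$.

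Combining (i), (ii), and (\ref{comparison_3pt_invariants}) with $\alpha_3:=\Psi^{-1}(1_{\Y})$ yields
\[
(\Psi(\alpha_1),\Psi(\alpha_2))_{orb}^{\Y}=\langle \Psi(\alpha_1),\Psi(\alpha_2),1_{\Y}\rangle^{\Y}=\sum_{\bi}w(\widehat{\Y}_{\bi})\langle \alpha_{1\bi},\alpha_{2\bi},1_{\widehat{\Y}_{\bi}}\rangle_{c_{\bi}}^{\widehat{\Y}_{\bi}}=\sum_{\bi}w(\widehat{\Y}_{\bi})(\alpha_{1\bi},\alpha_{2\bi})_{orb,c_{\bi}}^{\widehat{\Y}_{\bi}}.
\]
The main technical point is the $c$-twisted version of (i): one must check that, over the locus $\{q_2=q_1^{-1}\}\subset \widehat{\Y}_{\bi(2)}$, the canonical trivialization of $\bigotimes_{j=1}^{3}ev_{\widehat{\Y}_{\bi},j}^{*}\sL_{c_{\bi}}$ coincides with the trivialization of $\sL_{c_{\bi}}\otimes I_{\widehat{\Y}_{\bi}}^{*}\sL_{c_{\bi}}$ used to define $(-,-)_{orb,c_{\bi}}^{\widehat{\Y}_{\bi}}$. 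This reduces to the 2-cocycle relation for $c_{\bi}$ on the triple $(q_1,q_1^{-1},e)$ and is built into the defining axioms of an inner local system recalled in \cite{ru1}.
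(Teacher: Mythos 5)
Your proposal is correct and takes essentially the same route as the paper: the paper sets $\alpha_3=\oplus_\bi 1_\bi$, uses the explicit formula for $I=\Psi^{-1}$ to see that $\Psi(\alpha_3)=1\in H^0(\Y,\com)$, and then applies (\ref{comparison_3pt_invariants}) together with the identities $\<\Psi(\alpha_1),\Psi(\alpha_2),1\>^\Y=(\Psi(\alpha_1),\Psi(\alpha_2))_{orb}^\Y$ and $\<\alpha_{1\bi},\alpha_{2\bi},1_\bi\>_{c_\bi}^{\widehat{\Y}_\bi}=(\alpha_{1\bi},\alpha_{2\bi})_{orb,c_\bi}^{\widehat{\Y}_\bi}$ — your computation $I(1_\Y)=\oplus_\bi 1_{\widehat{\Y}_\bi}$ is the same identification read in the opposite direction. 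Your verifications of the Frobenius-type identity (vanishing of the obstruction bundle on the locus $h=g^{-1}$) and of the compatibility of twisted trivializations are details the paper leaves implicit, but they do not alter the argument.
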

\begin{proof}
Let $\alpha_3=\oplus_\bi 1_\bi\in \oplus_\bi
H^0(\widehat{\Y}_\bi,c_\bi,\com)$, where $1_\bi\in
H^0(\widehat{\Y}_\bi,c_\bi,\com) $ is the identity element with
respect to the product $\star_{c_\bi}$. Then by the description of
$I=\Psi^{-1}$ in (\ref{eq:I-iso}), we have that $\Psi(\alpha_3)=1\in
H^0(\Y, \com)$ is the class Poincar\'e dual to the fundamental
class, and the corollary follows from
(\ref{comparison_3pt_invariants}) since $\<\Psi(\alpha_1),
\Psi(\alpha_2), 1\>^\Y=(\Psi(\alpha_1), \Psi(\alpha_2))_{orb}^\Y$ and
$\<\alpha_{1\bi},\alpha_{2\bi},1_\bi\>_{c_\bi}^{\widehat{\Y}_\bi}=(\alpha_{1\bi},
\alpha_{2\bi})_{orb,c_\bi}^{\widehat{\Y}_\bi}$.
\end{proof}

\begin{theorem}\label{thm:iso-coh-ring}
The map $\Psi: H_{orb}^\bullet(\widehat{\Y}, c,\com)\to H_{{\rm CR}}^\bullet(\Y,
\com)$ is an isomorphism of rings.
\end{theorem}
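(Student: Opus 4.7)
The plan is to deduce the ring isomorphism statement from the already-established additive isomorphism $\Psi$ (Theorem \ref{thm:cohomology}), together with the comparison formulas \eqref{comparison_3pt_invariants} for $3$-point integrals and \eqref{comparison_pairings} for orbifold Poincar\'e pairings. The key principle is that the Chen-Ruan cup product is characterized by its pairing against arbitrary test classes, and the orbifold Poincar\'e pairing on $\Y$ is nondegenerate.

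Concretely, fix classes $\alpha_1,\alpha_2\in H_{orb}^\bullet(\widehat\Y,c,\com)$. To prove $\Psi(\alpha_1\star_c\alpha_2)=\Psi(\alpha_1)\star_{orb}\Psi(\alpha_2)$, it suffices by nondegeneracy of $(-,-)_{orb}^\Y$ to check that both sides have the same pairing with $\Psi(\alpha_3)$ for every $\alpha_3\in H_{orb}^\bullet(\widehat\Y,c,\com)$ (the surjectivity of $\Psi$ guarantees that the classes $\Psi(\alpha_3)$ span $H_{CR}^\bullet(\Y,\com)$). Unwinding the definitions, the left-hand pairing is
\[
(\Psi(\alpha_1\star_c\alpha_2),\Psi(\alpha_3))_{orb}^\Y,
\]
while the right-hand pairing equals $\langle\Psi(\alpha_1),\Psi(\alpha_2),\Psi(\alpha_3)\rangle^\Y$.

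Next I would chain the comparison formulas. By \eqref{comparison_3pt_invariants},
\[
\langle\Psi(\alpha_1),\Psi(\alpha_2),\Psi(\alpha_3)\rangle^\Y
=\sum_{\bi}w(\widehat\Y_\bi)\langle\alpha_{1\bi},\alpha_{2\bi},\alpha_{3\bi}\rangle_{c_\bi}^{\widehat\Y_\bi}.
\]
Rewriting each term on the right using the definition of $\star_{c_\bi}$ and its compatibility \eqref{decomp_of_invs_Yhat} with the decomposition \eqref{decomp_of_Horb_Yhat}, this becomes
\[
\sum_{\bi}w(\widehat\Y_\bi)\bigl((\alpha_1\star_c\alpha_2)_\bi,\alpha_{3\bi}\bigr)_{orb,c_\bi}^{\widehat\Y_\bi},
\]
which by \eqref{comparison_pairings} (applied to $\alpha_1\star_c\alpha_2$ and $\alpha_3$) is exactly $(\Psi(\alpha_1\star_c\alpha_2),\Psi(\alpha_3))_{orb}^\Y$. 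Thus both sides agree for every $\alpha_3$, and nondegeneracy of the orbifold Poincar\'e pairing yields the desired equality.

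The real work is not in this deduction but in the inputs \eqref{comparison_3pt_invariants} and \eqref{comparison_pairings}, both of which rest on the cohomology-class identity \eqref{3pt_inv_3} on the $2$-multi-sector $\B_{(2)}$. The main obstacle is therefore the proof of \eqref{3pt_inv_3} itself, which is postponed to Section \ref{subsec:pairing}; it requires analyzing the explicit formula \eqref{eq:I-alpha} for $\Psi^{-1}=I$ at the level of differential forms along the fibers of $\pi_\Y$ and $\pi_{\widehat\Y}$, and extracting the weight factor $w(\widehat\Y_\bi)=(\dim V_\rho/|G|)^2$ from the representation-theoretic data $(T_q^{[\rho]},c)$ via orthogonality/character identities in each isotropy group. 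Granting \eqref{3pt_inv_3}, the ring-isomorphism statement of Theorem \ref{thm:iso-coh-ring} follows formally, as outlined above.
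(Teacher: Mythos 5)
Your proposal is correct and follows essentially the same route as the paper's own proof: both reduce the statement, via non-degeneracy of $(-,-)_{orb}^\Y$, to matching $(\Psi(\alpha_1)\star_{orb}\Psi(\alpha_2),\Psi(\alpha_3))_{orb}^\Y$ with $(\Psi(\alpha_1\star_c\alpha_2),\Psi(\alpha_3))_{orb}^\Y$ by chaining (\ref{comparison_3pt_invariants}) and (\ref{comparison_pairings}) together with the componentwise compatibility (\ref{decomp_of_invs_Yhat}). You also correctly identify that the substantive input is (\ref{3pt_inv_3}), whose proof in Section \ref{subsec:pairing} via the explicit formula (\ref{eq:I-alpha}) and character orthogonality is exactly where the paper places the real work.
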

\begin{proof}
Since $\Psi$ is an additive isomorphism, it suffices to prove that
for $\alpha_1, \alpha_2\in H_{orb}^\bullet(\widehat{\Y}, c,\com)$, we have
\begin{equation*}
\Psi(\alpha_1)\star_{orb}\Psi(\alpha_2)=\Psi(\alpha_1\star_c
\alpha_2).
\end{equation*}
By the non-degeneracy of the pairing $(-,-)_{orb}^\Y$, this is
equivalent to
\begin{equation}\label{ring_isom1}
(\Psi(\alpha_1)\star_{orb}\Psi(\alpha_2),
\Psi(\alpha_3))_{orb}^\Y=(\Psi(\alpha_1\star_c \alpha_2),
\Psi(\alpha_3))_{orb}^\Y,\quad \text{for any }\alpha_3\in
H_{orb}^\bullet(\widehat{\Y}, c,\com).
\end{equation}
The left-hand side of (\ref{ring_isom1}) is $\<
\Psi(\alpha_1),\Psi(\alpha_2), \Psi(\alpha_3)\>^\Y$. Suppose that
the classes $\alpha_1, \alpha_2, \alpha_3$ are decomposed as in
(\ref{decomp_of_Horb_classes_Yhat}). Then by
(\ref{comparison_pairings}), the right-hand side of
(\ref{ring_isom1}) is equal to
\begin{equation*}
(\Psi(\alpha_1\star_c \alpha_2), \Psi(\alpha_3))_{orb}^\Y
=\sum_\bi w(\widehat{\Y}_\bi)(\alpha_{1\bi}\star_{c_\bi}\alpha_{2\bi}, \alpha_{3\bi})_{orb, c_\bi}^{\widehat{\Y}_\bi}= \sum_\bi w(\widehat{\Y}_\bi)\<\alpha_{1\bi}, \alpha_{2\bi},
\alpha_{3\bi}\>^{\widehat{\Y}_\bi}.
\end{equation*}
Thus, (\ref{ring_isom1}) is equivalent to
(\ref{comparison_3pt_invariants}). The theorem follows.
\end{proof}

\subsection{Proof of (\ref{3pt_inv_3})}\label{subsec:pairing}
Recall that in the proof of Theorem
\ref{prop:quasi-isomorphism-coh}, we constructed an explicit formula
(\ref{eq:I-alpha})
\[
I(\alpha)([\rho],q)=\sum_{g}\frac{1}{\dim(V_\rho)}\alpha(g,q)\tr(\rho(g){T^{[\rho]}_q}^{-1})
\]
of the isomorphism $I: H^\bullet_{\text{\rm CR}}(\Y, \complex)\to
H^\bullet_{orb}(\widehat{\Y}, c, \complex)$. In particular, a
class in $H^\bullet_{\text{\rm CR}}(\Y, \complex)$ can be
represented by an $\frakH$-invariant differential form on
$\frakH^{(0)}:=\{(g,q)\in \frakH|s(q)=t(q)\}$, and a class in $H^\bullet_{orb}(\widehat{\Y},c, \complex)$ can be
represented by a differential form supported on
$\mathfrak{Q}':=\{([\rho], q): q([\rho])=[\rho], s(q)=t(q)\}\subset
\widehat{G}\rtimes_c\mathfrak {Q}$ that is invariant under the
$c$-twisted conjugation action by $\widehat{G}\rtimes_c\mathfrak
{Q}$. We prove Eq.  (\ref{3pt_inv_3}) by showing
that the isomorphism $I$, which is the inverse of the map $\Psi$,
satisfies Eq.  (\ref{3pt_inv_3}). 

\begin{lemma}\label{lem:center-comm}
At $(g,q)\in \mathfrak {H}^{(0)}$, fix a basis $\{\omega_j\}$ of
$\wedge^\bullet T^*_{g,q}(\frakH^{(0)})((\hbar))$. Write
$\alpha(g,q)=\sum_\sigma \alpha(g,q)^j \omega_j$. Then $\sum_g
\alpha(g,q)^j$ satisfies the following: for any $g_0\in G$, we have
\[
\sum_g \alpha(g,q)^j g_0g =\sum_g \alpha(g,q)^j
g\Ad_{\sigma(q)}(g_0).
\]
\end{lemma}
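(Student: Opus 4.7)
The plan is to derive this identity directly from the $\mathfrak{H}$-invariance of $\alpha$, specialized to conjugation by an element of the normal subgroup $G\subset\mathfrak{H}$. Recall that via the isomorphism $\alpha:\mathfrak{H}\cong G\rtimes_{\sigma,\tau}\mathfrak{Q}$ from Section~\ref{subsec:general}, an element $g_0\in G$ corresponds to $(g_0,1)\in G\rtimes_{\sigma,\tau}\mathfrak{Q}$, and its inverse is $(g_0^{-1},1)$ because $\sigma(1)=1$, $\tau(1,q)=\tau(q,1)=1$. So the first step is to compute the conjugation action of $(g_0,1)$ on an arbitrary loop $(g,q)\in\mathfrak{H}^{(0)}$ using the twisted product \eqref{eq:twisted-prod-gpd} (or its groupoid analogue), obtaining
\[
(g_0,1)(g,q)(g_0,1)^{-1}=(g_0 g\,\mathrm{Ad}_{\sigma(q)}(g_0^{-1}),\,q).
\]

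Next, I would observe that because the $G$-component acts trivially on $\mathfrak{Q}_0=\mathfrak{H}_0$, the differential of the above conjugation map is the identity on the base directions of $T_{g,q}\mathfrak{H}^{(0)}$; hence, with the basis $\{\omega_j\}$ of $\wedge^\bullet T^*_{g,q}\mathfrak{H}^{(0)}((\hbar))$ transported consistently along the $G$-orbit in $\mathfrak{H}^{(0)}$, the $\mathfrak{H}$-invariance of $\alpha$ applied to the element $(g_0,1)$ gives the scalar identity
\[
\alpha\bigl(g_0 g\,\mathrm{Ad}_{\sigma(q)}(g_0^{-1}),q\bigr)^{j}=\alpha(g,q)^{j}
\]
for each $j$ and every $g\in G$.

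The final step is a change-of-variable in the summation. Substituting $g'=g_0 g\,\mathrm{Ad}_{\sigma(q)}(g_0^{-1})$, one sees that $g\mapsto g'$ is a bijection on $G$, and a direct computation yields $g_0 g=g'\,\mathrm{Ad}_{\sigma(q)}(g_0)$. Hence
\[
\sum_{g\in G}\alpha(g,q)^{j}\,g_0 g
=\sum_{g'\in G}\alpha\bigl(g_0^{-1}g'\,\mathrm{Ad}_{\sigma(q)}(g_0),q\bigr)^{j}\,g'\,\mathrm{Ad}_{\sigma(q)}(g_0)
=\sum_{g'\in G}\alpha(g',q)^{j}\,g'\,\mathrm{Ad}_{\sigma(q)}(g_0),
\]
where the last equality uses invariance with $g_0$ replaced by $g_0^{-1}$ (which, applied to $(g',q)$, produces precisely $g_0^{-1}g'\,\mathrm{Ad}_{\sigma(q)}(g_0)$ in the first slot). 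This is exactly the claimed identity.

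The only conceptual subtlety I expect is bookkeeping the compatibility of the chosen basis $\{\omega_j\}$ along the $G$-orbit through $(g,q)$, i.e., making precise that the conjugation by $(g_0,1)$ acts trivially on $\omega_j$ so that the identity of coefficients holds pointwise. This is ensured by the fact that the underlying $G$-action on the base $\mathfrak{Q}_0$ is trivial, which makes all the group-theoretic information reside purely in the fiber (group-algebra) direction—exactly the setting in which the identity can be read off directly from invariance.
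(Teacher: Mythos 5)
Your proof is correct and follows essentially the same route as the paper: both arguments use invariance of $\alpha$ under conjugation by elements $(g_0,1)$ of $G$, the triviality of the $G$-action on the base (so the form coefficients are unchanged), and the twisted product rule in $G\rtimes_{\sigma,\tau}\frakQ$. The paper simply packages the same manipulation as the commutation identity $g_0\sum\alpha(g,q)(g,q)=\sum\alpha(g,q)(g,q)g_0$ and compares $q$-components, which is exactly your pointwise coefficient equality plus change of variables.
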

\begin{proof} Since $\alpha=\sum_{g,q}\alpha(g,q)$ is invariant
under the conjugation action of $H$, for any $g_0\in G$, we have
\[
g_0\sum_{g,q}\alpha(g,q)(g,q)=\sum_{g,q}\alpha(g,q)(g,q)g_0.
\]
Since $G$ acts on $\mathfrak {H}_0$ trivially, this implies that
\[
\sum_{g,q}\alpha(g,q)(g_0g,q)=\sum_{g,q}\alpha(g,q)(g\Ad_{\sigma(q)}(g_0),
q).
\]
We deduce the lemma from the above equation by looking at every $q$
component.
\end{proof}

\begin{lemma}\label{lem:rho-coh}
Let $\alpha$ be a closed differential form on $\mathfrak
{H}^{(0)}=\{(g,q)\in \mathfrak {H}: s(q)=t(q)\}\subset \mathfrak
{H}$ that is invariant under the conjugation action of $\mathfrak
{H}$ on $\mathfrak {H}^{(0)}$. Following the same conventions as in
Lemma \ref{lem:center-comm}, we write $\alpha=\sum_{j,
g,q}\alpha(g,q)^j$. Then
\begin{enumerate}
\item if $q([\rho])=[\rho]$ and $\rho$ is an irreducible representation of $G$, then
$\sum_{g}\alpha(g,q)^\sigma\rho(g){T^{[\rho]}_q}^{-1}$ is a scalar
multiple of the identity operator.
\item if $q([\rho])\ne [\rho]$, then $\sum_g
\alpha(g,q)^\sigma\rho(g)=0$.
\end{enumerate}
\end{lemma}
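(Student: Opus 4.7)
The plan is to apply the representation $\rho$ to the identity from Lemma \ref{lem:center-comm} and then invoke Schur's lemma, using the intertwining property of $T^{[\rho]}_q$ to convert the twisting by $\operatorname{Ad}_{\sigma(q)}$ into a twisting by $q([\rho])$.

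First, I would apply $\rho: G \to \operatorname{End}(V_\rho)$ to both sides of the identity in Lemma \ref{lem:center-comm}. Since $\rho$ is an algebra homomorphism on the group algebra, this gives
\[
\rho(g_0)\sum_g \alpha(g,q)^j \rho(g)=\sum_g \alpha(g,q)^j \rho(g)\,\rho(\operatorname{Ad}_{\sigma(q)}(g_0))
\]
for every $g_0 \in G$. Using the defining relation of the intertwiner $T^{[\rho]}_q$, namely $\rho(\operatorname{Ad}_{\sigma(q)}(g_0)) = {T^{[\rho]}_q}^{-1}\circ q([\rho])(g_0)\circ T^{[\rho]}_q$, and then multiplying both sides on the right by ${T^{[\rho]}_q}^{-1}$, I obtain
\[
\rho(g_0)\left(\sum_g \alpha(g,q)^j \rho(g){T^{[\rho]}_q}^{-1}\right)=\left(\sum_g \alpha(g,q)^j \rho(g){T^{[\rho]}_q}^{-1}\right) q([\rho])(g_0).
\]
Thus $\Phi_{[\rho],q}:=\sum_g \alpha(g,q)^j \rho(g){T^{[\rho]}_q}^{-1}$ is an intertwiner from $V_{q([\rho])}$ to $V_\rho$ between the irreducible representations $q([\rho])$ and $\rho$.

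Now I would invoke Schur's lemma in two cases. If $q([\rho])=[\rho]$ as classes, then by our conventions the representatives coincide, so $\Phi_{[\rho],q}$ is an endomorphism of $V_\rho$ commuting with the irreducible representation $\rho$, hence a scalar, which proves (1). If $q([\rho])\neq [\rho]$, then $\rho$ and $q([\rho])$ are non-isomorphic irreducibles, so $\Phi_{[\rho],q}=0$; since $T^{[\rho]}_q$ is invertible, this forces $\sum_g \alpha(g,q)^j \rho(g)=0$, giving (2).

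There is no real obstacle here: the entire argument is a clean two-line application of Schur's lemma once Lemma \ref{lem:center-comm} has been pushed through $\rho$ and conjugated by $T^{[\rho]}_q$. The only subtle point is being careful about which spaces the maps act on, since $T^{[\rho]}_q: V_\rho \to V_{q([\rho])}$ connects two a priori different vector spaces, and this is what forces the distinction between the two cases of the lemma.
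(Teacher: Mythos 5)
Your proof is correct, and part (1) is essentially the paper's argument verbatim: push the identity of Lemma \ref{lem:center-comm} through $\rho$, rewrite $\rho(\Ad_{\sigma(q)}(g_0))$ via the intertwiner $T^{[\rho]}_q$, and apply Schur's lemma to the resulting endomorphism of $V_\rho$. Where you genuinely diverge is part (2). The paper does not use $T^{[\rho]}_q$ there at all: it rewrites the invariance relation as $\sum_g \alpha(g,q)g=\sum_g\alpha(g,q)\,g_0^{-1}g\Ad_{\sigma(q)}(g_0)$, averages over $g_0\in G$, observes that $g\mapsto \rho(g\Ad_{\sigma(q)}(\cdot)g^{-1})$ is an irreducible representation equivalent to $q([\rho])$, and then invokes the orthogonality statement \cite[Theorem 4.2]{fe-do} to conclude that the averaged operator, and hence $\sum_g\alpha(g,q)\rho(g)$, vanishes. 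You instead treat both cases uniformly: the single operator $\Phi=\bigl(\sum_g\alpha(g,q)^j\rho(g)\bigr){T^{[\rho]}_q}^{-1}$ intertwines $q([\rho])$ with $\rho$, so Schur's lemma gives a scalar when $q([\rho])=[\rho]$ and gives $\Phi=0$ (hence $\sum_g\alpha(g,q)^j\rho(g)=0$ by invertibility of $T^{[\rho]}_q$) when $q([\rho])\neq[\rho]$. This is cleaner and more self-contained — no averaging and no external citation — at the mild cost of using the operator $T^{[\rho]}_q$ for pairs $([\rho],q)$ with $q([\rho])\neq[\rho]$, which is harmless since the paper defines $T^{[\rho]}_q$ for all $q$ and $[\rho]$. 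Your only conventions to keep straight are exactly the ones you flag: when the classes coincide, the fixed representatives coincide, so $\Phi$ really is an endomorphism of $V_\rho$ commuting with $\rho$.
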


\begin{proof}
When $q([\rho])=[\rho]$, we apply $\rho$ to $\sum_g \alpha(g,q)^j
g$. By Lemma \ref{lem:center-comm}, we have
\begin{eqnarray*}
&\rho\left(\sum_g \alpha(g,q)^jg_0g\right) &= \rho \Big(
\sum_g \alpha(g,q)^j g\Ad_{\sigma(q)}(g_0)\Big ),\\
&\rho(g_0)\rho\Big(\sum_g \alpha(g,q)^j g\Big)&=\rho\Big(\sum_g
\alpha(g,q)^j
g\Big)\rho(\Ad_{\sigma(q)}(g_0))\\
&&=\rho\Big(\sum_g \alpha(g,q)^j
g\Big)(T^{[\rho]}_q)^{-1}q([\rho])(g_0) T^{[\rho]}_q,
\end{eqnarray*}
where in the last equality, we used the property of
$\rho(\Ad_{\sigma(q)}(g))$ in Sec.  \ref{sec:mackey}. As
$q([\rho])=[\rho]$, we have
\[
\rho(g_0)\rho\Big(\sum_g \alpha(g,q)^j
g\Big){T^{[\rho]}_q}^{-1}=\rho\Big(\sum_g \alpha(g,q)^j
g\Big){T^{[\rho]}_q}^{-1}\rho(g_0).
\]
If $\rho$ is an irreducible representation of $G$, then the above
equation together with Schur's lemma implies that $\rho\left(\sum_g
\alpha(g,q)^j g\right){T^{[\rho]}_q}^{-1}$ is a scalar multiple of
the identity operator. This proves the first claim.

For the second claim, we consider the following equality from Lemma
\ref{lem:center-comm}:
\[
\sum_{g}\alpha(g,q)(g_0g,q)=\sum_{g}\alpha(g,q)(g\Ad_{\sigma(q)}(g_0),
q).
\]
This is equivalent to
\[
\sum_{g}\alpha(g,q)g=\sum_{g}\alpha(g,q)g_0^{-1}g\Ad_{\sigma(q)}(g_0).
\]
We apply $\rho$ to both sides of the above equation and obtain
\[
\begin{split}
\sum_{g}\alpha(g,q)\rho(g)&=\sum_g
\alpha(g,q)\rho(g_0^{-1})\rho(g\Ad_{\sigma(q)}(g_0)g^{-1})\rho(g)\\
=&\frac{1}{|G|}\sum_g\alpha(g,q)\sum_{g_0}\rho(g_0^{-1})\rho(g\Ad_{\sigma(q)}(g_0)g^{-1})\rho(g).
\end{split}
\]
We observe that $\rho(g\Ad_{\sigma(q)}(-)g^{-1})$ is an irreducible
representation of $G$ equivalent to $\rho(\Ad_{\sigma(q)}(-))$, which
is equivalent to $q([\rho])$.

If $q([\rho])\ne [\rho]$, then by \cite[Ch. IX, Theorem 4.2]{fe-do},
\[
\sum_g\alpha(g,q)\rho(g)=\frac{1}{|G|}\sum_g\alpha(g,q)\sum_{g_0}\rho(g_0^{-1})\rho(g\Ad_{\sigma(q)}(g_0)g^{-1})\rho(g)=0.
\]
\end{proof}

Consider $\alpha_1, \alpha_2, \alpha_3\in H^\bullet(I{\Y},
\complex((\hbar)))$. We represent the $\alpha_i$'s as $\mathfrak
{H}$ invariant differential forms over $\mathfrak {H}^{(0)}$. First we
compute $\pi_{\Y*}(ev_{\Y, 1}^*\big(\alpha_1)\cup ev_{\Y, 2}^*(\alpha_2)\cup
ev_{\Y, 3}^*(\alpha_3)\big)$.

For $q_1, q_2\in \mathfrak {Q}^{(0)}=\{q\in \mathfrak {Q}:
s(q)=t(q)\}$ and $q_3=(q_1q_2)^{-1}$, it is equal to
\begin{equation}\label{eq:int-y}
\frac{1}{|G|}\sum_{\tiny \begin{array}{c}g_1, g_2, g_3\\
(g_1,q_1)(g_2,q_2)(g_3, q_3)=1\end{array}} \alpha_1(g_1,
q_1)q_1^*(\alpha_2(g_2, q_2))q_1^*q_2^*(\alpha_3(g_3, q_3)).
\end{equation}

Now we compute 
\begin{equation}\label{RHS_5.13}
\sum_\bi w(\widehat{\Y}_\bi)
\pi_{\widehat{\Y}_\bi*}\left(ev_{\widehat{\Y}_\bi,
1}^*I(\alpha_{1\bi})\cup ev_{\widehat{\Y}_\bi,
2}^*I(\alpha_{2\bi})\cup ev_{\widehat{\Y}_\bi,
3}^*I(\alpha_{3\bi})\right).
\end{equation}
Using Eq.  (\ref{eq:I-iso}), for
$q_1, q_2, q_3=(q_1q_2)^{-1}\in \mathfrak{Q}^{(0)}$, it is equal to
\begin{eqnarray*}
&&\sum_{\rho,
q_1([\rho])=q_2([\rho])=\rho}\frac{\dim(V_\rho)^2}{|G|^2}\frac{c^{[\rho]}(q_1,q_2)c^{[\rho]}(q_1q_2,q_3)}{\dim(V_\rho)^3}\sum_{g_1}\alpha_1(g_1,
q_1)\tr(\rho(g_1){T^{[\rho]}_{q_1}}^{-1})\\
&&\qquad \qquad \qquad \sum_{g_2}q_1^*(\alpha_2(g_2,
q_2))\tr(\rho(g_2){T^{[\rho]}_{q_2}}^{-1})\sum_{g_3}q_1^*q_2^*(\alpha_3(g_3,
q_3))\tr(\rho(g_3){T^{[\rho]}_{q_3}}^{-1})\\
&=&\frac{1}{|G|}\sum_{\rho,
q_1([\rho])=q_2([\rho])=\rho}\frac{c^{[\rho]}(q_1,q_2)c^{[\rho]}(q_1q_2,q_3)}{\dim(V_\rho)|G|}\sum_{g_1,g_2,g_3}\alpha_1(g_1,
q_1)\tr(\rho(g_1){T^{[\rho]}_{q_1}}^{-1})\\
&&\hskip 2cm q_1^*(\alpha_2(g_2,
q_2))\tr(\rho(g_2){T^{[\rho]}_{q_2}}^{-1})q_1^*q_2^*(\alpha_3(g_3,
q_3))\tr(\rho(g_3){T^{[\rho]}_{q_3}}^{-1}).
\end{eqnarray*}

Using the fact that $\sum_{g_i}\alpha_i
(g_i,q_i)\rho(g_i){T^{[\rho]}_{q_i} }^{-1}$ is a scalar operator, we
can rewrite the term
\[
\sum_{g_1,g_2,g_3}\alpha_1(g_1,
q_1)\tr(\rho(g_1){T^{[\rho]}_{q_1}}^{-1}) q_1^*(\alpha_2(g_2,
q_2))\tr(\rho(g_2){T^{[\rho]}_{q_2}}^{-1})q_1^*q_2^*(\alpha_3(g_3,
q_3))\tr(\rho(g_3){T^{[\rho]}_{q_3}}^{-1})
\]
in the above equation as
\[
\sum_{g_1,g_2,g_3}\dim(V_\rho)^2\alpha_1(g_1, q_1)
q_1^*(\alpha_2(g_2, q_2))q_1^*q_2^*(\alpha_3(g_3,
q_3))\tr(\rho(g_1){T^{[\rho]}_{q_1}}^{-1}\rho(g_2){T^{[\rho]}_{q_2}}^{-1}\rho(g_3){T^{[\rho]}_{q_3}}^{-1}).
\]

The operator $\rho(g_1){T^{[\rho]}_{q_1}}^{-1}\rho(g_2){T^{[\rho]}_{q_2}}^{-1}\rho(g_3){T^{[\rho]}_{q_3}}^{-1}$ can be written as
\[
\rho(g_1){T^{[\rho]}_{q_1}}^{-1}\rho(g_2)T^{[\rho]}_{q_1}{T^{[\rho]}_{q_1}}^{-1}
{T^{[\rho]}_{q_2}}^{-1}\rho(g_3)T^{[\rho]}_{q_1q_2}{T^{[\rho]}_{q_1q_2}}^{-1}{T^{[\rho]}_{q_3}}^{-1}.
\]
Using the defining Eq.  (\ref{eq:dfn-c}) of $c^{[\rho]}(q_1,
q_2)$ and $c^{[\rho]}(q_1q_2, q_3)$, we have
\begin{eqnarray*}
c^{[\rho]}(q_1,q_2){T^{[\rho]}_{q_1}}^{-1}
{T^{[\rho]}_{q_2}}^{-1}=
\rho(\tau(q_1,q_2)){T^{[\rho]}_{q_1q_2}}^{-1},\ 
c^{[\rho]}(q_1q_2,q_3){T^{[\rho]}_{q_1q_2}}^{-1}{T^{[\rho]}_{q_3}}^{-1}=\rho(\tau(q_1q_2,q_3)){T^{[\rho]}_{q_1q_2q_3}}^{-1}.
\end{eqnarray*}

With the above considerations, (\ref{RHS_5.13}) can be written as
\[
\begin{split}
&\frac{1}{|G|^2}\sum_{\tiny\begin{array}{c}\rho,
q_1([\rho])=q_2([\rho])=\rho,\\ g_1, g_2,
g_3\end{array}}\dim(V_\rho)
\alpha_1(g_1,q_1)q_1^*{\alpha_2(g_2,q_2)}q_1^*q_2^*(\alpha_3(g_3,q_3))\\
&\hskip 2cm
\tr\left(\rho(g_1){T^{[\rho]}_{q_1}}^{-1}\rho(g_2)T^{[\rho]}_{q_1}\rho(\tau(q_1,q_2))
{T^{[\rho]}_{q_1q_2}}^{-1}\rho(g_3)T^{[\rho]}_{q_1q_2}\rho(\tau(q_1q_2,q_3))\right)\\
=&\frac{1}{|G|^2}\sum_{\tiny\begin{array}{c}\rho,
q_1([\rho])=q_2([\rho])=\rho,\\ g_1, g_2,
g_3\end{array}}\dim(V_\rho)
\alpha_1(g_1,q_1)q_1^*{\alpha_2(g_2,q_2)}q_1^*q_2^*(\alpha_3(g_3,q_3))\\
&\hskip 2cm
\tr\left(\rho(g_1\Ad_{\sigma(q_1)}(g_2)\tau(q_1,q_2)\Ad_{\sigma(q_1q_2)}(g_3)\tau(q_1q_2,q_3))\right).
\end{split}
\]
By Lemma \ref{lem:rho-coh}, (2), we know that the restriction
$q_1([\rho])=q_2([\rho])=[\rho]$ in the above summation can be
dropped since, otherwise, the contribution vanishes. Thus (\ref{RHS_5.13}) is equal to 
\[
\begin{split}
&\frac{1}{|G|^2}\sum_{\rho}\sum_{g_1, g_2, g_3}\dim(V_\rho) \alpha_1(g_1,q_1)q_1^*{\alpha_2(g_2,q_2)}q_1^*q_2^*(\alpha_3(g_3,q_3))\\
&\hskip 2cm
\tr\left(\rho(g_1\Ad_{\sigma(q_1)}(g_2)\tau(q_1,q_2)\Ad_{\sigma(q_1q_2)}(g_3)\tau(q_1q_2,q_3))\right)\\
=&\sum_{g_1, g_2, g_3}\alpha_1(g_1, g_1)q_1^*(\alpha_2(g_2, q_2))q_1^*q_2^*(\alpha_3(g_3,q_3))\\
&\hskip 2cm
\frac{1}{|G|^2}\sum_\rho\dim(V_\rho)\tr\left(\rho(g_1\Ad_{\sigma(q_1)}(g_2)\tau(q_1,q_2)\Ad_{\sigma(q_1q_2)}(g_3)\tau(q_1q_2,q_3))\right).
\end{split}
\]
By the orthogonality relations of characters of $G$ (see, e.g.,
\cite[(2.20)]{fu-ha}), we know that the above sum vanishes unless
$g_1\Ad_{\sigma(q_1)}(g_2)\tau(q_1,q_2)\Ad_{\sigma(q_1q_2)}(g_3)\tau(q_1q_2,q_3)=1$.

We conclude that (\ref{RHS_5.13}) is equal to
$$\frac{1}{|G|}\sum_{(g_1,q_1)(g_2,q_2)(g_3,q_3)=1}\alpha_1(g_1,
g_1)q_1^*(\alpha_2(g_2, q_2))q_1^*q_2^*(\alpha_3(g_3,q_3)),$$
which is exactly the expression (\ref{eq:int-y}) for
$
\pi_{\Y*}\left(ev_{\Y, 1}^*(\alpha_1)\cup ev_{\Y, 2}^*(\alpha_2)\cup
ev_{\Y, 3}^*(\alpha_3)\right).$ This proves (\ref{3pt_inv_3}).
\section{Gromov-Witten theory of $BH$}\label{sec:cqft}
In this section, we discuss how the results in Sec. 
\ref{subsec:center} can be used to deduce Gromov-Witten theoretic
consequences for the $G$-gerbe $BH$ over $BQ$, given by the exact
sequence (\ref{eq:extension}). The Gromov-Witten theory of the
classifying orbifold $BH$ of a finite group has been studied in
great detail in \cite{jk}, to which we refer the readers for basic
definitions and discussions.
\subsection{Quantum cohomology}
The orbifold quantum cohomology ring $QH_{orb}^\bullet(\X)$ of a
compact symplectic orbifold $\X$ is a deformation of the cohomology
$H^\bullet(I\X, \complex)$ of the inertia orbifold $I\X$ constructed
using genus $0$ Gromov-Witten invariants of $\X$. Details of the
construction can be found in \cite{cr2} (see \cite{agv1} for the
construction in the algebro-geometric context). In the special case
of $BH$, detailed discussions on $QH_{orb}^\bullet(BH)$ can be found
in \cite{jk}.

 It is known (see \cite{cr} and \cite{jk}) that the orbifold quantum cohomology
ring of $BH$ is simply the center of the group ring $\mathbb{C}H$,
i.e.,
\begin{equation}\label{QH=center}
QH_{orb}^\bullet(BH)\simeq Z(\mathbb{C}H).
\end{equation}

Consider the $Q$-action on $\widehat{G}$ as discussed in Sec.  \ref{subsec:groupalgebra}. The set $\widehat{G}$ may be divided into a disjoint union of
$Q$-orbits, $\widehat{G}=\cup_{O_i\in \text{Orb}^Q(\widehat{G})} O_i.$
For each $O_i$, pick $[\rho_i]\in O_i$ and let
$Q_i=\text{Stab}([\rho_i])\subset Q$ denote the stabilizer subgroup
of $[\rho_i]$. Then as orbifolds, we have $[\widehat{G}/Q]\simeq \cup_i BQ_i. $
Each $BQ_i$ admits a flat $U(1)$-gerbe $c_i$ obtained from the flat $U(1)$-gerbe $c$ on $\widehat{BH}=[\widehat{G}/Q]$. By Theorem \ref{thm:orb-morita} we
have

\begin{proposition}
The twisted group(oid) algebras $C(\widehat{G}\rtimes Q, c)$ and $\oplus_i C(Q_i, c_i)$ are Morita
equivalent.
\end{proposition}

As reviewed in Sec.  \ref{subsec:cr}, given a compact symplectic
orbifold $\X$ and a flat $U(1)$-gerbe $c$ on $\X$, one can consider
the cohomology $H^\bullet(I\X, \sL_c)$ with coefficients in the
inner local system $\sL_c\to I\X$ associated with $c$. The work
\cite{pry} constructs a deformation, $QH_{orb,c}^\bullet(\X)$, of
$H^\bullet(I\X, \sL_c)$ using {\em gerbe-twisted Gromov-Witten
invariants} of $\X$. The basics of gerbe-twisted Gromov-Witten
invariants in the case of $(BQ_i, c_i)$ will be reviewed below. The
construction in the general case can be found in \cite{pry}.

It is known (see \cite[Example 6.4]{ru1} and \cite{pry}) that the
$c_i$-twisted orbifold quantum cohomology of $BQ_i$ coincides with
the center of the twisted group algebra, i.e.,
\begin{equation}\label{QH=twisted_center}
QH_{orb, c_i}^\bullet(BQ_i)\simeq Z(C(Q_i, c_i)).
\end{equation}
Using the map $I$ defined in (\ref{eqn:orbit_decomp_of_center}), we get
\begin{equation}\label{QH_isom}
QH_{orb}^\bullet(BH)\overset{(\ref{QH=center})}{\simeq}
Z(\mathbb{C}H)\overset{I}{\simeq} \oplus_i Z(C(Q_i,
c_i))\overset{(\ref{QH=twisted_center})}{\simeq} \oplus_i QH_{orb,
c_i}^\bullet(BQ_i)=QH_{orb,c}^\bullet([\widehat{G}/Q]).
\end{equation}

As stated in \cite[Corollary 3.3]{jk}, the map (\ref{QH=center})
identifies the orbifold Poincar\'e pairing $(-,-)_{orb}^{BH}$ on
$BH$ with the pairing defined by the trace $\text{tr}_H$. Similarly,
one can deduce from \cite{ru1} that the map
(\ref{QH=twisted_center}) identifies the orbifold Poincar\'e pairing
on $QH_{orb, c}^\bullet(BQ_i)$ with the one defined by the trace
$\text{tr}_{[\rho_i]}$. Now Proposition \ref{prop:trace} implies
that

\begin{lemma}\label{equality_pairing}
The isomorphism in (\ref{QH_isom}) identifies the orbifold
Poincar\'e pairing $(-,-)_{orb}^{BH}$ with the following rescaled
orbifold Poincar\'e pairing:
$$\bigoplus_{i} \Big(\frac{\text{dim}\,V_{\rho_i}}{|G|}\Big)^2(-,-)_{orb}^{BQ_i},$$
where $\text{dim}\, V_{\rho_i}$ is the dimension of the irreducible $G$-representation $\rho_i: G\to \End(V_{\rho_i})$.
\end{lemma}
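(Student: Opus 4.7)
The plan is to identify both sides of the claimed equality with algebraic trace pairings on the relevant centers, and then to quote Proposition \ref{prop:trace} to compare them. First, I would recall that under an isomorphism of the form $QH_{orb}^\bullet(\X) \simeq Z(A)$ coming from (\ref{QH=center}) or (\ref{QH=twisted_center}), the orbifold Poincar\'e pairing corresponds to the bilinear pairing $(a,b) \mapsto \mathrm{tr}(ab)$, where $\mathrm{tr}$ is the canonical trace on $A$ (that is, $\mathrm{tr}_H$ on $\complex H$ and $\mathrm{tr}_{[\rho_i]}$ on $C(Q_i, c_i)$, each normalized to take the value $1/|\text{group}|$ on the identity). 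This identification, already used in \cite[Corollary 3.3]{jk} for $BH$ and in the analogous computation for $(BQ_i, c_i)$ coming from \cite{ru1}, is precisely why the normalization of the traces $\mathrm{tr}_H$ and $\mathrm{tr}_{[\rho_i]}$ was chosen as stated in Section \ref{subsec:center}.

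Next, I would combine these two identifications with the fact that the map $I$ in (\ref{eqn:orbit_decomp_of_center}) is an algebra isomorphism. Concretely, for $a, b \in Z(\complex H)$ we may write
\[
(a,b)_{orb}^{BH} \;=\; \mathrm{tr}_H(ab) \;=\; \mathrm{tr}_H(I^{-1}(I(a)I(b))),
\]
where I use that $I$ is an algebra isomorphism and therefore preserves products. Proposition \ref{prop:trace} then tells us that $\mathrm{tr}_H = I^*\big(\sum_k \tfrac{\dim(V_{\rho_k})^2}{|G|^2}\mathrm{tr}_{[\rho_k]}\big)$, so the previous expression becomes
\[
\sum_k \frac{\dim(V_{\rho_k})^2}{|G|^2}\,\mathrm{tr}_{[\rho_k]}\!\big(I(a)I(b)\big).
\]

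Finally, since the decomposition $\oplus_i Z(C(Q_i,c_i))$ is an orthogonal direct sum with respect to both the ring structure and each $\mathrm{tr}_{[\rho_k]}$ (the latter being supported on the $k$-th summand), writing $I(a) = \oplus_i I(a)_i$ and $I(b) = \oplus_i I(b)_i$ yields
\[
\mathrm{tr}_{[\rho_k]}\!\big(I(a)I(b)\big) \;=\; \mathrm{tr}_{[\rho_k]}\!\big(I(a)_k\, I(b)_k\big) \;=\; \big(I(a)_k,\, I(b)_k\big)_{orb}^{BQ_k},
\]
which assembles into the rescaled direct-sum pairing in the statement. There is no real obstacle here; the only thing to be careful about is checking that the canonical trace normalizations match the orbifold Poincar\'e pairings on the nose, so that the rescaling factor $(\dim V_{\rho_i}/|G|)^2$ appears exactly as in Proposition \ref{prop:trace} with no further combinatorial correction.
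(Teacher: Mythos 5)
Your proposal is correct and follows essentially the same route as the paper: the paper also identifies $(-,-)_{orb}^{BH}$ and $(-,-)_{orb}^{BQ_i}$ with the trace pairings given by $\tr_H$ and $\tr_{[\rho_i]}$ (citing \cite[Corollary 3.3]{jk} and \cite{ru1}) and then invokes Proposition \ref{prop:trace} together with the fact that $I$ is an algebra isomorphism. Your extra remarks on the orthogonality of the direct-sum decomposition and on matching the trace normalizations are just explicit versions of steps the paper leaves implicit.
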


\subsection{Gromov-Witten invariants}
We may view (\ref{QH_isom}) as a decomposition of
$QH_{orb}^\bullet(BH)$ into a direct sum. In this section, we
discuss how to extend this decomposition to the full Gromov-Witten
theory. In this subsection, the letters $g$ and $g_i$ denote genera
of curves.

We begin with a brief review of the gerbe-twisted orbifold
Gromov-Witten theory for $(BQ_i,c_i)$. Details can be found in
\cite{pry} and \cite{ru1}. Let
$$IBQ_i=\bigcup_{\<q\>\subset Q_i} BC_{Q_i}(q)$$ be the
decomposition of the inertia orbifold of $BQ_i$, where the union is
taken over the conjugacy classes $\<q\>$ of $Q_i$, and
$C_{Q_i}(q)\subset Q_i$ is the centralizer subgroup of $q\in Q$. Let
$\sL_{c_i}\to IBQ_i$ be the inner local system associated
with the $U(1)$-gerbe $c_i$ (see \cite{ru1} and \cite{pry} for its
construction). Given classes $\alpha_j\in H^\bullet(BC_{Q_i}(q_j),
\sL_{c_i}), 1\leq j\leq n$ and non-negative integer $a_1,...,a_n$,
by the construction of gerbe-twisted Gromov-Witten invariants, there
exists an isomorphism of line bundles (see \cite[Sec.  5.2]{pry})
$$\theta_{\<q_1\>,...,\<q_n\>}: \otimes_{j=1}^n (\sL_{c_i}|_{BC_{Q_i}(q_j)})\to \mathbb{C},$$
such that the invariant is defined to be (see \cite{pry}, Definition
5.4)
\begin{equation}\label{eq:n-function-Q}
\begin{split}
\langle\tau_{a_1}(\alpha_1),...,\tau_{a_n}(\alpha_n)\rangle_{g,n}^{BQ_i,
c} =\int_{\Mbar_{g,n}(BQ_i,
\<q_1\>,..., \<q_n\>)}(\theta_{\<q_1\>,...,\<q_n\>})_*(\prod_{j=1}^n
ev_j^*\alpha_j)\prod_{j=1}^n\pi^*\psi_j^{a_j}.
\end{split}
\end{equation}
Here,
$\theta(\alpha_1,...,\alpha_n):=(\theta_{\<q_1\>,...,\<q_n\>})_*(\prod_{j=1}^n
ev_j^*\alpha_j)\in \mathbb{C}$. The integral is taken over the
moduli space  $\Mbar_{g,n}(BQ_i, \<q_1\>,...,\<q_n\>)$ of
$n$-pointed genus $g$ orbifold stable maps to $BQ_i$ such that the
orbifold structures at marked points are determined by the conjugacy
classes $\<q_1\>,...,\<q_n\>$ of $Q_i$. And denote by
$$\pi:\Mbar_{g,n}(BQ_i, \<q_1\>,...,\<q_n\>)\to \Mbar_{g,n}$$ the
natural map from forgetting the orbifold structures and by
$\psi_j\in H^2(\Mbar_{g,n})$ the descendant classes. We refer the
reader to \cite{jk} for a detailed discussion of orbifold stable
maps to $BQ_i$ and their moduli spaces.

Consider descendant integrals over $\Mbar_{g,n}$, 
$\langle\tau_{a_1},...,\tau_{a_n}\rangle_{g,n}:=\int_{\Mbar_{g,n}}\prod_{j=1}^n\psi_j^{a_j}.$ 
By a discussion similar to \cite[Proposition 3.4]{jk}, the projection formula implies
\begin{equation}\label{eqn:GW_inv_BQ_i1}
\begin{split}
\langle\tau_{a_1}(\alpha_1),...,\tau_{a_n}(\alpha_n)\rangle_{g,n}^{BQ_i,
c}
=&(\text{deg}\,\pi)\theta(\alpha_1,...,\alpha_n)\int_{\Mbar_{g,n}}\prod_{j=1}^n\psi_j^{a_j}
=(\text{deg}\,\pi)\theta(\alpha_1,...,\alpha_n)\langle\tau_{a_1},...,\tau_{a_n}\rangle_{g,n}.
\end{split}
\end{equation}
According to the proof of \cite[Proposition 3.4]{jk}, the degree
$\text{deg}\,\pi$ is equal to
\begin{equation}
\Omega_g^{Q_i}(\<q_1\>,...,\<q_n\>)
:=\frac{1}{|Q_i|}\#\Big\{\alpha_1,...,\alpha_g, \beta_1,...,\beta_g, \sigma_1,...,\sigma_n|\prod_{j=1}^g[\alpha_j, \beta_j]=\prod_{k=1}^n\sigma_k, \sigma_k\in \<q_k\> \text{ for all }k\Big\}.
\end{equation}

Recall that, according to \cite[Example 6.4]{ru1}, the cohomology
vector space $H^\bullet(BC_{Q_i}(q), \sL_{c_i})$ is $1$-dimensional
if $\<q\>$ is a {\em $c_i$-regular conjugacy class} of $Q_i$ and
$0$-dimensional otherwise. Recall that a conjugacy class $\<q\>$ of
$Q_i$ is $c_i$-regular if $c_i(q_1,q)c_i(q,q_1)^{-1}=1$ for every
$q_1\in C_{Q_i}(q)$. For a $c_i$-regular conjugacy class $\<q\>$, we
denote by $e_{\<q\>}$ a generator of $H^\bullet(BC_{Q_i}(q),
\sL_{c_i})$.

\begin{proposition}
The assignment $\Lambda_{g,n}^{BQ_i, c_i}: H_{orb,
c_i}^\bullet(BQ_i)^{\otimes n}\to \com$ given by  $$\alpha_1\otimes
...\otimes \alpha_n\mapsto\Lambda_{g,n}^{BQ_i,
c_i}(\alpha_1,...,\alpha_n):=\Omega_g^{Q_i}(\<q_1\>,...,\<q_n\>)\theta(\alpha_1,...,\alpha_n)$$
satisfies the following properties:
\begin{enumerate}
\item (Forgetting tails)
\begin{equation}\label{forget_tail}
\Lambda_{g,n}^{BQ_i, c_i}(e_{\<q_1\>},...,e_{\<q_n\>})=
\Lambda_{g,n+1}^{BQ_i, c_i}(e_{\<1\>}, e_{\<q_1\>},...,e_{\<q_n\>}).
\end{equation}
\item (Cutting loops)
\begin{equation}\label{cut_loop}
\Lambda_{g,n}^{BQ_i,
c_i}(e_{\<q_1\>},...,e_{\<q_n\>})=\sum_{\<q\>}|C_{Q_i}(q)|\Lambda_{g-1,
n+2}^{BQ_i, c_i}(e_{\<q\>}, e_{\<q^{-1}\>}, e_{\<q_1\>},...,e_{\<q_n\>}).
\end{equation}
\item (Cutting trees)
For $g=g_1+g_2$ and $\{1,...,n\}=P_1\coprod P_2$, we have
\begin{equation}\label{cut_tree}
\Lambda_{g,n}^{BQ_i,c_i}(e_{\<q_1\>},...,e_{\<q_n\>})=\sum_{\<q\>}|C_{Q_i}(q)|\Lambda_{g_1,
|P_1|+1}^{BQ_i, c_i}(\{e_{\<q_i\>}\}_{i\in P_1}, e_{\<q\>})\Lambda_{g_2, |P_2|+1}^{BQ_i,c_i}(e_{\<q^{-1}\>}, \{e_{\<q_i\>}\}_{i\in P_2}).
\end{equation}
\end{enumerate}
\end{proposition}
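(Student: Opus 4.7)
The plan is to deduce all three identities directly from the definition of $\Lambda_{g,n}^{BQ_i,c_i}$, which is a product of a purely combinatorial factor $\Omega_g^{Q_i}$ and an algebraic factor $\theta$ coming from the inner local system. The strategy is to handle these two factors separately: the $\Omega_g^{Q_i}$ identities are bijections of tuples in $Q_i$ essentially parallel to those used in \cite[Proposition 3.4]{jk} for the untwisted case, while the $\theta$ identities are the defining compatibility axioms for the inner local system $\sL_{c_i}$ spelled out in \cite[Definition 3.1]{ru1} and \cite[Section 5.2]{pry}. Once both factors are shown to satisfy the matching identities, the full identities for $\Lambda_{g,n}^{BQ_i,c_i}$ follow by multiplication.

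For (\ref{forget_tail}), first I observe that $\<1\>=\{1\}$ and that the defining set for $\Omega_{g,n+1}^{Q_i}(\<1\>,\<q_1\>,\ldots,\<q_n\>)$ forces the inserted $\sigma_0$ to equal $1$, so the forgetful assignment $(\sigma_0,\sigma_1,\ldots,\sigma_n)\mapsto(\sigma_1,\ldots,\sigma_n)$ is a bijection of the sets counted by $\Omega_{g,n+1}^{Q_i}$ and $\Omega_{g,n}^{Q_i}$. Combined with the fact that $e_{\<1\>}$ is the generator supported on the trivial sector where the inner local system is canonically trivial, so $\theta(e_{\<1\>},e_{\<q_1\>},\ldots,e_{\<q_n\>})=\theta(e_{\<q_1\>},\ldots,e_{\<q_n\>})$, this yields the desired identity.

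For (\ref{cut_loop}) and (\ref{cut_tree}), the combinatorial step is a standard bookkeeping exercise: on the tuple-counting side, one splits the commutator $\prod_{j=1}^g[\alpha_j,\beta_j]$ (respectively, the product of commutators and $\sigma_k$'s) at a chosen pair of insertion points, sums over the inserted element $\sigma\in\<q\>$, and uses that the number of representatives of $\<q\>$ is $|Q_i|/|C_{Q_i}(q)|$. The factor $|C_{Q_i}(q)|$ on the right of (\ref{cut_loop})–(\ref{cut_tree}) arises from rewriting the normalization $1/|Q_i|$ appearing in the definition of $\Omega_g^{Q_i}$ after the summation over $\<q\>$ is replaced by summation over individual group elements $\sigma$. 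On the $\theta$ side, the corresponding identity is precisely the compatibility of the trivializations of $\sL_{c_i}$ with the gluing of orbifold curves at nodal points, as stated for example in \cite[Definition 3.1(3)]{ru1}; the insertion of the pair $(e_{\<q\>},e_{\<q^{-1}\>})$ together with the canonical pairing between $\sL_{c_i}|_{BC_{Q_i}(q)}$ and $\sL_{c_i}|_{BC_{Q_i}(q^{-1})}$ (guaranteed by \cite[Definition 3.1(1)]{ru1}) exactly cancels with the $\theta$-factor coming from the unglued surface.

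The main obstacle will be keeping track of the trivialization $\theta$ under gluing. The potential subtlety is that $\theta_{\<q_1\>,\ldots,\<q_n\>}$ depends on the multi-index of conjugacy classes and on a choice of cocycle representative for $c_i$, and one has to verify that the gluing factor produced when inserting $(e_{\<q\>},e_{\<q^{-1}\>})$ is independent of all such choices and equals $1$ after using the self-duality of the inner local system. This is essentially the content of \cite[Section 5.2]{pry}, but some care is required to check that the normalizations match on the nose, particularly that the $c_i$-regularity condition $c_i(q_1,q)c_i(q,q_1)^{-1}=1$ for $q_1\in C_{Q_i}(q)$ is the exact ingredient needed to make the sum on the right of (\ref{cut_loop})–(\ref{cut_tree}) restrict to $c_i$-regular classes and agree with the left-hand side.
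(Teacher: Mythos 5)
Your proposal is correct and follows essentially the same route as the paper: it factors $\Lambda_{g,n}^{BQ_i,c_i}=\Omega_g^{Q_i}\cdot\theta$, treats the $\Omega_g^{Q_i}$ factor by the counting identities of \cite{jk} (the paper simply cites Proposition 3.5 there), and treats the $\theta$ factor by the triviality of $\sL_{c_i}$ on the untwisted sector, the gluing factorization $\theta_{\<q\>,\<q^{-1}\>,\<q_1\>,\ldots,\<q_n\>}=\theta_{\<q\>,\<q^{-1}\>}\otimes\theta_{\<q_1\>,\ldots,\<q_n\>}$ from \cite{pry}, and the normalization $\theta_{\<q\>,\<q^{-1}\>}(e_{\<q\>},e_{\<q^{-1}\>})=1$ from \cite[Example 6.4]{ru1}. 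The only divergence is your closing worry that $c_i$-regularity is needed to restrict the sum over $\<q\>$ in the cutting identities: the paper's proof does not invoke any such restriction, but applies the all-classes identity for $\Omega_g^{Q_i}$ together with the two $\theta$-facts directly.
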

\begin{remark}
\hfill
\begin{enumerate}
\item
It is easy to see that $\Lambda_{g,n}^{BQ_i, c_i}(e_{\<q_1\>},...,e_{\<q_n\>})$ is independent of the ordering of
conjugacy classes 
$\<q_1\>,...,\<q_n\>$.
\item
The collection of maps $\{\Lambda_{g,n}^{BQ_i, c_i} \}$
are determined by their values on $e_{\<q_1\>}\otimes ...\otimes
e_{\<q_n\>}$, and therefore the gerbe-twisted Gromov-Witten invariants of $BQ_i$ form a
cohomological field theory\footnote{The construction of
\cite{pry} implies that gerbe-twisted Gromov-Witten invariants form
a cohomological field theory in general.} (see, for example,
\cite[Chapter III]{manin} for a comprehensive introduction to
cohomological field theory).
\end{enumerate}
\end{remark}

\begin{proof}
The proof of this proposition amounts to a repeat of the arguments
in the proof of \cite[Lemma 3.5]{jk}. Handling the factor
$\theta(\alpha_1,...,\alpha_n)$ requires some properties of the
inner local system (see \cite{ru1}).

To prove the {\em forgetting tails} property (\ref{forget_tail}),
first note that for  the inner local system $\sL_{c_i}$, the
restriction $\sL_{c_i}|_{BC_{Q_i}(1)}$ is a trivial line bundle.
Thus, $\theta_{\<1\>, \<q_1\>,...\<q_n\>}=id\otimes
\theta_{\<q_1\>,...,\<q_n\>}$ and $\theta(e_{\<1\>},
e_{\<q_1\>},...,e_{\<q_n\>})=\theta(e_{\<q_1\>},...,e_{\<q_n\>})$.
With the relevant property of $\Omega_g^{Q_i}$ (see
\cite{jk}, Proposition 3.5 (3)), this implies (\ref{forget_tail}).

To prove the {\em cutting loop} property (\ref{cut_loop}), first
note that by \cite[Proposition 3.5 (2)]{jk}, we have
$$\Omega_{g}^{Q_i}(\<q_1\>,...,\<q_n\>)=\sum_{\<q\>}|C_{Q_i}(q)|\Omega_{g-1}^{Q_i}(\<q\>,
\<q^{-1}\>, \<q_1\>,...,\<q_n\>).
$$
Thus, it suffices to prove
\begin{equation}\label{splitting_theta}
\theta(e_{\<q_1\>},...,e_{\<q_n\>})=\theta(e_{\<q\>}, e_{\<q^{-1}\>}, e_{\<q_1\>},...,e_{\<q_n\>}).
\end{equation}
This follows from the following two facts:
\begin{enumerate}
\item
$\theta_{\<q\>, \<q^{-1}\>,\<q_1\>,...,\<q_n\>}=\theta_{\<q\>,
\<q^{-1}\>}\otimes \theta_{\<q_1\>,...,\<q_n\>}$, by 
the gluing law of $\theta$ (see \cite[Sec.  5.1]{pry}).
\item
$\theta_{\<q\>, \<q^{-1}\>}(e_{\<q\>}, e_{\<q^{-1}\>})=1$, 
by a direct calculation (see \cite[Example 6.4]{ru1}).
\end{enumerate}
The {\em cutting tree} property (\ref{cut_tree}) is proved by a
similar argument and we omit the details.
\end{proof}

The Gromov-Witten theory of $BH$ is completely solved by \cite{jk}.
It is not hard to see that the methods of \cite{jk} can be used
to solve the $c_i$-twisted Gromov-Witten theory of $BQ_i$. Instead of 
pursuing this here, we will compare the
Gromov-Witten theory of $BH$ with the $c_i$-twisted Gromov-Witten
theory of $BQ_i$.

For $\phi_1,...,\phi_n\in H^\bullet(IBH,\complex)$ and
integers $a_1,...,a_n\geq 0$, denote by
$\<\tau_{a_1}(\phi_1),...,\tau_{a_n}(\phi_n)\>_{g,n}^{BH}$ the
corresponding descendant Gromov-Witten invariant of $BH$. See \cite[Sec.  3]{jk} for its definition. Let
$IBH=\bigcup_{\<h\>\subset H} BC_H(h)$ be the decomposition of the
inertia orbifold of $BH$, where the union is taken over conjugacy
classes $\<h\>$ of $H$, and $C_H(h)\subset H$ is the centralizer
subgroup of $h\in H$. For each conjugacy class $\<h\>$, let
$1_{\<h\>}:=1\in H^0(BC_H(h),\complex)$ be the natural generator. By
\cite[Proposition 3.4]{jk}, we have
\begin{equation}\label{eqn:GW_inv_BH1}
\<\tau_{a_1}(1_{\<h_1\>}),...,\tau_{a_n}(1_{\<h_n\>})\>_{g,n}^{BH}=\langle\tau_{a_1},...,\tau_{a_n}\rangle_{g,n}\Omega_g^H(\<h_1\>,...,\<h_n\>),
\end{equation}
where the quantity $\Omega_g^H(\<h_1\>,...,\<h_n\>)$ is given by
\begin{equation}
\Omega_g^H(\<h_1\>,...,\<h_n\>)
=\frac{1}{|H|}\#\Big\{\alpha_1,...,\alpha_g, \beta_1,...,\beta_g,
\sigma_1,...,\sigma_n|\prod_{j=1}^g[\alpha_j, \beta_j]=\prod_{k=1}^n\sigma_k, \sigma_k\in \<h_k\> \text{ for all }k\Big\}.
\end{equation}
By \cite[Lemma 3.5]{jk}, the quantity $\Omega_g^H(\<h_1\>,...,\<h_n\>)$ satisfies the following properties:
\begin{eqnarray}\label{forget_tail_BH}
&\Omega_{g}^{H}(1_{\<h_1\>},...,1_{\<h_n\>})&=
\Omega_{g}^{H}(1_{\<1\>}, 1_{\<h_1\>},...,1_{\<h_n\>}),\\
\label{cut_loop_BH}
&\Omega_{g}^{H}(1_{\<h_1\>},...,1_{\<h_n\>})&=\sum_{\<h\>}|C_{H}(h)|\Omega_{g-1}^{H}(1_{\<h\>}, 1_{\<h^{-1}\>}, 1_{\<h_1\>},...,1_{\<h_n\>}),\\
\label{cut_tree_BH}
&\Omega_{g}^{H}(1_{\<h_1\>},...,1_{\<h_n\>})&=\sum_{\<h\>}|C_{H}(h)|\Omega_{g_1}^{H}(\{1_{\<h_i\>}\}_{i\in P_1}, 1_{\<h\>})\Omega_{g_2}^{H}(1_{\<h^{-1}\>}, \{1_{\<h_i\>}\}_{i\in P_2}),
\end{eqnarray}
where $g=g_1+g_2$ and $\{1,...,n\}=P_1\coprod P_2$.

\begin{theorem}\label{thm:GW_inv_decomp}
Let $\{ \phi_{ij}\}_{1\leq j\leq \text{dim}H_{orb,c_i}^\bullet(BQ_i)}\subset H_{orb,c_i}^\bullet(BQ_i)$
be an additive basis. We also view $\phi_{ij}$ as an element in
$QH_{orb}^\bullet(BH)$ via (\ref{QH_isom}). Then
\begin{equation}\label{decomp_vanishing}
\langle\tau_{a_1}(\phi_{i_1j_1}),...,\tau_{a_n}(\phi_{i_nj_n})\rangle_{g,n}^{BH}=0
\end{equation}
 unless $i_1=i_2=...=i_n=:i$, in which case,
\begin{equation}\label{decomp_formula}
\langle\tau_{a_1}(\phi_{ij_1}),...,\tau_{a_n}(\phi_{ij_n})\rangle_{g,n}^{BH}=\Big(\frac{\text{dim}\, V_{\rho_i}}{|G|}\Big)^{2-2g}\langle\tau_{a_1}(\phi_{ij_1}),...,\tau_{a_n}(\phi_{ij_n})\rangle_{g,n}^{BQ_i,
c_i}.
\end{equation}
\end{theorem}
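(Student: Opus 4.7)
The plan is to reduce the full descendant Gromov-Witten computation on both sides to an identity of purely combinatorial $2$-dimensional TQFTs, and then to appeal to the Atiyah-Segal classification in order to reconstruct the full identity from the genus-$0$ three-point case, where it follows from the quantum cohomology ring isomorphism \eqref{QH_isom} together with the pairing comparison in Lemma \ref{equality_pairing}.

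First I would expand each basis element $\phi_{ij}$, viewed as a class in $H^\bullet_{CR}(BH) = Z(\complex H)$ via \eqref{QH_isom}, in the natural conjugacy-class basis $\{1_{\langle h\rangle}\}_{\langle h\rangle \subset H}$ using the explicit formula \eqref{eq:I-alpha} for the inverse isomorphism $I^{-1}$ from Section \ref{subsec:center}. Multilinearity, combined with the product formula \eqref{eqn:GW_inv_BH1}, then expresses the left-hand side of \eqref{decomp_formula} as the descendant integral $\langle \tau_{a_1},\ldots,\tau_{a_n}\rangle_{g,n}$ on $\Mbar_{g,n}$ times a linear combination of values of $\Omega_g^H$. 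In exactly the same way, \eqref{eqn:GW_inv_BQ_i1} writes the right-hand side as the same descendant integral on $\Mbar_{g,n}$ times a linear combination of $\Omega_g^{Q_i}\cdot \theta$. Since the descendant integral is a common factor, the theorem reduces to a purely algebraic identity between these combinatorial factors, independent of the integers $a_1,\ldots,a_n$.

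The combinatorial factors $\Omega_g^H$ and $\Omega_g^{Q_i}\theta$ each define a $2$-dimensional TQFT on a genus-$g$ surface with $n$ marked points, as encoded by the forgetting-tails, cutting-loops, and cutting-trees relations \eqref{forget_tail_BH}--\eqref{cut_tree_BH} and \eqref{forget_tail}--\eqref{cut_tree}. By the Frobenius-algebra classification of $2$D TQFTs, these theories are determined by their values at $(g,n)=(0,3)$ together with the Frobenius pairing used when gluing. The pairing appearing in cutting-loops on the $BH$ side is $|C_H(h)|$ (inverse to $\mathrm{tr}_H$), while on the $BQ_i$ side it is $|C_{Q_i}(q)|$ (inverse to $\mathrm{tr}_{[\rho_i]}$); by Lemma \ref{equality_pairing}, these two pairings differ by the scalar $(\dim V_{\rho_i}/|G|)^2$ on the $i$-th component, so each handle gluing produces a compensating factor of $(|G|/\dim V_{\rho_i})^2$. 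A closed surface of genus $g$ requires exactly $g$ handle gluings, so these combine to give the $(\dim V_{\rho_i}/|G|)^{2-2g}$ scaling predicted in \eqref{decomp_formula}; the additional tree-cutting gluings produce no net rescaling because the inverse-pairing factor at a node is cancelled by the accompanying three-point vertex. The base case $(g,n)=(0,3)$ of \eqref{decomp_formula} and the vanishing \eqref{decomp_vanishing} at $(0,3)$ are immediate from the fact that \eqref{QH_isom} is a ring isomorphism onto a direct sum, combined with Lemma \ref{equality_pairing}. The vanishing then propagates to all $(g,n)$ along the same recursions, since any intermediate conjugacy class inserted at a cut must lie in a component on which the smaller correlators do not already vanish.

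The main technical obstacle will be the careful bookkeeping of the scaling factor $(\dim V_{\rho_i}/|G|)^{2-2g}$ across an arbitrary stable-graph decomposition, i.e., verifying that a handle gluing always contributes the extra $(|G|/\dim V_{\rho_i})^2$ while tree gluings remain scale-neutral. A secondary subtlety is to confirm that the image of $e_{\langle q\rangle}\in H^\bullet_{orb,c_i}(BQ_i)$ under $\Psi$ computed by Proposition \ref{prop:conjugacy} and the formula \eqref{eq:I-alpha} is compatible with the trivializations $\theta_{\langle q_1\rangle,\ldots,\langle q_n\rangle}$ of inner local systems used to define the gerbe-twisted invariants, so that the two reconstructed $2$D TQFTs agree on the nose after the stated rescaling rather than only up to a discrete ambiguity in the identifications.
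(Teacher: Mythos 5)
Your proposal is correct and follows essentially the same route as the paper: both reduce the descendant invariants to the combinatorial quantities $\Omega_g^H$ and $\Lambda_{g,n}^{BQ_i,c_i}$ via (\ref{eqn:GW_inv_BH1}) and (\ref{eqn:GW_inv_BQ_i1}), settle the $(g,n)=(0,3)$ case using the ring isomorphism (\ref{QH_isom}) together with Lemma \ref{equality_pairing}, and then reconstruct all $(g,n)$ from the cutting relations (\ref{forget_tail})--(\ref{cut_tree}) and (\ref{forget_tail_BH})--(\ref{cut_tree_BH}). The paper phrases this last step as an induction on $g$ and $n$ (following the argument of \cite[Proposition 4.2]{jk}) rather than invoking the Frobenius-algebra classification of $2$D TQFTs, but your explicit bookkeeping of the factor $\left(\dim V_{\rho_i}/|G|\right)^{2-2g}$ through handle and tree gluings is exactly the content of that induction.
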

\begin{proof}
We apply the argument in the proof of \cite[Proposition 4.2]{jk}. For $3$-pointed genus $0$ invariants,
(\ref{decomp_vanishing})-(\ref{decomp_formula}) follow easily from
the isomorphism (\ref{QH_isom}) and Lemma \ref{equality_pairing}.
Then we proceed by induction on the genus $g$ and the number of
insertions $n$. Using (\ref{eqn:GW_inv_BQ_i1}) and
(\ref{eqn:GW_inv_BH1}), we can rephrase
(\ref{decomp_vanishing})-(\ref{decomp_formula}) in terms of the
quantities $\Lambda_{g,n}^{BQ_i,c_i}(-)$ and $\Omega_g^H(-)$. The
induction step can then be carried out using properties
(\ref{forget_tail})-(\ref{cut_tree}) of
$\Lambda_{g,n}^{BQ_i,c_i}(-)$ and properties
(\ref{forget_tail_BH})-(\ref{cut_tree_BH}) of $\Omega_g^H(-)$. The
details are left to the reader.
\end{proof}

We now use Theorem \ref{thm:GW_inv_decomp} to prove Conjecture
\ref{intro:dual_GW} for the gerbe $BH\to BQ$. Let $$\{t_{ij, a}|
O_i\in \text{Orb}^Q(\widehat{G}), 1\leq j\leq
\text{dim}H_{orb,c_i}^\bullet(BQ_i), a\in \mathbb{N}_{\geq 0}\}$$ be
a set of variables. The genus $g$ descendant Gromov-Witten
potentials are defined as follows:
{\small
\begin{eqnarray*}
\F^g_{BQ_i,c_i}(\{t_{ij,a}\}_{1\leq j\leq\text{dim}H_{orb,c_i}^\bullet(BQ_i), a\in \mathbb{N}_{\geq 0}})
:=\sum_{n\geq 0}\sum_{a_1,...,a_n\geq 0}\frac{\prod_{k=1}^nt_{ij_k}}{n!}\<\tau_{a_1}(\phi_{ij_1}),...,\tau_{a_n}(\phi_{ij_n})\>_{g,n}^{BQ_i, c_i},\\
\F^g_{BH}(\{t_{ij,a}\}_{O_i\in \text{Orb}^Q(\widehat{G}), 1\leq j\leq \text{dim}H_{orb,c_i}^\bullet(BQ_i), a\in \mathbb{N}_{\geq 0}}):=\sum_{n\geq 0}\sum_{a_1,...,a_n\geq 0}\frac{\prod_{k=1}^nt_{i_kj_k}}{n!}\<\tau_{a_1}(\phi_{i_1j_1}),...,\tau_{a_n}(\phi_{i_nj_n})\>_{g,n}^{BH}.
\end{eqnarray*}}
The total descendant potentials are defined as follows:
\begin{eqnarray*}
\D_{BQ_i, c_i}(\{t_{ij,a}\}_{1\leq j\leq \text{dim}H_{orb,c_i}^\bullet(BQ_i), a\in \mathbb{N}_{\geq 0}}; \epsilon):=\exp\Big(\sum_{g\geq 0} \epsilon^{2g-2}\F^g_{BQ_i,c_i}\Big),\\
\D_{BH}(\{t_{ij,a}\}_{O_i\in \text{Orb}^Q(\widehat{G}), 1\leq j\leq \text{dim}H_{orb,c_i}^\bullet(BQ_i), a\in \mathbb{N}_{\geq 0}}; \epsilon):= \exp\Big(\sum_{g\geq 0} \epsilon^{2g-2}\F^g_{BH}\Big).
\end{eqnarray*}
By Theorem \ref{thm:GW_inv_decomp}, we confirms Conjecture \ref{intro:dual_GW} in this case with the following equation
\begin{equation*}
\begin{split}
&\D_{BH}(\{t_{ij,a}\}_{O_i\in \text{Orb}^Q(\widehat{G}), 1\leq j\leq \text{dim}H_{orb,c_i}^\bullet(BQ_i), a\in \mathbb{N}_{\geq 0}}; \epsilon)\\
=&\sum_{O_i\in \text{Orb}^Q(\widehat{G})}\D_{BQ_i, c_i}(\{t_{ij,a}\}_{1\leq j\leq \text{dim}H_{orb,c_i}^\bullet(BQ_i), a\in \mathbb{N}_{\geq 0}}; \epsilon\frac{|G|}{\text{dim}\, V_{\rho_i}}).
\end{split}
\end{equation*}

\section{Sheaves on gerbes and twisted sheaves}\label{sec:sheaf}
In this section, we discuss sheaf theoretic aspects of the duality
of gerbes. We prove that the category of sheaves on a $G$-gerbe $\Y$ over
$\B$ is equivalent to the category of $c$-{\em twisted} sheaves on
its dual $\widehat{\Y}$.

To illustrate our approach to sheaf theory on $G$-gerbes, we begin
with considering (complex) vector bundles on the simplest example of
$G$-gerbes, namely, $BG\to pt$. In this case, the dual space is
$\widehat{G}$ and the $U(1)$-gerbe $c$ on $\widehat{G}$ is trivial.
We aim at relating vector bundles on $BG$ to vector bundles on
$\widehat{G}$.

By definition, a (complex) vector bundle on $BG$ is a
$\complex$-linear representation $V$ of $G$. There is a
decomposition of $V$ into a direct sum of irreducible
$G$-representations, i.e.,
\begin{equation}\label{G_rep_decomp}
V=\bigoplus_{[\rho]\in \widehat{G}} Hom_G(V_\rho, V)\otimes V_\rho,
\end{equation}
where $Hom_G(V_\rho, V)$ is the $\complex$-vector space of
$G$-equivariant linear maps from $V_\rho$ to $V$. View the collection
$\{Hom_G(V_\rho,V)| [\rho]\in \widehat{G}\}$ as a
vector bundle over the disconnected space $\widehat{G}$ by assigning
the vector space $Hom_G(V_\rho, V)$ to the point $[\rho]\in
\widehat{G}$. Thus, the assignment $V\mapsto \{Hom_G(V_\rho,V)|
[\rho]\in \widehat{G}\}$ defines a functor from the category of
complex vector bundles on $BG$ to the category of complex vector
bundles on $\widehat{G}$. It is easy to see that this functor is an
equivalence of abelian categories.

It is clear from the above discussion that  the decomposition (\ref{G_rep_decomp})
of a $G$-representation into a direct sum of irreducible
representations is the key in the
construction of the equivalence $V\mapsto \{Hom_G(V_\rho,V)|
[\rho]\in \widehat{G}\}$. Such a decomposition will also be the
key to our study of the sheaf theory on an arbitrary $G$-gerbe.

\subsection{Global quotient}\label{sheaf_on_global_quotient}
In this subsection, we study the sheaf theory for gerbes arising
from global quotients. Our approach is based on \cite[Chapter
XII]{fe-do}, which can be understood as the case of the $G$-gerbe
$BH\to BQ$ arising from the extension (\ref{eq:extension}).

Consider the exact sequence
(\ref{eq:extension}).
As in Sec.  \ref{subsec:groupalgebra}, we choose a section $s:
Q\to H$. Recall that with such a section we can define a cocycle
$\tau: Q\times Q\to G$ via
$s(q_1)s(q_2)=\tau(q_1, q_2)s(q_1q_2),$ for $ q_1, q_2\in Q.$

Let $M$ be a smooth manifold\footnote{Depending on the context, we work with Euclidean, analytic, or \'etale topology. Our
arguments work in all these settings.} with an $H$-action. We assume
that this $H$-action restricts to a trivial $G$-action on $M$.
Consequently, a $Q$-action on $M$ is naturally defined. The natural
map $[M/H]\to [M/Q]$
defines a $G$-gerbe. Note that the $H$ and $Q$ actions on $M$
agree in the following sense: let $q\in Q$ and $h\in
j^{-1}(q)\subset H$, then for any point $m\in M$, we have
$q(m)=h(m)$, as $G=\text{ker}\, (j)$ acts trivially
on $M$.

Given a $G$-representation $\rho$ and $h\in H$ we may consider
another $G$-representation,
\begin{equation}\label{action_on_rep}
G\ni g\mapsto \rho(hg h^{-1}).
\end{equation}
It is easy to see that (\ref{action_on_rep}) defines a right action
of $H$ on the set $\widehat{G}$ of {\em isomorphism classes} of
unitary irreducible representations of $G$. If $h\in G$, then
(\ref{action_on_rep}) is equivalent to $\rho$ via the intertwining
operator $\rho(h)$. Hence, by choosing a section $s:Q\to H$, one
sees that (\ref{action_on_rep}) defines a right action\footnote{Note
that this $Q$-action is independent of the choice of the section.}
of $Q$ on $\widehat{G}$ as well. This is the action\footnote{Again,
we write this action as a left action.} we have seen in Sec. 
\ref{subsec:groupalgebra}. Let $H$ and $Q$ act on the product
$M\times \widehat{G}$ by the given actions on the factors. The two actions agree in the following sense: let $q\in Q$ and $h\in j^{-1}(q)\subset H$; as $G=\text{ker}\, (j)$ acts trivially on $M\times
\widehat{G}$, for any $m\in M$ and $[\rho]\in
\widehat{G}$, we have $q((m, [\rho]))=h((m, [\rho]))$. 
\subsubsection{A twisted sheaf}
As in Sec. \ref{subsec:groupalgebra}, for each isomorphism class
$[\rho]\in \widehat{G}$, we {\em fix} a choice of a representative,
denoted by $\rho: G\to \End(V_\rho)$. Let $\calv_G$ be the sheaf over $M\times \widehat{G}$ defined by
requiring that its restriction to $M\times [\rho]$ is the trivial
sheaf with fiber $V_\rho$.  There is a natural $G$-sheaf structure
on $\calv_G$. We will construct a twisted action by $H$, so
that $\calv_G$ descends to a twisted sheaf on the orbifold
$[(M\times\widehat{G})/H]$. We refer to
\cite{cal} for a discussion on the category of twisted sheaves.

Let $T_q^{[\rho]}: V_{\rho}\to V_{q([\rho])}$ be the intertwining
operator introduced in Sec.  \ref{subsec:groupalgebra} with the following property
\begin{equation}\label{defining_property_intertwining_op}
\rho(s(q)gs(q)^{-1})=T_q^{[\rho]^{-1}}\circ q([\rho])(g)\circ
T_q^{[\rho]}.
\end{equation}
For $h\in H$, there exist unique $q\in Q$ and $g\in G$ such that
$h=gs(q)$. Define
$$E_{h, [\rho]}:=\rho(g)\circ T_q^{[\rho]^{-1}}: V_{q([\rho])}\to
V_{\rho}.$$ For $h\in H$ we may view $E_{h, [\rho]}$ as an
isomorphism $h^*\calv_G|_{M\times [\rho]} \to \calv_G|_{M\times
[\rho]}$, where $h: M\times \widehat{G}\to M\times \widehat{G}$ is
the map defined by the action of $h\in H$.

Similarly, for $q\in Q$, we define $E_{q, [\rho]}:=E_{s(q), [\rho]}$
and view it as an isomorphism $q^*\calv_G|_{M\times [\rho]} \to
\calv_G|_{M\times [\rho]}$, where $q: M\times \widehat{G}\to M\times
\widehat{G}$ is the map defined by the action by $q\in Q$.

Recall that in Sec. \ref{subsec:groupalgebra}, a cocycle $c:
\widehat{G}\times Q\times Q\to U(1)$ is defined. We extend this to a
cocycle $c: \widehat{G}\times H\times H\to U(1)$ as follows. For
$h_1, h_2\in H$, we may uniquely write $h_1=g_1s(q_1)$ and
$h_2=g_2s(q_2)$ with $q_1, q_2\in Q, g_1, g_2\in G$. Set $c^{[\rho]}(h_1, h_2):= c^{[\rho]}(q_1, q_2)$.

\begin{lemma}\label{lem:Q_str_on_V}
The collection $\{E_{h, [\rho]}| h\in H, [\rho]\in\widehat{G}\}$ of
isomorphisms defines a $c^{-1}$-twisted $H$-equivariant structure on
the sheaf $\calv_G$.
\end{lemma}
\begin{proof}
It suffices to check that the isomorphisms are
compatible with the group actions.
Let $h_1, h_2\in H$. We need to show that the composition $$E_{h_1,
[\rho]}\circ E_{h_2, h_1([\rho])}: h_2^*h_1^*\calv_G|_{M\times
[\rho]}\to h_1^*\calv_G|_{M\times [\rho]}\to \calv_G|_{M\times
[\rho]}$$ coincides, up to a twist, with $E_{h_1h_2, [\rho]}:
(h_1h_2)^*\calv_G|_{M\times [\rho]}\to \calv_G|_{M\times [\rho]}.$
Write $h_1=g_1s(q_1)$ and $h_2=g_2s(q_2)$ with $g_1, g_2\in G$ and
$q_1, q_2\in Q$ as above. We compute
\begin{equation*}
h_1h_2=g_1s(q_1)g_2s(q_2)=g_1s(q_1)g_2s(q_1)^{-1}s(q_1)s(q_2)=g_1s(q_1)g_2s(q_1)^{-1}\tau(q_1,q_2) s(q_1q_2).
\end{equation*}
Set $\tilde{g}:=g_1s(q_1)g_2s(q_1)^{-1}\tau(q_1,q_2)$, so
$\tilde{g}\in G$. We compute
\begin{equation*}
\begin{split}
E_{h_1h_2, [\rho]}&=\rho(\tilde{g})\circ T_{q_1q_2}^{[\rho]^{-1}}=\rho(g_1)\circ \rho(s(q_1)g_2 s(q_1)^{-1})\circ \rho(\tau(q_1, q_2))\circ T_{q_1q_2}^{[\rho]^{-1}}\\
&= \rho(g_1)\circ T_{q_1}^{[\rho]^{-1}}\circ q_1([\rho])(g_2)\circ T_{q_1}^{[\rho]}\circ \rho(\tau(q_1,q_2))\circ T_{q_1q_2}^{[\rho]^{-1}}       \quad \text{ by } (\ref{defining_property_intertwining_op})\\
&=\rho(g_1)\circ T_{q_1}^{[\rho]^{-1}}\circ q_1([\rho])(g_2)\circ T_{q_1}^{[\rho]}\circ T_{q_1}^{[\rho]^{-1}}\circ T_{q_2}^{q_1([\rho])^{-1}}c^{[\rho]}(q_1,q_2)  \quad \text{ by } (\ref{eq:dfn-c})\\
&= \rho(g_1)\circ T_{q_1}^{[\rho]^{-1}}\circ q_1([\rho])(g_2)\circ T_{q_2}^{q_1([\rho])^{-1}}c^{[\rho]}(q_1,q_2)\\
&= c^{[\rho]}(q_1, q_2) E_{h_1, [\rho]}\circ E_{h_2, h_1([\rho])},
\end{split}
\end{equation*}
as desired.
\end{proof}
\subsubsection{Equivalence}
We consider $H$-equivariant sheaves $\widetilde{\calw}$ on $M\times
\widehat{G}$ satisfying the following:
\begin{assump}\label{restriction-decomposition}
The restriction of $\widetilde{\calw}$ to $M\times \{[\rho]\}$ is
isomorphic, as a $G$-sheaf, to the tensor product of an ordinary
sheaf on $M$ and the trivial $G$-sheaf $V_\rho$.
\end{assump}
\begin{proposition}\label{equivalence2}
Tensoring with $\calv_G$ yields an equivalence between the category
of $c$-twisted $Q$-equivariant sheaves on $M\times \widehat{G}$ and
the category of $H$-equivariant sheaves on $M\times \widehat{G}$
satisfying Assumption \ref{restriction-decomposition}.
\end{proposition}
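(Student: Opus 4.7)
The plan is to construct an explicit quasi-inverse functor and verify it is inverse to $-\otimes \calv_G$ pointwise over each component $M\times \{[\rho]\}$. Since the $Q$-action on $\widehat{\calw}$ will be twisted by $c^{-1}$ (via Lemma \ref{lem:Q_str_on_V}) and the $Q$-action on $\calw$ is twisted by $c$, the composite twist on $\calw\otimes \calv_G$ is trivial; lifting along $j: H\to Q$, and using the natural $G$-action on $\calv_G$, one obtains a genuine $H$-equivariant sheaf. Its restriction to $M\times \{[\rho]\}$ is $\calw|_{M\times [\rho]} \otimes V_\rho$ with $G$ acting on the second factor only (since $G$ acts trivially through $j$), so Assumption \ref{restriction-decomposition} is automatic. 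Thus the forward functor is well-defined.

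For the quasi-inverse, I would set
\[
\widetilde{\calw} \;\longmapsto\; \sH om_G(\calv_G,\widetilde{\calw}),
\]
taking $G$-equivariant $\sH om$ componentwise on $M\times \widehat{G}$. Because $\calv_G$ is $c^{-1}$-twisted $H$-equivariant and $\widetilde{\calw}$ is (untwisted) $H$-equivariant, the sheaf $\sH om(\calv_G,\widetilde{\calw})$ carries a canonical $c$-twisted $H$-equivariant structure; the subsheaf of $G$-equivariant maps is a summand on which $G$ acts trivially, and therefore descends via $j$ to a $c$-twisted $Q$-equivariant sheaf on $M\times\widehat{G}$.

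The verification that the two functors are mutually inverse is essentially Schur's lemma applied fiberwise. On each component $M\times\{[\rho]\}$, Assumption \ref{restriction-decomposition} gives $\widetilde{\calw}|_{M\times \{[\rho]\}} \cong \calu_\rho\otimes V_\rho$ as $G$-sheaves for some ordinary sheaf $\calu_\rho$ on $M$; then $\sH om_G(V_\rho,\calu_\rho\otimes V_\rho)\cong \calu_\rho$ by Schur's lemma, and the evaluation map
\[
\sH om_G(V_\rho,\widetilde{\calw}|_{M\times \{[\rho]\}})\otimes V_\rho \;\longrightarrow\; \widetilde{\calw}|_{M\times \{[\rho]\}}
\]
is an isomorphism. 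Dually, for a $c$-twisted $Q$-equivariant $\calw$, the component $\calw|_{M\times \{[\rho]\}}$ is just an ordinary sheaf on $M$ (the $G$-action being trivial), and $\sH om_G(V_\rho,\calw|_{M\times \{[\rho]\}}\otimes V_\rho)$ recovers it.

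The main technical obstacle is the bookkeeping of twists: one must check that the $c$-twisted $Q$-equivariant structure on $\sH om_G(\calv_G,\widetilde{\calw})$ induced by the given $H$-equivariance of $\widetilde{\calw}$ is the \emph{correct} one, i.e.\ that the evaluation and coevaluation isomorphisms are natural with respect to the equivariant structures and intertwine the actions $E_{h,[\rho]}$ of Lemma \ref{lem:Q_str_on_V} with the $H$-action on $\widetilde{\calw}$ and the $Q$-action on $\calw$. This reduces, after unwinding definitions, to the identity $T_{q_1}^{[\rho]}\circ T_{q_2}^{q_1([\rho])} = c^{[\rho]}(q_1,q_2)\, T_{q_1q_2}^{[\rho]}\rho(\tau(q_1,q_2))^{-1}$ from equation \eqref{eq:dfn-c}, exactly as used in the proof of Lemma \ref{lem:Q_str_on_V}; everything else is a straightforward diagram chase, and functoriality in morphisms is automatic because tensor product and $\sH om_G$ are functorial.
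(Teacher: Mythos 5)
Your proposal is correct and follows essentially the same route as the paper's proof: the forward functor is $-\otimes\calv_G$ with the twists from Lemma \ref{lem:Q_str_on_V} cancelling against the $c$-twist on the $Q$-equivariant sheaf, and the quasi-inverse is the componentwise $G$-equivariant Hom $\mathcal{H}om_G(\calv_G,-)$ (the paper's $\widehat{\calw}$ with $\widehat{\calw}|_{M\times[\rho]}=\mathcal{H}om_G(V_\rho,\calw)$), with Schur's lemma and the cocycle identity doing the bookkeeping exactly as in the paper. Two cosmetic points: the identity you invoke should read $T^{q_1([\rho])}_{q_2}\circ T^{[\rho]}_{q_1}=c^{[\rho]}(q_1,q_2)\,T^{[\rho]}_{q_1q_2}\,\rho(\tau(q_1,q_2))^{-1}$ (your composition order does not typecheck), and the descent of the twisted $H$-action on the $G$-equivariant Hom to a $c$-twisted $Q$-action also uses the normalization $c^{[\rho]}(g,h)=1$ for $g\in G$, which the paper checks explicitly.
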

\begin{proof}
Let $\calw'$ be a $c$-twisted $Q$-equivariant sheaf on $M\times
\widehat{G}$. Let $$\Gamma_{q, [\rho]}: q^*\calw'|_{M\times
[\rho]}\to \calw'|_{M\times [\rho]},\quad q\in Q, [\rho]\in
\widehat{G}$$ be the $c$-twisted $Q$-action on $\calw'$. Let
$\calw'\otimes \calv_G$ be the sheaf on $M\times \widehat{G}$
defined by
$$(\calw'\otimes \calv_G)|_{M\times [\rho]}:=\calw'|_{M\times [\rho]}\otimes
\calv_G|_{M\times [\rho]}.$$ For $h\in H$, we write $h=gs(q)$ with
$g\in G, q\in Q$. We fix such an expression for each $h\in H$. Set
$$\gamma_{h, [\rho]}:= \Gamma_{q, [\rho]}\otimes E_{h, [\rho]}:
h^*(\calw'\otimes \calv_G)|_{M\times [\rho]}\to \calw'\otimes
\calv_G|_{M\times [\rho]}.$$ For $h_1, h_2\in H$, we calculate
\[
\begin{split}
\gamma_{h_1, [\rho]}\circ \gamma_{h_2, h_1([\rho])}&=(\Gamma_{q_1, [\rho]}\otimes E_{h_1, [\rho]})\circ (\Gamma_{q_2, q_1([\rho])}\otimes E_{h_2, h_1([\rho])})\\
&=(c^{[\rho]}(q_1, q_2)^{-1}\Gamma_{q_1q_2, [\rho]})\otimes (c^{[\rho]}(q_1, q_2)E_{h_1h_2, [\rho]})=\Gamma_{q_1q_2, [\rho]}\otimes E_{h_1h_2, [\rho]}=\gamma_{h_1h_2, [\rho]}.
\end{split}
\]
Therefore, the following collection defines an $H$-equivariant structure on $\calw'\otimes \calv_G$
\begin{equation}\label{H-action_on_sheaf}
\{\gamma_{h,[\rho]}\}, \quad h\in H, [\rho]\in \widehat{G}. 
\end{equation}

Let $\calu'$ and $\calw'$ be two $c$-twisted $Q$-equivariant sheaves
on $M\times \widehat{G}$ with the equivariant structures given,
respectively, by $\{\Gamma^\calu_{q, [\rho]}\}$ and
$\{\Gamma^\calw_{q, [\rho]}\}$, and let $\calu'\otimes \calv_G$ and
$\calw'\otimes \calv_G$ be the sheaves with $H$-equivariant
structures defined, respectively, by
$$\gamma^\calu_{h, [\rho]}=\Gamma^\calu_{q, [\rho]}\otimes E_{h, [\rho]},\qquad \gamma^\calw_{h, [\rho]}= \Gamma^\calw_{q, [\rho]}\otimes E_{h, [\rho]},$$
as in (\ref{H-action_on_sheaf}). By Schur's lemma, we have $Hom_H
(\calu'\otimes \calv_G, \calw'\otimes \calv_G)=Hom_{c, Q}(\calu',
\calw').$ Indeed, for $\phi\in Hom_H(\calu'\otimes \calv_G,
\calw'\otimes \calv_G)$, the map $\phi$ is $H$-equivariant. Hence,
it is also $G$-equivariant. It follows from Schur's lemma that the
restriction $\phi|_{M\times [\rho]}: \calu'\otimes \calv_G|_{M\times
[\rho]}\to \calw'\otimes \calv_G|_{M\times [\rho]}$ is of the form
$\bar{\phi}_{[\rho]}\otimes id$. The $H$-equivariance of $\phi$
reads $(\bar{\phi}_{[\rho]}\otimes id)\circ \gamma^U_{h,
[\rho]}=\gamma^W_{h,[\rho]}\circ (\bar{\phi}_{q([\rho])}\otimes
id).$ This implies that $\bar{\phi}_{[\rho]}\circ \Gamma^U_{q,
[\rho]}=\Gamma^W_{q, [\rho]}\circ \bar{\phi}_{q([\rho])}$. Thus, the
collection $\{\bar{\phi}_{[\rho]}| [\rho]\in \widehat{G}\}$ of maps
defines a $c$-twisted $Q$-equivariant map $\bar{\phi}: \calu'\to
\calw'$ of sheaves on $M\times \widehat{G}$.

Conversely, given a $c$-twisted $Q$-equivariant map $\bar{\phi}:
\calu'\to \calw'$, the map $\bar{\phi}\otimes id: \calu'\otimes
\calv_G\to \calw'\otimes \calv_G$ is equivariant with respect to the
$H$-actions defined in (\ref{H-action_on_sheaf}),
\begin{equation*}
\begin{split}
(\bar{\phi}_{[\rho]}\otimes id)\circ \gamma^\calu_{h, [\rho]}&=(\bar{\phi}_{[\rho]}\otimes id)\circ (\Gamma^\calu_{q, [\rho]}\otimes E_{h, [\rho]})=(\bar{\phi}_{[\rho]}\circ \Gamma^\calu_{q, [\rho]})\otimes E_{h, [\rho]}\\
&=(\Gamma^\calw_{q, [\rho]}\circ \bar{\phi}_{q([\rho])})\otimes E_{h, [\rho]}=(\Gamma^\calw_{q, [\rho]}\otimes E_{h, [\rho]})\circ (\bar{\phi}_{q([\rho])}\otimes id)=\gamma^\calw_{h, [\rho]}\circ (\bar{\phi}_{q([\rho])}\otimes id),
\end{split}
\end{equation*}
where we used the $Q$-equivariance of $\bar{\phi}$ in the middle
equality.

We have proved that $(-)\otimes \calv_G$ is a fully faithful functor
from the category of $c$-twisted $Q$-equivariant sheaves on $M\times
\widehat{G}$ to the category of $H$-equivariant sheaves on $M\times
\widehat{G}$ satisfying Assumption \ref{restriction-decomposition}.
To prove that this functor is an equivalence, we construct the
inverse functor. Let $\calw$ be an $H$-equivariant sheaf on $M\times
\widehat{G}$ satisfying Assumption \ref{restriction-decomposition}.
By Assumption \ref{restriction-decomposition}, we can write
\begin{equation}\label{splitting_as_G-sheaves}
\calw|_{M\times [\rho]}=\mathcal{H}om_G(V_{\rho}, \calw)\otimes
V_{\rho},
\end{equation}
as $G$-sheaves. Let $\widehat{\calw}$ be the sheaf over $M\times
\widehat{G}$ defined by $\widehat{\calw}|_{M\times [\rho]}:=
\mathcal{H}om_G(V_{\rho}, \calw).$ We will show that
$\widehat{\calw}$ is naturally a $c$-twisted $Q$-equivariant sheaf.

Let $\gamma_{h, [\rho]}: h^*\calw|_{M\times [\rho]}\to
\calw|_{M\times [\rho]}$ denote the $H$-equivariant structure on
$\calw$. In view of (\ref{splitting_as_G-sheaves}), we may assume
that
\begin{equation}\label{G-action}
\gamma_{g, [\rho]}=id\otimes \rho(g)\quad \text{ for } g\in G.
\end{equation}
Note that $h^*\calw|_{M\times [\rho]}=\mathcal{H}om_G(V_{h([\rho])},
\calw)\otimes V_{h([\rho])}$. Also, note that if we write $h=gs(q)$
with $g\in G$ and $q\in Q$, then $q([\rho])=h([\rho])$. Precomposing
with $T_q^{[\rho]}: V_{\rho}\to V_{q([\rho])}=V_{h([\rho])}$ defines
an isomorphism $\mathcal{H}om_G(V_{h([\rho])}, \calw)\to
\mathcal{H}om_G (V_{\rho}, \calw)$, which we denote by $T_{q,
[\rho]}^\vee$. Consider the composition
\[
\mathcal{H}om_G(V_{h([\rho])}, \calw)\otimes
V_{h([\rho])}\overset{\gamma_{h, [\rho]}}{\longrightarrow}
\calw|_{M\times [\rho]}=\mathcal{H}om_G(V_{\rho},
\calw)\otimes V_{\rho}\overset{T_{q, [\rho]}^{\vee-1}\otimes
E_{h,[\rho]}^{-1}}{\longrightarrow} \mathcal{H}om_G(V_{h([\rho])},
\calw)\otimes V_{h([\rho])}.
\]

\begin{claim}
The map $(T_{q, [\rho]}^{\vee-1}\otimes E_{h,[\rho]}^{-1})\circ
\gamma_{h, [\rho]}$ commutes with $id\otimes h([\rho])(g')$ for any
$g'\in G$.
\end{claim}
\begin{proof}[Proof of Claim]
We first compute
\begin{equation*}
\begin{split}
&(T_{q, [\rho]}^{\vee-1}\otimes E_{h,[\rho]}^{-1})\circ \gamma_{h, [\rho]}\circ (id\otimes h([\rho])(g'))\\
=&(T_{q, [\rho]}^{\vee-1}\otimes E_{h,[\rho]}^{-1})\circ \gamma_{h, [\rho]}\circ \gamma_{g', h([\rho])} \quad \text{ by }(\ref{G-action})\\
=&(T_{q, [\rho]}^{\vee-1}\otimes E_{h,[\rho]}^{-1})\circ \gamma_{hg', [\rho]} \quad \text{ since }\gamma_{h, [\rho]} \text{ is the equivariant structure on }\calw\\
=&(T_{q, [\rho]}^{\vee-1}\otimes E_{h,[\rho]}^{-1})\circ \gamma_{hg'h^{-1}, [\rho]}\circ \gamma_{h, (hg'h^{-1})([\rho])} \quad \text{ by the same reason}\\
=&(T_{q, [\rho]}^{\vee-1}\otimes E_{h,[\rho]}^{-1})\circ (id\otimes \rho(hg'h^{-1}))\circ \gamma_{h, (hg'h^{-1})([\rho])} \quad G \text{ is normal in } H, \text{ so } hg'h^{-1}\in G\\
=&(T_{q, [\rho]}^{\vee-1}\otimes E_{h,[\rho]}^{-1})\circ (id\otimes \rho(hg'h^{-1}))\circ \gamma_{h, [\rho]} \quad \text{ since } G \text{ fixes } [\rho]\\
=&(T_{q, [\rho]}^{\vee-1}\otimes (E_{h, [\rho]}^{-1}\circ
\rho(hg'h^{-1})))\circ \gamma_{h,[\rho]}.
\end{split}
\end{equation*}
Next we compute $E_{h, [\rho]}^{-1}\circ \rho(hg'h^{-1})$. By
definition, $E_{h, [\rho]}=\rho(g)T_q^{[\rho]^{-1}}$, where
$h=gs(q)$ with $g\in G, q\in Q$. As $G$ is normal in $H$, $s(q)gs(q)^{-1}\in G$ and 
$$\rho(hg'h^{-1})=\rho(gs(q)g's(q)^{-1}g^{-1})=\rho(g)\rho(s(q)g's(q)^{-1})\rho(g)^{-1}.$$
The claim follows from the following calculation. 
\begin{equation*}
\begin{split}
E_{h, [\rho]}^{-1}\circ \rho(hg'h^{-1})&=T_q^{[\rho]}\rho(g)^{-1}\rho(g)\rho(s(q)g's(q)^{-1})\rho(g)^{-1}=T_q^{[\rho]}\rho(s(q)g's(q)^{-1})\rho(g)^{-1}\\
&=T_q^{[\rho]}T_q^{[\rho]^{-1}}\circ q([\rho])(g')\circ T_q^{[\rho]}\circ \rho(g)^{-1}\quad \text{ by }(\ref{defining_property_intertwining_op})\\
&=q([\rho])(g')\circ T_q^{[\rho]}\circ \rho(g)^{-1}=h([\rho])(g')\circ E_{h, [\rho]}^{-1} \quad \text{ because }
h([\rho])=q([\rho]).
\end{split}
\end{equation*}
\end{proof}
By the claim and Schur's lemma, we have
$$(T_{q, [\rho]}^{\vee-1}\otimes E_{h,[\rho]}^{-1})\circ \gamma_{h,
[\rho]}=\Gamma'_{h, [\rho]}\otimes id$$ for some sheaf map
$\Gamma'_{h, [\rho]}: \mathcal{H}om_G(V_{h([\rho])}, \calw)\to
\mathcal{H}om_G(V_{h([\rho])}, \calw)$. Let
$$\Gamma_{h, [\rho]}:=T_{q, [\rho]}^{\vee}\circ \Gamma'_{h, [\rho]}: h^*\widehat{\calw}|_{M\times [\rho]}
=\mathcal{H}om_G(V_{h([\rho])}, \calw)\to
\mathcal{H}om_G(V_{\rho}, \calw)=\widehat{\calw}|_{M\times
[\rho]}.$$ Then $\gamma_{h, [\rho]}=\Gamma_{h, [\rho]}\otimes E_{h,
[\rho]}$. We can easily check that the collection $\{\Gamma_{h, [\rho]}\}$
defines $c$-twisted $H$-equivariant and $Q$-equivariant structures
on $\widehat{\calw}$.

Using the properties of $\gamma_{h, [\rho]}$ and $E_{h, [\rho]}$
that were discussed above, we can compute
\[
\Gamma_{h_1h_2, [\rho]}\otimes id =c^{[\rho]}(h_1, h_2)^{-1}(\Gamma_{h_1,[\rho]}\circ \Gamma_{h_2,
h_1([\rho])}\otimes id).
\]
Thus,
\begin{equation}\label{H-equivariant_structure_on_hatW}
\Gamma_{h_1h_2, [\rho]}=c^{[\rho]}(h_1,
h_2)^{-1}\Gamma_{h_1,[\rho]}\circ \Gamma_{h_2, h_1([\rho])}.
\end{equation}
In other words, $\{\Gamma_{h, [\rho]}| h\in H, [\rho]\in
\widehat{G}\}$ defines a $c$-twisted $H$-equivariant structure on
$\widehat{\calw}$.

Note that $\Gamma_{g, [\rho]}=id$ for $g\in G$, by (\ref{G-action}).
A special case of (\ref{H-equivariant_structure_on_hatW}) reads
$\Gamma_{g, [\rho]}\circ \Gamma_{h,
[\rho]}=c^{[\rho]}(g,h)\Gamma_{gh, [\rho]},$ where $g\in G, h\in H$
and note that $g([\rho])=[\rho]$. We claim that $c^{[\rho]}(g,h)=1$
for all $g\in G, h\in H$. To see this, note that by Proposition
\ref{prop:u(1)-cocycle}, $c^{[\rho]}(g,h)=c^{[\rho]}(1,q)=1$ for
$q\in Q$ such that $hs(q)^{-1}\in G$.

By the discussion above, we find that $\Gamma_{gh,
[\rho]}=\Gamma_{h, [\rho]}$ for all $g\in G$. Therefore, $\Gamma_{h,
[\rho]}$ depends only on the $G$-coset of $h$, not the element $h$
itself. Moreover, for $q\in Q$, the definition $\Gamma_{q, [\rho]}:=\Gamma_{s(q), [\rho]}$ is independent of the
choice of the section $s: Q\to H$. It follows from
(\ref{H-equivariant_structure_on_hatW}) that $\{\Gamma_{q, [\rho]}|
q\in Q, [\rho]\in \widehat{G}\}$ defines a $c$-twisted
$Q$-equivariant structure on $\widehat{\calw}$.

It is straightforward to check that the functor $\calw\mapsto
\widehat{\calw}$ is the inverse of the functor $\calw'\mapsto
\calw'\otimes \calv_G$. The proposition is proved.
\end{proof}

\begin{lemma}\label{equivalence1}
The category of $H$-equivariant sheaves on $M\times \widehat{G}$ satisfying
Assumption \ref{restriction-decomposition} is equivalent to the category of $H$-equivariant sheaves on $M$.
\end{lemma}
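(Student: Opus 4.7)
The plan is to construct explicit quasi-inverse functors $\Phi$ and $\Psi$ between the two categories using the natural projection $\pi_M : M \times \widehat{G} \to M$, which is $H$-equivariant (with respect to the diagonal $H$-action on the source, noting that $G\subset H$ acts trivially on $\widehat{G}$ from the point of view of $M\times\widehat{G}$) and finite \'etale.

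In one direction, define $\Psi(\calw) := (\pi_M)_* \calw$. Since $\widehat{G}$ is finite and discrete, as a sheaf on $M$ this is simply $\bigoplus_{[\rho] \in \widehat{G}} \calw|_{M \times \{[\rho]\}}$, and the $H$-equivariance of $\pi_M$ transports the $H$-equivariant structure on $\calw$ to one on $\Psi(\calw)$. In the other direction, given an $H$-equivariant sheaf $\calv$ on $M$, the restricted $G$-action acts trivially on the base, so it gives an honest $G$-action on the sheaf; by the Peter--Weyl-type isotypic decomposition we may write $\calv \cong \bigoplus_{[\rho] \in \widehat{G}} \calv^{(\rho)}$ with $\calv^{(\rho)}:=\mathcal{H}om_G(V_\rho, \calv) \otimes V_\rho$, viewed as a $G$-equivariant sheaf on $M$. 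Set $\Phi(\calv)$ to be the sheaf on $M \times \widehat{G}$ whose restriction to $M \times \{[\rho]\}$ is $\calv^{(\rho)}$; this manifestly satisfies Assumption \ref{restriction-decomposition}.

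The $H$-equivariant structure on $\Phi(\calv)$ comes from the observation that since $G$ is normal in $H$, the $H$-action on $\calv$ permutes the $G$-isotypic components according to $h\cdot \calv^{(\rho)} \subset \calv^{(h([\rho]))}$, where the action of $H$ on $\widehat{G}$ is exactly the one used to define the action on $M\times \widehat{G}$ (factoring through $Q$). Repackaging this graded $H$-action as an $H$-equivariant structure on a sheaf over $M\times \widehat{G}$ is formally immediate. Finally, $\Psi \circ \Phi(\calv) \cong \bigoplus_{[\rho]} \calv^{(\rho)} \cong \calv$ by the isotypic decomposition, and $\Phi \circ \Psi(\calw) \cong \calw$ since Assumption \ref{restriction-decomposition} guarantees that the restriction $\calw|_{M\times \{[\rho]\}}$ coincides with the $V_\rho$-isotypic component of $(\pi_M)_*\calw$. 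The only point demanding any care, rather than a genuine obstacle, is bookkeeping that all of these natural isomorphisms intertwine the $H$-actions on the two sides, which reduces in each instance to the compatibility between conjugation in $H$ and the $H$-action on $\widehat{G}$ recorded in Section~\ref{subsec:groupalgebra}.
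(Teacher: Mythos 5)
Your proposal is correct, and the two functors you construct are exactly the paper's: pushforward along $M\times\widehat{G}\to M$ (i.e.\ the direct sum of the restrictions to the $M\times\{[\rho]\}$) in one direction, and the $G$-isotypic decomposition $\calw=\bigoplus_{[\rho]}\mathcal{H}om_G(V_\rho,\calw)\otimes V_\rho$ spread out over $M\times\widehat{G}$ in the other. The one place where you genuinely diverge from the paper is in how the $H$-equivariant structure on $\Phi(\calv)$ is produced. You argue directly that, because $G$ acts trivially on $M$ and is normal in $H$, each structure map $\gamma_h$ of $\calv$ conjugates the $G$-action by $\mathrm{Ad}_h$ and hence permutes the isotypic components compatibly with the $H$-action on $\widehat{G}$; the cocycle condition is then inherited componentwise. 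The paper instead writes $\Phi(\calv)\cong\mathcal{H}om_G(\calv_G,\mathbf{p}^*\calv)\otimes\calv_G$ and invokes Lemma \ref{lem:Q_str_on_V}: $\calv_G$ is $c^{-1}$-twisted $H$-equivariant, $\mathcal{H}om_G(\calv_G,\mathbf{p}^*\calv)$ is $c$-twisted, and the twists cancel. Your route is more elementary and more canonical (it needs neither the intertwiners $T^{[\rho]}_q$ nor the cocycle $c$ at this stage), while the paper's route has the side benefit of exhibiting the factorization through the twisted sheaf $\calv_G$ that is then combined with Proposition \ref{equivalence2}. Two small bookkeeping points you should fix when writing this up: your inclusion $h\cdot\calv^{(\rho)}\subset\calv^{(h([\rho]))}$ is off by an inverse relative to the paper's convention (with $h([\rho])=[\rho\circ\mathrm{Ad}_h]$ one gets $h\cdot\calv^{(h([\rho]))}\subset\calv^{([\rho])}$, equivalently $\gamma_h$ maps the pullback of the $h([\rho])$-component to the $[\rho]$-component), and you should state explicitly that the isotypic component is a canonical subsheaf so that the equivariance of the unit and counit isomorphisms of your adjoint pair is indeed the routine verification you claim; neither point is a gap in the argument.
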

\begin{proof}
Let $\widetilde{\calw}$ be an $H$-equivariant sheaf on $M\times
\widehat{G}$. Then the direct sum $\bigoplus_{[\rho]\in
\widehat{G}}\widetilde{\calw}|_{M\times \{[\rho]\}}$ is a sheaf on
$M$ with a natural $H$-equivariant structure induced from that of
$\widetilde{\calw}$. Clearly,
\[
Hom_H(\widetilde{\calu},
\widetilde{\calw})=Hom_H(\bigoplus_{[\rho]\in\widehat{G}}\widetilde{\calu}|_{M\times
\{[\rho]\}},\bigoplus_{[\rho]\in\widehat{G}}\widetilde{\calw}|_{M\times
\{[\rho]\}}).
\]
Hence, the assignment $\widetilde{\calw}\mapsto
\bigoplus_{[\rho]\in \widehat{G}}\widetilde{\calw}|_{M\times
\{[\rho]\}}$ is a covariant fully faithful functor from the category
of $H$-equivariant sheaves on $M\times \widehat{G}$ satisfying
Assumption \ref{restriction-decomposition} to the category of
$H$-equivariant sheaves on $M$. It remains to construct an inverse
functor.

Let $\calw$ be an $H$-equivariant sheaf on $M$. Since $G$ acts
trivially on $M$, we have the following canonical decomposition as
$G$-equivariant sheaves (see, e.g., \cite{bkr}, Sec.  4.2):
\begin{equation}\label{canonical_decomp}
\calw=\bigoplus_{[\rho]\in \widehat{G}} \mathcal{H}om_G(V_\rho,
\calw)\otimes V_\rho,
\end{equation}
where $V_\rho$ is again the trivial vector bundle over $M$ with a
$G$-action given by $\rho$, and $\mathcal{H}om_G(V_\rho, \calw)$ is
just an ordinary sheaf on $M$. Define a sheaf $\widetilde{\calw}$ on
$M\times \widehat{G}$ by
$\widetilde{\calw}|_{M\times \{[\rho]\}}:= \mathcal{H}om_G(V_\rho, \calw)\otimes V_\rho.$
Clearly, $\widetilde{\calw}$ satisfies Assumption
\ref{restriction-decomposition}. We claim that $\widetilde{\calw}$
has the structure of an $H$-equivariant sheaf.

Since the projection $\mathbf{p}: M\times \widehat{G}\to M$ onto the
first factor is $H$-equivariant, the pull-back $\mathbf{p}^*\calw$
is an $H$-equivariant sheaf on $M\times \widehat{G}$. Clearly, for
any $[\rho]\in \widehat{G}$, we have $\mathbf{p}^*\calw|_{M\times
\{[\rho]\}}=\calw$. Also,
$$
\mathcal{H}om_G(\calv_G, \mathbf{p}^*\calw)|_{M\times
\{[\rho]\}}=\mathcal{H}om_G(V_\rho, \calw).
$$
It follows that $\mathcal{H}om_G(V_\rho, \calw)\otimes V_\rho=
\mathcal{H}om_G(\calv_G, \mathbf{p}^*\calw)|_{M\times
\{[\rho]\}}\otimes \calv_G|_{M\times \{[\rho]\}}$, i.e.,
$$
\widetilde{\calw}=\mathcal{H}om_G(\calv_G, \mathbf{p}^*\calw)\otimes \calv_G.
$$
By Lemma \ref{lem:Q_str_on_V}, $\calv_G$ is a $c^{-1}$-twisted
$H$-equivariant sheaf. As $\mathcal{H}om_G(\calv_G,
\mathbf{p}^*\calw)$ is a $c$-twisted $H$-equivariant sheaf,
$\widetilde{\calw}$, being the tensor product of
$\mathcal{H}om_G(\calv_G, \mathbf{p}^*\calw)$ and $\calv_G$, is an
$H$-equivariant sheaf.

For two $H$-equivariant sheaves $\calu$ and $\calw$ on $M$, it is
easy to see that $Hom_H(\calu,\calw)=Hom_H(\widetilde{\calu},
\widetilde{\calw})$. The functor $\calw\mapsto \widetilde{\calw}$
provides the needed inverse functor.
\end{proof}

It is known that sheaves on the orbifold $[M/H]$ are equivalent to
$H$-equivariant sheaves on $M$. The cocycle $c$ defines a cocycle,
which we still denote by $c$, on the underlying orbifold $[(M\times
\widehat{G})/Q]$. It is also known that $c$-twisted sheaves on the
orbifold $[(M\times \widehat{G})/Q]$ are equivalent to $c$-twisted
$Q$-equivariant sheaves on $M\times \widehat{G}$. We may
combine Proposition \ref{equivalence2} with Lemma \ref{equivalence1}
to deduce the following theorem:
\begin{theorem}\label{equivalence3}
The categories of ({\em $c$-twisted}) sheaves on the gerbe $[M/H]$ and $[(M\times \widehat{G})/Q]$ are equivalent.
\end{theorem}

\subsection{General case}
In this section we discuss sheaf theory on a general $G$-gerbe. Let
$\Y\to \B$ be a $G$-gerbe over an orbifold $\B$. If
$\frakH\rightrightarrows \frakH_0$ is an \'etale groupoid presenting
the gerbe $\Y$, then sheaves on $\Y$ are equivalent to
$\frakH$-sheaves on $\frakH_0$. Therefore, in order to study sheaves
on the gerbe $\Y$, we may pick a suitable \'etale groupoid
presentation $\frakH$ of $\Y$ and work with $\frakH$-sheaves. As discussed in Sec.  \ref{subsec:general}, we may choose proper
\'etale groupoids $\frakH$ and $\frakQ$ so that the $G$-gerbe $\Y\to
\B$ is presented by the groupoid extension
\begin{equation}\label{grp_extension_presentation_of_gerbe}
(M\times G\rightrightarrows
M)\overset{i}{\longrightarrow}\frakH\overset{j}{\longrightarrow}\frakQ,
\end{equation}
such that
\begin{enumerate}
\item
$M\times G\rightrightarrows M$ is the groupoid for the trivial
action of $G$ on $M$.
\item
$\frakH\rightrightarrows\frakH_0$ with $\frakH_0=M$ is a
presentation of $\Y$.

\item
$\frakQ\rightrightarrows\frakQ_0$ with $\frakQ_0=M$ is a
presentation of $\B$.

\item
$i|_M=j|_M=\text{identity}$.

\item
there is a section of $j$, i.e., a map $\sigma: \frakQ\to \frakH$,
such that $j\circ \sigma=id$ and $\sigma|_M=\text{identity}$.
\end{enumerate}

\begin{remark}
In the algebraic context, i.e. when $\Y$ and $\B$ are
Deligne-Mumford $\mathbb{C}$-stacks, a presentation of $\Y\to
\B$ as in (\ref{grp_extension_presentation_of_gerbe}) can be
obtained as follows. By \cite{av}, Lemma 2.2.3, we may find an
\'etale cover $U:=\coprod_i U_i\to \Y$ such that $\Y$ is locally
isomorphic to a quotient $[U_i/H_i]$ by some finite group $H_i$
acting on $U_i$. Since $\Y$ is a $G$-gerbe, the group $H_i$ contains
$G$ as a normal subgroup, and the induced $G$-action on $U_i$ is
trivial. Set $Q_i:= H_i/G$. Then $\B$ is locally isomorphic to the
quotient $[U_i/Q_i]$, and the map $\Y\to \B$ is locally presented as
$[U_i/H_i]\to [U_i/Q_i]$. We may take $M=U$, $\frakH:=(U\times_\Y
U\rightrightarrows U)$, and $\frakQ:=(U\times_\B U\rightrightarrows
U)$. Choosing a section $\sigma: \frakQ\to \frakH$ amounts to
choosing sections $Q_i\to H_i$ and $Q_{ij}\to H_{ij}$, where
$Q_{ij}$ and $H_{ij}$ are finite groups, so that over $U_i\times_\Y
U_j$, the map $\Y\to \B$ is presented as $[V_{ij}/H_{ij}]\to
[V_{ij}/Q_{ij}]$.
\end{remark}

We now proceed to study sheaves on the gerbe $\Y$ by studying
$\frakH$-sheaves on $M=\frakH_0$, in a way similar to the treatment
in Sec.  \ref{sheaf_on_global_quotient}.

As discussed in Sec.  \ref{subsec:general}, the groupoid $\frakQ$
acts on $\widehat{G}$. Similarly, $\frakH$ acts on $\widehat{G}$ as
well. Indeed, the $\frakH$-action on $\widehat{G}$ is obtained from
the $\frakQ$-action by the map $j:\frakH\to \frakQ$. Consider the
two transformation groupoids $\widehat{\frakH}:=\widehat{G}\rtimes
\frakH$ and $\widehat{\frakQ}:=\widehat{G}\rtimes \frakQ$. There is
a groupoid cocycle $c$ on $\widehat{\frakQ}$. Note that
$\widehat{\frakH}_0=\widehat{\frakQ}_0=M\times \widehat{G}$.

Let $\calv_G$ be the sheaf over $M\times \widehat{G}$ defined by
requiring that its restriction to $M\times [\rho]$ be the trivial
sheaf with fiber $V_\rho$.  There is a natural $G$-sheaf structure
on $\calv_G$. Similar to the method in Sec. 
\ref{sheaf_on_global_quotient}, we can construct a $c^{-1}$-twisted
$\wfrakH$-equivariant structure on $\calv_G$. Here, $c$ is the cocycle in Proposition
\ref{prop:u(1)-cocycle-gpd}. Evidently, $c$ can be extended to a
cocycle $\wfrakH\times_{\wfrakH_0} \wfrakH\to U(1)$ via the map $j:
\frakH\to \frakQ$.

We generalize Proposition \ref{equivalence2} to $\frakH$-sheaves $\widetilde{\calw}$ on $M\times
\widehat{G}$ satisfying the following:
\begin{assump}\label{restriction-decomposition_general}
The restriction of $\widetilde{\calw}$ to $M\times \{[\rho]\}$ is
isomorphic, as $G$-sheaves, to the tensor product of an ordinary
sheaf on $M$ and the trivial $G$-sheaf $V_\rho$.
\end{assump}
\begin{proposition}\label{equivalence2_general}
Tensoring with $\calv_G$ yields an equivalence between the category
of $c$-twisted $\wfrakQ$-sheaves on $M\times \widehat{G}$ and the
category of $\wfrakH$-sheaves on $M\times \widehat{G}$ satisfying
Assumption \ref{restriction-decomposition_general}.
\end{proposition}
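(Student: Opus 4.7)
The plan is to mimic the proof of Proposition \ref{equivalence2} essentially verbatim, using the groupoid-theoretic machinery developed in Section \ref{subsec:general}. First, I would use the section $\sigma:\frakQ\to\frakH$ (which exists by our choice of presentation) to write each arrow $h\in\frakH$ uniquely as $h=g\cdot\sigma(q)$ with $q=j(h)\in\frakQ$ and $g\in\ker(j)=G$ sitting in the fiber over $s(h)$. This is the groupoid analogue of the decomposition $h=gs(q)$ used throughout Section \ref{sheaf_on_global_quotient}.

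Next, for each arrow $(h,[\rho])$ of $\wfrakH$, I would define
\[
E_{h,[\rho]} := \rho(g)\circ (T_q^{[\rho]})^{-1}:\calv_G|_{M\times\{h([\rho])\}}\longrightarrow \calv_G|_{M\times\{[\rho]\}},
\]
using the intertwining operators $T_q^{[\rho]}$ introduced before Proposition \ref{prop:u(1)-cocycle-gpd}. A computation parallel to Lemma \ref{lem:Q_str_on_V}, which will use the cocycle identity (\ref{equation:gpd-tau-cocycle}) for $\tau$ together with the defining relation (\ref{eq:dfn-c}) of $c$, establishes
\[
E_{h_1,[\rho]}\circ E_{h_2,h_1([\rho])} = c^{[\rho]}(h_1,h_2)\,E_{h_1h_2,[\rho]},
\]
i.e., $\calv_G$ carries a canonical $c^{-1}$-twisted $\wfrakH$-equivariant structure. (Here $c$ is pulled back from $\wfrakQ$ along $j$.)

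The forward functor is then $\calw'\mapsto \calw'\otimes \calv_G$, equipped on each stalk with the structure maps $\gamma_{h,[\rho]}:=\Gamma_{j(h),[\rho]}\otimes E_{h,[\rho]}$, where $\{\Gamma_{q,[\rho]}\}$ denotes the $c$-twisted $\wfrakQ$-equivariant structure of $\calw'$. The two twists cancel, producing an honest $\wfrakH$-sheaf which manifestly satisfies Assumption \ref{restriction-decomposition_general}. For full faithfulness I would apply Schur's lemma stalkwise at each point $(m,[\rho])\in M\times\widehat{G}$: any $\wfrakH$-equivariant morphism $\calu'\otimes\calv_G\to\calw'\otimes\calv_G$ is in particular $G$-equivariant on $M\times\{[\rho]\}$, so it is forced to have the form $\bar\phi_{[\rho]}\otimes\mathrm{id}_{V_\rho}$, and the $\wfrakH$-equivariance of $\gamma$ then translates exactly into the $c$-twisted $\wfrakQ$-equivariance of $\bar\phi$.

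For the inverse functor I would set $\widehat{\calw}|_{M\times\{[\rho]\}}:=\mathcal{H}om_G(V_\rho,\calw|_{M\times\{[\rho]\}})$. The argument inside the claim of Proposition \ref{equivalence2}, together with Schur's lemma, produces maps $\Gamma_{h,[\rho]}$ such that $\gamma_{h,[\rho]}=\Gamma_{h,[\rho]}\otimes E_{h,[\rho]}$; one checks that $\Gamma_{h,[\rho]}$ depends only on $j(h)\in\frakQ$ (since $c^{[\rho]}(g,h)=1$ for $g\in G$, exactly as in the global quotient case) and thus descends to a well-defined $c$-twisted $\wfrakQ$-equivariant structure on $\widehat\calw$. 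The main, and essentially only nontrivial, point is that all these purely pointwise algebraic identities assemble into genuinely groupoid-equivariant structures globally; this is ensured by (i) the existence of the global section $\sigma$ from our chosen presentation, and (ii) the fact that $c$ is locally constant by Proposition \ref{prop:local-const}, which makes the twist behave consistently as one varies the base point of a stalk. No new ingredients beyond those appearing in Section \ref{sheaf_on_global_quotient} are required.
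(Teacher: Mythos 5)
Your proposal is correct and is essentially the paper's intended argument: the paper proves the global-quotient case (Proposition \ref{equivalence2}) in detail and disposes of Proposition \ref{equivalence2_general} by declaring it a straightforward modification, which is exactly what you carry out via the section $\sigma:\frakQ\to\frakH$, the intertwiners $T_q^{[\rho]}$, and the stalkwise Schur's lemma argument. (One small convention point: with the paper's definition of a twisted action, the computation of Lemma \ref{lem:Q_str_on_V} yields $E_{h_1,[\rho]}\circ E_{h_2,h_1([\rho])}=c^{[\rho]}(h_1,h_2)^{-1}E_{h_1h_2,[\rho]}$, which is what makes $\calv_G$ a $c^{-1}$-twisted $\wfrakH$-sheaf as you conclude.)
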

\begin{proof}
The proof is a straightforward modification of the proof of
Proposition \ref{equivalence2}. We omit the details.
\end{proof}

Let $\calw$ be a $\frakH$-sheaf on $M$. Since $G$ acts trivially on
$M$, we have the following canonical decomposition as
$G$-equivariant sheaves:
\begin{equation}\label{canonical_decomp_general}
\calw=\bigoplus_{[\rho]\in \widehat{G}} \mathcal{H}om_G(V_\rho,
\calw)\otimes V_\rho,
\end{equation}
where $V_\rho$ is, again, the trivial vector bundle over $M$ with a
$G$-action given by $\rho$, and $\mathcal{H}om_G(V_\rho, \calw)$ is
just an ordinary sheaf on $M$. Given $\calw$ as above, define a sheaf $\widetilde{\calw}$ on
$M\times \widehat{G}$ by
$\widetilde{\calw}|_{M\times \{[\rho]\}}:= \mathcal{H}om_G(V_\rho, \calw)\otimes V_\rho.$
A generalization of Lemma \ref{equivalence1} is immediate. We omit the proof.
\begin{lemma}\label{equivalence1_general}
The assignment $\calw\mapsto \widetilde{\calw}$ defines an
equivalence between the category of $\frakH$-sheaves on $M$ and the
category of $\wfrakH$-sheaves on $M\times \widehat{G}$ satisfying
Assumption \ref{restriction-decomposition_general}.
\end{lemma}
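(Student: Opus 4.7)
The plan is to generalize the proof of Lemma \ref{equivalence1} to the groupoid setting by exploiting the $c^{-1}$-twisted $\wfrakH$-equivariant structure on $\calv_G$ that was constructed prior to Proposition \ref{equivalence2_general}.

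The key observation is that the natural projection $\pi:\wfrakH\to\frakH$ that forgets the $\widehat{G}$-coordinate (sending an arrow $(h,[\rho])$ to $h$) is a homomorphism of groupoids, and the projection $\mathbf{p}:M\times\widehat{G}\to M$ is $\wfrakH$-equivariant with respect to $\pi$. Consequently, for any $\frakH$-sheaf $\calw$ on $M$, the pullback $\mathbf{p}^*\calw$ is naturally a $\wfrakH$-sheaf on $M\times\widehat{G}$, whose restriction to $M\times\{[\rho]\}$ is just $\calw$ for every $[\rho]\in\widehat{G}$.

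With this in hand, I would construct the $\wfrakH$-equivariant structure on $\widetilde{\calw}$ by writing
\[
\widetilde{\calw}=\mathcal{H}om_G(\calv_G,\mathbf{p}^*\calw)\otimes\calv_G,
\]
which matches the formula $\widetilde{\calw}|_{M\times\{[\rho]\}}=\mathcal{H}om_G(V_\rho,\calw)\otimes V_\rho$ component by component. Since $\calv_G$ carries a $c^{-1}$-twisted $\wfrakH$-equivariant structure, the internal Hom $\mathcal{H}om_G(\calv_G,\mathbf{p}^*\calw)$ inherits a $c$-twisted $\wfrakH$-equivariant structure, and in the tensor product the two twists cancel, producing an honest (untwisted) $\wfrakH$-equivariant structure on $\widetilde{\calw}$. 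Assumption \ref{restriction-decomposition_general} holds by construction.

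For the quasi-inverse, I would use the functor $F:\widetilde{\calw}\mapsto\bigoplus_{[\rho]\in\widehat{G}}\widetilde{\calw}|_{M\times\{[\rho]\}}$, observing that an arrow $h\in\frakH$ from $m_1$ to $m_2$ lifts, for each $[\rho]$, to an arrow $(h,[\rho])\in\wfrakH$ from $(m_1,[\rho])$ to $(m_2,h([\rho]))$; as $[\rho]$ runs over $\widehat{G}$ so does $h([\rho])$, so summing these actions yields a well-defined $\frakH$-equivariant structure on $F(\widetilde{\calw})$. The round-trip $F\circ(\,\widetilde{\,\cdot\,}\,)$ is the identity by the canonical $G$-isotypic decomposition \eqref{canonical_decomp_general}, and the round-trip in the other direction is the identity on $\widetilde{\calw}$ satisfying Assumption \ref{restriction-decomposition_general} for the same reason. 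Full faithfulness will be verified fiberwise using Schur's lemma exactly as in the fully faithful argument of Proposition \ref{equivalence2}, reducing a morphism $\widetilde{\calu}\to\widetilde{\calw}$ of $\wfrakH$-sheaves to a collection of $\frakH$-equivariant maps on the $\mathcal{H}om_G$-factors.

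The main technical obstacle will be the careful bookkeeping of the twisted $\wfrakH$-equivariant structure on $\calv_G$ and the proof that the two twists really do cancel on the tensor product $\mathcal{H}om_G(\calv_G,\mathbf{p}^*\calw)\otimes\calv_G$, particularly at triples of composable arrows in $\wfrakH$ where the cocycle condition for $c$ comes into play. Once this cancellation is pinned down via the formula $E_{h_1h_2,[\rho]}=c^{[\rho]}(h_1,h_2)\,E_{h_1,[\rho]}\circ E_{h_2,h_1([\rho])}$ (Lemma \ref{lem:Q_str_on_V}), the verification reduces to the same bookkeeping already carried out in the proofs of Proposition \ref{equivalence2} and Lemma \ref{equivalence1}, which we invoke locally and then globalize by covering $\frakQ$ by contractible charts as in Section \ref{subsec:general}.
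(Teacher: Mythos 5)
Your proposal is correct and follows essentially the same route as the paper: the paper omits the proof, declaring it an immediate generalization of Lemma \ref{equivalence1}, and your argument is exactly that generalization, using the $\wfrakH$-equivariant pullback $\mathbf{p}^*\calw$, the identification $\widetilde{\calw}=\mathcal{H}om_G(\calv_G,\mathbf{p}^*\calw)\otimes\calv_G$ with the cancellation of the $c^{-1}$- and $c$-twists from Lemma \ref{lem:Q_str_on_V}, and restriction/direct sum with Schur's lemma for the quasi-inverse and full faithfulness. The only superfluous element is the final appeal to covering $\frakQ$ by contractible charts, which is not needed since the argument works directly at the groupoid level.
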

Combining Proposition \ref{equivalence2_general} with Lemma
\ref{equivalence1_general}, we obtain the following:
\begin{theorem}
The category of $\frakH$-sheaves is equivalent to the
category of $c$-twisted $\wfrakQ$-sheaves.
\end{theorem}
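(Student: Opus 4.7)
The plan is to obtain the desired equivalence simply by composing the two equivalences supplied by Proposition \ref{equivalence2_general} and Lemma \ref{equivalence1_general}, so the argument reduces to assembling the intermediate category of $\wfrakH$-sheaves on $M\times\widehat{G}$ satisfying Assumption \ref{restriction-decomposition_general}. Concretely, I would introduce the diagram of functors
\[
\{\frakH\text{-sheaves on }M\}
 \;\xrightarrow{\;\calw\mapsto\widetilde{\calw}\;}\;
\{\wfrakH\text{-sheaves on }M\times\widehat{G}\text{ satisfying Assumption }\ref{restriction-decomposition_general}\}
 \;\xleftarrow{\;(-)\otimes\calv_G\;}\;
\{c\text{-twisted }\wfrakQ\text{-sheaves on }M\times\widehat{G}\},
\]
in which both arrows are equivalences of categories by the cited results, and to check that the composition makes sense with the $c$-twisting on the right matching (rather than being conjugated by) the one implicit in the $\wfrakH$-structure on the middle term.

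First I would record that, because $G$ acts trivially on $M=\frakH_0$, the canonical decomposition (\ref{canonical_decomp_general}) shows that every $\frakH$-sheaf $\calw$ automatically splits as a direct sum indexed by $\widehat{G}$; hence the target $\widetilde{\calw}$ of the functor of Lemma \ref{equivalence1_general} really does land in the subcategory cut out by Assumption \ref{restriction-decomposition_general}. Next I would fix, once and for all, the section $\sigma\colon\frakQ\to\frakH$, the representatives $\rho\colon G\to\End(V_\rho)$, and the intertwiners $T^{[\rho]}_q$ from Section \ref{subsec:general}, so that the twisting cocycle $c$ on $\wfrakQ$ (Proposition \ref{prop:u(1)-cocycle-gpd}) and the $c^{-1}$-twisted $\wfrakH$-equivariant structure on $\calv_G$ (generalizing Lemma \ref{lem:Q_str_on_V}) are both fixed. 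Everything else becomes functorial bookkeeping relative to these choices.

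Having done this, the composition $\calw\mapsto\widetilde{\calw}\otimes\calv_G^{\vee}$ (or its inverse $\calw'\mapsto\calw'\otimes\calv_G$ followed by $\bigoplus_{[\rho]}(-)|_{M\times[\rho]}$) gives functors in both directions, and I would note that both are equivalences because each of Proposition \ref{equivalence2_general} and Lemma \ref{equivalence1_general} is. The key point to verify carefully is that the $\wfrakH$-equivariant structure produced from a $c$-twisted $\wfrakQ$-sheaf by tensoring with $\calv_G$, once restricted to the kernel $M\times G\rightrightarrows M$, recovers the canonical $G$-action appearing in (\ref{canonical_decomp_general}); this was essentially the content of the proof of Proposition \ref{equivalence2_general} and needs only to be invoked here.

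The only genuine subtlety, and the one I expect to be the main obstacle, is a globalization issue: the section $\sigma$ of $j\colon\frakH\to\frakQ$ and the choice of intertwiners $T^{[\rho]}_q$ are only well-defined after refining to a sufficiently fine open cover (as in the discussion preceding Proposition \ref{prop:local-const}). One has to argue that different choices lead to naturally isomorphic equivalences, so that the construction glues to a global equivalence of categories on the orbifold $\Y$ rather than only on a local presentation. This is handled exactly as in Propositions \ref{prop:u(1)-cocycle} and \ref{prop:u(1)-cocycle-gpd}: changing $\sigma$ or $T^{[\rho]}_q$ alters $c$ by a $U(1)$-valued coboundary, which induces a canonical isomorphism between the corresponding categories of $c$-twisted sheaves, compatible with the tensoring-by-$\calv_G$ functor. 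Once this is observed, the composition of the two cited equivalences gives the theorem.
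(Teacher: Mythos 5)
Your proposal is correct and follows exactly the paper's route: the theorem is obtained by composing the equivalence of Lemma \ref{equivalence1_general} (the functor $\calw\mapsto\widetilde{\calw}$ into $\wfrakH$-sheaves satisfying Assumption \ref{restriction-decomposition_general}) with the equivalence of Proposition \ref{equivalence2_general} (tensoring with $\calv_G$). The only superfluous part is your globalization worry: the statement is made for the fixed presentation (\ref{grp_extension_presentation_of_gerbe}), which by construction already admits a global section $\sigma$ of $j$ (after the refinement of Section \ref{subsec:general}), so independence of the choices of $\sigma$ and $T^{[\rho]}_q$ is only relevant when rephrasing the result at the level of $\Y$ and $\widehat{\Y}$ as in Theorem \ref{equivalence3_general}, not for the theorem as stated.
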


Observe that the groupoid $\wfrakQ$ is a presentation of the
orbifold $\widehat{\Y}$, which is dual to the gerbe $\Y\to \B$, and
the cocycle $c$ defines a flat $U(1)$-gerbe on $\widehat{\Y}$ (which
we still denote by $c$). It is known that sheaves on $\Y$ are
equivalent to $\frakH$-sheaves on $M$, and $c$-twisted sheaves on
$\widehat{\Y}$ are equivalent to $c$-twisted $\wfrakQ$-sheaves on
$M\times \widehat{G}$. Therefore, we may rephrase the above theorem
as follows:
\begin{theorem}\label{equivalence3_general}
The category of sheaves on $\Y$ is equivalent to the category of
$c$-twisted sheaves on $\widehat{\Y}$.
\end{theorem}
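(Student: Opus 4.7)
My plan is to obtain Theorem \ref{equivalence3_general} as a formal consequence of the groupoid-level equivalence already established just above, together with two standard translations between stack-theoretic and groupoid-theoretic descriptions of (twisted) sheaves.

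First, I would fix a presentation of $\Y\to\B$ as in (\ref{grp_extension_presentation_of_gerbe}), i.e., a groupoid extension $M\times G\to \frakH\to \frakQ$ with the properties listed there. Under this presentation, standard groupoid descent gives the first translation: the category of sheaves on $\Y$ is canonically equivalent to the category of $\frakH$-sheaves on $M=\frakH_0$ (see \cite{mo}). This is the algebraic counterpart of the statement that an orbifold can be modeled by the Morita equivalence class of a proper \'etale groupoid, and it is the natural starting point for translating the assertion of the theorem to something to which Proposition \ref{equivalence2_general} and Lemma \ref{equivalence1_general} can be applied.

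Second, I would invoke the central equivalence already proved above Theorem \ref{equivalence3_general}, namely that the category of $\frakH$-sheaves on $M$ is equivalent to the category of $c$-twisted $\widehat{\frakQ}$-sheaves on $M\times \widehat{G}$. Concretely, this equivalence is the composition of Lemma \ref{equivalence1_general} (passing from $\calw$ on $M$ to the $\widehat{\frakH}$-sheaf $\widetilde{\calw}$ on $M\times\widehat{G}$ via the canonical isotypic decomposition (\ref{canonical_decomp_general})) with Proposition \ref{equivalence2_general} (stripping off the $c^{-1}$-twisted equivariant sheaf $\calv_G$ to obtain a $c$-twisted $\widehat{\frakQ}$-sheaf). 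Since both ingredients have already been established, at this step there is nothing left to do except to chain them.

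Third, I would apply the analogue of groupoid descent for twisted sheaves: because the cocycle $c$, as in Proposition \ref{prop:u(1)-cocycle-gpd} and Proposition \ref{prop:local-const}, is a locally constant $U(1)$-valued groupoid $2$-cocycle on $\widehat{\frakQ}$, it defines a flat $U(1)$-gerbe on the orbifold $\widehat{\Y}$, and the category of $c$-twisted sheaves on $\widehat{\Y}$ is canonically equivalent to the category of $c$-twisted $\widehat{\frakQ}$-sheaves on $\widehat{\frakQ}_0=M\times \widehat{G}$ (see \cite{cal, Lieblich}). Composing the three equivalences yields the theorem.

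The only genuine content not already contained in the preceding results is the verification that these three equivalences compose naturally and that the composite is indeed independent of the chosen groupoid presentation. I expect the main (mild) obstacle to be checking that the $c$-twisted $\widehat{\frakQ}$-sheaf one obtains from a different choice of presentation $\sigma:\frakQ\to\frakH$, or of representatives $\rho\in[\rho]$ and intertwiners $T_q^{[\rho]}$, differs only by the coboundary modification of $c$ described in Proposition \ref{prop:u(1)-cocycle-gpd}; this gives an isomorphism of the resulting categories of twisted sheaves, so that the composite equivalence descends to one between the intrinsic categories on $\Y$ and $\widehat{\Y}$. Once this naturality is in hand, the theorem follows immediately.
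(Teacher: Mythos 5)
Your proposal matches the paper's own argument: the paper obtains Theorem \ref{equivalence3_general} precisely by combining Lemma \ref{equivalence1_general} with Proposition \ref{equivalence2_general} to get the equivalence between $\frakH$-sheaves on $M$ and $c$-twisted $\wfrakQ$-sheaves on $M\times\widehat{G}$, and then invoking the standard identifications of sheaves on $\Y$ with $\frakH$-sheaves and of $c$-twisted sheaves on $\widehat{\Y}$ with $c$-twisted $\wfrakQ$-sheaves. Your additional remark on independence of the presentation and of the choices of $\sigma$, $\rho$, $T_q^{[\rho]}$ (up to the coboundary modification of $c$) is a sensible refinement that the paper leaves implicit, but it is not needed beyond what you describe.
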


\begin{remark}
\hfill
\begin{enumerate}
\item
Our arguments in this section are valid in the
algebro-geometric context. Hence, the main results of this section,
as well as their counterparts for (quasi-)coherent sheaves, hold for
$G$-gerbes over Deligne-Mumford stacks as well. For example, the
abelian category of (quasi-)coherent sheaves on $\Y$ is equivalent
to the abelian category of $c$-twisted (quasi-)coherent sheaves on
$\widehat{\Y}$.
\item
The equivalences in Theorems \ref{equivalence3_general} and
\ref{equivalence3} {\em do not} preserve tensor product structure.
The main reason is that the tensor product of two $c$-twisted
sheaves is not a $c$-twisted sheaf. However, the equivalence is
compatible with tensor products by ``invariant'' vector bundles.
More precisely, let $F: \Sh(\Y)\to \Sh(\widehat{\Y})$ be
the equivalence in Theorem \ref{equivalence3_general}. Let $\pi_\Y:
\Y\to \B$ and $\pi_{\widehat{\Y}}: \widehat{\Y}\to \B$ denote the
natural maps. Suppose that $V\to \B$ is a vector bundle such that at
any point $x\in \B$, the action of the isotropy group
$\text{Iso}(x)$ on the fiber $V_x$ is trivial\footnote{If $\B$ is a
Deligne-Mumford stack, such a vector bundle $V$ is pulled back from
the coarse moduli space of $\B$.}. Then for any sheaf $\F$ on $\Y$, it
follows easily from the construction of the functor $F$ that $F(\F\otimes \pi_\Y^*V)=F(\F)\otimes \pi_{\widehat{\Y}}^*V.$
\end{enumerate}
\end{remark}


\begin{thebibliography}{12}
\bibitem{abramovich} D. Abramovich, {\em Lectures on Gromov-Witten invariants of orbifolds}, In: Enumerative invariants in algebraic geometry and string theory, 1--48, Lecture Notes in Math., \textbf{1947}, Springer, Berlin, (2008).

\bibitem{agv1} D. Abramovich, T. Graber, A. Vistoli, {\em Algebraic orbifold quantum products}, In:  \cite{Adem_Morava_Ruan}, 1--24.

\bibitem{agv2} D. Abramovich, T. Graber, A. Vistoli, {\em Gromov-Witten theory of Deligne-Mumford stacks}, Amer. J. Math., \textbf{130} (2008), no. 5, 1337--1398.

\bibitem{av} D. Abramovich, A. Vistoli, {\em Compactifying the space of stable maps}, J. Amer. Math. Soc.  \textbf{15}  (2002),  no. 1, 27--75.

\bibitem{Adem_Leida_Ruan} A. Adem, J. Leida, Y. Ruan, {\em Orbifolds and stringy topology}, Cambridge Tracts in Mathematics, \textbf{171}. Cambridge University Press, Cambridge, (2007).

\bibitem{Adem_Morava_Ruan} A. Adem, J. Morava, Y. Ruan (Ed.), {\em Orbifolds in mathematics and physics (Madison, WI, 2001)}, Contemp. Math., \textbf{310}, Amer. Math. Soc. Providence, RI, (2002).

\bibitem{adem_ruan} A. Adem, Y. Ruan, {\em Twisted orbifold $K$-theory}, Comm. Math. Phys. \textbf{237} (2003), no. 3, 533--556.

\bibitem{an-fu}F. Anderson, K. Fuller, {\em Rings and Categories of Modules}, Graduate Texts in Mathematics, Vol. \textbf{13}, 2nd Ed., Springer-Verlag, New York, (1992).

\bibitem{ajt1} E. Andreini, Y. Jiang, H.-H. Tseng, {\em Gromov-Witten theory of product stacks}, arXiv:0905.2258.

\bibitem{ajt2} E. Andreini, Y. Jiang, H.-H. Tseng, {\em Gromov-Witten theory of root gerbes, I: structure of genus $0$ moduli spaces}, to appear in J. Differential Geom., arXiv:0907.2087.

\bibitem{ajt2.5} E. Andreini, Y. Jiang, H.-H. Tseng, {\em Gromov-Witten theory of banded gerbes over schemes}, arXiv:1101.5996.

\bibitem{ajt3} E. Andreini, Y. Jiang, H.-H. Tseng, in preparation.

\bibitem{bn} K. Behrend, B. Noohi, {\em Uniformization of Deligne-Mumford curves},  J. Reine Angew. Math. \textbf{599} (2006), 111--153.

\bibitem{be-xu} K. Behrend, P. Xu, {\em Differentiable stacks and gerbes}, J. Symplectic Geom. \textbf{9} (2011), no. 3, 285--341, arXiv:math/0605694.

\bibitem{br} L. Breen, {\em Notes on $1$- and $2$-gerbes}, In: Towards Higher Categories, 193--235, IMA Vol. Math. Appl., \textbf{152}, Springer, New York (2010), arXiv:math/0611317.

\bibitem{bgnt}P. Bressler, A. Gorokhovsky, R. Nest, B. Tsygan, {\em Deformation quantization of gerbes}, Adv. Math. \textbf{214} (2007), no. 1, 230--266.

\bibitem{bkr} T. Bridgeland, A. King, M. Reid, {\em The McKay correspondence as an equivalence of derived categories}, J. Amer. Math. Soc. {\textbf 14} (2001), no. 3, 535--554.

\bibitem{bry} J. Brylinski, {\em Loop spaces, characteristic classes and geometric quantization}, Reprint of the 1993 edition, Modern Birkh\"auser Classics, Birkh\"auser Boston, Inc., Boston, MA, (2008).

\bibitem{br-ni} J. Brylinski, V. Nistor, {\em Cyclic cohomology of \'etale groupoids}, $K$-Theory \textbf{8} (1994), no. 4, 341--365.

\bibitem{cal} A. Caldararu, {\em Derived Categories of Twisted Sheaves on Calabi-Yau Manifolds}, Ph.D. Thesis, Cornell University, (2000).

\bibitem{cr2} W. Chen, Y. Ruan, {\em Orbifold Gromov-Witten theory}, In: \cite{Adem_Morava_Ruan}, 25--85.

\bibitem{cr}W. Chen, Y. Ruan, {\em A new cohomology theory of orbifold}, Comm. Math. Phys. \textbf{248} (2004), no. 1, 1--31.

\bibitem{cl}A. Clifford, {\em Representations induced in an invariant subgroup}, Ann. of Math. (2) \textbf{38} (1937), no. 3, 533--550.

\bibitem{c}A. Connes,  {\em Noncommutative Geometry},  Academic Press (San Diego), (1994).

\bibitem{crainic}M. Crainic, {\em Cyclic cohomology of \'etale groupoids: the general case}, $K$-theory, \textbf{17} (1999), no. 4, 319--362.

\bibitem{deligne-mumford} P. Deligne, D. Mumford, {\em The irreducibility of the space of curves of given genus}, Inst. Hautes \'Etudes Sci. Publ. Math. No. \textbf{36} (1969), 75--109.

\bibitem{EHKV} D. Edidin, B. Hassett, A. Kresch, A. Vistoli, {\em Brauer groups and quotient stacks}, Amer. J. Math. \textbf{123} (2001), no. 4, 761--777.

\bibitem{fe-do} J. Fell, R. Doran, {\em Representations of $\sp *$-algebras, locally compact groups, and Banach $*$-algebraic bundles}, Vol. 2. Banach $*$-algebraic bundles, induced representations, and the generalized Mackey analysis. Pure and Applied Mathematics \textbf{126}, Academic Press, Inc., Boston, MA, (1988).

\bibitem{fu-ha} W. Fulton, J. Harris, {\em Representation theory}, Graduate Texts in Mathematics, \textbf{129}, Readings in Mathematics, Springer-Verlag, New York, (1991).

\bibitem{gt} A. Gholampour, H.-H. Tseng, {\em On Donaldson-Thomas invariants of threefold stacks and gerbes}, Proc. Amer. Math. Soc. \textbf{141} (2013), no. 1, 191--203. 

\bibitem{gi} J. Giraud, {\em Cohomologie non ab\'elienne}, Die Grundlehren der mathematischen Wissenschaften \textbf{179}, Springer-Verlag, (1971).

\bibitem{givental_quantization} A. Givental, {\em Gromov-Witten invariants and quantization of quadratic Hamiltonians}, Mosc. Math. J. \textbf{1} (2001), no. 4, 551--568.

\bibitem{ha} A. Haefliger, {\em Groupo\" ides d'holonomie et classifiants}, In: Transversal structure of foliations (Toulouse, 1982). Ast\'erisque No. \textbf{116}, (1984), 70--97.

\bibitem{hel-hen-pan-sh} S. Hellerman, A. Henriques, T. Pantev, E. Sharpe, M. Ando, {\em Cluster decomposition, $T$-duality, and gerby CFTs}, Adv.
Theor. Math. Phys. \textbf{11} (2007), no. 5, 751--818.

\bibitem{hepworth} R. Hepworth, {\em The age grading and the Chen-Ruan cup product},  Bull. Lond. Math. Soc. \textbf{42} (2010), no. 5, 868--878,

\bibitem{jk} T. Jarvis, T. Kimura, {\em Orbifold quantum cohomology of the classifying space of a finite group}, In: \cite{Adem_Morava_Ruan}, 123--134.

\bibitem{jkk} T. Jarvis, R. Kaufmann, T. Kimura, {\em Stringy $K$-theory and the Chern character}, Invent. Math. \textbf{168} (2007), no. 1, 23--81.

\bibitem{johnson} P. Johnson, {\em Equivariant Gromov-Witten theory of one dimensional stacks}, arXiv:0903.1068.

\bibitem{kar} G. Karpilovsky, {\em Projective Representations of Finite Groups}, Monographs in Pure and Applied Mathematics \textbf{94}, New York: Marcel Dekker Inc. (1985).

\bibitem{lm-b} G. Laumon, L. Moret-Bailly, {\em Champs alg\'ebriques}, Ergebnisse der Mathematik und ihrer Grenzgebiete. 3. Folge. A Series of Modern Surveys in Mathematics, \textbf{39}, Springer-Verlag, Berlin, (2000).

\bibitem{la-st-xu-1} C. Laurent-Gengoux, J. Tu, P. Xu,  {\em Chern-Weil map for principal bundles over groupoids}, Math. Z. \textbf{255} (2007), no. 3, 451--491.

\bibitem{la-st-xu} C. Laurent-Gengoux, M. Sti\'enon, P. Xu, {\em Non-abelian differentiable gerbes}, Adv. Math. \textbf{220} (2009), no. 5, 1357--1427.

\bibitem{Lieblich} M. Lieblich, {\em Moduli of twisted sheaves}, Duke Math. J., \textbf{138} (2007), no. 1, 23--118.

\bibitem{loday} J. Loday, {\em Cyclic homology}, Appendix E by Mar\'ia O. Ronco, Second edition,  Grundlehren der Mathematischen Wissenschaften, \textbf{301}, Springer-Verlag, Berlin, (1998).

\bibitem{manin} Y. Manin, {\em Frobenius manifolds, quantum cohomology, and moduli spaces},  AMS Colloquium Publications \textbf{47}, Amer. Math. Soc., Providence, RI, (1999).

\bibitem{mo} I. Moerdijk,  {\em Orbifolds as groupoids: an introduction},  In: \cite{Adem_Morava_Ruan}, 205--222.

\bibitem{mo-pr} I. Moerdijk, D. Pronk, {\em Orbifolds, sheaves and groupoids}, $K$-Theory \textbf{12} (1997), no. 1, 3--21.

\bibitem{mo-pr-ind} I. Moerdijk, D. Pronk, {\em A. Simplicial cohomology of orbifolds}, Indag. Math. (N.S.) \textbf{10} (1999), no. 2, 269--293.

\bibitem{rmw} P. Muhly, J. Renault, D. Williams, {\em Equivalence and isomorphism for groupoid $C\sp \ast$-algebras},  J. Operator Theory \textbf{17} (1987), no. 1, 3--22.

\bibitem{me} R. Meyer, {\em Analytic Cyclic Cohomology}, Ph.D.~Thesis, M\"unster, arXiv:math.KT/9906205.

\bibitem{nppt} N. Neumaier, M.~Pflaum, H.~Posthuma, X.~Tang,  {\em Homology of formal deformations of proper \'etale Lie groupoids}, J. Reine Angew. Math. \textbf{593}  (2006), 117--168.

\bibitem{pry} J. Pan, Y. Ruan, X. Yin, {\em Gerbes and twisted orbifold quantum cohomology}, Sci. China Ser. A \textbf{51} (2008), no. 6, 995--1016.

\bibitem{pptt} M. Pflaum, H. Posthuma, X. Tang, H.-H. Tseng, {\em Orbifold cup products and ring structures on Hochschild cohomologies},  Commun. Contemp. Math. \textbf{13} (2011), no. 1, 123--182.

\bibitem{ro} D. Robinson, {\em A Course in the Theory of Groups}, 2nd edition, Graduate Texts in Mathematics \textbf{80}, Springer-Verlag, (1996).

\bibitem{ru1} Y.  Ruan, {\em Discrete torsion and twisted orbifold cohomology}, J. Symplectic Geom. \textbf{2} (2003), no. 1, 1--24.

\bibitem{ruan_0011149} Y. Ruan, {\em Stringy Geometry and Topology of Orbifolds}, In: Symposium in Honor of C. H. Clemens (Salt Lake City, UT, (2000), 187--233, Contemp. Math., \textbf{312}, Amer. Math. Soc., Providence, RI, (2002).

\bibitem{ruan_0201123} Y. Ruan, {\em Stringy orbifolds}, In: \cite{Adem_Morava_Ruan}, 259--299.

\bibitem{sa} I. Satake, {\em On a generalization of the notion of manifold}, Proc. Nat. Acad. Sci. U.S.A., \textbf{42} (1956), 359--363.

\bibitem{ta} X. Tang, {\em Deformation quantization of pseudo Poisson groupoids}, Geom. Funct. Anal. \textbf{16} (2006), no. 3, 731--766.

\bibitem{th} W. Thurston, {\em The Geometry and Topology of Three-Manifolds} (Chapter 13), Princeton University lecture notes (1978--1981).

\bibitem{tseng} H.- H. Tseng, {\em Orbifold Quantum Riemann-Roch, Lefschetz and Serre}, Geom. Topol. \textbf{14} (2010) 1--81.

\bibitem{tu-xu} J. Tu, P. Xu, {\em Chern character for twisted $K$-theory of orbifolds}, Adv. Math. \textbf{207} (2006), no. 2, 455--483.

\bibitem{tu-la-xu} J. Tu, P. Xu, C. Laurent-Gengoux, {\em Twisted $K$-theory of differentiable stacks}, Ann. Sci. \'Ecole Norm. Sup. (4) \textbf{37} (2004), no. 6, 841--910.
\end{thebibliography}
\end{document}